\documentclass[11pt,reqno]{amsart}
\usepackage[T1]{fontenc}
\usepackage{lmodern}
\usepackage{amsmath,amssymb,mathtools,mathrsfs}
\usepackage[a4paper,margin=23mm]{geometry}
\usepackage[colorlinks=true,linkcolor=blue,citecolor=blue,urlcolor=blue]{hyperref}
\usepackage{cite}

\usepackage{graphicx,xcolor,tikz}
\usepackage{microtype}

\numberwithin{equation}{section}
\newtheorem{theorem}{Theorem}[section]
\newtheorem{proposition}[theorem]{Proposition}
\newtheorem{lemma}[theorem]{Lemma}

\theoremstyle{definition}
\newtheorem{definition}[theorem]{Definition}
\theoremstyle{remark}
\newtheorem{remark}[theorem]{Remark}

\DeclareMathOperator{\Div}{div}

\newcommand{\bu}{\mathbf u}
\newcommand{\bv}{\mathbf v}
\newcommand{\bw}{\mathbf w}
\newcommand{\be}{\mathbf e}

\newcommand{\bphi}{\boldsymbol\varphi}
\newcommand{\bsigma}{\boldsymbol\sigma}
\newcommand{\cE}{\mathcal E}
\newcommand{\cS}{\mathcal S}
\newcommand{\cD}{\mathcal D}
\newcommand{\cB}{\mathcal B}
\newcommand{\eps}{\varepsilon}
\newcommand{\dx}{\,\mathrm dx}
\newcommand{\dz}{\,\mathrm dz}
\newcommand{\dy}{\,\mathrm dy}
\newcommand{\dt}{\,\mathrm dt}
\newcommand{\dA}{\,\mathrm dx\,\mathrm dy}
\newcommand{\norm}[1]{\lVert #1\rVert}

\allowdisplaybreaks[4]

\title[Beam-fluid interaction]{Global strong solutions without contact for a viscous fluid interacting with a purely elastic beam}

\author{Yadong Liu}

\address{
	School of Mathematical Sciences, and Ministry of Education Key Laboratory of NSLSCS, and Key Laboratory of Jiangsu Provincial Universities of FDMTA, Nanjing Normal University, Nanjing 210023, P. R. China
}
\email{ydliu@njnu.edu.cn}
\date{\today}

\subjclass[2020]{35Q30, 35Q74, 35R35, 35R37, 74F10}

\keywords{Fluid--structure interaction, elastic beam, Navier--Stokes equations, global strong solution, no contact, elliptic multiplier}

\begin{document}
	\begin{abstract}		
		We consider a two-dimensional incompressible viscous fluid confined in a periodic channel between a fixed flat wall and a purely elastic periodic beam. For compatible initial fluid velocity in $H^1$, beam height in $H^3$, and beam velocity in $H^1$, with strictly positive initial height, we establish global existence and uniqueness of strong solutions. In particular, there is no contact of the beam and bottom, which is built on a uniform distance bound derived by the mechanical energy and fluid dissipation through a height-dependent degenerate elliptic regularization of the classical distance test. Combining tangential differentiation, a fractional estimate for the height, and a negative-order tangential estimate for the accelerations, we establish a global regularity estimate. Moreover, quadratic time-weighted estimates that quantify the higher regularity at positive times are justified. As a consequence, the work gives an affirmative answer to the problem proposed by Grandmont and Hillairet (Arch. Ration. Mech. Anal. 220(3): 1283--1333, 2016) of whether fluid dissipation alone is sufficient to derive the distance estimate.
	\end{abstract}
	\maketitle
	
	\section{Introduction}
	
	We study the interaction of a two-dimensional incompressible viscous fluid with a purely elastic beam forming the upper boundary of a periodic channel. The beam deforms under the fluid stress, while its motion determines both the fluid domain and the velocity at the interface. Such coupling between fluid motion and structural deformation arises in blood flow through compliant vessels, flow-induced vibration in engineering, transport through flexible channels, and biological growth \cite{BGN2014,KKLTTW2018}. Its mathematical analysis combines the dissipative dynamics of the fluid with the oscillatory response of the structure on an evolving domain.
	
	The existence theory for fluid--structure interaction has been developed for a broad range of models. Weak solutions have been constructed for fluid--plate systems~\cite{ChambolleDesjardinsEstebanGrandmont2005,Grandmont2008,CanicMuhaTawri2026}, fluid--shell systems~\cite{LengelerRuzicka2014,MuhaCanic2013,MuhaSchwarzacher2022}, and fluids coupled to nonlinear viscoelastic bulk solids~\cite{BenesovaKampschulteSchwarzacher2024}. Compressible counterparts include~\cite{Boulakia2004,BreitSchwarzacher2018,LiuMitraNecasova2026}, while inviscid fluid-structure interaction were studied in~\cite{AlazardKukavicaTuffaha2025}. These theories admit large initial data, but their continuation generally requires that the moving geometry remain nondegenerate: contact with a fixed boundary or self-contact of the structure remains an obstruction even for weak solutions. For the fluid--beam problem, Casanova, Grandmont, and Hillairet~\cite{CasanovaGrandmontHillairet2021} constructed global weak solutions in a formulation that accommodates possible contact, without asserting a positive distance from the wall.
	
	Strong solutions have been obtained locally in time for beams and plates~\cite{BeiraoDaVeiga2004,Lequeurre2011,GrandmontHillairetLequeurre2019,KukavicaTuffaha2024}, shells~\cite{ChengShkoller2007,ChengShkoller2010}, and bulk elastic solids~\cite{CoutandShkoller2005,CoutandShkoller2006,KukavicaTuffaha2012,AbelsLiu2023,RaymondVanninathan2014}. Global results for sufficiently small data, such as~\cite{Lequeurre2011,IgnatovaKukavicaLasieckaTuffaha2017,Remond-TiedrezTice2019}, preserve a nondegenerate configuration through smallness of the deformation. For large data, local and maximal-in-time theories~\cite{MaityRaymondRoy2020,Breit2024,GrandmontSabbagh2025}, including recent results for perfectly elastic shells~\cite{BreitMensahSchwarzacherSu2026}, leave open the possibility of geometric degeneration.
	
	A fundamental question is therefore whether contact can occur in finite time. This question has been studied extensively for rigid bodies immersed in viscous incompressible fluids. Under no-slip boundary conditions, the resistance of the fluid squeezed between approaching surfaces can prevent collision. Hillairet~\cite{Hillairet2007} established such a no-collision result in two dimensions under suitable assumptions on the shapes of the body and the cavity. The outcome depends on both the boundary regularity and the coupling conditions. G\'erard-Varet and Hillairet~\cite{Gerard-VaretHillairet2010} showed that bodies with sufficiently low boundary regularity can collide even under no-slip coupling. For a sphere falling towards a flat wall, G\'erard-Varet, Hillairet, and Wang~\cite{GerardVaretHillairetWang2015} proved finite-time contact when Navier slip is imposed on both approaching surfaces. For an elastic structure, the approaching geometry is itself an unknown, and a no-contact argument must control its deformation as the fluid gap narrows.
	
	For the beam-fluid system, in the seminal paper ~\cite{GrandmontHillairet2016}, Grandmont and Hillairet established the first global strong no-contact result for arbitrary compatible initial data in two dimensions. Their distance estimate, combined with propagation of regularity, yields global strong solutions with initial fluid velocity in $H^1$, beam height in $H^3$, and beam velocity in $H^1$. Note that structural damping is essential to their proof, which controls the beam velocity in $L^2(0,T;H^1)$ and promotes the distance estimate. They commented in \cite[Remark~4]{GrandmontHillairet2016}:
	\begin{quote}
		\emph{To prove our distance estimate we need to control $\partial_th$ in $L^2(0,T;H_\sharp^1(0,L))$. This is the reason why we assume $\gamma>0$. One may wonder whether the fluid dissipation would be sufficient.}
	\end{quote}
	Subsequent work has developed both regularity and contact results for elastic structures. Without structural damping, Schwarzacher and Su~\cite{SchwarzacherSu2025} proved strong existence and uniqueness up to a possible contact for initial data in $H^2\times H^4\times H^2$. Their coupled acceleration estimate propagates regularity for arbitrary times as long as the beam height remains positive. Breit and Roy~\cite{BreitRoy2025} obtained a conditional no-contact criterion for compressible fluids interacting with elastic plates, with an application to the incompressible perfectly elastic beam.
	In three dimensions, Bukal, Kukavica, Li, and Muha~\cite{BKLM2026} established a conditional no-contact criterion under additional regularity assumptions on weak solutions, with or without structural damping. Their separate global existence theorem in~\cite{BKLM2026JNS} constructs weak solutions that remain uniformly separated from the bottom when the initial kinetic and bending energy satisfies a quantitative bound relative to the square of the initial mean plate height, also allowing a perfectly elastic plate. This restriction permits large absolute energy but does not cover arbitrary data at a fixed mean height.
	Recently, for compressible fluids, Trifunovi\'c~\cite{Trifunovic2026} constructed global weak solutions allowing contact and proved detachment driven by pressure and suitable forces on the plate. Tawri and Ward~\cite[Theorem~3]{TawriWard2026} proved finite-time contact for a channel with a damped elastic beam and Navier-slip conditions at the moving interface and bottom wall, under their prescribed outlet flux and pressure data.
	
	In this contribution, we derive a \emph{quantitative distance estimate} directly from the coupled system at the level of the mechanical energy with only the fluid dissipation, cf.~Theorem \ref{thm:separation} below. Combining with new tangential estimates, we complete the construction of global solutions with initial regularity class $H^1\times H^3\times H^1$, cf.~Theorem \ref{thm:global}. This, therefore, answers the question of Grandmont and Hillairet~\cite[Remark~4]{GrandmontHillairet2016} that \emph{the fluid dissipation does suffice for the separation}. We also derive quadratic time-weighted bounds for the higher norms, which quantify the regularity at positive times, cf.~\eqref{eq:weighted-estimate}, and lead to a spatially Lipschitz estimate for fluid velocity, cf.~\eqref{eq:velocity-lipschitz}.
	
	\subsection{Problem setting}
	Let $L>0$, identify the endpoints of $(0,L)$, and denote the beam height by $h(t,x)$. All unknowns are $L$-periodic in the horizontal variable $x$. The subscript $\sharp$ indicates periodicity only, and zero-mean conditions are imposed separately. Norms of periodic functions are taken over one period. For fluid function spaces, periodicity in $x$ is understood and the subscript $\sharp$ is omitted. The fluid occupies
	\begin{align}
		\Omega_h(t)=\{(x,y):x\in(0,L),\ 0<y<h(t,x)\}.\label{eq:domain}
	\end{align}
	The beam moves only vertically, with velocity $\partial_t h$. For a depiction of the geometry, see Figure \ref{fig:geometry} below. The fluid velocity $\bu=(u_1,u_2)$ and pressure $p$ satisfy
	\begin{subequations}\label{eq:model}
		\begin{align}
			\rho_f(\partial_t\bu+\bu\cdot\nabla\bu)-\Div\bsigma&=0,
			\qquad\Div\bu=0\quad\text{in }\Omega_h(t),\label{eq:fluid}\\
			\bu(t,x,0)&=0,\qquad \bu(t,x,h(t,x))= \partial_t h (t,x)\be_2,\label{eq:boundary}\\
			\rho_s\partial_t^2 h-\beta \partial_x^2 h +\alpha \partial_x^4 h 
			&=-\be_2\cdot\bsigma(t,x,h(t,x))\mathbf N_h.\label{eq:beam}
		\end{align}
	\end{subequations}
	Here $ \bsigma=2\mu D\bu-p\mathbf I $, $D\bu=\frac12(\nabla\bu+(\nabla\bu)^\top)$ and $\mathbf N_h=(- \partial_x h,1)$. All unknowns are periodic in $x$. 
	Here $\rho_f,\rho_s > 0$ are the fluid and beam mass densities, $\mu>0$ is the fluid viscosity, $\alpha>0$ is the bending coefficient, and $\beta\geq 0$ is the tension coefficient. The vector $\mathbf N_h$ is the outward normal to the fluid multiplied by the surface Jacobian. Thus the right-hand side of \eqref{eq:beam} is the vertical fluid force per unit reference length.
	
	\begin{figure}[ht!]
		\begin{tikzpicture}[line width=0.75pt,scale=1.5]
			
			\definecolor{aqua}{rgb}{0.0, 1.0, 1.0}
			
			\begin{scope}[declare function={
					f(\x)=2+.7*sin(deg(\x*pi/2))
					+.3*sin(deg(\x*pi))
					+.15*(cos(deg(\x*pi))-cos(deg(\x*pi/2)));
				}]
				
				\fill[aqua!20] plot[domain = 0:4,samples=100](\x, {f(\x)}) -- (4,0) -- (0,0) -- (0,2);
				
				\draw[-stealth] (-.5,0) -- (4.5,0) node[right] {};
				\draw[-stealth] (0,-.25) -- (0,3.15);
				
				\draw[blue,domain = 0:4,samples=100] plot(\x, {f(\x)});
				
				\draw[dashed] (0,2) 
				-- (4,2);
				
				\node[below left] at (0,0) {$ 0 $};
				
				\node[below] at (4,0) {$ L $};
				
				\node[] at (3.5,0.5) {$ \Omega_h(t) $};
			\end{scope}
		\end{tikzpicture}
		\caption{Sketch of the geometry.}
		\label{fig:geometry}
	\end{figure}
	
	Incompressibility and \eqref{eq:boundary} preserve the mean height
	\begin{align}
		H=\frac1L\int_0^L h(t,x)\dx=\frac1L\int_0^L h_0(x)\dx.
	\end{align}
	The spatially constant part of $p$ is determined by the condition that the right-hand side of \eqref{eq:beam} has zero mean. In particular, we do not independently impose zero spatial mean on the total pressure. The mechanical energy and fluid dissipation are
	\begin{align}
		\cE(t)&=\frac{\rho_f}{2}\norm{\bu(t)}_{L^2(\Omega_h(t))}^2
		+\frac{\rho_s}{2}\norm{\partial_t h (t)}_{L_\sharp^2(0,L)}^2
		+\frac\alpha2\norm{\partial_x^2 h (t)}_{L_\sharp^2(0,L)}^2
		+\frac\beta2\norm{\partial_x h (t)}_{L_\sharp^2(0,L)}^2,\\
		\cD(t)&=\norm{\nabla\bu(t)}_{L^2(\Omega_h(t))}^2.
	\end{align}
	Strong solutions satisfy
	\begin{align}
		\cE(t)+\mu\int_0^t\cD(s)\,\mathrm ds=\cE(0).\label{eq:energy-identity}
	\end{align}
	The equality $2\int_{\Omega_h}|D\bu|^2=\int_{\Omega_h}|\nabla\bu|^2$ used here follows from the particular no-slip traces in \eqref{eq:boundary}.
	
	\subsection{Main results}
	
	We use the standard notation $L^p$ and $W^{k,p}$ for Lebesgue and Sobolev spaces, and write $H^s=W^{s,2}$ for $s\geq0$, with noninteger orders understood in the fractional Sobolev sense. The same notation is used for scalar-, vector-, and matrix-valued functions. For integers $k\geq1$, $H_\sharp^k(0,L)$ consists of the functions in $H^k(0,L)$ whose derivatives of orders $0,\ldots,k-1$ have matching endpoint traces. Fractional periodic Sobolev spaces are defined by the inhomogeneous Fourier weights with frequencies in $(2\pi/L)\mathbb Z$. For $s>0$, we set $H_\sharp^{-s}(0,L)=(H_\sharp^s(0,L))'$.
	For a Banach space $X$, the spaces $L^q(0,T;X)$ and $W^{k,q}(0,T;X)$ have their usual Bochner and Bochner--Sobolev meanings, while $C([0,T];X)$ denotes continuity in the norm of $X$.
	
	For a positive height, define
	\begin{align*}
		\Omega_1=(0,L)\times(0,1),\qquad
		\chi_h(t,x,z)=(x,h(t,x)z).
	\end{align*}
	Thus $\chi_h(t,\cdot)$ maps $\Omega_1$ onto $\Omega_h(t)$. For a function $f$ on the moving domain, we write $\widehat f(t,x,z)=f(t,x,h(t,x)z)$. Measurability in time is understood after this pullback. For $1\leq q<\infty$ and $s\geq0$, we use the physical-domain norm
	\begin{align*}
		\norm{f}_{L^q(0,T;H^s(\Omega_h))}
		=
		\left(
		\int_0^T
		\norm{f(t)}_{H^s(\Omega_h(t))}^q
		\dt
		\right)^{1/q},
	\end{align*}
	with the essential-supremum convention when $q=\infty$. The analogous convention is used for $L^p$ and $W^{k,p}$ in place of $H^s$. We interpret $f\in C([0,T];H^1(\Omega_h))$ as $\widehat f\in C([0,T];H^1(\Omega_1))$.
	
	\begin{definition}\label{def:strong}
		A strong solution to \eqref{eq:model} on $[0,T]$ is a triple $(h,\bu,p)$ with positive continuous height and
		\begin{gather}
			\bu\in C([0,T];H^1(\Omega_h))\cap L^2(0,T;H^2(\Omega_h)),\ 
			\partial_t\bu\in L^2(0,T;L^2(\Omega_h)),\\
			p\in L^2(0,T;H^1(\Omega_h)),\nonumber\\
			h\in C([0,T];H_\sharp^3(0,L))\cap L^2(0,T;H_\sharp^{7/2}(0,L)),\\
			\partial_th\in C([0,T];H_\sharp^1(0,L))\cap L^2(0,T;H_\sharp^{3/2}(0,L)),\nonumber\\
			\partial_t^2h\in L^2(0,T;H_\sharp^{-1/2}(0,L)).\nonumber
		\end{gather}
		The fluid equations hold in $L^2(0,T;L^2(\Omega_h))$, the beam equation holds in $L^2(0,T;H_\sharp^{-1/2}(0,L))$, and the boundary conditions hold in the trace sense. The prescribed initial values are attained in the indicated continuous spaces. The pressure constant is fixed by the mean of the beam equation.
	\end{definition}
	
	The initial height and beam velocity are denoted by $h_0=h(0,\cdot)$ and $h_1=\partial_th(0,\cdot)$, respectively. The data satisfy
	\begin{subequations}\label{eq:data}
		\begin{gather}
			h_0\in H_\sharp^3(0,L),\quad h_1\in H_\sharp^1(0,L),\quad
			\bu_0\in H^1(\Omega_{h_0}),\quad m_0:=\min_{x\in[0,L]}h_0(x)>0,\label{eq:data-regularity}\\
			\int_0^Lh_1\dx=0,\qquad \Div\bu_0=0\quad\text{in }\Omega_{h_0},\\
			\bu_0(x,0)=0,\qquad \bu_0(x,h_0(x))=h_1(x)\be_2.
		\end{gather}
	\end{subequations}
	We first establish a quantitative lower bound for the gap on every finite interval of existence.
	\begin{theorem}[Quantitative distance estimate]
		\label{thm:separation}
		Let $(h,\bu,p)$ be a solution in Definition~\ref{def:strong} on $[0,T_*)$. Set
		\begin{align}
			\cS(t)=\int_0^L\frac{\dx}{h(t,x)}.
		\end{align}
		There exist $C_1,C_2>0$, depending only on $\cE(0),H,L,\rho_f,\rho_s,\mu,\alpha,\beta$, such that
		\begin{align}
			\cS(t)\leq \mathcal R(t):=C_1(1+\cS(0))e^{C_2t},
			\qquad 0\leq t<T_*, \label{eq:entropy-bound}
		\end{align}
		and
		\begin{align}
			\min_{x\in[0,L]}h(t,x)\geq
			\left(\frac{2\mathcal R(t)}L+\frac{2\cE(0)}{3\alpha}\mathcal R(t)^3\right)^{-1},
			\qquad 0\leq t<T_*.\label{eq:gap-bound}
		\end{align}
		These constants are independent of any assumed positive lower bound for the evolving height.
	\end{theorem}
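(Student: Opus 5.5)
The plan has two parts of very different difficulty. The first is a pointwise bound converting an integral bound on $\cS$ into \eqref{eq:gap-bound}, using only the energy. The second is the differential inequality behind \eqref{eq:entropy-bound}, which must use the fluid dissipation $\cD$ and no structural damping.

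\textbf{Step 1: from \eqref{eq:entropy-bound} to \eqref{eq:gap-bound}.} By \eqref{eq:energy-identity}, $\norm{\partial_x^2h(t)}_{L^2}^2\le 2\cE(0)/\alpha$.
Fix $t$ and let $x_0$ be a minimum point of $h(t,\cdot)$, with $m=h(t,x_0)$. Periodicity gives $\partial_xh(x_0)=0$, so Cauchy--Schwarz yields
\begin{align*}
|\partial_xh(x)|\le \norm{\partial_x^2h}_{L^2}|x-x_0|^{1/2},
\qquad
h(x)\le m+\tfrac23\norm{\partial_x^2h}_{L^2}|x-x_0|^{3/2}.
\end{align*}
Set $\delta=\min\{L/2,\,(3m/(2\norm{\partial_x^2h}))^{2/3}\}$. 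On $|x-x_0|<\delta$ we have $h\le 2m$, hence $\cS(t)\ge \delta/m$.
Now split into cases according to which term realizes $\delta$:
\begin{itemize}
\item if $\delta=L/2$, then $m\ge L/(2\cS)$;
\item otherwise, $\cS^3\ge \tfrac{9}{4}m^{-1}\norm{\partial_x^2h}^{-2}$, so $m\gtrsim (\cE(0)/\alpha)^{-1}\cS^{-3}$.
\end{itemize}
Since $\cS\le\mathcal R$, the reciprocal of the sum of the two reciprocal bounds gives \eqref{eq:gap-bound}. The constants $2/L$ and $2/(3\alpha)$ should come out after tracking the $\tfrac23$ and the factor $2$ carefully.

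\textbf{Step 2: evolution of $\cS$.} The boundary conditions and incompressibility give $\partial_th=-\partial_x q$, where $q=\int_0^h u_1\,\mathrm dy$ is the horizontal flux. Therefore
\begin{align*}
\frac{\mathrm d}{\mathrm dt}\cS=-\int_0^L\frac{\partial_th}{h^2}\dx=-2\int_0^L\frac{\partial_xh}{h^3}\,q\dx .
\end{align*}
Since $u_1$ vanishes at $y=0$ and $y=h$, the Poiseuille-type bound $|q|\le c\,h^{3/2}\norm{\partial_yu_1}_{L^2(0,h)}$ holds. This yields $|\dot\cS|\lesssim (\int h_x^2h^{-3})^{1/2}\cD^{1/2}$.
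The weight $\int h_x^2h^{-3}$ is not controlled by $\cS$ and the energy alone. It is borderline: integrating by parts gives $\tfrac12\int h_{xx}h^{-2}$, which needs $\int h^{-4}$.

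\textbf{Step 3 (main obstacle).} A naive estimate of $\dot\cS$ therefore does not close. Instead I would use the multiplier method of Grandmont--Hillairet, but with the beam-velocity term handled by fluid dissipation only.

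I would test the coupled system with a pair $(\bw,\psi)$, where $\psi=\phi(h)-\langle\phi(h)\rangle$ for a power-type choice such as $\phi(h)=h^{-1}$. Here $\bw$ is a divergence-free field in $\Omega_h$ equal to $\psi\be_2$ on the beam and $0$ at the bottom. I would not take $\bw$ as an explicit Poiseuille profile. Instead I would obtain it from a height-dependent degenerate elliptic problem: a stream-function equation whose coefficients vanish as $h\to0$. This choice is designed to make two estimates hold:
\begin{itemize}
\item $\norm{\nabla\bw}_{L^2}$ and $\norm{\bw}_{L^2}$ are controlled by $\int h_x^2h^{-3}$-type quantities and powers of $\cS$, with no $1/\min h$;
\item $\partial_t\bw$ is expressed through $\partial_th=-\partial_xq$, so it can be estimated via $\cD$ rather than $\norm{\partial_th}_{H^1}$.
\end{itemize}

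The bending term $\alpha\int\partial_x^4h\,\psi$ then produces a coercive quantity, roughly $\alpha\int h_x^2h^{-3}$ plus lower-order pieces. The remaining terms are of four kinds:
\begin{itemize}
\item time-derivative terms, which combine into $\frac{\mathrm d}{\mathrm dt}\bigl(\rho_f\int\bu\cdot\bw+\rho_s\int\partial_th\,\psi\bigr)$;
\item viscous terms $\mu\int\nabla\bu:\nabla\bw$;
\item convective terms;
\item terms involving $\partial_t\psi$.
\end{itemize}
I would absorb all of these by Young's inequality into the coercive quantity and $C(1+\cS)(1+\cD)$.

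Integrating in time, and bounding the boundary terms by $\cE(0)$ and $\cS$, should give $\cS(t)\le C(1+\cS(0))+C\int_0^t(1+\cD)\cS$. Grönwall then yields \eqref{eq:entropy-bound}, because $\int_0^t\cD\le\cE(0)/\mu$. Constructing $\bw$ so that the $\partial_t\bw$ and convective terms are controlled with only $\cD$ is where I expect the real work to lie.
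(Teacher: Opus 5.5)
Your Steps 1 and 2 are sound. Step 1 takes a different route from the paper, which bounds $\max h^{-1}$ by its mean plus half its total variation and uses $\int(\partial_xh)^2h^{-3}\dx=\frac12\int\partial_x^2h\,h^{-2}\dx$. Your route works, but the crude bound $h\le 2m$ only gives the cubic coefficient $\frac{8\cE(0)}{9\alpha}$. Integrating $1/(m+\frac23\norm{\partial_x^2h}_{L^2}|x-x_0|^{3/2})$ over $|x-x_0|<\delta$ instead recovers, and in fact improves, $\frac{2\cE(0)}{3\alpha}$.

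The genuine gap is Step 3: nothing in your identity produces $\cS$. The quantities under $\frac{\mathrm d}{\mathrm dt}$, namely $\rho_f\int\bu\cdot\bw+\rho_s\int\partial_th\,\psi$, are bounded by the energy and $\cS$; they are not $\cS$ itself. So everything rests on the claimed coercivity of the bending term, and that claim is false. For $\psi=h^{-1}-\langle h^{-1}\rangle$,
\[
\alpha\int_0^L\partial_x^2h\,\partial_x^2\psi\dx=-\alpha\int_0^L\frac{(\partial_x^2h)^2}{h^2}\dx+2\alpha\int_0^L\frac{(\partial_xh)^4}{h^4}\dx .
\]
This has the scaling of $\int(\partial_x^2h)^2h^{-2}$, not of $\int(\partial_xh)^2h^{-3}$, and it is sign-indefinite. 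When $\partial_x^2h$ concentrates near the minimum point, its negative part is of size $\alpha\norm{\partial_x^2h}_{L^2}^2(\min h)^{-2}$, far beyond what a linear Gronwall inequality in $\cS$ can absorb. Your choice does give the beam-inertia remainder $\rho_s\int(\partial_th)^2h^{-2}\dx$ a favourable sign, but that quantity does not control $\cS$. You also locate the difficulty in $\partial_t\bw$ and convection; in the paper those are the easy terms, bounded by $C\cD$.

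What is missing is the mechanism that produces $\frac{\mathrm d}{\mathrm dt}\cS$: it comes from viscosity, not elasticity. Take the Grandmont--Hillairet lift $\bw=(-\partial_y\Psi,\partial_x\Psi)$ with $\Psi=F(x)\chi(y/h)$, $\chi(z)=3z^2-2z^3$, and beam test $\partial_xF$. Then one has exactly $2\int D\bu:D\bw\dA=-12\int QF\,h^{-3}\dx+\int\partial_x^2\Psi\,(\partial_xu_2-2\partial_yu_1)\dA$. The choice $F=\partial_xh$ turns the first term into $-6\frac{\mathrm d}{\mathrm dt}\cS$, while the elastic pairing becomes $\alpha\norm{\partial_x^3h}^2+\beta\norm{\partial_x^2h}^2\ge0$. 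The real obstruction is then the beam inertia, which produces $\rho_s\norm{\partial_t\partial_xh}^2$ with the wrong sign, and $\cD$ does not control it.

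The paper puts the degenerate elliptic regularization on the amplitude: $F-\varepsilon^2h^{4/3}\partial_x^2F=\partial_xh$. The exponent makes $\partial_x(\varepsilon h^{2/3})$ small via $|\partial_xh|^3\le\frac94\norm{\partial_x^2h}^2h$. Two things then happen:
\begin{itemize}
\item The viscous identity survives up to a defect $-12\mu\varepsilon^2\int Qh^{-5/3}\partial_x^2F\dx$. Using $12\int Q^2h^{-3}\dx\le\cD$ and the coercivity $\cB\ge\frac{15\alpha}{16}\norm{\partial_x^2F}^2$, this defect is absorbed up to $C(1+\cS)\cD$.
\item The inertial remainder becomes $-\rho_s\int\partial_th\,\partial_x\partial_tF\dx$. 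It is bounded by $\rho_s(\int(\partial_th)^2h^{-1}\dx)^{1/2}(\int h|\partial_x\partial_tF|^2\dx)^{1/2}\le C(\min h)^{-1/3}\cD\le C(1+\cS)\cD$.
\end{itemize}
Gronwall for $6\mu\cS-\rho_s\int\partial_th\,\partial_xF\dx-\rho_f\int\bu\cdot\bw\dA$, which is comparable to $1+\cS$ up to an additive constant, then yields \eqref{eq:entropy-bound}.
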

	
	By the separation estimate in Theorem~\ref{thm:separation}, together with the global regularity estimate, we obtain the following result.
	\begin{theorem}[Global strong solution]\label{thm:global}
		Let the initial data $(\bu_0,h_0,h_1)$ satisfy \eqref{eq:data}. Then system~\eqref{eq:model} has a unique global solution in the class of Definition~\ref{def:strong}. It satisfies the energy identity~\eqref{eq:energy-identity} and the distance estimates~\eqref{eq:entropy-bound}--\eqref{eq:gap-bound}. For every finite $T>0$,
		\begin{align}
			&\sup_{0\leq t\leq T}\left(
			\norm{\bu(t)}_{H^1(\Omega_h(t))}^2+
			\norm{h(t)}_{H_\sharp^3(0,L)}^2+
			\norm{\partial_th(t)}_{H_\sharp^1(0,L)}^2\right)\nonumber\\
			&\quad+\int_0^T\left(
			\norm{\partial_t\bu}_{L^2(\Omega_h)}^2+
			\norm{\bu}_{H^2(\Omega_h)}^2+
			\norm{p}_{H^1(\Omega_h)}^2\right)\dt\nonumber\\
			&\quad+\int_0^T\left(
			\norm{h}_{H_\sharp^{7/2}(0,L)}^2+
			\norm{\partial_th}_{H_\sharp^{3/2}(0,L)}^2+
			\norm{\partial_t^2h}_{H_\sharp^{-1/2}(0,L)}^2\right)\dt\leq C_T.\label{eq:lower-estimate}
		\end{align}
		Here $C_T$ depends only on $T,L,\rho_f,\rho_s,\mu,\alpha,\beta$, $m_0^{-1}$, and the three initial norms in \eqref{eq:data-regularity}.
	\end{theorem}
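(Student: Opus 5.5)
The proof will be a continuation argument. It rests on three ingredients: a local well-posedness result in the class of Definition~\ref{def:strong}, the distance estimate of Theorem~\ref{thm:separation}, and an a priori bound \eqref{eq:lower-estimate} on any interval where the height stays bounded below.

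\emph{Local theory.} First I will establish local existence and uniqueness. For data satisfying \eqref{eq:data}, I pull back to $\Omega_1$ via $\chi_h$. The problem is then linearised around the data: a Stokes system with a beam boundary condition, solved by a fixed point in the class of Definition~\ref{def:strong}. This gives a maximal time $T_*$ whose length depends only on $m_0^{-1}$ and the three initial norms, in the spirit of \cite{GrandmontHillairetLequeurre2019,SchwarzacherSu2025}. Uniqueness follows from a difference estimate in the fixed reference domain, using the Lipschitz dependence of the transformed coefficients on $h\in H^3$. The energy identity \eqref{eq:energy-identity} holds by testing with $\bu$ and $\partial_th$.

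\emph{A priori estimate.} Suppose $T_*<\infty$. Theorem~\ref{thm:separation} gives $\min h\ge m_T>0$ on $[0,T_*)$, depending only on admissible quantities, and \eqref{eq:energy-identity} bounds $\|h\|_{H^2}$. It remains to propagate the $H^1\times H^3\times H^1$ norms without structural damping. I would do this in three steps.
\begin{enumerate}
\item Differentiate the pulled-back system tangentially in $x$, once, and test the fluid equation with $\partial_x\widehat\bu$ and the beam equation with $\partial_x\partial_th$. This controls $\sup_t\|\partial_x^3h\|_{L^2}^2$, $\sup_t\|\partial_x\partial_th\|_{L^2}^2$ and $\int\|\nabla\partial_x\bu\|^2$, up to commutator terms. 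These terms are cubic and involve $\partial_x h$, $\partial_t h$ and $\nabla\bu$, and they are bounded via Ladyzhenskaya/Agmon inequalities on $\Omega_1$ using $m_T$.
\item Recover normal derivatives and the pressure from Stokes regularity on $\Omega_1$. The boundary datum is $\partial_th\in H^{3/2}$, which comes from the trace of $\bu\in H^2$. The beam equation with traction in $H^{1/2}$ gives $h\in L^2H^{7/2}$ by a fractional estimate, namely testing with $\Lambda^{1/2}\partial_x^3h$-type multipliers.
\item Bound $\partial_t\bu$ in $L^2L^2$ and $\partial_t^2h$ in $L^2H^{-1/2}$ by a negative-order tangential estimate for the accelerations. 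Here the beam equation is tested by $\Lambda^{-1}\partial_t^2h$ coupled with a divergence-free lift into the fluid.
\end{enumerate}
Gr\"onwall then closes with constants depending on $T$, $m_T^{-1}$ and the data, yielding \eqref{eq:lower-estimate} on $[0,T_*)$.

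\emph{Continuation.} By \eqref{eq:lower-estimate} and the continuity in time, the limits at $T_*$ exist in $H^1\times H^3\times H^1$. Compatibility is preserved, and the minimum height is at least $m_T$. Restarting the local theory extends the solution past $T_*$, which is a contradiction. Hence the solution is global, and the estimates hold on every $[0,T]$.

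\emph{Main obstacle.} I expect the main obstacle to be step~1 together with the acceleration control in step~3. Without damping there is no $L^2H^1$ bound on $\partial_th$ at the energy level. The top-order commutator, in which $\partial_x^3 h$ appears against $\partial_t h$ through the pressure trace, must therefore be absorbed using the fluid dissipation via the trace $\partial_th=\bu_2|_{y=h}$, together with the half-derivative gain for $h$ from step~2. I would first try to handle the pressure term by writing it as the beam equation minus the inertia, and integrating by parts in time.
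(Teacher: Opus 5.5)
Your global architecture (local well-posedness in the class of Definition~\ref{def:strong}, then Theorem~\ref{thm:separation}, an a priori bound, and continuation) differs from the paper's and could in principle work. However, the a priori bound, which is the real content, has a gap exactly where it must close. A negative-order tangential test controls only $\Lambda^{-1/2}\partial_t\bv$ and $\Lambda^{-1/2}\partial_t^2h$. It does not give $\partial_t\bu\in L^2(0,T;L^2(\Omega_h))$, which your Stokes step needs. Full control of the fluid acceleration requires a separate parabolic estimate. Since the boundary velocity is $\partial_th\,\be_2$, that estimate costs $\norm{\partial_t^2h}_{H_\sharp^{-1/2}(0,L)}$; the paper subtracts the stationary lift $\mathscr{E}(\partial_th)$ and tests with $\partial_t\overline{\bv}$ (Lemma~\ref{lem:lower-fluid}). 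Conversely, in the acceleration identity you must commute $\Lambda^{-1}$ with the viscous coefficients, which contain $\partial_x^2h$ and are only $H^1$ in $x$. This produces a term of size $\norm{\bv}_{H^2(\Omega_1)}^2$ with a coefficient of order one. So $\int_0^t\norm{\bv}_{H^2(\Omega_1)}^2$ and $\int_0^t\norm{\partial_t^2h}_{H_\sharp^{-1/2}(0,L)}^2$ bound each other with $O(1)$ constants, and the estimates cannot be closed.

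The missing idea is a smallness device. The paper's device is the shifted multiplier $\Lambda_N=(N^2-\partial_x^2)^{1/2}$, applied to the coupled pair $(\Lambda_N^{-1}\partial_t\bv,\Lambda_N^{-1}\partial_t^2h)$ (Lemma~\ref{lem:lower-acceleration}). With this test the coupled mass is coercive, so both accelerations sit on the left. By Lemma~\ref{lem:lower-commutator}, the viscous commutator then costs only $CN^{-2}\norm{\bv}_{H^2(\Omega_1)}^2$, that is $CN^{-1}$ after the conversion \eqref{eq:lower-norm}. The fluid estimate absorbs this once $N$ is fixed large. Your wording suggests the fluid test is instead a separate divergence-free extension of $\Lambda^{-1}\partial_t^2h$. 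In that case the fluid acceleration is paired with a field other than itself and enters as an indefinite cross term, so the coupled coercivity is lost.

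There are further points.

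\emph{Circular ordering in steps 1--2.} $\partial_th\in L^2(0,T;H_\sharp^{3/2})$ is boundary data for Stokes, so it must come from the tangential dissipation, $\norm{\partial_th}_{H_\sharp^{3/2}}\le C\norm{\partial_x\bv}_{H^1}$, not from the trace of $\bu\in H^2$. Similarly, $h\in L^2(0,T;H_\sharp^{7/2})$ enters the acceleration commutators. It must therefore be obtained before Stokes, via the pressure-free coupled test $(\mathscr{E}(\Lambda^3(h-H)),\Lambda^3(h-H))$ of Lemma~\ref{lem:lower-height}, not from a traction bound in $H^{1/2}$.

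\emph{The pressure obstacle.} The pressure-trace difficulty you single out is an artifact of testing with $\partial_x\widehat\bu$. For the Piola field $\bv=B_h^\top\widehat\bu$, $\partial_x^2\bv$ is solenoidal with upper trace $\partial_t\partial_x^2h\,\be_2$, so the tangential test never sees the pressure.

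\emph{Extra burdens of the continuation route.} You need a low-regularity local theory for the undamped beam whose existence time depends only on $m_0^{-1}$ and the three norms; you assert this but do not prove it. You also need the estimates to hold for solutions in the class of Definition~\ref{def:strong} itself. There, for instance, $\partial_t\partial_x^2h$ and $\partial_t^2h$ both lie only in $L^2(0,T;H_\sharp^{-1/2})$ and cannot be paired without tangential regularization.

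\emph{How the paper sidesteps these.} It approximates the data by compatible $H^2\times H^4\times H^2$ data and uses the Schwarzacher--Su higher theory, which is global for each approximation by Theorem~\ref{thm:separation} and their continuation criterion. It proves Proposition~\ref{prop:lower-estimates} for these exact solutions, with constants depending only on lower norms and the gap, and passes to the limit by compactness. Strong continuity of $(h,\partial_th)$ is recovered from the $x$-differentiated beam energy identity, and uniqueness comes from the Schwarzacher--Sroczinski weak--strong theorem.
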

	Moreover, we establish further regularity property for the global strong solutions, which might be of interest for its own. 
	\begin{theorem}[Partial parabolic smoothing]\label{thm:smoothing}
		Let $(h,\bu,p)$ be the global strong solution furnished by Theorem~\ref{thm:global}. For every finite $T>0$, the solution satisfies
		\begin{align}
			&\mathop{\mathrm{ess\,sup}}_{0<t\leq T}t^2\left(
			\norm{\partial_t\bu(t)}_{L^2(\Omega_h(t))}^2
			+\norm{\partial_t^2h(t)}_{L_\sharp^2(0,L)}^2
			+\norm{\partial_th(t)}_{H_\sharp^2(0,L)}^2\right.\nonumber\\
			&\hspace{41mm}\left.
			+\norm{\bu(t)}_{H^2(\Omega_h(t))}^2
			+\norm{p(t)}_{H^1(\Omega_h(t))}^2
			+\norm{h(t)}_{H_\sharp^4(0,L)}^2\right)\nonumber\\
			&\qquad
			+\int_0^Tt^2
			\norm{\nabla\partial_t\bu(t)}_{L^2(\Omega_h(t))}^2
			\dt
			\leq C_T.
			\label{eq:weighted-estimate}
		\end{align}
		Moreover, for every $1\leq q<2$,
		\begin{align}
			\norm{\bu}_{L^q(0,T;W^{1,\infty}(\Omega_h))}
			\leq C_{T,q}.
			\label{eq:velocity-lipschitz}
		\end{align}
		Here $C_T$ depends only on $T,L,\rho_f,\rho_s,\mu,\alpha,\beta$, $m_0^{-1}$, and the three initial norms in \eqref{eq:data-regularity}. The constant $C_{T,q}$ additionally depends on $q$.
	\end{theorem}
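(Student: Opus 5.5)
The plan is to differentiate the system once in time and test with time-weighted multipliers, working throughout on the fixed reference domain $\Omega_1$ through $\chi_h$. Theorem~\ref{thm:global} and Theorem~\ref{thm:separation} give a uniform positive lower bound for $h$ on $[0,T]$. They also give $h\in L^\infty(0,T;H^3_\sharp)$, so the Jacobian of $\chi_h$ and its inverse are controlled in $W^{2,\infty}$. The pulled-back Stokes operator is therefore uniformly elliptic with coefficients bounded in terms of $C_T$.

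\textbf{Weighted energy for $(\partial_t\bu,\partial_t^2 h)$.} I will set $\bv=\partial_t\bu$ (pulled back, with the correction from $\partial_t\chi_h$) and $w=\partial_t h$. Formally, $(\bv,w)$ solves a linearized fluid--beam system with the same boundary coupling $\bv=\partial_t w\,\be_2$ on the beam. The forcing consists of commutators: terms from $\partial_t$ of the metric coefficients, which are linear in $\partial_t h$ and its $x$-derivatives, and terms from the convective nonlinearity. I will test the fluid equation with $t^2\bv$ and the beam equation with $t^2\partial_t w$. This gives
\begin{align*}
\frac{d}{dt}\Big(t^2\mathcal E_1\Big)+\mu t^2\norm{\nabla\bv}_{L^2}^2\le 2t\,\mathcal E_1+t^2\,\mathcal F,
\end{align*}
with
\begin{align*}
\mathcal E_1=\frac{\rho_f}2\norm{\bv}^2+\frac{\rho_s}2\norm{\partial_t^2h}^2+\frac\alpha2\norm{\partial_x^2\partial_th}^2+\frac\beta2\norm{\partial_x\partial_th}^2 .
\end{align*}
The term $2t\mathcal E_1$ is integrable by \eqref{eq:lower-estimate}, once $\partial_t^2h\in L^2L^2$ and $\partial_th\in L^2H^2$ are available. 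This needs one extra step: a time-weighted estimate with weight $t$ at the level of $H^2\times H^4$, obtained by testing the original system with $t(\partial_t\bu,\partial_t^2h)$ in the spirit of the paper's tangential/acceleration estimates. Alternatively, I can interpolate the estimate at weight $t^2$ against the unweighted one, using the structure $\int 2t\mathcal E_1\le \delta\sup t^2\mathcal E_1+C_\delta\int\mathcal E_1^{1/2}$-type bounds. The forcing $\mathcal F$ is handled with Ladyzhenskaya-type inequalities in two dimensions and trace inequalities, absorbing $\nabla\bv$. Gronwall then yields the first line of \eqref{eq:weighted-estimate} for $\partial_t\bu,\partial_t^2h,\partial_th\in H^2$, together with $\int t^2\norm{\nabla\partial_t\bu}^2$.

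\textbf{Elliptic recovery.} At a.e.\ fixed $t$, $\bu$ solves a stationary Stokes problem with right-hand side $-\rho_f(\partial_t\bu+\bu\cdot\nabla\bu)$ and Dirichlet data $\partial_th\,\be_2\in H^{3/2}$. Since $h(t)\in H^3$ is Lipschitz with $C^{1,1/2}$ regularity, Stokes regularity on $\Omega_h(t)$ gives $t\norm{\bu}_{H^2}+t\norm{p}_{H^1}\lesssim t\norm{\partial_t\bu}+t\norm{\partial_th}_{H^2}+\dots$, which is bounded. For $h$, I will read the beam equation as $\alpha\partial_x^4h=-\rho_s\partial_t^2h+\beta\partial_x^2h-\be_2\cdot\bsigma\mathbf N_h$. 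The stress trace is bounded in $H^{1/2}$ by $\norm{\bu}_{H^2}+\norm{p}_{H^1}$ via the trace theorem, so $t\,\partial_x^4h\in L^\infty L^2$. A regularization argument (difference quotients in time, or a Galerkin approximation satisfying the same bounds) justifies the formal computation.

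\textbf{Lipschitz bound \eqref{eq:velocity-lipschitz}.} I will use the Stokes estimate in $W^{2,r}$ for some $r>2$, with data $\partial_t\bu\in L^r$, where $t\norm{\partial_t\bu}_{L^r}\lesssim t\norm{\partial_t\bu}^{2/r}\norm{\nabla\partial_t\bu}^{1-2/r}$, and boundary data $\partial_th\in W^{2-1/r,r}$. This gives $\norm{\bu}_{W^{1,\infty}}\lesssim\norm{\bu}_{W^{2,r}}\le C\,t^{-1}g(t)$ with $g\in L^2$. Hölder's inequality gives $L^q$ for every $q<2$, since $t^{-1}\in L^{s}(0,T)$ for all $s<1$ cannot be used directly. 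Instead I will interpolate: $\norm{\bu}_{W^{1,\infty}}\lesssim\norm{\bu}_{H^2}^{\theta}\norm{\bu}_{W^{2,r}}^{1-\theta}$, then use the unweighted $L^2H^2$ bound and choose $\theta$ so that the weight $t^{-(1-\theta)}$ is integrable to power $q$.

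\textbf{Main obstacle.} The main obstacle is the forcing $\mathcal F$ in the $\partial_t$-differentiated system. The hardest piece is the term generated by time-differentiating the domain map, which involves $\partial_t^2h$ and $\partial_x\partial_th$ multiplying $\nabla\bu$ and $p$. The pressure commutator $\partial_t(\text{metric})\,p$ tested with $\bv$ must be bounded using only $p\in H^1$ and $\partial_th\in H^{3/2}$. I would first try to integrate by parts in $z$ to move the derivative onto $\bv$, and use the divergence-free structure to eliminate the worst pressure contribution.
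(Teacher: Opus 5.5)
\textbf{There is a genuine gap at the step you call the ``extra step''.} To close your $t^2$-weighted inequality you need
\begin{align*}
\int_0^T t\Bigl(\norm{\partial_t^2h}_{L_\sharp^2(0,L)}^2+\norm{\partial_th}_{H_\sharp^2(0,L)}^2\Bigr)\dt\leq C_T .
\end{align*}
The fluid part of $2t\mathcal E_1$ is harmless, but neither of your suggestions delivers this bound.

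Take first the test with $t(\partial_t\bu,\partial_t^2h)$. It must in any case be done with the Piola field, since the Eulerian $\partial_t\bu$ has trace $\partial_t^2h\,\be_2-\partial_th\,\partial_y\bu$. Even then it hits the hyperbolic sign problem:
$\alpha\int_0^L\partial_x^2h\,\partial_t^2\partial_x^2h\dx=\alpha\frac{\mathrm d}{\mathrm dt}\int_0^L\partial_x^2h\,\partial_t\partial_x^2h\dx-\alpha\norm{\partial_t\partial_x^2h}_{L_\sharp^2(0,L)}^2$.
So this test yields only $\rho_s\int t\norm{\partial_t^2h}^2\leq\alpha\int t\norm{\partial_t\partial_x^2h}^2+\dots$. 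It consumes the $H^2$ bound for $\partial_th$ instead of producing it. Combined with the reverse (virial) inequality, the beam terms cancel exactly and nothing is gained.

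Your alternative, $\int t\mathcal E_1\leq\delta\sup t^2\mathcal E_1+C_\delta(\int\mathcal E_1^{1/2})^2$, also fails. It requires the unweighted bounds $\partial_t^2h\in L^1(0,T;L_\sharp^2(0,L))$ and $\partial_th\in L^1(0,T;H_\sharp^2(0,L))$. The class of Theorem~\ref{thm:global} gives only $\partial_t^2h\in L^2(0,T;H_\sharp^{-1/2}(0,L))$ and $\partial_th\in L^2(0,T;H_\sharp^{3/2}(0,L))$. A bound on $\sup t^2\norm{\partial_th}_{H_\sharp^2}^2$ alone gives only the divergent estimate $\int t\norm{\partial_th}_{H_\sharp^2}^2\lesssim\int_0^Tt^{-1}\dt$.

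\textbf{What is missing are the two ingredients the paper uses.}

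First, the fluid dissipation reaches the beam through the interface. Since $\partial_t\bv|_{z=1}=\partial_t^2h\,\be_2$, the trace theorem gives $\norm{\partial_t^2h}_{H_\sharp^{1/2}}\leq C\norm{\partial_t\bv}_{H^1(\Omega_1)}$. Interpolating this with $\partial_t^2h\in L^2(0,T;H_\sharp^{-1/2})$ from \eqref{eq:lower-estimate} gives $\int t\norm{\partial_t^2h}_{L_\sharp^2}^2\leq\delta\int t^2\norm{\partial_t\bv}_{H^1}^2+C_\delta\int\norm{\partial_t^2h}_{H_\sharp^{-1/2}}^2$. The first term is absorbed by the $t^2$-weighted dissipation, which you already have on the left but never exploit this way.

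Second, the paper uses a $t$-weighted virial identity. Test the time-differentiated coupled system with $(\mathscr E(\partial_th),\partial_th)$, where $\mathscr E$ is the stationary Stokes lift of Lemma~\ref{lem:lower-lift}. This gives $\alpha\int t\norm{\partial_t\partial_x^2h}^2\leq\rho_s\int t\norm{\partial_t^2h}^2+\dots$. The boundary functional is bounded at the $H^1\times H^3\times H^1$ level by rewriting it through the undifferentiated equation. The inertial cross terms are controlled by $\norm{\partial_t\mathscr E(\partial_th)}_{L^2}\lesssim\norm{\partial_t^2h}_{H_\sharp^{-1/2}}$. Only this interpolation, fed into the virial, breaks the circle.

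Once that bound is in place, your elliptic recovery goes through. Your $W^{2,r}$ Stokes route to \eqref{eq:velocity-lipschitz} also works: interpolate $\norm{\bu}_{W^{1,\infty}}$ between the unweighted $H^2$ bound and $t\norm{\bu}_{W^{2,r}}\in L^2(0,T)$. This is a legitimate alternative to the paper's $H^{2+\theta}$ argument.

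Finally, the pressure commutator you single out as the main obstacle does not arise at all. If you differentiate the Piola weak formulation \eqref{eq:lower-coupled} with solenoidal tests, the pressure never appears; the real obstacle is the hyperbolic one above.
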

	Several comments are in order.
	
	\begin{remark}
		Theorem \ref{thm:separation} shows that the fluid dissipation solely is in fact strong enough in such beam-fluid, to prevent possible contact. Recently, Alazard, Kukavica, and Tuffaha \cite{AlazardKukavicaTuffaha2025} showed that in two dimensions, the inviscid fluid-beam system has a global weak solution, by the Craig--Sulem--Zakharov formulation of the problem. One may also ask if the distant estimate can still hold without fluid viscosity. 
	\end{remark}
	
	\begin{remark}
		For Theorem \ref{thm:global}, we adopt the initial regularity of the damped-beam theorem in \cite{GrandmontHillairet2016} without adding structural viscosity. The tension by $ \beta \geq 0 $ is unnecessary because bending controls all lower structural norms on the space of periodic functions with zero mean. Section~\ref{sec:global-proof} explains the corresponding zero-tension modification of the higher continuation theorem, which was stated for positive tension in \cite{SchwarzacherSu2025}.
	\end{remark}
	
	\begin{remark}
		Here the additional spatial Lipschitz estimate \eqref{eq:velocity-lipschitz} is derived as a byproduct, following the argument in \cite{DanchinMucha2019}.  For further applications, it may contribute in the stability issue with low regularity.
	\end{remark}
	
	\subsection{Obstruction and strategy}
	The main obstacle to derive a distance estimate is due to a lack of regularity. Since higher-order estimate would require nondegeneracy of the domain ($ \min h > 0 $), one can not exploit it without a uniform distance estimate.
	The standard energy controls $h$ in $H_\sharp^2(0,L)$ and $\partial_th$ in $L_\sharp^2(0,L)$, but gives neither separation from the bottom nor a full spatial derivative of the beam velocity. This loss appears in the classical distance test as in \cite{GrandmontHillairet2016}. Indeed, for the stream function
	\begin{align*}
		\psi(t,x,y)=F(t,x)\chi\left(\frac{y}{h(t,x)}\right),\qquad \chi(z)=3z^2-2z^3,
	\end{align*}
	the solenoidal field $\bw=(-\partial_y\psi,\partial_x\psi)$ has upper trace $\partial_xF\,\be_2$. The choice $F=\partial_xh$ produces the reciprocal-height derivative through the viscous term as in \cite{GrandmontHillairet2016}, but the structural inertia gives
	\begin{align*}
		\int_0^L\partial_t^2h\,\partial_x^2h\dx
		=\partial_t\int_0^L\partial_th\,\partial_x^2h\dx
		+\norm{\partial_t\partial_xh}_{L_\sharp^2(0,L)}^2.
	\end{align*}
	The last term has an unfavorable sign for the distance estimate and cannot be bounded by the mechanical energy and fluid dissipation.
	
	Instead, note that the fundamental theorem of calculus in each vertical fluid column yields
	\begin{align*}
		\int_0^L\frac{|\partial_t h|^2}{h}\,\mathrm dx = \int_0^L\frac{|u_2(t,x,h(t,x))|^2}{h}\,\mathrm dx
		\leq \int_0^L \int_0^{h(t,x)} |\partial_y u_2|^2 \dy \dx
		\leq \cD.
	\end{align*}
	Related weighted column estimates appear in~\cite[Section~4.2]{GrandmontHillairet2016} and~\cite[Lemma~A.3]{Trifunovic2026}. See also \cite[Theorem 5.1]{LeoniTice2019} for the trace of functions in strip-like domains. After integration by parts in $x$, the inertial remainder for a general $F$ therefore satisfies
	\begin{align*}
		\left|\int_0^L\partial_x\partial_th\,\partial_tF\dx\right|
		\leq\sqrt{\cD}\left(\int_0^Lh|\partial_x\partial_tF|^2\dx\right)^{1/2}.
	\end{align*}
	This allows a certain degree of freedom for us to close the argument. By the preceding inequality, a higher regularity of $F$, both in space and time, is required, which is not available a priori if $ F = \partial_x h $. Motivated by the observation, we therefore seek a regularization of $F = \partial_xh$ for which the weighted norm on the right-hand side can be controlled by the fluid dissipation, with a dependence on the minimum height compatible with the distance estimate. Such a regularization must also preserve the reciprocal-height derivative generated by the fluid viscosity. To make this it explicit, we introduce the horizontal flux
	\begin{align*}
		Q(t,x)=\int_0^{h(t,x)}u_1(t,x,y)\dy,
	\end{align*}
	which satisfies $\partial_th=-\partial_xQ$, and hence
	\begin{align*}
		\partial_t\cS(t)=2\int_0^L\frac{Q\partial_xh}{h^3}\dx.
	\end{align*}
	See also \cite[(79) \& (82)]{GrandmontHillairet2016} for similar calculations. The choice $F=\partial_xh$ recovers the above derivative exactly by the stream function test procedure that generates $\partial_t \cS$. Thus, a regularized $ F $ must retain this identity as well, up to a controllable error, cf.~Lemma \ref{lem:stress-identity} below. Because of the degeneracy as $ h \to 0 $, we use a ``degenerate elliptic regularization'' of $ \partial_x h $, by solving
	\begin{align*}
		F-\ell^2\partial_x^2F=\partial_xh,\qquad \ell=\varepsilon h^{2/3},
	\end{align*}
	where $\varepsilon>0$ is fixed by the initial energy. 
	The regularization length $\ell=\varepsilon h^{2/3}$ is chosen to balance the spatial variation of the coefficients with the singular weight in the inertial estimate, i.e.,
	\begin{align*}
		|\partial_x \ell| = \frac{2}{3} \eps h^{-1/3} |\partial_x h| \leq C \eps, 
	\end{align*}
	where $ |\partial_x h| \leq C h^{1/3} $ (cf.~\eqref{eq:slope-cubic}). With this construction, we are able to particularly justify (cf. Lemma \ref{lem:time} below)
	\begin{align*}
		\int_0^Lh|\partial_x\partial_tF|^2\dx
		\leq C(\min h)^{-2/3}\cD(t),
	\end{align*}
	The resulting inertial loss is $(\min h)^{-1/3}$, which one can control by $C(1+\cS)$, cf.~Lemma~\ref{lem:geometry}. This replaces the uncontrolled spatial derivative of the beam velocity by a weighted estimate compatible with a linear Gr\"onwall argument, cf. Section \ref{sec:separation-proof}. 
	We believe that the \emph{degenerate regularized multiplier} shall be adopted to other fluid-structure interaction problems to address the no-contact/contact issues, as long as its time derivative is compatible with the weighted trace controlled by fluid dissipation, and its spatial regularization preserve the distance identity up to an error controlled by elasticity.
	
	The next issue is to obtain a global $H^1$-level estimate. 
	The global regularity estimate of Grandmont and Hillairet~\cite{GrandmontHillairet2016} depends strongly on the structure damping ($\gamma > 0$), while the regularity theory of Schwarzacher and Su~\cite{SchwarzacherSu2025} is built on a coupled \textit{acceleration estimate}. Testing the time-differentiated system by the accelerations preserves the hyperbolic structure of the beam and yields the estimate for the spatial gradient of the fluid time derivative, from which continuation follows whenever the height stays positive. That estimate requires initial velocity in $H^2$, initial height in $H^4$ and initial beam velocity in $H^2$, and the constants that carry the local theory depend on the inverse gap. 
	The present contribution reaches the lower regularity $H^1\times H^3\times H^1$ setting as in Grandmont and Hillairet \cite{GrandmontHillairet2016} and proves that contact never occurs. At this level the acceleration is not available as a test field at time zero, and the corresponding constant becomes unbounded in the limit. Indeed, the Eulerian temporal derivative has the boundary trace
	\begin{align*}
		\partial_t\bu\big|_{y=h}=\partial_t^2h\,\be_2-\partial_th\,\partial_y\bu\big|_{y=h},
	\end{align*}
	so it does not directly give a coupled acceleration test. Testing the beam equation by $\partial_t^2h$ also produces an elastic cross term whose time derivative leaves a negative squared norm of $\partial_t\partial_x^2h$. Thus, acceleration argument, in the form used in \cite{SchwarzacherSu2025}, cannot be run on our data. Indeed, as Schwarzacher and Su mentioned in \cite{SchwarzacherSu2025}, second-order time derivative of a solution is not a good test function for hyperbolic equations.
	
	We circumvent it by tangential estimates and a negative half order accelerate estimate, which then control the final fluid acceleration and Stokes regularity via a solenoidal lifting. 
	On the fixed strip $\Omega_1$, the Piola velocity $\bv=B_h^\top(\bu\circ\chi_h)$ is solenoidal and has upper trace $\partial_th\,\be_2$, cf.~\eqref{eq:piola-velocity} below. Essentially tangential derivatives preserve both constraints. The coupled test $(-\partial_x^2\bv,-\partial_t\partial_x^2h)$ controls the horizontal derivatives of the fluid and the lower structural energy, while the fluid dissipation gives $\partial_th\in L^2(0,T;H_\sharp^{3/2}(0,L))$ through the upper trace. Combining with a subsequent solenoidal lifting test with upper trace $(1-\partial_x^2)^{3/2}(h-H)\be_2$, we are able to derive $h\in L^2(0,T;H_\sharp^{7/2}(0,L))$, cf.~Lemma \ref{lem:lower-height} below.
	Then formally by comparison in \eqref{eq:beam}, the beam acceleration is expected to stay in a negative half-order Sobolev space. This is further established with an admissible pair $((N^2-\partial_x^2)^{-1/2}\partial_t\bv,(N^2-\partial_x^2)^{-1/2}\partial_t^2h)$, where negative-order tangential multiplier is employed. The multiplier with $N$ that is chosen to be sufficiently large here filters the singular part of the viscous commutator, whose coefficient in the fluid $H^2$ norm is $N^{-2}$. The resulting estimate for $ \partial_t^2 h $ lies in $ L^2(0,T;H_\sharp^{-1/2}(0,L)) $, supplying the remaining boundary norm in the fluid estimate, \textit{without} requiring any initial acceleration, cf.~Lemma \ref{lem:lower-acceleration}.
	Note that subtracting a stationary solenoidal Stokes lift of the boundary velocity $\partial_th\,\be_2$ reduces the fluid equation to homogeneous boundary data. The time derivative of this lift is controlled in $L^2(0,T;L^2(\Omega_1))$ by $\norm{\partial_t^2h}_{L^2(0,T;H_\sharp^{-1/2}(0,L))}$. This gives a nonstationary fluid estimate on the entire chosen interval, which combined the tangential estimates above closes the argument by Gr\"onwall's inequality.
	
	Additionally, we would like to explore the regularity property for the global strong solutions. In particular, we aim to figure out what does the fluid dissipation provide in such a parabolic-hyperbolic coupled system. This motivates us to derive the \textit{time-weighted estimate}, cf. \eqref{eq:weighted-estimate} above. Following the scheme of Danchin and Mucha~\cite{DanchinMucha2019} (see also \cite{Li2017}), we carry out the time-weighted estimates on our fluid-structure interaction problem under the energy estimate, uniform separation, and $ H^1 $-estimate. The quadratic weight is then obtained by averaging the higher propagation estimate over positive starting times. Precisely the propagation estimate presupposes the higher norms at its starting time and therefore cannot be restarted at $t=0$ on our data, so the estimate holds for positive times only and controls the initial layer rather than supplying a new existence mechanism.
	
	Note that the \textit{time-weighted estimate} records the \textit{partial parabolic smoothing} of the coupled fluid-structure system (parabolic--hyperbolic coupling). The smoothing originates in the viscous fluid alone, since no structural damping is present and the beam equation is hyperbolic. The elastic beam gains regularity only through the coupling, which transmits the fluid regularity to the structure across the moving interface. 
	Nevertheless, the estimate is of independent interest in this form, since it quantifies a smoothing mechanism that is available to coupled parabolic--hyperbolic problems at low initial regularity, which is not visible in an unweighted estimate. Thus, we expect similar time-weighted estimates to be applicable to broader situations of fluid--structure interactions.
	
	\subsection{Outline}
	In Section~\ref{sec:separation}, the quantitative separation estimate is justified, including admissibility of the distance test in the lower regularity class. Section~\ref{sec:lower-estimates} establishes the direct global lower-order estimates, which is supplied by delicate tangential estimates for height, velocity and acceleration. In Section~\ref{sec:global-proof}, the global solution is constructed via higher-regularity approximation, and uniqueness is shown as well. Section~\ref{sec:time-weighted-estimates} then derives the higher regularity estimates with quadratic time weights.
	
	\section{Quantitative Separation}\label{sec:separation}
	In this section, we prove Theorem \ref{thm:separation}, which establishes the quantitative distance estimate. Before introducing the main argument, let us briefly justify the energy-dissipation law of \eqref{eq:model}, which is the basis of further estimates. Throughout this section, $C$ denotes a positive constant depending only on $\cE(0),H,L,\rho_f,\rho_s,\mu,\alpha,\beta$; its value may change between occurrences. These constants are independent of the minimum height. 
	\begin{lemma}\label{lem:mechanical-energy}
		Every solution in Definition~\ref{def:strong} conserves the mean height and satisfies \eqref{eq:energy-identity}.
	\end{lemma}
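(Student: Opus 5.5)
The plan has two parts: conservation of the mean height, then the energy identity.

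\textbf{Mean height.} I use only incompressibility and the boundary conditions \eqref{eq:boundary}. For a.e.\ $t$, integrate $\Div\bu=0$ over $\Omega_h(t)$ and apply the divergence theorem. Periodicity in $x$ cancels the lateral contributions. The bottom trace vanishes. On the top, the outward normal times the surface Jacobian is $\mathbf N_h=(-\partial_xh,1)$. Hence
\[
0=\int_0^L\bu(t,x,h)\cdot\mathbf N_h\dx=\int_0^L\partial_th\dx .
\]
Since $\partial_th\in C([0,T];H^1_\sharp(0,L))$, the map $t\mapsto\int_0^Lh\dx$ is $C^1$ with zero derivative, so $H$ is constant in time.

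\textbf{Energy identity, fluid part.} I test \eqref{eq:fluid} with $\bu$ in $L^2(\Omega_h(t))$, which is admissible because the equation holds in $L^2(0,T;L^2(\Omega_h))$. The three terms are handled as follows.
\begin{itemize}
\item \emph{Time derivative.} Reynolds' transport theorem gives
\[
\rho_f\int_{\Omega_h}\partial_t\bu\cdot\bu=\frac{\rho_f}{2}\frac{\mathrm d}{\mathrm dt}\norm{\bu}^2_{L^2(\Omega_h)}-\frac{\rho_f}{2}\int_0^L|\partial_th|^2\partial_th\dx .
\]
The boundary term arises because the top moves with normal velocity $\partial_th$ (with Jacobian), and there $|\bu|^2=|\partial_th|^2$.
\item \emph{Convection.} Using $\Div\bu=0$, this equals $\frac{\rho_f}{2}\int_{\partial\Omega_h}|\bu|^2\bu\cdot\mathbf N$, which on the top is $\frac{\rho_f}{2}\int_0^L|\partial_th|^2\partial_th\dx$. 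It exactly cancels the Reynolds boundary term.
\item \emph{Stress.} Integrating by parts,
\[
-\int_{\Omega_h}\Div\bsigma\cdot\bu=2\mu\int_{\Omega_h}|D\bu|^2-\int_0^L\partial_th\,\be_2\cdot\bsigma\mathbf N_h\dx .
\]
\end{itemize}

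\textbf{Energy identity, beam part and symmetrization.} I pair \eqref{eq:beam} with $\partial_th$. The inertial, tension and bending terms give $\frac{\mathrm d}{\mathrm dt}$ of the structural energy, which requires periodic integration by parts. The right-hand side cancels the boundary stress term above. For the identity $2\int|D\bu|^2=\int|\nabla\bu|^2$, I expand
\[
2|D\bu|^2=|\nabla\bu|^2+\partial_ju_i\partial_iu_j
\]
and integrate by parts twice, using $\Div\bu=0$. This leaves the boundary term $\int_{\partial\Omega_h}(\bu\cdot\nabla\bu\cdot\mathbf n-(\Div\bu)\,\bu\cdot\mathbf n)$, which I must show vanishes. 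On the bottom it vanishes because $\bu=0$. On the top $\bu=\partial_th\,\be_2$, so the boundary term reduces to $\partial_th\,(\partial_yu_2 n_2+\partial_yu_1n_1)$ minus the $\Div\bu=0$ correction. Here I use the tangential derivative $\partial_x+\partial_xh\,\partial_y$ of the trace, which gives $\partial_xu_1+\partial_xh\,\partial_yu_1=0$. Combined with $\Div\bu=0$ this makes the boundary expression vanish after integrating over a period. I expect this computation to be routine but fiddly, and I will check it against the known formula $\int_{\partial\Omega}(\bu\cdot\nabla)\bu\cdot \mathbf n-\Div\bu\,\bu\cdot\mathbf n=\int \kappa$-type terms.

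\textbf{Main obstacle: rigor at the regularity of Definition~\ref{def:strong}.} The pairing $\langle\partial_t^2h,\partial_th\rangle$ is only an $H^{-1/2}$–$H^{1/2}$ duality, since $\partial_th\in L^2H^{3/2}$ and $\partial_t^2h\in L^2H^{-1/2}$. Justifying $\frac{\mathrm d}{\mathrm dt}\frac12\norm{\partial_th}^2$ then needs a Lions–Magenes argument. Likewise, the bending term $\int\partial_x^4h\,\partial_th$ needs $h\in L^2H^{7/2}$ against $\partial_th\in L^2H^{1/2}$. Reynolds' theorem also has to be justified, and I will do this by pulling back to $\Omega_1$ via $\chi_h$, where the time regularity is defined, and mollifying in time. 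With these points settled, I integrate in time to get \eqref{eq:energy-identity}.
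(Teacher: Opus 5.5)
Your proposal is correct and follows the paper's proof essentially step for step. Both use the divergence theorem for the mean height and the coupled test $(\bu,\partial_th\,\be_2)$, in which the Reynolds boundary flux cancels the convective flux and the traction cancels the beam work. Both then use the cross-gradient integration by parts (the paper's Lemma~\ref{lem:stress-identity}) to identify $2\int|D\bu|^2$ with $\int|\nabla\bu|^2$, and temporal regularization to justify the calculation. Your hedge about curvature-type terms is unnecessary. On the top the boundary integrand $\partial_th(-\partial_xh\,\partial_yu_1+\partial_yu_2)=-\partial_th(\partial_xu_1+\partial_xh\,\partial_yu_1)$ vanishes pointwise, precisely because the trace $u_1(x,h(x))$ vanishes identically.
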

	\begin{proof}
		Integrating incompressibility and using the prescribed traces, we obtain $\int_0^L\partial_th\dx=0$. On a compact positive-gap interval, the two-dimensional product estimate ensures $\bu\cdot\nabla\bu\in L^2(0,T;L^2(\Omega_h))$. Then we use the test pair $(\bu, \partial_t h \be_2)$ for the coupled problem. Reynolds transport contributes $\frac12\int_0^L|\partial_th|^2\partial_th\dx$, which cancels the convective boundary flux. The beam equation can be paired with $\partial_th\in L^\infty(0,T;H_\sharp^1(0,L))$. The elastic product rule follows from $\partial_x^2h\in L^2(0,T;H_\sharp^1(0,L))$ and $\partial_t\partial_x^2h\in L^2(0,T;H_\sharp^{-1}(0,L))$. Fluid traction and beam work cancel because the traces match. We thus obtain the energy identity with dissipation $2\mu\int_{\Omega_h}|D\bu|^2\dA$. The cross-gradient integration by parts proved in Lemma~\ref{lem:stress-identity} identifies this dissipation with $\mu\int_{\Omega_h}|\nabla\bu|^2\dA$. Temporal regularization and the continuous initial traces justify the calculation from zero.
	\end{proof}
	
	\subsection{Geometry and estimates from the fluid dissipation}
	By \eqref{eq:energy-identity}, we know
	\begin{align}
		\norm{\partial_x^2 h (t)}_{L_\sharp^2(0,L)}\leq\sqrt{\frac{2\cE(0)}\alpha},\qquad
		0<h(t,x)\leq H+L^{3/2}\sqrt{\frac{2\cE(0)}\alpha}.\label{eq:energy-geometry}
	\end{align}
	Indeed, since $\partial_x h$ has zero mean, we have $\norm{\partial_x h}_{L_\sharp^\infty(0,L)}\leq\sqrt L\norm{\partial_x^2 h}_{L_\sharp^2(0,L)}$, and the oscillation of $h$ is at most $L\norm{\partial_x h}_{L_\sharp^\infty(0,L)}$. 
	
	We first establish a Glaeser-type estimate for the slope and a quantitative reciprocal-height estimate for the minimum. The former controls the spatial variation of height uniformly as the gap decreases (see also \cite[Proposition 6]{GrandmontHillairet2016} and \cite[Lemma A.1]{BKLM2026}), while the latter converts a bound on the reciprocal-height integral into a positive lower bound for the gap. 
	\begin{lemma}
		\label{lem:geometry}
		Every positive periodic $h\in H_\sharp^2(0,L)$ satisfies
		\begin{align}
			| \partial_x h (x)|^3&\leq\frac{9}{4}\norm{\partial_x^2 h}_{L_\sharp^2(0,L)}^2h(x).\label{eq:slope-cubic}
		\end{align}
		Moreover, it holds
		\begin{align}
			(\min h)^{-1}\leq\frac{2}L \int_0^L \frac1h\dx+\frac{1}{3}\norm{\partial_x^2 h}_{L_\sharp^2(0,L)}^2 \left(\int_0^L \frac1h\dx\right)^3,
			\label{eq:min-entropy0}
		\end{align}
		In particular,
		\begin{align}
			(\min h)^{-1/3}\leq C\left(1+\int_0^L \frac1h\dx\right).
			\label{eq:min-entropy}
		\end{align}
	\end{lemma}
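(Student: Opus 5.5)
The plan is to prove all three inequalities by elementary one-dimensional Taylor estimates, using only positivity, periodicity and $K:=\norm{\partial_x^2h}_{L_\sharp^2(0,L)}$. The one step that needs care is the normalization of the constant in \eqref{eq:min-entropy0}, addressed at the end.

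\emph{Slope estimate \eqref{eq:slope-cubic}.} Fix $x_0$ and, after reflecting $x\mapsto -x$ if necessary, assume $a:=\partial_xh(x_0)\geq0$. For $0<s\leq L$, Taylor's formula with integral remainder on the periodic extension gives
\begin{align*}
0<h(x_0-s)=h(x_0)-as+\int_0^s(s-t)\,\partial_x^2h(x_0-t)\dt .
\end{align*}
The window $[x_0-s,x_0]$ has length at most one period, so Cauchy--Schwarz yields
\begin{align*}
as\leq h(x_0)+\frac{s^{3/2}}{\sqrt3}K,
\qquad\text{that is,}\qquad
a\leq \frac{h(x_0)}{s}+\frac{K}{\sqrt3}s^{1/2}.
\end{align*}
The right-hand side is minimized at $s_*$ with $s_*^{3/2}=2\sqrt3\,h(x_0)/K$. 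If $s_*\leq L$, inserting $s_*$ gives $a^3\leq \frac{27}{4}\cdot\frac{K^2}{3}h(x_0)=\frac94K^2h(x_0)$. If $s_*>L$, take $s=L$ and use periodicity, $h(x_0-L)=h(x_0)$. The identity then reads $aL=\int_0^L(L-t)\partial_x^2h\dt$, so $a\leq K\sqrt{L/3}$. Combined with $h(x_0)>KL^{3/2}/(2\sqrt3)$, this gives $a^3\leq\frac23K^2h(x_0)$, which is again within \eqref{eq:slope-cubic}.

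\emph{Reciprocal-height estimate \eqref{eq:min-entropy0}.} Let $x_m$ be a minimum point, so $m=h(x_m)=\min h$ and $\partial_xh(x_m)=0$. The same Taylor argument, now without the linear term, gives
\begin{align*}
h(x)\leq m+\tfrac23K|x-x_m|^{3/2}\qquad\text{for }|x-x_m|\leq L/2 .
\end{align*}
Fix $\theta>0$ and let $\delta=(3\theta m/(2K))^{2/3}$, so that $h\leq(1+\theta)m$ on $\{|x-x_m|\leq\min(\delta,L/2)\}$. There are two cases.
\begin{itemize}
\item If $\delta\geq L/2$, then $h\leq(1+\theta)m$ on the whole period, and $\int_0^L\frac{\dx}{h}\geq L/((1+\theta)m)$.
\item If $\delta<L/2$, then $\int_0^L\frac{\dx}{h}\geq 2\delta/((1+\theta)m)=c_\theta K^{-2/3}m^{-1/3}$, which inverts to $m^{-1}\leq c_\theta^{-3}K^2\left(\int_0^L\frac{\dx}{h}\right)^3$.
\end{itemize}
Adding the two cases gives a bound of the form \eqref{eq:min-entropy0}. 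Taking $\theta=1$ produces the coefficient $2/L$ in front of the integral.

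\emph{Main obstacle.} The expected difficulty is matching the coefficient $\frac13$ on the cubic term. The crude cutoff above gives a coefficient of order $\frac38$ at the optimal $\theta=2$. To improve it, I would integrate the exact comparison profile $1/(m+\frac23K|y|^{3/2})$ rather than cut off. Alternatively, I would use the sharper pointwise slope bound \eqref{eq:slope-cubic}. That bound says $|\partial_x(h^{2/3})|\leq\frac23(\frac94)^{1/3}K^{2/3}$, which gives $h(x)\leq\bigl(m^{2/3}+c K^{2/3}|x-x_m|\bigr)^{3/2}$ with an explicit $c$. This profile integrates in closed form, $\int\frac{\mathrm dy}{(m^{2/3}+c'|y|)^{3/2}}=\frac{4}{c'}m^{-1/3}$ up to truncation at $L/2$, and I expect it to give the stated constant after the same two-case split.

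\emph{Consequence \eqref{eq:min-entropy}.} This is immediate from either route. Take cube roots in \eqref{eq:min-entropy0} and use $K^2\leq 2\cE(0)/\alpha$ from \eqref{eq:energy-geometry}. Then $m^{-1/3}\leq C\bigl(\int_0^L\frac{\dx}{h}+\left(\int_0^L\frac{\dx}{h}\right)^{1/3}\bigr)\leq C\left(1+\int_0^L\frac{\dx}{h}\right)$.
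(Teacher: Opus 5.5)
Your proof of \eqref{eq:slope-cubic} is correct, and it differs from the paper's. You optimize a Taylor--Cauchy--Schwarz bound in the step $s$, which requires the extra case $s_*>L$. The paper instead integrates the exact identity $\partial_x\bigl((\partial_xh)^3/h\bigr)=\frac94|\partial_x^2h|^2-\bigl(\frac{(\partial_xh)^2}{h}-\frac32\partial_x^2h\bigr)^2$ forward and backward from a zero of $\partial_xh$. Both routes give the sharp constant $\frac94$. For \eqref{eq:min-entropy0} the paper argues globally: a periodic $W^{1,1}$ bound for $h^{-1}$, the identity $\int_0^L|\partial_xh|^2h^{-3}\dx=\frac12\int_0^L\partial_x^2h\,h^{-2}\dx$, Cauchy--Schwarz, and Young with absorption of $\frac12(\min h)^{-1}$. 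Your comparison near a minimum point is local and more elementary. Your deduction of \eqref{eq:min-entropy} is fine.

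The gap is that, as written, you do not prove \eqref{eq:min-entropy0}. Set $S=\int_0^Lh^{-1}\dx$. Your cutoff yields only $m^{-1}\le\frac2LS+\frac49K^2S^3$ at $\theta=1$, or $\frac3LS+\frac38K^2S^3$ at $\theta=2$, and the stated constants are left as an expectation. They are not cosmetic: \eqref{eq:gap-bound} is exactly \eqref{eq:min-entropy0} combined with $K^2\le 2\cE(0)/\alpha$.

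The whole loss comes from your comparison $h\le m+\frac23K|x-x_m|^{3/2}$, obtained by integrating $|\partial_xh(y)|\le K|y-x_m|^{1/2}$. Use instead the remainder bound you already used for the slope. Since $\partial_xh(x_m)=0$, for $|x-x_m|\le L/2$ you have $|h(x)-m|=\bigl|\int_{x_m}^x(x-y)\partial_x^2h(y)\,\mathrm dy\bigr|\le\frac{K}{\sqrt3}|x-x_m|^{3/2}$. Then your own split with $\theta=1$ and $\delta=(\sqrt3\,m/K)^{2/3}$ gives exactly the stated constants:
\begin{itemize}
\item if $\delta\ge L/2$, then $h\le2m$ on the whole period and $m^{-1}\le\frac2LS$;
\item if $\delta<L/2$, then $S\ge\delta/m=3^{1/3}K^{-2/3}m^{-1/3}$, that is, $m^{-1}\le\frac13K^2S^3$.
\end{itemize}

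Your first fallback would also work: integrating $1/(m+\frac23K|y|^{3/2})$ over $|y|\le\delta$, rather than bounding it below by $1/(2m)$, gives about $0.13$ in place of $\frac13$. The second fallback would not. The profile $(m^{2/3}+c'|y|)^{3/2}$, with $c'^3=\frac23K^2$, discards $\partial_xh(x_m)=0$. In the same two-case split, requiring $h\le2m$ on the whole period in the first case means the best the second case can give is $S\ge\frac4{c'}(1-2^{-1/3})m^{-1/3}$. This leaves a cubic coefficient $\frac1{96}(1-2^{-1/3})^{-3}\approx1.19>\frac13$.
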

	\begin{proof}
		The one-dimensional Sobolev embedding and positivity of $h$ justify the chain rules and integrations by parts below. We first observe that
		\begin{align*}
			\partial_x\left(\frac{(\partial_x h)^3}{h}\right)
			=3\frac{(\partial_x h)^2}{h}\partial_x^2h
			-\frac{(\partial_x h)^4}{h^2}
			=\frac94|\partial_x^2h|^2
			-\left(\frac{(\partial_x h)^2}{h}
			-\frac32\partial_x^2h\right)^2
			\leq\frac94|\partial_x^2h|^2.
		\end{align*}
		Since $\partial_x h$ has zero mean and is continuous, it vanishes somewhere in each period. Integrating the preceding inequality from such a zero to $x$ bounds $(\partial_x h(x))^3/h(x)$ from above. Integrating from $x$ to the next periodic copy of the same zero bounds it from below. Consequently,
		\begin{align*}
			\frac{|\partial_x h(x)|^3}{h(x)}
			\leq\frac94\int_0^L|\partial_x^2h|^2\dx,
		\end{align*}
		which implies \eqref{eq:slope-cubic}.
		
		For the reciprocal-height estimate, we apply periodic integration by parts and the Cauchy--Schwarz inequality to obtain
		\begin{align*}
			\int_0^L\frac{|\partial_x h|^2}{h^3}\dx
			=\frac12\int_0^L\frac{\partial_x^2h}{h^2}\dx
			& \leq\frac12
			\left(\int_0^L|\partial_x^2h|^2\dx\right)^{1/2}
			\left(\int_0^L\frac1{h^4}\dx\right)^{1/2} \\
			& \leq\frac{\norm{\partial_x^2 h}_{L_\sharp^2(0,L)}}{2} \left(\int_0^L \frac1h\dx\right)^{1/2} (\min h)^{-3/2}.
		\end{align*}
		The periodic $W^{1,1}$ estimate bounds the maximum of $h^{-1}$ by its mean plus one half of the $L^1$ norm of its derivative. Indeed, periodicity implies that its total variation is at least twice its oscillation. Applying this estimate and then the Cauchy--Schwarz and Young inequalities, we obtain
		\begin{align*}
			(\min h)^{-1}
			&\leq\frac 1L \int_0^L \frac1h\dx+\frac12\int_0^L
			\frac{|\partial_x h|}{h^2}\dx\\
			&\leq\frac 1L\int_0^L \frac1h\dx+\frac12
			\left(\int_0^L\frac{|\partial_x h|^2}{h^3}\dx\right)^{1/2}
			\left(\int_0^L\frac1h\dx\right)^{1/2}\\
			&\leq\frac 1L\int_0^L \frac1h\dx
			+ \frac1{2} (\min h)^{-1}
			+ \frac{\norm{\partial_x^2 h}_{L_\sharp^2(0,L)}^2}{6} \left(\int_0^L \frac1h\dx\right)^3,
		\end{align*}
		which shows \eqref{eq:min-entropy0}. Taking cube roots, using $(a+b)^{1/3}\leq a^{1/3}+b^{1/3}$ for $a,b\geq0$, and recalling $\norm{\partial_x^2h}_{L_\sharp^2(0,L)}^2\leq2\cE(0)/\alpha$, we obtain
		\begin{align*}
			(\min h)^{-1/3}
			&\leq\left(\frac2L\right)^{1/3}\left(\int_0^L\frac1h\dx\right)^{1/3}
			+\left(\frac{2\cE(0)}{3\alpha}\right)^{1/3}\int_0^L\frac1h\dx
			\leq C\left(1+\int_0^L\frac1h\dx\right),
		\end{align*}
		which proves \eqref{eq:min-entropy}.
	\end{proof}
	
	For the rest of this section, $h$ satisfies the energy bounds in \eqref{eq:energy-geometry}. Fix the following parameters:
	\begin{align}
		\varepsilon=\frac{3}{32\left(1+\left(\frac{9\cE(0)}{2\alpha}\right)^{1/3}\right)},\qquad
		\ell=\varepsilon h^{2/3}.
	\end{align}
	Then it follows from Lemma~\ref{lem:geometry} that
	\begin{align}
		|\partial_x h|\leq\left(\frac{9\cE(0)}{2\alpha}\right)^{1/3}h^{1/3},\qquad
		\norm{\partial_x\ell}_{L_\sharp^\infty(0,L)}\leq\frac1{16}.\label{eq:length-lipschitz}
	\end{align}
	
	The horizontal flux links the beam motion to the fluid dissipation. More precisely, incompressibility implies $\partial_t h=-\partial_xQ$, so the derivative of the reciprocal-height integral can be expressed in terms of $Q$. The no-slip condition at both ends of each vertical column controls $Q$ with weight $h^{-3}$, which is precisely the weight in the viscous distance identity.
	
	\begin{lemma}
		\label{lem:flux}
		Define the horizontal flux by
		\begin{align*}
			Q(t,x)=\int_0^{h(t,x)}u_1(t,x,y)\dy.
		\end{align*}
		Then
		\begin{align}
			\partial_t h =- \partial_x Q,\qquad
			12\int_0^L\frac{Q^2}{h^3}\dx\leq\cD,\qquad
			\int_0^L\frac{(\partial_t h)^2}{h}\dx\leq\cD.\label{eq:flux-estimates}
		\end{align}
		Moreover,
		\begin{align}
			\norm{\bu(x,\cdot)}_{L^2(0,h(x))}
			\leq h(x)\norm{\partial_y\bu(x,\cdot)}_{L^2(0,h(x))}.\label{eq:column-poincare}
		\end{align}
	\end{lemma}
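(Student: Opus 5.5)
The plan is to prove all four assertions of Lemma~\ref{lem:flux} column by column, using only the two no-slip traces in \eqref{eq:boundary}, incompressibility, and one-dimensional Poincar\'e or Cauchy--Schwarz inequalities on each vertical segment $(0,h(t,x))$. For a strong solution the traces and the column restrictions of $\bu$ make sense for a.e.\ $x$, so the argument is carried out pointwise in $x$ and then integrated.

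\emph{Flux identity.} Differentiate $Q$ by the Leibniz rule:
\begin{align*}
\partial_xQ=u_1(x,h)\,\partial_xh+\int_0^h\partial_xu_1\dy .
\end{align*}
The boundary term vanishes because $u_1=0$ on $y=h$. Incompressibility turns the integral into $-\int_0^h\partial_yu_2\dy=-u_2(x,h)+u_2(x,0)$, which equals $-\partial_th$. Hence $\partial_th=-\partial_xQ$. The Leibniz rule is justified in $L^2$ since $\bu\in H^2(\Omega_h)$ for a.e.\ $t$; alternatively one can pull back to $\Omega_1$.

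\emph{Weighted flux bound.} Fix $x$ and set $f(y)=u_1(x,y)$, so that $f(0)=f(h)=0$. Use the antiderivative weight $w(y)=y(h-y)/2$, which satisfies $w(0)=w(h)=0$ and $-w''=1$. Then
\begin{align*}
Q=\int_0^hf\dy=\int_0^h w'f'\dy .
\end{align*}
Cauchy--Schwarz gives $Q^2\le\int_0^h|w'|^2\dy\int_0^h|f'|^2\dy$. Since $w'=h/2-y$, we have $\int_0^h|w'|^2\dy=h^3/12$. Therefore $12Q^2/h^3\le\int_0^h|\partial_yu_1|^2\dy$, and integrating in $x$ yields $12\int_0^L Q^2h^{-3}\dx\le\cD$.

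\emph{Velocity trace bound.} We have $\partial_th=u_2(x,h)=\int_0^h\partial_yu_2\dy$ because $u_2(x,0)=0$. Cauchy--Schwarz gives $|\partial_th|^2\le h\int_0^h|\partial_yu_2|^2\dy$. Dividing by $h$ and integrating, the $\partial_yu_2$ contribution is bounded by $\cD$.

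\emph{Column Poincar\'e inequality.} This is \eqref{eq:column-poincare}. Since $\bu(x,0)=0$, we write $\bu(x,y)=\int_0^y\partial_y\bu\,\mathrm ds$, so $|\bu(x,y)|^2\le y\int_0^h|\partial_y\bu|^2$. Integrating over $y\in(0,h)$ gives the factor $h^2/2\le h^2$, which proves the claim (even with constant $1/\sqrt2$).

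\emph{Main obstacle.} The only delicate point is the sharp constant $12$ in the flux bound. It requires using both zero traces of $u_1$ through the quadratic weight rather than a one-sided estimate. Everything else is routine.
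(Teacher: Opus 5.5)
Your proposal is correct and follows the paper's proof essentially line by line. It uses the same Leibniz and incompressibility computation for $\partial_t h=-\partial_x Q$, the same linear weight $h/2-y$ (your $w'$) with Cauchy--Schwarz giving the factor $h^3/12$, and the same vertical fundamental-theorem-of-calculus arguments for the trace bound on $\partial_t h$ and the column Poincar\'e inequality, where you even obtain the sharper factor $1/\sqrt{2}$.
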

	
	\begin{proof}
		Differentiating $Q$ and using $u_1(x,h)=0$, we find
		\begin{align*}
			\partial_x Q =\int_0^h \partial_x u_1 \dy=-\int_0^h \partial_y u_2 \dy=- \partial_t h.
		\end{align*}
		The two zero endpoint values of $u_1$ imply
		\begin{align*}
			Q=-\int_0^h\left(y-\frac h2\right) \partial_y u_1 \dy,
			\qquad |Q|^2\leq\frac{h^3}{12}\int_0^h| \partial_y u_1 |^2\dy.
		\end{align*}
		Also $\partial_t h =\int_0^h \partial_y u_2 \dy$, so $(\partial_t h)^2\leq h\int_0^h| \partial_y u_2 |^2\dy$. Integration in $x$ proves \eqref{eq:flux-estimates}. Finally, \eqref{eq:column-poincare} follows from the bottom condition $\bu(x,0)=0$ and the fundamental theorem of calculus.
	\end{proof}
	\begin{remark}
		The powers $h^{-3}$ and $h^{-1}$ in \eqref{eq:flux-estimates} follow from the squared $L^2(0,h)$ norms of $y-h/2$ and $1$, respectively. Thus fluid dissipation controls the flux and the beam velocity with the height weights needed to close the coupled distance identity. Although these estimates are derived while $h>0$, their constants are independent of $\min h$. The weighted trace estimate underlying the last inequality in \eqref{eq:flux-estimates}, together with a weighted column estimate, also appears in \cite[Lemma~A.3]{Trifunovic2026}.
	\end{remark}
	
	\subsection{The regularized multiplier}
	
	For each positive height $h$, let $F$ be the periodic solution of
	\begin{align}
		F-\ell^2\partial_x^2F=\partial_xh, \label{eq:elliptic}
	\end{align}
	where $\ell=\varepsilon h^{2/3}$ is the local smoothing length for $\partial_xh$. The exponent $2/3$ is consistent with both the slope estimate, which bounds $\partial_x\ell$ uniformly, and the height-weighted time-derivative estimates below, which produce a loss of at most $(\min h)^{-1/3}$ in the structural inertial term. Lemma~\ref{lem:geometry} bounds this loss by $C(1+\cS)$.
	After division by $\ell^2$, equation~\eqref{eq:elliptic} reads $- \partial_x^2 F +\ell^{-2}F=\ell^{-2} \partial_x h$. For a fixed positive $h$, the bilinear form
	\begin{align*}
		(F,\varphi)\mapsto\int_0^L \partial_x F \partial_x \varphi \dx+\int_0^L \ell^{-2}F\varphi\dx
	\end{align*}
	is coercive on $H_\sharp^1(0,L)$. The Lax--Milgram theorem and one-dimensional elliptic regularity therefore ensure a unique solution. This construction may depend on the positive minimum of the fixed height, while the following estimates have constants uniform as that minimum decreases. 
	
	\begin{lemma}[Pointwise bounds]
		The solution of \eqref{eq:elliptic} satisfies
		\begin{align}
			|F|\leq Ch^{1/3},\qquad
			| \partial_x F |\leq Ch^{-1/3},\qquad
			| \partial_x^2 F |\leq Ch^{-1},\label{eq:pointwise}
		\end{align}
		where $C > 0$ does not depend on $\min h$.
	\end{lemma}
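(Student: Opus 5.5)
We will obtain the bound for $F$ from a Green's function representation with exponential decay on the intrinsic scale $\ell$. The bound for $\partial_x^2F$ then follows directly from the equation, and the bound for $\partial_xF$ from interpolation on intervals of length $\ell(x)$. The only inputs are \eqref{eq:length-lipschitz}, namely $|\partial_xh|\le Ch^{1/3}=C\varepsilon^{-1/2}\ell^{1/2}$ and $\|\partial_x\ell\|_{L^\infty}\le 1/16$, together with the maximum principle for the uniformly elliptic (for fixed positive $h$) operator $\mathcal L=1-\ell^2\partial_x^2$.

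\emph{Step 1: barrier for the Green's function.} Lift to the line $\mathbb R$ by periodic extension and introduce the intrinsic coordinate $\phi(x)=\int_0^x\ell(s)^{-1}\,\mathrm ds$. For $c=1/2$ and fixed $y$, set $b_y(x)=e^{-c|\phi(x)-\phi(y)|}$. Away from $x=y$ we have
\begin{align*}
\ell^2\partial_x^2 b_y=(c^2\pm c\,\partial_x\ell)\,b_y\le\left(\tfrac14+\tfrac1{32}\right)b_y .
\end{align*}
At $x=y$ the function $b_y$ has a concave corner, so $\mathcal L b_y\ge\frac12 b_y$ holds in the sense of distributions, up to a positive multiple of the Dirac mass at $y$. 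Comparing the Green's function $G(\cdot,y)$ of $\mathcal L$ on $\mathbb R$ with a multiple of $b_y$ by the maximum principle yields
\begin{align*}
0\le G(x,y)\le \frac{C}{\ell(y)}\,e^{-c|\phi(x)-\phi(y)|}.
\end{align*}
The constant comes from matching the jump $\ell(y)^{-2}$ of $\partial_xG$ at $x=y$. The periodic solution is $F(x)=\int_{\mathbb R}G(x,y)\partial_xh(y)\dy$; the exponential decay makes this integral converge and shows that it coincides with the Lax--Milgram solution.

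\emph{Step 2: the bound $|F|\le Ch^{1/3}$.} In the variable $s=\phi(y)$ we have $\dy=\ell\,\mathrm ds$ and $\frac{\mathrm d}{\mathrm ds}\log\ell=\partial_x\ell$. Hence $\log\ell$ is $\frac1{16}$-Lipschitz in $s$, which gives
\begin{align*}
\ell(y)\le\ell(x)\,e^{|\phi(x)-\phi(y)|/16}.
\end{align*}
Using $|\partial_xh(y)|\le C\ell(y)^{1/2}$, we estimate
\begin{align*}
|F(x)|\le C\int_{\mathbb R}e^{-c|\phi(x)-s|}\ell(y)^{1/2}\,\mathrm ds\le C\ell(x)^{1/2}\int_{\mathbb R}e^{-(1/2-1/32)|\sigma|}\,\mathrm d\sigma\le Ch(x)^{1/3}.
\end{align*}
This step is the main point. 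A naive barrier of the form $Kh^{1/3}$ would require pointwise control of $\partial_x^2h$, which is not available, since $\partial_x^2h$ is only bounded in $L^2$. The intrinsic coordinate $\phi$ avoids this, because it uses only the Lipschitz bound on $\ell$.

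\emph{Step 3: the derivative bounds.} From \eqref{eq:elliptic} we get $\partial_x^2F=\ell^{-2}(F-\partial_xh)$. Step 2 and \eqref{eq:length-lipschitz} then give
\begin{align*}
|\partial_x^2F|\le C\varepsilon^{-2}h^{-4/3}h^{1/3}=Ch^{-1}.
\end{align*}
For $\partial_xF$, fix $x$ and let $I=[x,x+\ell(x)]$. Since $|\partial_x\ell|\le1/16$, $\ell$ varies by at most a factor $17/16$ on $I$; as $h=(\ell/\varepsilon)^{3/2}$, the height is comparable to $h(x)$ on $I$ with absolute constants. By the mean value theorem there is $\xi\in I$ with $|\partial_xF(\xi)|\le 2\sup_I|F|/\ell(x)$. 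Integrating $\partial_x^2F$ from $\xi$ to $x$ then yields
\begin{align*}
|\partial_xF(x)|\le C\frac{h(x)^{1/3}}{\ell(x)}+C\ell(x)h(x)^{-1}\le Ch(x)^{-1/3}.
\end{align*}
All constants depend only on $\cE(0)$ and $\alpha$ through $\varepsilon$ and \eqref{eq:length-lipschitz}, and not on $\min h$.
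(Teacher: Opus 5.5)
Your proof is correct, but it takes a genuinely different route from the paper's.

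\textbf{The paper's route.} The paper constructs no Green's function. It works with the two first-order combinations $(F\pm\ell\partial_xF)h^{-1/3}$. Using $\ell^2\partial_x^2F=F-\partial_xh$ and $\ell\partial_x(h^{-1/3})=-\tfrac12(\partial_x\ell)h^{-1/3}$, these satisfy
\[
\ell\partial_x\Bigl(\frac{F\pm\ell\partial_xF}{h^{1/3}}\Bigr)=\pm\frac{F\pm\ell\partial_xF}{h^{1/3}}-\frac{\partial_x\ell}{2}\,\frac{F\mp\ell\partial_xF}{h^{1/3}}\mp\frac{\partial_xh}{h^{1/3}} .
\]
The paper evaluates this identity at a point where the larger of the two absolute maxima is attained. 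There the coupling term is absorbed because $|\partial_x\ell|\le 1/16$. This gives $|F|+\ell|\partial_xF|\le Ch^{1/3}$ in one stroke, and $\partial_x^2F$ is then read off from the equation exactly as in your Step 3.

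\textbf{What each approach buys.} The paper's argument is shorter and more elementary. The bound on $\partial_xF$ comes for free, with no interpolation on $[x,x+\ell(x)]$. It also avoids three steps your argument needs: existence and decay of the Green's function on $\mathbb R$, the distributional comparison at the corner of $b_y$, and identifying the line solution with the periodic Lax--Milgram solution (uniqueness of bounded solutions). The price is an algebraic identity tailored to the weight $h^{1/3}$.

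Your argument instead proves a reusable statement about $(1-\ell^2\partial_x^2)^{-1}$: its kernel is bounded by $\ell(y)^{-1}e^{-c|\phi(x)-\phi(y)|}$, so it is localized on the scale $\ell$ in the intrinsic coordinate $\phi$. The fact that $\log\ell$ is $\tfrac1{16}$-Lipschitz in $\phi$ then transfers power weights from $y$ to $x$. This makes transparent why $\ell$ is the natural smoothing length, and the kernel bound applies unchanged to other right-hand sides with power-type bounds.

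Both proofs use exactly the same inputs, namely the slope bound $|\partial_xh|\le Ch^{1/3}$ and $\|\partial_x\ell\|_{L^\infty}\le 1/16$. Neither requires pointwise control of $\partial_x^2h$.
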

	\begin{proof}
		Since $h\in H_\sharp^2(0,L)$ is positive, \eqref{eq:elliptic} implies $F\in H_\sharp^3(0,L)\hookrightarrow C^2([0,L])$. We estimate $F$ and $\ell\partial_xF$ simultaneously, using
		\begin{align*}
			|F|+\ell|\partial_xF|
			=\max\bigl\{|F+\ell\partial_xF|,\,
			|F-\ell\partial_xF|\bigr\}.
		\end{align*}
		The combinations on the right satisfy first-order identities involving only $\partial_xh$ and $\partial_x\ell$. Indeed, since
		\begin{align*}
			\ell^2\partial_x^2F=F-\partial_xh,
			\qquad
			\ell\partial_x(h^{-1/3})
			=-\frac12(\partial_x\ell)h^{-1/3},
		\end{align*}
		direct differentiation yields
		\begin{align*}
			\ell\partial_x\left(
			\frac{F\pm\ell\partial_xF}{h^{1/3}}\right)
			&=
			\pm\frac{F\pm\ell\partial_xF}{h^{1/3}}
			-\frac{\partial_x\ell}{2}
			\frac{F\mp\ell\partial_xF}{h^{1/3}}
			\mp\frac{\partial_xh}{h^{1/3}}.
		\end{align*}
		
		Choose the sign for which the absolute maximum of these two periodic functions is larger, and take a point where this maximum is attained. The derivative of the selected function vanishes there, while the absolute value of the other function is no larger. Taking absolute values in the preceding identity and using \eqref{eq:length-lipschitz}, we obtain
		\begin{align*}
			\max_{\pm}
			\left\|\frac{F\pm\ell\partial_xF}{h^{1/3}}\right\|
			_{L_\sharp^\infty(0,L)}
			&\leq \frac1{32}\max_{\pm}
			\left\|\frac{F\pm\ell\partial_xF}{h^{1/3}}\right\|
			_{L_\sharp^\infty(0,L)}
			+\left\|\frac{\partial_xh}{h^{1/3}}\right\|
			_{L_\sharp^\infty(0,L)}.
		\end{align*}
		Absorbing the first term and recalling the slope bound in \eqref{eq:length-lipschitz}, we conclude that, at every point,
		\begin{align*}
			\frac{|F|+\ell|\partial_xF|}{h^{1/3}}
			&\leq \frac{32}{31}
			\norm{\frac{\partial_xh}{h^{1/3}}}
			_{L_\sharp^\infty(0,L)}
			\leq C.
		\end{align*}
		Consequently, since $\ell=\varepsilon h^{2/3}$,
		\begin{align*}
			|F|\leq Ch^{1/3},
			\qquad
			|\partial_xF|\leq C\varepsilon^{-1}h^{-1/3}.
		\end{align*}
		Finally, the equation itself implies
		\begin{align*}
			|\partial_x^2F|
			\leq \ell^{-2}\bigl(|F|+|\partial_xh|\bigr)
			\leq C\varepsilon^{-2}h^{-1}.
		\end{align*}
		This completes the proof.
	\end{proof}
	
	The next identity provides the positive term that will absorb the defect in the viscous distance calculation.
	\begin{lemma}\label{lem:elastic}
		For a positive $h\in H_\sharp^3(0,L)$, it holds
		\begin{align}
			\beta& \int_0^L \partial_x^2 h \partial_x F \dx+\alpha\int_0^L \partial_x^3 h \partial_x^2 F \dx \nonumber\\
			& \geq\frac{15\alpha}{16}
			\bigl(\norm{\partial_x^2 F}_{L_\sharp^2(0,L)}^2+\norm{\ell \partial_x^3 F}_{L_\sharp^2(0,L)}^2\bigr)
			+\beta\bigl(\norm{\partial_x F}_{L_\sharp^2(0,L)}^2+\norm{\ell \partial_x^2 F}_{L_\sharp^2(0,L)}^2\bigr)\geq0.
			\label{eq:elastic-coercivity}
		\end{align}
	\end{lemma}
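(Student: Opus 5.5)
The plan is to eliminate $h$ from the left-hand side of \eqref{eq:elastic-coercivity} by means of the defining equation \eqref{eq:elliptic}, written as $\partial_xh=F-\ell^2\partial_x^2F$. After that substitution, every term becomes a quadratic form in $F$. The only non-sign-definite piece comes from the derivative falling on $\ell$, and it is absorbed using $\norm{\partial_x\ell}_{L_\sharp^\infty(0,L)}\leq\frac1{16}$ from \eqref{eq:length-lipschitz}.

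First I justify the regularity. Since $h\in H_\sharp^3(0,L)$ is positive, $\ell=\varepsilon h^{2/3}\in H_\sharp^3(0,L)$ is bounded below by a positive constant. Hence $\ell^{-2}\in H_\sharp^3(0,L)$, and one-dimensional elliptic regularity for \eqref{eq:elliptic}, with right-hand side $\partial_xh\in H_\sharp^2(0,L)$, gives $F\in H_\sharp^4(0,L)$. In particular $\partial_x^3F\in L_\sharp^2(0,L)$, and all periodic integrations by parts below are legitimate; no boundary terms appear by periodicity.

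\emph{Tension term.} Differentiating the equation gives $\partial_x^2h=\partial_xF-\partial_x(\ell^2\partial_x^2F)$. Integrating the second piece by parts, I obtain the exact identity
\begin{align*}
\beta\int_0^L\partial_x^2h\,\partial_xF\dx=\beta\norm{\partial_xF}_{L_\sharp^2(0,L)}^2+\beta\norm{\ell\partial_x^2F}_{L_\sharp^2(0,L)}^2 .
\end{align*}

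\emph{Bending term.} I first integrate by parts once, $\int_0^L\partial_x^3h\,\partial_x^2F\dx=-\int_0^L\partial_x^2h\,\partial_x^3F\dx$, and then insert
\begin{align*}
\partial_x^2h=\partial_xF-2\ell\,\partial_x\ell\,\partial_x^2F-\ell^2\partial_x^3F .
\end{align*}
Using $-\int_0^L\partial_xF\,\partial_x^3F\dx=\norm{\partial_x^2F}_{L_\sharp^2(0,L)}^2$, this yields
\begin{align*}
\int_0^L\partial_x^3h\,\partial_x^2F\dx=\norm{\partial_x^2F}_{L_\sharp^2(0,L)}^2+\norm{\ell\partial_x^3F}_{L_\sharp^2(0,L)}^2+2\int_0^L\partial_x\ell\,\partial_x^2F\,\ell\partial_x^3F\dx .
\end{align*}

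\emph{Absorbing the remainder.} This step is the only real obstacle, and it is exactly where the choice of $\varepsilon$ enters. By \eqref{eq:length-lipschitz} and Young's inequality, the last integral is bounded in absolute value by
\begin{align*}
\frac18\norm{\partial_x^2F}_{L_\sharp^2(0,L)}\norm{\ell\partial_x^3F}_{L_\sharp^2(0,L)}\leq\frac1{16}\bigl(\norm{\partial_x^2F}_{L_\sharp^2(0,L)}^2+\norm{\ell\partial_x^3F}_{L_\sharp^2(0,L)}^2\bigr).
\end{align*}
Multiplying the bending identity by $\alpha$ and adding the tension identity then gives \eqref{eq:elastic-coercivity} with the constant $\frac{15\alpha}{16}$. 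Nonnegativity of the right-hand side is immediate since $\alpha>0$ and $\beta\geq0$.

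The constants involved are independent of $\min h$, because only $\norm{\partial_x\ell}_{L_\sharp^\infty(0,L)}$ was used.
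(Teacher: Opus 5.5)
Your proposal is correct and follows essentially the same route as the paper: substitute $\partial_xh=F-\ell^2\partial_x^2F$, integrate by parts periodically to get the two exact identities, and absorb the cross term $\int_0^L\partial_x(\ell^2)\,\partial_x^2F\,\partial_x^3F\dx$ using $\norm{\partial_x\ell}_{L_\sharp^\infty(0,L)}\leq\frac1{16}$ and Young's inequality. The only differences are cosmetic: you move one derivative off $\partial_x^3h$ before substituting, and you spell out the bootstrap to $F\in H_\sharp^4(0,L)$ (via $\ell^{-2}\in H_\sharp^3(0,L)$), which the paper states in one line.
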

	
	\begin{proof}
		By \eqref{eq:elliptic}, we have 
		\begin{align*}
			\partial_x^2 h = \partial_x F -\partial_x(\ell^2 \partial_x^2 F),\qquad
			\partial_x^3 h = \partial_x^2 F -\partial_x^2(\ell^2 \partial_x^2 F).
		\end{align*}
		Then $F\in H_\sharp^4(0,L)$ because $h\in H_\sharp^3(0,L)$. Integration by parts yields
		\begin{align*}
			\int_0^L \partial_x^2 h \partial_x F \dx
			&=\norm{\partial_x F}_{L_\sharp^2(0,L)}^2+\norm{\ell \partial_x^2 F}_{L_\sharp^2(0,L)}^2,\\
			\int_0^L \partial_x^3 h \partial_x^2 F \dx
			&=\norm{\partial_x^2 F}_{L_\sharp^2(0,L)}^2+\norm{\ell \partial_x^3 F}_{L_\sharp^2(0,L)}^2
			+\int_0^L \partial_x(\ell^2) \partial_x^2 F \partial_x^3 F.
		\end{align*}
		Moreover, 
		\begin{align*}
			\left|\int_0^L \partial_x(\ell^2) \partial_x^2 F \partial_x^3 F \dx\right|
			\leq\frac18\norm{\partial_x^2 F}_{L_\sharp^2(0,L)}\norm{\ell \partial_x^3 F}_{L_\sharp^2(0,L)}
			\leq\frac1{16}\bigl(\norm{\partial_x^2 F}_{L_\sharp^2(0,L)}^2+\norm{\ell \partial_x^3 F}_{L_\sharp^2(0,L)}^2\bigr).
		\end{align*}
		Adding them together with $\alpha > 0,\beta \geq 0$ proves \eqref{eq:elastic-coercivity}.
	\end{proof}
	
	Now we turn to the temporal regularity. Taking the time derivative of \eqref{eq:elliptic} leads to
	\begin{align}
		\partial_t F -\ell^2 \partial_x^2\partial_t F = \partial_x\partial_t h +\frac43\frac{F-\partial_x h}{h}\partial_t h,\label{eq:time-resolvent}
	\end{align}
	where $\left|\frac43\frac{F-\partial_x h}{h}\right|\leq Ch^{-2/3}$.
	\begin{lemma}
		\label{lem:time}
		For a strong solution, it follows
		\begin{gather}
			\int_0^L h(\partial_t F)^2\dx\leq C\cD,\qquad
			\int_0^L h(\partial_x\partial_t F)^2\dx\leq C (\min h)^{-2/3}\cD,\label{eq:time-dissipation}\\
			\left|\int_0^L \partial_x\partial_t h \partial_t F \dx\right|\leq C(1+\cS)\cD.
		\end{gather}
	\end{lemma}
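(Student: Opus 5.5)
The plan is to treat \eqref{eq:time-resolvent} as an elliptic equation for $G=\partial_tF$ with right-hand side $\partial_x\partial_th+R\,\partial_th$, where $R=\frac43(F-\partial_xh)/h$ satisfies $|R|\leq Ch^{-2/3}$ by \eqref{eq:pointwise} and \eqref{eq:length-lipschitz}. I will run weighted energy estimates for this equation, rather than pointwise ones, because $\partial_th$ is controlled only in the weighted norm $\int_0^L(\partial_th)^2/h\dx\leq\cD$ from Lemma~\ref{lem:flux}. The term $\partial_x\partial_th$ is never estimated directly. It is always integrated by parts onto the test function, so that only $\partial_th$ appears.

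\textbf{Step 1.} To get the first bound, test the equation with $hG$. This gives
\begin{align*}
\int_0^L hG^2\dx+\int_0^L\ell^2h(\partial_xG)^2\dx+\int_0^L\partial_x(\ell^2h)G\,\partial_xG\dx
=-\int_0^L\partial_th\,\partial_x(hG)\dx+\int_0^L R\,h\,\partial_th\,G\dx .
\end{align*}
Note that $\ell^2h=\varepsilon^2h^{7/3}$ and $|\partial_x(\ell^2h)|\leq C\varepsilon\,\ell h^{1/2}\cdot h^{1/2}$, using $|\partial_x h|\leq Ch^{1/3}$. The commutator is therefore at most $C\varepsilon\norm{h^{1/2}G}\norm{\ell h^{1/2}\partial_xG}$, and it is absorbed since $\varepsilon$ is small.

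On the right, I split $\partial_x(hG)=\partial_xh\,G+h\,\partial_xG$. The piece $\int\partial_th\,\partial_xh\,G$ is bounded by $\norm{h^{-1/2}\partial_th}\,\norm{h^{1/2}G}\,\sup|\partial_xh|$, and the $R$ term is handled the same way since $|Rh|\leq Ch^{1/3}$. The remaining piece $\int h\,\partial_th\,\partial_xG$ equals $\int (h^{-1/2}\partial_th)(h^{3/2}\partial_xG)$.

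This last piece is the main obstacle. Its weight $h^{3/2}$ exceeds the weight $\ell h^{1/2}\sim h^{7/6}$ of the coercive term only by $h^{1/3}/\varepsilon$, which is bounded by $C/\varepsilon$. With the paper's fixed $\varepsilon$ it is a constant, so Young's inequality closes the estimate with a constant $C(\varepsilon)$. The result is $\int hG^2+\int\ell^2h(\partial_xG)^2\leq C\cD$.

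\textbf{Step 2.} For the gradient bound, the crude consequence of Step 1 is $\int h(\partial_xG)^2\leq\varepsilon^{-2}(\min h)^{-4/3}\cdot C\cD$, which is too weak. Instead I test with $-\partial_x(h\,\partial_xG)$, or equivalently differentiate the equation and multiply by $h\,\partial_xG$. The left side then produces $\int h(\partial_xG)^2+\int\ell^2h(\partial_x^2G)^2$ plus commutators handled as in Step 1.

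On the right, after integrating by parts twice, there is a term $\int\partial_th\,\partial_x^2(h\partial_xG)$. I bound it using the coercive $\ell^2h(\partial_x^2G)^2$, and the weights lose exactly a factor $\ell^{-1}\sim h^{-2/3}$. Bounding that factor by $(\min h)^{-2/3}$ gives $\int h(\partial_xG)^2\leq C(\min h)^{-2/3}\cD$. If the integrations by parts prove delicate, a fallback is to write $\partial_xG=\ell^{-2}\!\int(G-\text{rhs})$ and work with the Green's function of the degenerate resolvent. I expect the weight bookkeeping here to be the hardest part.

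\textbf{Step 3.} For the last inequality, integrate by parts to get $\int\partial_x\partial_th\,G=-\int\partial_th\,\partial_xG$. By Cauchy--Schwarz this is at most
\begin{align*}
\Big(\int_0^L\frac{(\partial_th)^2}{h}\dx\Big)^{1/2}\Big(\int_0^L h(\partial_xG)^2\dx\Big)^{1/2}\leq C(\min h)^{-1/3}\cD .
\end{align*}
Finally, \eqref{eq:min-entropy} gives $(\min h)^{-1/3}\leq C(1+\cS)$, which completes the proof.
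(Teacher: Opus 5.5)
Your Steps 1 and 3 are correct and match the paper's argument. Step 1 tests \eqref{eq:time-resolvent} by $h\,\partial_tF$, absorbs the commutator via $\partial_x(\ell^2h)=\frac72\ell h\,\partial_x\ell$, and uses Young's inequality to produce $\int_0^L h^{-1/3}(\partial_th)^2\dx$. Step 3 integrates by parts and applies Cauchy--Schwarz together with \eqref{eq:min-entropy}.

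The gap is in Step 2. Write $G=\partial_tF$. Testing with $-\partial_x(h\,\partial_xG)$ asks for one more derivative than the fluid dissipation supplies.

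The coercive quantity this test generates, $\int_0^L\ell^2h(\partial_x^2G)^2\dx$, is not controlled by $\cD$ at all. By the equation itself, $\ell^2\partial_x^2G=G-\partial_x\partial_th-R\,\partial_th$. So controlling this quantity amounts to controlling $\int_0^L h^{-1/3}(\partial_x\partial_th)^2\dx$. That is exactly the spatial derivative of the beam velocity which the paper identifies as unavailable from energy and dissipation.

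The right-hand side cannot be closed either. After two integrations by parts, $\int_0^L\partial_th\,\partial_x^2(h\,\partial_xG)\dx$ contains $h\,\partial_x^3G$, which none of your coercive terms controls. It also contains $\partial_x^2h\,\partial_xG$, with $\partial_x^2h$ only in $L^2$. After just one integration by parts, the term contains $\partial_x\partial_th$ instead. So the claim that ``the weights lose exactly a factor $\ell^{-1}$'' has no estimate behind it, and the Green's function fallback is not carried out.

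The fix is to keep a zeroth-order test and change only its weight: test \eqref{eq:time-resolvent} by $(h/\ell^2)\,G$.
\begin{itemize}
\item Since $\ell^2\cdot h/\ell^2=h$, the second-order term yields $\int_0^L h(\partial_xG)^2\dx$ plus the commutator $\int_0^L\partial_xh\,G\,\partial_xG\dx$. The commutator is absorbed using $\ell\,\partial_xh/h=\frac32\partial_x\ell$ and $\norm{\partial_x\ell}_{L_\sharp^\infty(0,L)}\leq\frac1{16}$.
\item The zeroth-order term yields $\int_0^L (h/\ell^2)G^2\dx$.
\item On the right, $\partial_x\partial_th$ is moved onto $(h/\ell^2)G$, which involves only $G$ and $\partial_xG$. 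Young's inequality gives $\frac{h}{\ell^2}|\partial_th||\partial_xG|\leq\frac14h(\partial_xG)^2+C\,\frac{h}{\ell^4}(\partial_th)^2$, where $h/\ell^4=\varepsilon^{-4}h^{-5/3}$.
\item The remaining term $\ell^{-2}(\frac43F-\partial_xh)\partial_th\,G$ has coefficient bounded by $Ch^{1/3}/\ell^2$. It produces the same weight $h^{-5/3}$ after Young's inequality against $\int_0^L (h/\ell^2)G^2\dx$.
\end{itemize}
Then $\int_0^L h^{-5/3}(\partial_th)^2\dx\leq(\min h)^{-2/3}\cD$ by \eqref{eq:flux-estimates}. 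This is the second bound in \eqref{eq:time-dissipation}, and your Step 3 then goes through unchanged.
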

	
	\begin{proof}
		Testing \eqref{eq:time-resolvent} by $h \partial_t F$ and integrating by parts, we obtain the exact identity
		\begin{align}
			&\int_0^L h(\partial_t F)^2\dx+\int_0^L \ell^2 h(\partial_x\partial_t F)^2\dx
			\nonumber\\
			&\quad=-\int_0^L\partial_x(\ell^2 h)\,\partial_t F\,\partial_x\partial_t F\dx-\int_0^L h\,\partial_t h\,\partial_x\partial_t F\dx
			+\int_0^L\left(\frac43F-\frac73\partial_x h\right)\partial_t h\,\partial_t F\dx.
			\label{eq:first-weighted-identity}
		\end{align}
		Since $\partial_x(\ell^2 h) = \frac72 \partial_x \ell h \ell$, we have 
		\begin{align*}
			\left|\int_0^L\partial_x(\ell^2 h) \partial_t F \partial_x\partial_t F \dx\right|
			\leq\frac7{64}\left(\int_0^L h(\partial_t F)^2\dx+\int_0^L \ell^2 h(\partial_x\partial_t F)^2\dx\right),
		\end{align*}
		where we used $\|\partial_x\ell\|_{L_\sharp^\infty(0,L)} \leq 1/16$.
		Moreover, by Young's inequality, $\ell = \eps h^{2/3}$, \eqref{eq:slope-cubic}, and \eqref{eq:pointwise} one records
		\begin{align*}
			\left|\int_0^L h \partial_t h \partial_x\partial_t F \dx\right|
			&\leq\frac14\int_0^L \ell^2 h(\partial_x\partial_t F)^2\dx+C\int_0^L h^{-1/3}(\partial_t h)^2\dx,\\
			\left|\int_0^L\left(\frac43F-\frac73 \partial_x h \right) \partial_t h \partial_t F \dx\right|
			&\leq\frac14\int_0^L h(\partial_t F)^2\dx+C\int_0^L h^{-1/3}(\partial_t h)^2\dx.
		\end{align*}
		Therefore, by \eqref{eq:energy-geometry} and \eqref{eq:flux-estimates}, we infer from \eqref{eq:first-weighted-identity} that
		\begin{align*}
			\int_0^L h(\partial_t F)^2\dx+\int_0^L \ell^2h(\partial_x\partial_t F)^2\dx
			\leq C\int_0^L h^{-1/3}(\partial_t h)^2\dx
			\leq C\int_0^L h^{-1}(\partial_t h)^2\dx
			\leq C\cD,
		\end{align*}
		which proves the first inequality of \eqref{eq:time-dissipation}.
		
		Now testing instead by $(h/\ell^2) \partial_t F$, one has
		\begin{align}
			&\int_0^L\frac{h}{\ell^2}(\partial_t F)^2\dx+\int_0^L h(\partial_x\partial_t F)^2\dx
			\nonumber\\
			&\qquad=
			-\int_0^L\partial_x h\,\partial_t F\,\partial_x\partial_t F\dx
			-\int_0^L\frac{h}{\ell^2}\,\partial_t h\,\partial_x\partial_t F\dx
			+\int_0^L\frac{\frac43F-\partial_x h}{\ell^2}\,\partial_t h\,\partial_t F\dx.
			\label{eq:second-weighted-identity}
		\end{align}
		Here $h/\ell^2=\varepsilon^{-2}h^{-1/3}$ and $\ell \partial_x h /h=\frac32 \partial_x \ell$ and $\|\partial_x\ell\|_{L^\infty}\leq 1/16$. Thus the first term on the right-hand side is controlled by
		\begin{align*}
			\left|\int_0^L
			(\partial_x h)(\partial_t F)(\partial_x\partial_t F)\dx\right|
			&\leq
			\frac32\|\partial_x\ell\|_{L^\infty}
			\int_0^L\left|\frac{\sqrt h}{\ell}\,\partial_t F\right|\left|\sqrt h\,\partial_x\partial_t F\right|\dx
			\\
			&\quad\leq
			\frac{3}{64}
			\int_0^L\left(\frac{h}{\ell^2}|\partial_t F|^2+h|\partial_x\partial_t F|^2\right)\dx.
		\end{align*}
		Furthermore, by Young's inequality, $\ell = \eps h^{2/3}$, \eqref{eq:slope-cubic}, and \eqref{eq:pointwise}, it holds
		\begin{align*}
			\left|\int_0^L\frac{h}{\ell^2} \partial_t h \partial_x\partial_t F \dx\right|
			&\leq\frac14\int_0^L h(\partial_x\partial_t F)^2\dx+C\int_0^L h^{-5/3}(\partial_t h)^2\dx,\\
			\left|\int_0^L\frac{\frac43F- \partial_x h}{\ell^2}\, \partial_t h \partial_t F \dx\right|
			&\leq\frac14\int_0^L\frac{h}{\ell^2}(\partial_t F)^2\dx+C\int_0^L h^{-5/3}(\partial_t h)^2\dx,
		\end{align*}
		Combining these bounds with the preceding estimates and \eqref{eq:second-weighted-identity}, we obtain
		\begin{align*}
			\int_0^L\frac {h}{\ell^2}(\partial_t F)^2\dx+\int_0^L h(\partial_x\partial_t F)^2\dx
			\leq C\int_0^L h^{-5/3}(\partial_t h)^2\dx
			\leq C (\min h)^{-2/3}\cD.
		\end{align*}
		Here we have used \eqref{eq:flux-estimates}. This shows the second inequality in \eqref{eq:time-dissipation}. 
		
		Finally, integrating by parts with Lemma~\ref{lem:geometry} leads to
		\begin{align*}
			\left|\int_0^L \partial_x\partial_t h \partial_t F \dx\right|
			=\left|\int_0^L \partial_t h \partial_x\partial_t F \dx\right|
			&\leq\left(\int_0^L\frac{(\partial_t h)^2}{h}\dx\right)^{1/2}
			\left(\int_0^L h(\partial_x\partial_t F)^2\dx\right)^{1/2}\\
			&\leq C (\min h)^{-1/3}\cD\leq C(1+\cS)\cD.\qedhere
		\end{align*}
	\end{proof}
	
	\subsection{The distance estimate}
	Now following the stream function testing procedure as in \cite{GrandmontHillairet2016}, we lift the beam test $\partial_x F$ into the fluid by the stream function
	\begin{align}
		\psi(t,x,y)=F(t,x)\chi\left(\frac{y}{h(t,x)}\right),\qquad
		\chi(z)=3z^2-2z^3,\qquad
		\bw=(- \partial_y \psi, \partial_x \psi).\label{eq:stream-function}
	\end{align}
	Because $\chi(0)=\chi'(0)=\chi'(1)=0$ and $\chi(1)=1$, we infer
	\begin{align}
		\Div\bw=0,\qquad
		\bw(x,0)=0,\qquad
		\bw(x,h(x))= \partial_x F (x)\be_2.\label{eq:test-traces}
	\end{align}
	The mean of $\partial_x F$ is zero, so this is an admissible coupled test. The lifting follows the distance construction of \cite[Section~4.2]{GrandmontHillairet2016}, while the new choice of $F$ changes the viscous identity by a term that we compute explicitly.
	
	\begin{lemma}[Regularity of test functions]\label{lem:distance-admissibility}
		Let $(h,\bu,p)$ satisfy Definition~\ref{def:strong} on $[0,T]$. The multiplier given by \eqref{eq:elliptic} satisfies
		\begin{align}
			F\in C([0,T];H_\sharp^4(0,L))\cap C^1([0,T];H_\sharp^2(0,L)).\label{eq:multiplier-lower-class}
		\end{align}
		The field in \eqref{eq:stream-function} satisfies
		\begin{align}
			\bw\in C([0,T];H^2(\Omega_h)),\qquad
			\partial_t\bw\in C([0,T];L^2(\Omega_h)).\label{eq:test-lower-class}
		\end{align} 
	\end{lemma}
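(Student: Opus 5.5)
We treat the multiplier problem \eqref{eq:elliptic} as a smooth nonlinear map of $h$ on a compact positive-gap interval. Since $h$ is continuous and positive on $[0,T]\times[0,L]$, we have $m:=\min_{[0,T]\times[0,L]}h>0$. By Definition~\ref{def:strong}, $h\in C([0,T];H_\sharp^3)\cap C^1([0,T];H_\sharp^1)$. The coefficient $\ell^2=\varepsilon^2h^{4/3}$ is then bounded below by $\varepsilon^2m^{4/3}$. Because $s\mapsto s^{4/3}$ is smooth on $[m,\infty)$ and $H_\sharp^3$ and $H_\sharp^1$ are algebras in one dimension, we get $\ell^2\in C([0,T];H_\sharp^3)\cap C^1([0,T];H_\sharp^1)$. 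No uniformity in $m$ is needed here; only qualitative regularity is claimed.

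\emph{Step 1: spatial regularity of $F$.} For fixed $t$, write $\partial_x^2F=\ell^{-2}(F-\partial_xh)$ and bootstrap. Lax--Milgram gives $F\in H_\sharp^1$. The right-hand side then lies in $H_\sharp^1$, hence $F\in H_\sharp^3$. Iterating once more with $\ell^{-2}\in H_\sharp^3$ and $\partial_xh\in H_\sharp^2$ yields $F\in H_\sharp^4$.

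For continuity in time, take the difference of the equations at $t$ and $s$:
\[
(F(t)-F(s))-\ell^2(t)\,\partial_x^2\bigl(F(t)-F(s)\bigr)=\partial_x\bigl(h(t)-h(s)\bigr)+\bigl(\ell^2(t)-\ell^2(s)\bigr)\partial_x^2F(s).
\]
Apply the same bootstrap, with constants depending on $m$ and on $\sup_t\norm{h}_{H^3}$. This gives $F\in C([0,T];H_\sharp^4)$.

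\emph{Step 2: the time derivative.} The candidate for $\partial_tF$ is the solution $G$ of \eqref{eq:time-resolvent}:
\[
G-\ell^2\partial_x^2G=\partial_x\partial_th+\tfrac43\,h^{-1}(F-\partial_xh)\,\partial_th.
\]
Its right-hand side lies in $C([0,T];L^2_\sharp)$, because $\partial_th\in C([0,T];H_\sharp^1)$ and $h^{-1}(F-\partial_xh)\in C([0,T];H_\sharp^2)$. Elliptic regularity then gives $G\in C([0,T];H_\sharp^2)$.

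To identify $G=\partial_tF$, take difference quotients in $t$ of \eqref{eq:elliptic}. The quotient $D_\tau F$ solves the same equation with right-hand side
\[
\partial_xD_\tau h+(D_\tau\ell^2)\,\partial_x^2F(t+\tau).
\]
This converges in $L^2_\sharp$ to the right-hand side of the $G$-equation, using $D_\tau h\to\partial_th$ in $H^1$ and $D_\tau\ell^2\to\partial_t\ell^2$ in $H^1\subset L^\infty$. The $L^2\to H^2$ stability of the resolvent then gives $D_\tau F\to G$ in $H_\sharp^2$, uniformly in $t$. Hence $F\in C^1([0,T];H_\sharp^2)$.

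\emph{Step 3: the field $\bw$.} Pull back to $\Omega_1$, where $\widehat\psi=F(t,x)\chi(z)$. The components of $\bw$ are
\[
-\partial_y\psi=-\frac{F\chi'(z)}{h},\qquad \partial_x\psi=\partial_xF\,\chi-F\chi'\,\frac{z\,\partial_xh}{h}.
\]
These are finite sums of products of $z$-polynomials with functions of $x$ built from $F,\partial_xF,h^{-1},\partial_xh$. Such $x$-functions lie in $C([0,T];H_\sharp^2)$ or better, since $\partial_xF\in H^3$ and $\partial_xh\in H^2$. So $\widehat\bw\in C([0,T];H^2(\Omega_1))$, and transporting through $\chi_h$ gives $\bw\in C([0,T];H^2(\Omega_h))$. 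Here $\chi_h$ is an $H^3$-regular diffeomorphism, bi-Lipschitz with constants depending on $m$, which preserves $H^2$.

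For the time derivative we use
\[
\partial_t\bw=\partial_t\widehat\bw-\frac{z\,\partial_th}{h}\,\partial_z\widehat\bw,
\]
evaluated at $z=y/h$. The first term involves $\partial_tF$ and $\partial_x\partial_tF\in C([0,T];H^1)$, multiplied by $H^2$ coefficients. It also involves $\partial_th$ and $\partial_x\partial_th\in C([0,T];L^2)$, which always multiply coefficients in $L^\infty$. The second term is an $H^1$ function times $\partial_z\widehat\bw\in C(H^1)$. Hence every product is continuous into $L^2$.

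\emph{Main obstacle.} The delicate point is the term $F\chi'\,z\,\partial_x\partial_th/h$ in $\partial_t\partial_x\psi$. It carries only $L^2$ regularity in time-continuous form, and this is exactly why \eqref{eq:test-lower-class} stops at $L^2$ for $\partial_t\bw$. We must check that it multiplies an $L^\infty$ factor rather than an $L^2$ one; it does, since $F\in H^4$. Beyond that, we only have to track that each product pairs an $L^2$-in-$x$ factor with an $L^\infty$-in-$x$ factor.
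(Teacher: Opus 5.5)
Your proposal is correct and follows essentially the same route as the paper. You bootstrap $\partial_x^2F=\ell^{-2}(F-\partial_xh)$ to get $F\in C([0,T];H_\sharp^4(0,L))$, and you identify $\partial_tF\in C([0,T];H_\sharp^2(0,L))$ through the difference-quotient identity, whose right-hand side converges in $L_\sharp^2(0,L)$. The only cosmetic difference is in the last step: you check the regularity of $\bw$ and $\partial_t\bw$ on the reference strip via the pullback chain rule, whereas the paper reads it off directly from the Eulerian formula for $\partial_t\psi$.
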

	\begin{proof}
		The height has a positive minimum on the compact interval. Using the equation $\partial_x^2F=\ell^{-2}(F-\partial_xh)$, the coercive weak formulation, and one-dimensional Sobolev multiplication, we first obtain $F\in H_\sharp^2(0,L)$ and then $F\in H_\sharp^4(0,L)$ because $h\in H_\sharp^3(0,L)$. Resolvent estimates for the difference of two coefficient fields imply continuity with values in $H_\sharp^4(0,L)$.
		
		Subtracting \eqref{eq:elliptic} at $t+\tau$ and $t$, and dividing by $\tau$, we obtain
		\begin{align*}
			&\frac{F(t+\tau)-F(t)}{\tau}
			-\ell(t)^2\partial_x^2\frac{F(t+\tau)-F(t)}{\tau}
			=\partial_x\frac{h(t+\tau)-h(t)}{\tau}
			+\frac{\ell(t+\tau)^2-\ell(t)^2}{\tau}\partial_x^2F(t+\tau).
		\end{align*}
		Since $\partial_th\in C([0,T];H_\sharp^1(0,L))$ and $F\in C([0,T];H_\sharp^4(0,L))$, the right-hand side converges in $L_\sharp^2(0,L)$, uniformly on compact time intervals. Elliptic estimates then show that the time derivative belongs to $C([0,T];H_\sharp^2(0,L))$, which gives \eqref{eq:multiplier-lower-class}. 
		
		The stream function consequently has three spatial derivatives in $L^2$, while its Eulerian time derivative is
		\begin{align*}
			\partial_t\psi=\partial_tF\,\chi
			-F\frac{\partial_th}{h}\frac yh\chi',
		\end{align*}
		which has one spatial derivative in $L^2$ continuously in time. This proves \eqref{eq:test-lower-class}. Also $\partial_xF\in C([0,T];H_\sharp^3(0,L))\cap C^1([0,T];H_\sharp^1(0,L))$, so
		\begin{align*}
			\left\langle\partial_t^2h,\partial_xF\right\rangle_{H_\sharp^{-1/2},H_\sharp^{1/2}}
		\end{align*}
		is integrable and the product rule for $\int_0^L\partial_th\,\partial_xF\dx$ is valid. After pullback by $\chi_h$, the fluid product rule follows from the identity $\partial_t(\bu\circ\chi_h)=(\partial_t\bu)\circ\chi_h+z\partial_th\,(\partial_y\bu)\circ\chi_h\in L^2(0,T;L^2(\Omega_1))$, the preceding regularity of $\bw$, and the Jacobian $h$. The displayed regularity follows from the positive gap and $\partial_th\in C([0,T];H_\sharp^1(0,L))$. Thus Reynolds transport and the integrations by parts used below hold down to zero, by temporal regularization if necessary. In the beam acceleration term of the coupled identity, the integral is understood as the displayed duality pairing.
		
		The positive minimum used here justifies the operations on an existing solution. Every inequality used to estimate the distance is proved separately with the gap dependence displayed explicitly.
	\end{proof}
	
	Next, we introduce the corresponding test with respect to the fluid viscosity which generates the distance estimate essentially. 
	\begin{lemma}\label{lem:stress-identity}
		For the velocity and test above, we have
		\begin{align}
			2\mu& \int_{\Omega_h}D\bu:D\bw\dA \nonumber\\
			& =-6\mu \frac{\mathrm d}{\mathrm dt} \int_0^L \frac{\dx}h 
			- 12\mu\varepsilon^2\int_0^L \frac{Q}{h^{5/3}} \partial_x^2 F \dx+\mu\int_{\Omega_h} \partial_x^2 \psi (\partial_x u_2 -2 \partial_y u_1)\dA.
			\label{eq:distance-viscosity}
		\end{align}
	\end{lemma}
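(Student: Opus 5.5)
The identity is a computation at fixed time, so I would fix $t$ and first regularize. The plan is to verify \eqref{eq:distance-viscosity} for smooth positive $h$ and smooth $\bu$ that are solenoidal and satisfy the traces \eqref{eq:boundary}. The general case then follows by density, since both sides are continuous in $\bu\in H^2(\Omega_h)$ and $F\in H_\sharp^4(0,L)$ by Lemma~\ref{lem:distance-admissibility}.

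\emph{Step 1: reduce the symmetric gradient to the full gradient.} Write $2\int_{\Omega_h}D\bu:D\bw=\int_{\Omega_h}\nabla\bu:\nabla\bw+\int_{\Omega_h}\partial_ju_i\,\partial_iw_j$. For the cross term, use $\partial_ju_i\partial_iw_j=\partial_j(u_i\partial_iw_j)-u_i\partial_i(\Div\bw)$. The divergence theorem on $\Omega_h$, with normal $\mathbf N_h$ on the graph and periodicity in $x$, turns it into boundary integrals. On the bottom, $\bu=0$. On the top, $\bu=\partial_th\,\be_2$ and $\bw=\partial_xF\,\be_2$, so the boundary term reduces to $\int_0^L\partial_th\,\mathbf N_h\cdot\partial_y\bw\dx$, up to the arrangement of indices. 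I expect it to vanish, or to be rewritten into the last term of \eqref{eq:distance-viscosity}, after a second transfer using the tangential relation along the graph: differentiating $w_1(x,h(x))=0$ gives $\partial_xw_1=-\partial_xh\,\partial_yw_1$ on the top. The same argument with $\bw$ replaced by $\bu$ gives the identity $2\int|D\bu|^2=\int|\nabla\bu|^2$ quoted in Lemma~\ref{lem:mechanical-energy}.

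\emph{Step 2: expand $\nabla\bu:\nabla\bw$ in $\psi$.} With $\bw=(-\partial_y\psi,\partial_x\psi)$ and $\partial_yu_2=-\partial_xu_1$,
\begin{align*}
\nabla\bu:\nabla\bw=-\partial_yu_1\,\partial_y^2\psi-2\partial_xu_1\,\partial_x\partial_y\psi+\partial_xu_2\,\partial_x^2\psi .
\end{align*}
The term $\partial_xu_2\partial_x^2\psi$ already appears in the final formula. The term $-2\partial_xu_1\partial_x\partial_y\psi$ should be converted to $-2\partial_yu_1\partial_x^2\psi$ by interchanging the $x$- and $y$-derivatives. This uses two integrations by parts on $\Omega_h$. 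The boundary terms vanish on the bottom because $\bu=0$ there and $\psi=\partial_y\psi=0$. On the graph, $u_1=0$, $\chi'(1)=0$, and the tangential derivative $\partial_x+\partial_xh\,\partial_y$ of $u_1$ vanishes.

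\emph{Step 3: the main term.} Since $\partial_y^2\psi=F\chi''(y/h)h^{-2}$ with $\chi''(z)=6-12z$ and $\chi'''=-12$, I integrate by parts in $y$ in each column, using $u_1(x,0)=u_1(x,h)=0$:
\begin{align*}
-\int_0^h\partial_yu_1\,\frac{F\chi''}{h^2}\dy=\frac{F}{h^3}\int_0^hu_1\chi'''\dy=-12\frac{FQ}{h^3}.
\end{align*}
Inserting $F=\partial_xh+\ell^2\partial_x^2F$ with $\ell^2=\varepsilon^2h^{4/3}$ from \eqref{eq:elliptic} yields $-12\int_0^LQ\partial_xh\,h^{-3}\dx-12\varepsilon^2\int_0^LQh^{-5/3}\partial_x^2F\dx$. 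By Lemma~\ref{lem:flux} and periodic integration by parts,
\begin{align*}
\frac{\mathrm d}{\mathrm dt}\int_0^L\frac{\dx}{h}=\int_0^L\frac{\partial_xQ}{h^2}\dx=2\int_0^L\frac{Q\partial_xh}{h^3}\dx,
\end{align*}
so the first term equals $-6\frac{\mathrm d}{\mathrm dt}\int_0^L h^{-1}\dx$. Multiplying all three steps by $\mu$ gives \eqref{eq:distance-viscosity}.

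The main obstacle is the bookkeeping of the top-boundary terms in Steps 1 and 2. The graph is curved, $\partial_x$ is not tangential to it, and $\partial_x\partial_y\psi$ does not vanish on it, since $\partial_x$ hits $\chi'(y/h)$. I would handle this by working throughout with the tangential derivative $\partial_x+\partial_xh\,\partial_y$ along $y=h(x)$, together with the known traces of $\bu$, $\bw$ and $\psi$ ($\psi=F$ on the graph). Where this becomes unwieldy, I would instead pull back to $\Omega_1$ via $\chi_h$, where the top boundary is flat and the computation is explicit.
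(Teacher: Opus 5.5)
Your proposal is correct and follows the paper's proof essentially step by step: the symmetric gradient is reduced to the full gradient through a vanishing top boundary term, the integrand is expanded in $\psi$ with an $x$--$y$ derivative swap, the $\partial_y^2\psi$ term is handled by column integration by parts with $\chi'''=-12$, and then \eqref{eq:elliptic} and the flux identity for $\frac{\mathrm d}{\mathrm dt}\int_0^L h^{-1}\dx$ are inserted. The only variation is in Step 1, where you move the derivative onto $\bw$ and use $\Div\bw=0$, while the paper uses $\Div\bu=0$ and gets the boundary term $\int_0^L\partial_xF\,(-\partial_xh\,\partial_yu_1+\partial_yu_2)\dx$.

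Your hedged boundary term does vanish identically, since on the graph $\partial_yw_2=-\partial_xw_1=\partial_xh\,\partial_yw_1$. The curved-boundary bookkeeping in Step 2 also disappears if you integrate by parts first in $x$ and then in $y$:
$-2\int_{\Omega_h}\partial_xu_1\,\partial_x\partial_y\psi\dA=2\int_{\Omega_h}u_1\,\partial_x^2\partial_y\psi\dA=-2\int_{\Omega_h}\partial_yu_1\,\partial_x^2\psi\dA$.
Every boundary term then carries the undifferentiated $u_1$, which vanishes on both walls.
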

	
	\begin{proof}
		First, integrating by parts and recalling $\Div\bu=0$, we find
		\begin{align*}
			\int_{\Omega_h}(\nabla\bu)^\top:\nabla\bw\dA
			=\int_0^L \partial_x F (- \partial_x h \partial_y u_1 + \partial_y u_2)(x,h(x))\dx=0.
		\end{align*}
		Indeed, $\partial_y u_2 =- \partial_x u_1$, and differentiation of the trace $u_1(x,h(x))=0$ implies $\partial_x u_1 + \partial_x h \partial_y u_1 =0$. The lower-boundary term vanishes by \eqref{eq:test-traces}, and the lateral terms cancel by periodicity. The same computation with $\bw=\bu$ proves $2\int_{\Omega_h}|D\bu|^2\dA=\int_{\Omega_h}|\nabla\bu|^2\dA$.
		
		By incompressibility and the definition of $\bw$, it holds
		\begin{align*}
			\nabla\bu:\nabla\bw
			=2 \partial_y u_2 \partial_x\partial_y \psi - \partial_y u_1 \partial_y^2 \psi + \partial_x u_2 \partial_x^2 \psi.
		\end{align*}
		Since $u_1|_{y=0} = u_1|_{y=h} = 0$, $\partial_y\chi(z)=\chi'(z)/h$ and $\partial_y^3\chi(z)=\chi'''(z)/h^3=-12/h^3$,
		\begin{align*}
			-\int_{\Omega_h} \partial_y u_1 \partial_y^2 \psi \dA
			=\int_{\Omega_h}u_1 \partial_y^3 \psi \dA
			=-12\int_0^L\frac{QF}{h^3}\dx.
		\end{align*}
		The first term satisfies
		\begin{align*}
			2\int_{\Omega_h} \partial_y u_2 \partial_x\partial_y \psi \dA
			=-2\int_{\Omega_h} \partial_x u_1 \partial_x\partial_y \psi \dA
			=2\int_{\Omega_h}u_1 \partial_x^2\partial_y \psi \dA
			=-2\int_{\Omega_h} \partial_y u_1 \partial_x^2 \psi \dA.
		\end{align*}
		Invoking identities above, we arrive at
		\begin{align}
			2\int_{\Omega_h}D\bu:D\bw\dA
			=-12\int_0^L\frac{QF}{h^3}\dx
			+\int_{\Omega_h} \partial_x^2 \psi (\partial_x u_2 -2 \partial_y u_1)\dA.\label{eq:stress-identity}
		\end{align}
		By $\partial_t h =- \partial_x Q$, we compute
		\begin{align}
			\frac{\mathrm d}{\mathrm dt} \int_0^L \frac{\dx}h  =-\int_0^L\frac{\partial_t h}{h^2}\dx
			=2\int_0^L\frac{Q \partial_x h}{h^3}\dx.\label{eq:entropy-derivative}
		\end{align}
		Substituting $F= \partial_x h +\varepsilon^2h^{4/3} \partial_x^2 F$ into \eqref{eq:stress-identity} with \eqref{eq:entropy-derivative} yields \eqref{eq:distance-viscosity}.
	\end{proof}
	
	\begin{remark}
		The proof of Lemma~\ref{lem:stress-identity} does not introduce a periodic auxiliary pressure with derivative $12F/h^3$. Such a primitive need not exist, since the resolvent does not imply $\int_0^L F/h^3\dx=0$. The direct flux calculation also retains a possible nonzero mean of $Q$.
	\end{remark}
	By Lemma \ref{lem:elastic}, we know
	\begin{align}
		\cB\coloneqq\beta\int_0^L \partial_x^2 h \partial_x F \dx+\alpha\int_0^L \partial_x^3 h \partial_x^2 F \dx,\label{eq:elastic-form}
	\end{align}
	is coercive in suitable spaces. Then one shall control the additional perturbation due to elliptic regularization by the following.
	\begin{lemma}\label{lem:defect}
		It holds
		\begin{align}
			\left|12\mu\varepsilon^2\int_0^L \frac{Q}{h^{5/3}}\partial_x^2F\dx\right|
			\leq\frac12\cB+C(1+\cS)\cD.
		\end{align}
	\end{lemma}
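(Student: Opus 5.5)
Cauchy--Schwarz with the height weight split so that the flux part is exactly the quantity controlled by Lemma~\ref{lem:flux}:
\begin{align*}
\left|12\mu\varepsilon^2\int_0^L \frac{Q}{h^{5/3}}\partial_x^2F\dx\right|
\leq 12\mu\varepsilon^2\left(\int_0^L\frac{Q^2}{h^3}\dx\right)^{1/2}\left(\int_0^L h^{-1/3}|\partial_x^2F|^2\dx\right)^{1/2}.
\end{align*}
By \eqref{eq:flux-estimates}, the first factor is at most $(\cD/12)^{1/2}$. The task therefore reduces to bounding the weighted quantity $\int_0^L h^{-1/3}|\partial_x^2F|^2\dx$ in terms of the coercive quantity $\cB$ from Lemma~\ref{lem:elastic}, with a loss no worse than $(\min h)^{-1/3}$ or $\cS$.

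The crude route is to use $h^{-1/3}\leq(\min h)^{-1/3}\leq C(1+\cS)$ from \eqref{eq:min-entropy} and $\norm{\partial_x^2F}_{L_\sharp^2}^2\leq\frac{16}{15\alpha}\cB$ from \eqref{eq:elastic-coercivity}. Young's inequality then gives
\begin{align*}
12\mu\varepsilon^2\sqrt{\tfrac{\cD}{12}}\,\bigl(C(1+\cS)\bigr)^{1/2}\Bigl(\tfrac{16}{15\alpha}\cB\Bigr)^{1/2}
\leq \frac12\cB + C(1+\cS)\cD,
\end{align*}
which is exactly the claim. The constants depend only on $\mu,\alpha,\varepsilon$ and the constant in \eqref{eq:min-entropy}, all admissible. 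Before using this, I would check two things. First, \eqref{eq:min-entropy} really gives the power $-1/3$, and it does. Second, the factor $(1+\cS)$ appears linearly, not as $(1+\cS)^2$. It does appear linearly, because the weight enters only once under the square root and Young's inequality squares it back to a single factor.

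The only genuine point is this bookkeeping of powers. If one tried instead to avoid the minimum height, for instance by pointwise bounds like $|\partial_x^2F|\leq Ch^{-1}$ from \eqref{eq:pointwise}, one would obtain $\int h^{-7/3}$. That is not controlled by $\cS$, so the $L^2$ coercivity of Lemma~\ref{lem:elastic} combined with \eqref{eq:min-entropy} is the right tool. The product $\cS\cD$ on the right is acceptable for the later Gr\"onwall argument, since $\int_0^t\cD\leq\cE(0)/\mu$ makes it a linear Gr\"onwall term in $\cS$.
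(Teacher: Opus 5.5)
Your proposal is correct and follows essentially the same route as the paper: Cauchy--Schwarz against the flux bound $12\int_0^L Q^2h^{-3}\dx\leq\cD$, then $h^{-1/3}\leq(\min h)^{-1/3}$, the coercivity $\frac{15\alpha}{16}\norm{\partial_x^2F}_{L_\sharp^2(0,L)}^2\leq\cB$ from Lemma~\ref{lem:elastic}, Young's inequality, and \eqref{eq:min-entropy} to convert $(\min h)^{-1/3}$ into $C(1+\cS)$. The only difference is order: the paper applies Young's inequality with the factor $(\min h)^{-1/3}$ and converts to $C(1+\cS)$ at the end, whereas you convert first.
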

	
	\begin{proof}
		By the flux bound in \eqref{eq:flux-estimates}, we have
		\begin{align*}
			\left|12\mu\varepsilon^2\int_0^L \frac{Q}{h^{5/3}}\partial_x^2F\dx\right|
			&\leq\sqrt{12}\mu\varepsilon^2\sqrt\cD
			\left(\int_0^L h^{-1/3}(\partial_x^2 F)^2\dx\right)^{1/2}\\
			&\leq\sqrt{12}\mu\varepsilon^2(\min h)^{-1/6}\sqrt\cD\norm{\partial_x^2 F}_{L_\sharp^2(0,L)}.
		\end{align*}
		By Young's inequality and \eqref{eq:elastic-coercivity}, we have
		\begin{align*}
			\left|12\mu\varepsilon^2\int_0^L \frac{Q}{h^{5/3}}\partial_x^2F\dx\right|\leq\frac12\frac{15\alpha}{16}\norm{\partial_x^2 F}_{L_\sharp^2(0,L)}^2
			+C (\min h)^{-1/3}\cD
			\leq\frac12\cB+C(1+\cS)\cD,
		\end{align*}
		where the last step uses \eqref{eq:min-entropy}.
	\end{proof}
	
	\begin{lemma}\label{lem:fluid-remainders}
		The velocity test holds
		\begin{align}
			\left|\int_{\Omega_h}\bu\cdot
			(\partial_t \bw +\bu\cdot\nabla\bw)\dA\right|
			\leq C\cD.\label{eq:fluid-inertia-bound}
		\end{align}
	\end{lemma}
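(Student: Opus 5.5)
The plan is to bound the two pieces $\int_{\Omega_h}\bu\cdot\partial_t\bw\dA$ and $\int_{\Omega_h}\bu\cdot(\bu\cdot\nabla\bw)\dA$ separately. In each case I move derivatives off $\bw$ where the coefficients are too singular. The remaining factors are then controlled by three earlier results: the pointwise bounds \eqref{eq:pointwise}, the slope bound \eqref{eq:length-lipschitz}, and the column estimates of Lemma~\ref{lem:flux} together with Lemma~\ref{lem:time}. The aim is constants depending only on $\cE(0),H,L$ and the physical parameters, with no power of $\min h$.

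\emph{Pointwise bounds for the stream function.} Write $z=y/h$. Since $|\partial_x h|\le Ch^{1/3}$ and $|F|\le Ch^{1/3}$, I expect the following bounds.
\begin{align*}
|\partial_y\psi|&=\frac{|F\chi'(z)|}{h}\le Ch^{-2/3},\\
|\partial_x\psi|&=\Bigl|\partial_xF\,\chi-F\chi'(z)\frac{z\,\partial_xh}{h}\Bigr|\le Ch^{-1/3}.
\end{align*}
Differentiating once more, and using $|\partial_x F|\le Ch^{-1/3}$, gives
\begin{align*}
|\partial_y^2\psi|\le Ch^{-5/3},\qquad |\partial_x\partial_y\psi|\le Ch^{-4/3}.
\end{align*}
Only $\partial_x^2\psi$ is bad: it contains $F\chi'(z)z\,\partial_x^2h/h$, and $\partial_x^2h$ is only in $L^2$.

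\emph{Convective term.} Expand
\begin{align*}
\bu\cdot(\bu\cdot\nabla\bw)=-u_1u_1\partial_x\partial_y\psi-u_1u_2\partial_y^2\psi+u_1u_2\partial_x^2\psi+u_2u_2\partial_x\partial_y\psi .
\end{align*}
For all terms except $u_1u_2\partial_x^2\psi$, I use the column Poincar\'e inequality \eqref{eq:column-poincare}. It gives
\begin{align*}
\int_0^h|\bu|^2h^{-5/3}\dy\le h^{1/3}\int_0^h|\partial_y\bu|^2\dy,
\end{align*}
so these terms are at most $C\norm{h}_{L^\infty}^{1/3}\cD\le C\cD$ by \eqref{eq:energy-geometry}.

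For the bad term, I integrate by parts in $x$:
\begin{align*}
\int_{\Omega_h}u_1u_2\,\partial_x^2\psi\dA=-\int_{\Omega_h}\partial_x(u_1u_2)\,\partial_x\psi\dA .
\end{align*}
The lateral boundary terms cancel by periodicity. The flux through the curved top carries the factor $u_1$, which vanishes at $y=h$, and the bottom term vanishes because $\bu=0$ there. Then Cauchy--Schwarz and \eqref{eq:column-poincare} give
\begin{align*}
\int_{\Omega_h}|\bu||\nabla\bu|h^{-1/3}\dA\le\Bigl(\int_{\Omega_h}|\bu|^2h^{-2/3}\dA\Bigr)^{1/2}\cD^{1/2}\le C\norm{h}_{L^\infty}^{2/3}\cD\le C\cD .
\end{align*}
This integration by parts is the main point of the step, since it removes the non-pointwise term $\partial_x^2h$.

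\emph{Time-derivative term.} The Eulerian derivative is
\begin{align*}
\partial_t\psi=\partial_tF\,\chi-F\frac{\partial_th}{h}z\chi'(z).
\end{align*}
Since $\partial_t\bw=(-\partial_y\partial_t\psi,\partial_x\partial_t\psi)$, I integrate by parts to get
\begin{align*}
\int_{\Omega_h}\bu\cdot\partial_t\bw\dA=\int_{\Omega_h}(\partial_yu_1-\partial_xu_2)\partial_t\psi\dA+\text{boundary terms}.
\end{align*}
At $y=0$ we have $\bu=0$. At $y=h$ we have $u_1=0$, $u_2=\partial_th$, and $\partial_t\psi=\partial_tF$ because $\chi(1)=1$ and $\chi'(1)=0$. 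So the only boundary contribution has the form $\pm\int_0^L\partial_th\,\partial_xh\,\partial_tF\dx$. I will double-check its sign and exact form, but only its absolute value is needed.

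For the bulk term, I bound it by $\cD^{1/2}\norm{\partial_t\psi}_{L^2(\Omega_h)}$. Using $|F|\le Ch^{1/3}$,
\begin{align*}
\norm{\partial_t\psi}_{L^2(\Omega_h)}^2\le C\int_0^Lh(\partial_tF)^2\dx+C\int_0^Lh^{2/3}\frac{(\partial_th)^2}{h}\dx\le C\cD ,
\end{align*}
by the first bound in \eqref{eq:time-dissipation} and \eqref{eq:flux-estimates}.

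For the boundary term, I use $|\partial_xh|\le Ch^{1/3}$ to write
\begin{align*}
\Bigl|\int_0^L\partial_th\,\partial_xh\,\partial_tF\dx\Bigr|\le C\Bigl(\int_0^L\frac{(\partial_th)^2}{h}\dx\Bigr)^{1/2}\Bigl(\int_0^Lh^{5/3}(\partial_tF)^2\dx\Bigr)^{1/2}\le C\cD ,
\end{align*}
again using $h\le C$. The integrations by parts are justified by the regularity in Lemma~\ref{lem:distance-admissibility}, and combining both estimates gives \eqref{eq:fluid-inertia-bound}.

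The step I expect to be the main obstacle is the $\partial_x^2h$ contribution in $u_1u_2\partial_x^2\psi$. I plan to handle it by the $x$-integration by parts above, checking carefully that the top-boundary term really vanishes because $u_1=0$ on $y=h$.
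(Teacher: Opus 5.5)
Your proposal is correct. The time-derivative part matches the paper's argument. It uses the same integration by parts, giving
\begin{align*}
\int_{\Omega_h}\bu\cdot\partial_t\bw\dA=-\int_0^L\partial_th\,\partial_xh\,\partial_tF\dx+\int_{\Omega_h}(\partial_yu_1-\partial_xu_2)\partial_t\psi\dA,
\end{align*}
and bounds the two pieces with the first estimate in \eqref{eq:time-dissipation} and \eqref{eq:flux-estimates}.

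You take a different route for the convective term. The paper integrates the whole term by parts using $\Div\bu=0$:
\begin{align*}
\int_{\Omega_h}\bu\cdot(\bu\cdot\nabla\bw)\dA=\int_0^L(\partial_th)^2\partial_xF\dx-\int_{\Omega_h}(\bu\cdot\nabla\bu)\cdot\bw\dA.
\end{align*}
It then needs only the pointwise bound $|\bw|\leq C(h^{-2/3}+h^{-1/3})$, i.e.\ first derivatives of $\psi$. The price is a top-boundary integral, controlled by $|h\partial_xF|\leq C$ and the weighted trace bound $\int_0^L(\partial_th)^2/h\dx\leq\cD$.

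You instead keep the derivatives on $\psi$ and bound $\partial_y^2\psi$ and $\partial_x\partial_y\psi$ pointwise by $Ch^{-5/3}$ and $Ch^{-4/3}$. The weight $h^{-5/3}$ is exactly the borderline that the $h^2$ gain from the column Poincar\'e inequality can absorb. The only non-pointwise piece, the $\partial_x^2h$ contribution in $\partial_x^2\psi$, you handle by a purely horizontal integration by parts. Its top boundary term vanishes identically because $u_1=0$ at $y=h$.

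So your version needs neither incompressibility nor a boundary integral for this term, at the cost of more second-derivative bookkeeping for $\psi$. The paper's version is shorter because it never differentiates $\bw$. Both give $C\cD$ with constants independent of $\min h$.
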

	
	\begin{proof}
		We first record the second spatial derivative of the stream function. Differentiating at fixed physical coordinates, with the profile and its derivatives evaluated at $z=y/h\in(0,1)$, we obtain
		\begin{align}
			\partial_x^2 \psi &= \partial_x^2 F \chi
			-\left(\frac{2 \partial_x F \partial_x h}{h}+\frac{F \partial_x^2 h}h\right)\frac yh\chi'
			+\frac{F(\partial_x h)^2}{h^2}\Big(2\frac yh\chi'+\frac{y^2}{h^2}\chi''\Big).\label{eq:psi-second}
		\end{align}
		For the time derivative, we use the formula established in the proof of Lemma~\ref{lem:distance-admissibility}. Since the profile and its derivatives are bounded on $[0,1]$, \eqref{eq:time-dissipation} implies
		\begin{align*}
			\norm{\partial_t \psi}_{L^2(\Omega_h)}^2
			\leq C\int_0^L h(\partial_t F)^2\dx+C\int_0^L\frac{F^2(\partial_t h)^2}{h}\dx
			\leq C\cD,
		\end{align*}
		where we used $\norm{F}_{L_\sharp^\infty(0,L)}\leq C$ and \eqref{eq:flux-estimates} for the second term.
		
		By \eqref{eq:length-lipschitz} and \eqref{eq:pointwise}, every term in \eqref{eq:psi-second} other than the one containing $\partial_x^2 h$ is bounded by $Ch^{-1}$. The remaining term is controlled by $C| \partial_x^2 h |h^{-2/3}$. Therefore invoking \eqref{eq:min-entropy}, one has
		\begin{align*}
			\norm{\partial_x^2 \psi}_{L^2(\Omega_h)}^2
			\leq C\left(\cS+\int_0^L \frac{(\partial_x^2 h)^2}{h^{1/3}}\dx\right)
			\leq C(\cS+(\min h)^{-1/3})\leq C(1+\cS).
		\end{align*}
		In view of Cauchy--Schwarz inequality, one further shows 
		\begin{gather}
			\left|\mu\int_{\Omega_h}\partial_x^2\psi(\partial_xu_2-2\partial_yu_1)\dA\right|\leq C\sqrt{(1+\cS)\cD}.
		\end{gather}
		
		For the time contribution, spatial integration by parts leads to the exact identity
		\begin{align*}
			\int_{\Omega_h}\bu\cdot \partial_t \bw \dA
			=-\int_0^L \partial_t h \partial_x h \partial_t F \dx
			+\int_{\Omega_h}(\partial_y u_1 - \partial_x u_2) \partial_t \psi \dA.
		\end{align*}
		Here the top trace of $\partial_t \psi$ is $\partial_t F$, since $\chi'(1)=0$. By \eqref{eq:flux-estimates} and \eqref{eq:time-dissipation}, one derives
		\begin{align*}
			\left|\int_0^L \partial_t h \partial_x h \partial_t F \dx\right|
			&\leq\norm{\partial_x h}_{L_\sharp^\infty(0,L)}
			\left(\int_0^L\frac{(\partial_t h)^2}{h}\dx\right)^{1/2}
			\left(\int_0^L h(\partial_t F)^2\dx\right)^{1/2}\leq C\cD.
		\end{align*}
		Moreover, it holds
		\begin{align*}
			\left|\int_{\Omega_h}(\partial_y u_1 - \partial_x u_2) \partial_t \psi \dA\right|
			&\leq C\sqrt{\cD} \norm{\partial_t \psi}_{L^2(\Omega_h)} \leq C \cD.
		\end{align*}
		Then 
		\begin{align*}
			\left|\int_{\Omega_h}\bu\cdot \partial_t \bw \dA\right|
			\leq C \cD.
		\end{align*}
		
		For convection, incompressibility and the top traces yield
		\begin{align*}
			\int_{\Omega_h}\bu\cdot(\bu\cdot\nabla\bw)\dA
			=\int_0^L (\partial_t h)^2 \partial_x F \dx
			-\int_{\Omega_h}(\bu\cdot\nabla\bu)\cdot\bw\dA.
		\end{align*}
		The first term is bounded by $\norm{h \partial_x F}_{L_\sharp^\infty(0,L)}\int_0^L (\partial_t h)^2/h\dx\leq C\cD$. From \eqref{eq:stream-function} and \eqref{eq:pointwise},
		\begin{align}
			|\bw(x,y)|\leq C\bigl(h(x)^{-2/3}+h(x)^{-1/3}\bigr).\label{eq:test-pointwise}
		\end{align}
		Applying \eqref{eq:column-poincare} to the other term, we find
		\begin{align*}
			&\left|\int_{\Omega_h}(\bu\cdot\nabla\bu)\cdot\bw\dA\right|
			\leq C\int_0^L
			\norm{\partial_y\bu(x,\cdot)}_{L^2(0,h)}
			\norm{\nabla\bu(x,\cdot)}_{L^2(0,h)}\dx
			\leq C\cD.
		\end{align*}
		Indeed, multiplying the right-hand side of \eqref{eq:test-pointwise} by $h$ leads to a bounded coefficient by \eqref{eq:energy-geometry}. Combining the estimates for time and convection proves \eqref{eq:fluid-inertia-bound}.
	\end{proof}
	
	Next, we introduce the functional arising from the coupled test. The natural energy bound makes it equivalent, up to an additive constant, to $1+\cS$.
	\begin{lemma}
		Let
		\begin{align}
			\mathcal Z=6\mu \int_0^L \frac{\dx}{h}
			-\rho_s\int_0^L \partial_t h \partial_x F \dx
			-\rho_f\int_{\Omega_h}\bu\cdot\bw\dA.
		\end{align}
		There is a constant $C_*>0$, depending only on the fixed energy parameters, such that
		\begin{align}
			3\mu(1+\cS)\leq \mathcal Z+C_*\leq C(1+\cS).\label{eq:functional-equivalence}
		\end{align}
	\end{lemma}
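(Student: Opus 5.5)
The plan is to show that the two cross terms in $\mathcal Z$ are bounded by $C$ plus a small multiple of $\cS$, using only the mechanical energy and the pointwise bounds \eqref{eq:pointwise}. It is enough to prove
\begin{align*}
\left|\rho_s\int_0^L\partial_th\,\partial_xF\dx\right|+\left|\rho_f\int_{\Omega_h}\bu\cdot\bw\dA\right|\leq C_0+3\mu\,\cS,
\end{align*}
with $C_0$ depending only on the energy parameters. Then with $C_*=C_0+3\mu L/\max h$ we get $\mathcal Z+C_*\geq 3\mu\cS+3\mu L/\max h$. Since $h\leq H+L^{3/2}\sqrt{2\cE(0)/\alpha}$ by \eqref{eq:energy-geometry}, we also have $\cS\geq L/\max h$, which gives a lower bound of the form $3\mu(1+\cS)$ after possibly enlarging $C_*$ by a constant: $3\mu(1+\cS)\leq 3\mu\cS+3\mu+\ldots$, and the missing $3\mu$ is absorbed into $C_*$. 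The upper bound $\mathcal Z+C_*\leq 6\mu\cS+C_0+C_*\leq C(1+\cS)$ follows immediately.

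\textbf{Structural term.} By \eqref{eq:pointwise}, $|\partial_xF|\leq Ch^{-1/3}$. Cauchy--Schwarz and Young then give
\begin{align*}
\rho_s\int_0^L|\partial_th||\partial_xF|\dx\leq C\norm{\partial_th}_{L^2_\sharp}\Big(\int_0^Lh^{-2/3}\dx\Big)^{1/2}\leq C+\delta\int_0^Lh^{-2/3}\dx.
\end{align*}
For the last factor, pointwise Young gives $h^{-2/3}\leq \eta h^{-1}+C_\eta$, so this piece is at most $C+\mu\cS$. The kinetic energy bounds $\norm{\partial_th}_{L^2}^2\leq 2\cE(0)/\rho_s$.

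\textbf{Fluid term.} Apply the pointwise bound \eqref{eq:test-pointwise}, $|\bw|\leq C(h^{-2/3}+h^{-1/3})\leq Ch^{-2/3}$, using boundedness of $h$ from above. This gives
\begin{align*}
\rho_f\int_{\Omega_h}|\bu||\bw|\dA\leq C\norm{\bu}_{L^2(\Omega_h)}\Big(\int_0^L h\cdot h^{-4/3}\dx\Big)^{1/2}=C\norm{\bu}_{L^2}\Big(\int_0^Lh^{-1/3}\dx\Big)^{1/2}.
\end{align*}
The column integral contributes the factor $h$, and $\int_0^Lh^{-1/3}\dx\leq C(1+\cS)$. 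Using the energy bound on $\norm{\bu}_{L^2}$ and Young, this is at most $C+\mu\cS$.

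\textbf{Main point.} The only subtle issue is ensuring the cross terms grow sublinearly in $\cS$, so the coefficient $6\mu$ survives with a margin of at least $3\mu$. The exponents $h^{-2/3}$ and $h^{-1/3}$ obtained above are strictly below $h^{-1}$, so the Young splits with small $\eta$ give arbitrarily small multiples of $\cS$. The mild subtlety is that \eqref{eq:test-pointwise} must hold uniformly down to $y$ near $h$; this is immediate from \eqref{eq:stream-function}, since $\bw$ involves $\partial_xF$, $F/h$, and $F\partial_xh/h$, each bounded by \eqref{eq:pointwise} and \eqref{eq:length-lipschitz}. All constants depend only on $\cE(0),H,L$ and the coefficients, not on $\min h$.
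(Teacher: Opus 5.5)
Your proof is correct and follows the paper's argument. Both cross terms are bounded through $|\partial_xF|\le Ch^{-1/3}$, $|\bw|\le Ch^{-2/3}$ and the mechanical energy, which makes them sublinear in $\cS$ so they can be absorbed into $3\mu\cS$; the paper records this growth via H\"older as $C\cS^{1/3}$ and $C(1+\cS^{1/6})$, while you use a pointwise Young splitting, which amounts to the same thing. The detour through $L/\max h$ when fixing $C_*$ is unnecessary, and $\max h$ is time dependent anyway: simply take $C_*=C_0+3\mu$, which gives $\mathcal Z+C_*\geq 3\mu\cS+3\mu$ directly.
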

	
	\begin{proof}
		By \eqref{eq:pointwise} and H\"older's inequality,
		\begin{align*}
			\norm{\partial_x F}_{L_\sharp^2(0,L)}^2\leq C\int_0^L h^{-2/3}\dx
			\leq C L^{1/3}\cS^{2/3}.
		\end{align*}
		The energy bound therefore implies
		\begin{align}
			\left|\rho_s\int_0^L \partial_t h \partial_x F \dx\right|\leq C\cS^{1/3}.\label{eq:beam-pairing}
		\end{align}
		Further by vertical integration of \eqref{eq:test-pointwise}, we obtain
		\begin{align*}
			\norm{\bw}_{L^2(\Omega_h)}^2
			\leq C\left(1+\int_0^L h^{-1/3}\dx\right)
			\leq C(1+\cS^{1/3}),
		\end{align*}
		and hence
		\begin{align}
			\left|\rho_f\int_{\Omega_h}\bu\cdot\bw\dA\right|
			\leq C(1+\cS^{1/6}).\label{eq:fluid-pairing}
		\end{align}
		Since both exponents are strictly below one, the sum of the absolute values in \eqref{eq:beam-pairing}--\eqref{eq:fluid-pairing} is at most $3\mu \cS+C_*-3\mu$ for a sufficiently large fixed $C_*\geq3\mu$. Consequently, $\mathcal Z+C_*\geq3\mu(1+\cS)$. The upper bound follows from the same estimates.
	\end{proof}
	
	\subsection{Proof of Theorem~\ref{thm:separation}}\label{sec:separation-proof}
	We first work on a compact interval before the maximal existence time. The coupled test $(\bw,\partial_x F)$ is admissible by Lemma~\ref{lem:distance-admissibility}. Testing the fluid equation \eqref{eq:fluid} by $\bw$ and the beam equation \eqref{eq:beam} by $\partial_x F$ yields
	\begin{align}
		&\rho_f\int_{\Omega_h}(\partial_t \bu +\bu\cdot\nabla\bu)\cdot\bw\dA
		+2\mu\int_{\Omega_h}D\bu:D\bw\dA
		+\rho_s\int_0^L \partial_t^2 h \partial_x F \dx \nonumber\\
		&\qquad =\beta\int_0^L \partial_x^2 h \partial_x F \dx+\alpha\int_0^L \partial_x^3 h \partial_x^2 F \dx,\label{eq:coupled-test}
	\end{align}
	where the traction term cancels after integration by parts. For the beam pairing,
	\begin{align}
		\int_0^L \partial_t^2 h \partial_x F \dx
		=\partial_t\int_0^L \partial_t h \partial_x F \dx+\int_0^L \partial_x\partial_t h \partial_t F \dx.\label{eq:beam-pairing-time}
	\end{align}
	By Reynolds transport,
	\begin{align*}
		\partial_t\int_{\Omega_h}\bu\cdot\bw\dA
		&=\int_{\Omega_h}(\partial_t \bu \cdot\bw+\bu\cdot \partial_t \bw)\dA
		+\int_0^L (\partial_t h)^2 \partial_x F \dx.
	\end{align*}
	Combining this identity with the convective integration by parts in the proof of Lemma~\ref{lem:fluid-remainders}, the boundary contributions cancel in the material-derivative pairing, and we obtain
	\begin{align}
		\int_{\Omega_h}(\partial_t \bu +\bu\cdot\nabla\bu)\cdot\bw\dA
		=\partial_t\int_{\Omega_h}\bu\cdot\bw\dA
		-\int_{\Omega_h}\bu\cdot(\partial_t \bw +\bu\cdot\nabla\bw)\dA.
		\label{eq:fluid-pairing-time}
	\end{align}
	On every compact positive-gap interval, the admissibility and the estimates below show that $\mathcal Z$ is absolutely continuous. The following identities hold almost everywhere and in their time-integrated form. Substituting \eqref{eq:distance-viscosity}, \eqref{eq:beam-pairing-time}, and \eqref{eq:fluid-pairing-time} into \eqref{eq:coupled-test} yields
	\begin{align}
		\partial_t \mathcal Z+\cB={}&\rho_s\int_0^L\partial_x\partial_t h\,\partial_tF\dx
		+\mu\int_{\Omega_h}\partial_x^2\psi(\partial_xu_2-2\partial_yu_1)\dA\nonumber\\
		&-12\mu\varepsilon^2\int_0^L\frac{Q}{h^{5/3}}\partial_x^2F\dx
		-\rho_f\int_{\Omega_h}\bu\cdot(\partial_t\bw+\bu\cdot\nabla\bw)\dA.
	\end{align}
	Here $\cB$ is defined in \eqref{eq:elastic-form} as the right-hand side of \eqref{eq:coupled-test}, which is coercive by Lemma \ref{lem:elastic}. Then recalling Lemmata~\ref{lem:time}, \ref{lem:defect}, and \ref{lem:fluid-remainders}, one has
	\begin{align*}
		\partial_t (\mathcal Z+C_*) +\frac12\cB
		\leq C(1+\cS)\cD+C\sqrt{(1+\cS)\cD}+C\cD.
	\end{align*}
	Using \eqref{eq:functional-equivalence}, we conclude that
	\begin{align}
		\partial_t (\mathcal Z+C_*) +\frac12\cB\leq C(1+\cD)(\mathcal Z+C_*).
	\end{align}
	Every coefficient on the right is controlled by the natural energy. Since $\cB\geq0$, Gronwall's inequality and \eqref{eq:energy-identity} imply
	\begin{align*}
		(\mathcal Z+C_*)(t)&\leq (\mathcal Z+C_*)(0)\exp\left(C\left[t+\int_0^t\cD(s)\,\mathrm ds\right]\right)
		\leq C(1+\cS(0))\exp\left(C\left[t+\frac{\cE(0)}\mu\right]\right).
	\end{align*}
	Absorbing the energy-dependent factor into $C_1$ proves \eqref{eq:entropy-bound}. Lemma~\ref{lem:geometry} then further implies \eqref{eq:gap-bound}. The constants are independent of the compact interval and of its positive minimum height, so the estimates hold for all $t<T_*$. \qed
	\begin{remark}
		Theorem~\ref{thm:separation} is an energy-controlled estimate for solutions in Definition~\ref{def:strong}. The regularity of that class justifies the coupled test through Lemma~\ref{lem:distance-admissibility}, while the resulting bound on each finite interval $[0,T]$ depends only on $T$, the mechanical energy, the conserved mean height, the period, the physical parameters, and the initial reciprocal-height integral. In Section~\ref{sec:global-proof}, these estimates hold uniformly for the exact higher strong approximations and pass to the constructed solution through the uniform convergence of the heights and their common positive lower bound on each finite interval. Therefore, one may probably ask if the uniform separation property can hold true for weak solutions fulfilling the coupled weak formulation. 
	\end{remark} 
	
	\section{Uniform Regularity}\label{sec:lower-estimates}
	Throughout this section, we work on a fixed interval $[0,T]$ with an exact higher strong solution of \eqref{eq:model}, in the class supplied by \cite[Theorems~1.1 and~4.7]{SchwarzacherSu2025} and used in Section~\ref{sec:global-proof}. We assume that $h(t,x)\geq \min h>0$ on this interval. Higher regularity is used to justify the coupled tests, with regularization of the test functions where needed. Apart from their stated dependence on $T$, the constants below depend only on the mechanical energy, $(\min h)^{-1}$, the conserved mean height, the period, and the physical parameters, together with the lower initial norms when initial values occur. We retain the physical dissipation $\cD$ as an integrable coefficient and combine tangential and filtered tests with a zero-trace fluid acceleration test. Section~\ref{sec:global-proof} transfers these bounds to the lower strong solution.
	
	\subsection{The transformed system and main estimate}
	To study the regularity property, it is useful to transform the Eulerian moving domain to a fixed reference configuration.
	We employ the change of variables $\chi_h(t,x,z)=(x,h(t,x)z)$, with $x$ periodic and $0<z<1$, and write $\widehat\bu=\bu\circ\chi_h$ and $\widehat p=p\circ\chi_h$. The geometric matrices are
	\begin{align}
		B_h=\begin{pmatrix}h&-z\partial_xh\\0&1\end{pmatrix},
		\qquad
		A_h=\begin{pmatrix}h&-z\partial_xh\\-z\partial_xh&(1+z^2|\partial_xh|^2)/h\end{pmatrix}.
	\end{align}
	By $\chi_h$ we know the transformed velocity is not necessarily solenoidal and the nonhomogeneous boundary data varies. To preserve the properties, one adopts the so-called \emph{Piola transform} such that 
	\begin{align}
		\bv=B_h^\top\widehat\bu,
		\label{eq:piola-velocity}
	\end{align}
	fulfilling
	\begin{align}
		\Div\bv=0,\qquad
		\bv|_{z=1}=\partial_th\be_2,\qquad
		\bv|_{z=0}=0,\qquad
		\widehat\bu=B_h^{-\top}\bv.
	\end{align}
	For every smooth periodic test pair $(\bphi,\zeta)$ satisfying $\Div\bphi=0$, $\bphi|_{z=0}=0$, and $\bphi|_{z=1}=\zeta\be_2$, the coupled equations then take the form
	\begin{align}
		&\rho_f\int_0^L\int_0^1 h(B_h^{-\top}\partial_t\bv)\cdot(B_h^{-\top}\bphi)\dz\,\mathrm dx
		+\rho_s\int_0^L\partial_t^2h\,\zeta\dx\nonumber\\
		&\quad+\mu\sum_{i,j=1}^2\int_0^L\int_0^1(A_h)_{ij}
		\partial_i\widehat\bu\cdot\partial_j(B_h^{-\top}\bphi)\dz\,\mathrm dx
		+\alpha\int_0^L\partial_x^2h\,\partial_x^2\zeta\dx
		+\beta\int_0^L\partial_xh\,\partial_x\zeta\dx
		\label{eq:lower-coupled}\\
		&\quad+\rho_f\int_0^L\int_0^1h(\partial_tB_h^{-\top})\bv\cdot B_h^{-\top}\bphi\dz\,\mathrm dx
		+\rho_f\int_0^L\int_0^1
		\big[(\widehat\bu-\partial_t\chi_h)\cdot(B_h\nabla)\big]\widehat\bu
		\cdot B_h^{-\top}\bphi\dz\,\mathrm dx=0.\nonumber
	\end{align}
	Here $\partial_1=\partial_x$ and $\partial_2=\partial_z$. The transport field satisfies
	\begin{align}
		B_h^\top(\widehat\bu-\partial_t\chi_h)
		=\bv-z\partial_th\be_2.
	\end{align}
	and the transformed fluid momentum equation is
	\begin{align}
		&\rho_fhB_h^{-1}B_h^{-\top}\partial_t\bv
		-\mu B_h^{-1}\Div\big(A_h\nabla(B_h^{-\top}\bv)\big)
		+\nabla\widehat p\nonumber\\
		&\quad=-\rho_fhB_h^{-1}(\partial_tB_h^{-\top})\bv
		-\rho_fB_h^{-1}
		\big[(\widehat\bu-\partial_t\chi_h)\cdot(B_h\nabla)\big]\widehat\bu.
		\label{eq:lower-strong-fluid}
	\end{align}
	Here, we have used by the chain rule that
	\begin{align*}
		(\nabla p)\circ\chi_h&=h^{-1}B_h\nabla\widehat p,\qquad
		(\Delta\bu)\circ\chi_h=h^{-1}\Div(A_h\nabla\widehat\bu),\\
		(\partial_t\bu+\bu\cdot\nabla\bu)\circ\chi_h
		&=\partial_t\widehat\bu+h^{-1}\big[(\widehat\bu-\partial_t\chi_h)\cdot(B_h\nabla)\big]\widehat\bu.
	\end{align*}
	with the Jacobian of $\chi_h$ that is $h$.

	Then we have the regularity estimate.	
	\begin{proposition}[Lower-order regularity estimate]\label{prop:lower-estimates}
		Let $0<T<\infty$ and let $(h,\bu,p)$ be an exact higher strong solution satisfying the standing assumptions of this section, with $h\geq \min h>0$ on $[0,T]\times[0,L]$. Then
		\begin{align}
			&\sup_{0\leq t\leq T}\left(
			\norm{\bv(t)}_{H^1(\Omega_1)}^2
			+\norm{h(t)}_{H_\sharp^3(0,L)}^2
			+\norm{\partial_th(t)}_{H_\sharp^1(0,L)}^2\right)\nonumber\\
			&\quad+\int_0^T\left(
			\norm{\partial_t\bv}_{L^2(\Omega_1)}^2
			+\norm{\bv}_{H^2(\Omega_1)}^2
			+\norm{\widehat p}_{H^1(\Omega_1)}^2\right)\dt\nonumber\\
			&\quad+\int_0^T\left(
			\norm{h}_{H_\sharp^{7/2}(0,L)}^2
			+\norm{\partial_th}_{H_\sharp^{3/2}(0,L)}^2
			+\norm{\partial_t^2h}_{H_\sharp^{-1/2}(0,L)}^2\right)\dt\leq C_T.
			\label{eq:lower-bound}
		\end{align}
		The constant $C_T > 0$ depends on $T$, $(\min h)^{-1}$, the initial mechanical energy, the mean height, the period, the physical parameters, and the three initial norms in \eqref{eq:data-regularity}.   
	\end{proposition}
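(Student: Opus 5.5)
The proof will follow the strategy announced in the introduction. It consists of three tangential/filtered estimates on the fixed strip $\Omega_1$, each posed with the Piola velocity $\bv$ in \eqref{eq:lower-coupled}, followed by a nonstationary Stokes estimate. A single Gr\"onwall argument closes them, with the dissipation $\cD$ (integrable by \eqref{eq:energy-identity}) as coefficient. The bound on $(\min h)^{-1}$ from Theorem~\ref{thm:separation} is treated as a fixed constant.

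\emph{Step 1 (tangential $H^1$-level estimate).} Test \eqref{eq:lower-coupled} with $(\bphi,\zeta)=(-\partial_x^2\bv,-\partial_t\partial_x^2h)$. This pair is admissible because $\partial_x$ commutes with the divergence and with the traces on $\{z=0,1\}$. The inertial and elastic parts produce $\frac{\mathrm d}{\mathrm dt}$ of $\norm{\sqrt h B_h^{-\top}\partial_x\bv}_{L^2}^2+\rho_s\norm{\partial_t\partial_xh}^2+\alpha\norm{\partial_x^3h}^2+\beta\norm{\partial_x^2h}^2$, up to commutators. The viscous part gives the coercive term $\mu\norm{\nabla\partial_x\bv}_{L^2(\Omega_1)}^2$. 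The upper trace then yields $\norm{\partial_th}_{L^2H^{3/2}}$ by the trace theorem. Commutators with $A_h,B_h$ involve at most $\partial_x^3h$ multiplied by $\nabla\bv$ and are bounded in the form $\delta\norm{\bv}_{H^2}^2+C(1+\cD)Y$, where $Y$ is the left-hand energy. The convective terms are handled with two-dimensional Ladyzhenskaya/Agmon inequalities in the same way.

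\emph{Step 2 (height gain and negative-order acceleration).} Construct a solenoidal lift $\bphi$ of $(1-\partial_x^2)^{3/2}(h-H)\be_2$ via a stream function, as in \eqref{eq:stream-function}, and test with it. The bending term then gives $\alpha\norm{h}_{H^{7/2}}^2$. The remaining terms are a time derivative plus quantities controlled by Step 1 and by $\norm{\bv}_{H^2}\norm{h}_{H^{5/2}}$; this gives Lemma~\ref{lem:lower-height}. Next, test with $((N^2-\partial_x^2)^{-1/2}\partial_t\bv,(N^2-\partial_x^2)^{-1/2}\partial_t^2h)$. This yields $\norm{\partial_t^2h}_{L^2H^{-1/2}}^2$ and $\norm{\partial_t\bv}_{L^2H^{-1/2}_x L^2_z}$-type control. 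The viscous commutator between the multiplier and the variable coefficients $A_h$ has size $N^{-2}\norm{\bv}_{H^2}^2$, so fixing $N$ large makes it absorbable.

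\emph{Step 3 (Stokes closure).} Subtract a stationary solenoidal Stokes lift $\bv_b$ of $\partial_th\,\be_2$. Then $\norm{\partial_t\bv_b}_{L^2L^2}\lesssim\norm{\partial_t^2h}_{L^2H^{-1/2}}$ and $\norm{\bv_b}_{H^2}\lesssim\norm{\partial_th}_{H^{3/2}}$. Test the zero-trace remainder by its time derivative in \eqref{eq:lower-strong-fluid}, and use Stokes regularity for variable coefficients, which holds uniformly under a lower bound on $\min h$. This gives $\norm{\partial_t\bv}_{L^2}^2+\norm{\bv}_{H^2}^2+\norm{\widehat p}_{H^1}^2$ together with $\sup\norm{\bv}_{H^1}^2$. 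Summing Steps 1 through 3 with suitable small weights absorbs every $\delta$ term. Gr\"onwall's inequality with integrable coefficient $1+\cD$ then produces $C_T$ depending only on the initial norms in \eqref{eq:data-regularity}. No initial acceleration enters, since the acceleration only appears under the time integral.

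The main obstacle is Step 2's negative-order acceleration estimate. The multiplier does not commute with the $x$-dependent Piola geometry, and the resulting viscous and inertial commutators must be shown to be small, of order $N^{-1}$ or $N^{-2}$, in norms already controlled. Otherwise the $H^{-1/2}$ bound for $\partial_t^2h$ cannot be fed back into Step 3. I would first attempt this using one-dimensional Kato--Ponce commutator estimates in $x$ at each fixed $z$, with $h\in H^3\subset W^{2,\infty}$.
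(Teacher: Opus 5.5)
Your outline follows the paper's architecture: the tangential test $(-\partial_x^2\bv,-\partial_t\partial_x^2h)$, a solenoidal lift with upper trace $\Lambda^3(h-H)\be_2$, the filtered test $(\Lambda_N^{-1}\partial_t\bv,\Lambda_N^{-1}\partial_t^2h)$, subtraction of the stationary Stokes lift of $\partial_th\,\be_2$, and a linear Gr\"onwall argument with coefficient $1+\cD$. The gap is in the step you flag as the main obstacle, and the tool you propose there breaks the closure.

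For Gr\"onwall to bound $\cE_1$ on an \emph{arbitrary} finite $[0,T]$, every structural constant must depend only on the mechanical energy and the gap, with $\cE_1$ entering only through $(1+\cD)\cE_1$. This applies to:
\begin{itemize}
\item the threshold $N_0$;
\item the positivity of $\int(hB_h^{-1}B_h^{-\top}\partial_t\bv)\cdot\Lambda_N^{-1}\partial_t\bv$, which is not automatic since $\Lambda_N^{-1}$ does not commute with the coefficient;
\item the constant in front of $N^{-2}\int\norm{\bv}_{H^2(\Omega_1)}^2$;
\item hence all the $C_N$ that end up in the Gr\"onwall exponent.
\end{itemize}
Kato--Ponce bounds with $h\in H^3\subset W^{2,\infty}$ need $\norm{\partial_x g}_{L^\infty}$ for the principal coefficients $g$, i.e.\ $\norm{\partial_x^2h}_{L^\infty}\lesssim\cE_1^{1/4}$. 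That is one of the quantities you are estimating. So $N$ must be chosen as a function of $M=\sup_{[0,T]}\cE_1$, and the coefficient to be absorbed becomes $CN^{-2}$ times a power of $M$. You then arrive at $M\leq C\cE_1(0)\exp\bigl(C_{N(M)}(T+\cE(0)/\mu)\bigr)$ with $C_{N(M)}$ increasing in $M$, which closes only on a short time interval.

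The missing ingredient is Lemma~\ref{lem:lower-commutator}. Its estimates \eqref{eq:lower-sharp-commutator} and \eqref{eq:lower-positive-mass} use only $g\in H_\sharp^1(0,L)$, i.e.\ a $C^{1/2}$ modulus, at the price of weaker decay in $N$. They are applied only to the principal coefficients $hB_h^{-1}B_h^{-\top}$ and $(A_h)_{ij}B_h^{-1}B_h^{-\top}$. These depend on $h,h^{-1},\partial_xh$ alone, so their $H_\sharp^1$ norms are controlled by the energy and gap, and $N$ can be fixed once and for all. The coefficient families in \eqref{eq:lower-coefficient-expansion} that contain $\partial_x^2h$ or $\partial_x^3h$ multiply only $\bv$ or $\nabla\bv$. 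For those, the rough bound \eqref{eq:lower-rough-commutator} with an $N$-dependent constant suffices, and the factors $\norm{\partial_x^3h}_{L_\sharp^2}$ and $\norm{h}_{H_\sharp^{7/2}}$ enter linearly. This is why Lemma~\ref{lem:lower-height} must precede the acceleration estimate.

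The same requirement applies to your Step~3. The stationary Stokes constant must depend only on $\norm{h}_{H_\sharp^2}$ and the gap; the paper uses the Schwarzacher--Su estimate with boundary in $B^2_{2,2}$. It cannot depend on Lipschitz bounds for the flattened coefficients.

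A smaller point concerns your Step~2 lift. A stream function with fixed vertical profile does not gain the half derivative of the Stokes lift $\mathscr E$ of Lemma~\ref{lem:lower-lift}. You get $\norm{\bphi}_{L^2(\Omega_1)}\sim\norm{h}_{H_\sharp^3}$ and $\norm{\bphi}_{H^1(\Omega_1)}\sim\norm{h}_{H_\sharp^4}$, not the $\norm{h}_{H_\sharp^{5/2}}$ bookkeeping you state. This is repairable: integrate the viscous pairing by parts onto $\bv$, and absorb the resulting $\delta\norm{\bv}_{H^2}^2$ after $N$ is fixed.
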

	
	\subsection{Geometric coefficients estimates}\label{sec:lower-coefficients}
	We first make the coefficient and product estimates explicit.
	Direct calculation gives
	\begin{align*}
		B_h^{-\top}
		&=\begin{pmatrix}
			h^{-1}&0\\
			z\partial_xh/h&1
		\end{pmatrix},\quad
		\partial_xB_h^{-\top}
		=\begin{pmatrix}
			-\partial_xh/h^2&0\\
			z\big(\partial_x^2h/h-|\partial_xh|^2/h^2\big)&0
		\end{pmatrix},\\
		\partial_x^2B_h^{-\top}
		&=\begin{pmatrix}
			-\partial_x^2h/h^2+2|\partial_xh|^2/h^3&0\\
			z\big(\partial_x^3h/h
			-3\partial_xh\,\partial_x^2h/h^2
			+2(\partial_xh)^3/h^3\big)&0
		\end{pmatrix},\quad
		\partial_z\partial_xB_h^{-\top}
		=\begin{pmatrix}
			0&0\\
			\partial_x^2h/h-|\partial_xh|^2/h^2&0
		\end{pmatrix}.
	\end{align*}
	Moreover,
	\begin{align*}
		\partial_xA_h
		=\begin{pmatrix}
			\partial_xh&-z\partial_x^2h\\
			-z\partial_x^2h&
			\displaystyle
			\frac{2z^2\partial_xh\,\partial_x^2h}{h}
			-\frac{(1+z^2|\partial_xh|^2)\partial_xh}{h^2}
		\end{pmatrix}.
	\end{align*}
	Since the energy and separation bounds control $h$, $h^{-1}$, and $\partial_xh$ in $L_\sharp^\infty(0,L)$, these formulas imply
	\begin{gather}
		\norm{A_h}_{L^\infty(\Omega_1)}
		+\norm{B_h^{-\top}}_{L^\infty(\Omega_1)}
		+\norm{\partial_zB_h^{-\top}}_{L^\infty(\Omega_1)}
		\leq C,\\
		|\partial_xA_h|+|\partial_xB_h^{-\top}|
		\leq C(1+|\partial_x^2h|),\quad
		|\nabla\partial_xB_h^{-\top}|
		\leq C(1+|\partial_x^2h|+|\partial_x^3h|). \label{eq:geometric-bound}
	\end{gather}
	In particular,
	\begin{align*}
		\norm{\partial_xA_h}_{L_\sharp^2(0,L;L^\infty(0,1))}
		+\norm{\partial_xB_h^{-\top}}_{L_\sharp^2(0,L;L^\infty(0,1))}
		\leq C.
	\end{align*}
	Notice that $\nabla\partial_xB_h^{-\top}$ contains $\partial_x^3h$ only linearly.
	
	We next record the estimates used below. By \eqref{eq:lower-energy-products} and \eqref{eq:lower-anisotropic},
	\begin{align*}
		\norm{\mathbf v}_{L_\sharp^\infty(0,L;L^2(0,1))}
		&\leq C(1+\cD)^{1/4},\\
		\norm{\partial_x\mathbf v}_{L_\sharp^\infty(0,L;L^2(0,1))}
		&\leq C\norm{\partial_x\mathbf v}_{L^2(\Omega_1)}^{1/2}
		\norm{\partial_x\mathbf v}_{H^1(\Omega_1)}^{1/2}
		\leq C(1+\cD)^{1/4}
		\norm{\partial_x\mathbf v}_{H^1(\Omega_1)}^{1/2},\\
		\norm{\nabla\mathbf v}_{L_\sharp^\infty(0,L;L^2(0,1))}
		&\leq C\norm{\nabla\mathbf v}_{L^2(\Omega_1)}^{1/2}
		\left(
		\norm{\nabla\mathbf v}_{L^2(\Omega_1)}
		+\norm{\partial_x\nabla\mathbf v}_{L^2(\Omega_1)}
		\right)^{1/2}\\
		&\leq C(1+\cD)^{1/2}
		+C(1+\cD)^{1/4}
		\norm{\partial_x\mathbf v}_{H^1(\Omega_1)}^{1/2}.
	\end{align*}
	Then the products involving a coefficient depending only on $x$ satisfy
	\begin{align*}
		\norm{\partial_x^2h\,\nabla\mathbf v}_{L^2(\Omega_1)}
		&\leq
		\norm{\partial_x^2h}_{L_\sharp^2(0,L)}
		\norm{\nabla\mathbf v}_{L_\sharp^\infty(0,L;L^2(0,1))}
		\leq C(1+\cD)^{1/2}
		+C(1+\cD)^{1/4}
		\norm{\partial_x\mathbf v}_{H^1(\Omega_1)}^{1/2},\\
		\norm{\partial_x^3h\,\mathbf v}_{L^2(\Omega_1)}
		&\leq
		\norm{\partial_x^3h}_{L_\sharp^2(0,L)}
		\norm{\mathbf v}_{L_\sharp^\infty(0,L;L^2(0,1))}
		\leq C\norm{\partial_x^3h}_{L_\sharp^2(0,L)}
		(1+\cD)^{1/4},\\
		\norm{\partial_x^2h\,\mathbf v}_{L^2(\Omega_1)}
		&\leq C(1+\cD)^{1/4}.
	\end{align*}
	Expanding the derivative of $(\partial_xB_h^{-\top})\mathbf v$ and applying the preceding product estimates, we obtain
	\begin{align}
		\norm{\nabla((\partial_xB_h^{-\top})\mathbf v)}_{L^2(\Omega_1)}
		&\leq C\left(
		\norm{\mathbf v}_{H^1(\Omega_1)}
		+\norm{\partial_x^2h\,\mathbf v}_{L^2(\Omega_1)}
		+\norm{\partial_x^3h\,\mathbf v}_{L^2(\Omega_1)}
		+\norm{\partial_x^2h\,\nabla\mathbf v}_{L^2(\Omega_1)}
		\right)\nonumber\\
		&\leq
		C\left(1+\norm{\partial_x^3h}_{L_\sharp^2(0,L)}\right)
		(1+\cD)^{1/2}
		+C(1+\cD)^{1/4}
		\norm{\partial_x\mathbf v}_{H^1(\Omega_1)}^{1/2}.
		\label{eq:lower-matrix-product}
	\end{align}
	
	To estimate the column supremum of $\nabla\widehat\bu$, first observe that the change of variables and the multiplication bounds from Lemma~\ref{lem:lower-coefficients} yield
	\begin{align*}
		\norm{\nabla\widehat\bu}_{L^2(\Omega_1)}
		&\leq C\cD^{1/2},\\
		\norm{\nabla(B_h^{-\top}\partial_x\mathbf v)}_{L^2(\Omega_1)}
		&\leq C\norm{\partial_x\mathbf v}_{H^1(\Omega_1)}.
	\end{align*}
	The second estimate uses only the energy bound for $\partial_x^2h$. Namely, the coefficient product arising when the derivative falls on $B_h^{-\top}$ satisfies
	\begin{align*}
		\norm{\partial_x^2h\,\partial_x\mathbf v}_{L^2(\Omega_1)}
		\leq
		\norm{\partial_x^2h}_{L_\sharp^2(0,L)}
		\norm{\partial_x\mathbf v}_{L_\sharp^\infty(0,L;L^2(0,1))}
		\leq C\norm{\partial_x\mathbf v}_{H^1(\Omega_1)}.
	\end{align*}
	Since $\widehat\bu=B_h^{-\top}\mathbf v$, we have
	\begin{align*}
		\partial_x\nabla\widehat\bu
		=\nabla(B_h^{-\top}\partial_x\mathbf v)
		+\nabla((\partial_xB_h^{-\top})\mathbf v).
	\end{align*}
	Using \eqref{eq:lower-matrix-product} and Young's inequality, we obtain
	\begin{align*}
		\norm{\partial_x\nabla\widehat\bu}_{L^2(\Omega_1)}
		\leq C\norm{\partial_x\mathbf v}_{H^1(\Omega_1)}
		+C\left(1+\norm{\partial_x^3h}_{L_\sharp^2(0,L)}\right)
		(1+\cD)^{1/2}.
	\end{align*}
	By \eqref{eq:lower-anisotropic}, applied to $\nabla\widehat\bu$, one concludes
	\begin{align}
		\norm{\nabla\widehat\bu}_{L_\sharp^\infty(0,L;L^2(0,1))}
		&\leq C\norm{\nabla\widehat\bu}_{L^2(\Omega_1)}^{1/2}
		\left(
		\norm{\nabla\widehat\bu}_{L^2(\Omega_1)}
		+\norm{\partial_x\nabla\widehat\bu}_{L^2(\Omega_1)}
		\right)^{1/2}\nonumber\\
		&\quad\leq
		C(1+\cD)^{1/4}
		\norm{\partial_x\mathbf v}_{H^1(\Omega_1)}^{1/2}
		+C(1+\cD)^{1/2}
		\left(1+\norm{\partial_x^3h}_{L_\sharp^2(0,L)}\right)^{1/2}.
		\label{eq:lower-gradient-column}
	\end{align}
	
	Before deriving the estimates, we define the $H^1$-level functional
	\begin{align}
		\cE_1(t)\coloneqq1+\norm{\bv(t)}_{H^1(\Omega_1)}^2
		+\norm{h(t)}_{H_\sharp^3(0,L)}^2
		+\norm{\partial_th(t)}_{H_\sharp^1(0,L)}^2.
	\end{align}
	Moreover, we would like to establish certain equivalency between physical norms and reference norms, by controlling the geometric quantities from transformation.
	\begin{lemma}\label{lem:lower-coefficients}
		Under the standing assumptions of this section, the matrices $A_h$ and $B_h^{-1}B_h^{-\top}$ are uniformly elliptic and have uniformly bounded entries in $L^\infty(0,1;H_\sharp^1(0,L))$. Moreover,
		\begin{gather}
			C^{-1}\norm\bu_{H^1(\Omega_h)}
			\leq\norm{\bv}_{H^1(\Omega_1)}
			\leq C\norm\bu_{H^1(\Omega_h)},\nonumber\\
			\norm{\bv}_{H^1(\Omega_1)}^2\leq C\cD,
			\qquad
			\norm{\bv}_{L_\sharp^\infty(0,L;L^2(0,1))}^2\leq C(1+\cD)^{1/2}.
			\label{eq:lower-energy-products}
		\end{gather}
		For every $x$-periodic $\mathbf z\in H^1(\Omega_1)$ with zero bottom trace,
		\begin{align}
			C^{-1}\norm{\mathbf z}_{H^1(\Omega_1)}^2
			&\leq\sum_{i,j=1}^2\int_{\Omega_1}(A_h)_{ij}
			\partial_i(B_h^{-\top}\mathbf z)\cdot\partial_j(B_h^{-\top}\mathbf z)\dx\dz
			\leq C\norm{\mathbf z}_{H^1(\Omega_1)}^2.
			\label{eq:lower-uniform-coercivity}
		\end{align}
		The physical and transformed second-order spatial norms satisfy
		\begin{align}
			\norm{\bv}_{H^2(\Omega_1)}^2
			\leq C\norm\bu_{H^2(\Omega_h)}^2+C(1+\cD)\cE_1,\quad
			\norm\bu_{H^2(\Omega_h)}^2
			\leq C\norm{\bv}_{H^2(\Omega_1)}^2+C(1+\cD)\cE_1.
			\label{eq:lower-norm-spatial}
		\end{align}
		and their time derivatives satisfy
		\begin{align}
			\norm{\partial_t\bu}_{L^2(\Omega_h)}^2
			\leq C\norm{\partial_t\bv}_{L^2(\Omega_1)}^2+C(1+\cD)\cE_1,\quad
			\norm{\partial_t\bv}_{L^2(\Omega_1)}^2
			\leq C\norm{\partial_t\bu}_{L^2(\Omega_h)}^2+C(1+\cD)\cE_1.
			\label{eq:lower-norm-time}
		\end{align}
		For every $\delta>0$, the two transport terms obey
		\begin{align}
			&\norm{h(\partial_tB_h^{-\top})\bv}_{L^2(\Omega_1)}^2
			+\norm{[(\widehat\bu-\partial_t\chi_h)\cdot(B_h\nabla)]\widehat\bu}_{L^2(\Omega_1)}^2
			\leq\delta\norm{\bv}_{H^2(\Omega_1)}^2+C_\delta(1+\cD)\cE_1.
			\label{eq:lower-nonlinear}
		\end{align}
	\end{lemma}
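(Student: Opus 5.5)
The plan is to verify each assertion of Lemma~\ref{lem:lower-coefficients} by explicit computation with the matrices $B_h$, $A_h$, $B_h^{-\top}$ and their derivatives recorded above. The inputs are the pointwise bounds $C^{-1}\leq h\leq C$ and $|\partial_xh|\leq C$, which follow from \eqref{eq:energy-geometry}, \eqref{eq:slope-cubic} and the standing lower bound $\min h>0$, together with $\norm{\partial_x^2h}_{L_\sharp^2(0,L)}\leq C$ from the energy. With these, the entries of $A_h$ and $B_h^{-1}B_h^{-\top}$ are bounded. They are also uniformly elliptic: $\det A_h=1$ and $\operatorname{tr}A_h\leq C$, so both eigenvalues lie in $[C^{-1},C]$, and $B_h^{-1}B_h^{-\top}$ is the inverse Gram matrix of a matrix with bounded entries and determinant $h$. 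Their $x$-derivatives involve at most $\partial_x^2h$ linearly, which gives the $L^\infty(0,1;H_\sharp^1(0,L))$ bound.

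For the norm equivalences I would write $\bv=B_h^\top\widehat\bu$ and $\widehat\bu=B_h^{-\top}\bv$, and use the chain rule for $\nabla(\bu\circ\chi_h)$. Derivatives that fall on the coefficients produce products $\partial_x^2h\,\bv$, which are controlled in $L^2$ by $\norm{\partial_x^2h}_{L^2_\sharp}\norm{\bv}_{L^\infty_x L^2_z}$. For this I would use the anisotropic embedding $\norm{f}_{L^\infty_xL^2_z}^2\leq C\norm f_{L^2}(\norm f_{L^2}+\norm{\partial_xf}_{L^2})$, which is the estimate referred to as \eqref{eq:lower-anisotropic}. This gives the $H^1$ equivalence. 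The bound $\norm{\bv}_{H^1}^2\leq C\cD$ follows from the physical Poincaré inequality \eqref{eq:column-poincare} and the Jacobian bounds. The column bound follows from the same embedding together with $\norm{\bv}_{L^2}^2\leq C\cE(0)$, which gives $C\cE(0)^{1/2}\cD^{1/2}\leq C(1+\cD)^{1/2}$.

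Coercivity \eqref{eq:lower-uniform-coercivity} comes from the identity $\sum_{ij}(A_h)_{ij}\partial_i\widehat{\mathbf z}\cdot\partial_j\widehat{\mathbf z}=h\,|(\nabla \mathbf z^{\mathrm{phys}})\circ\chi_h|^2$. Here $\widehat{\mathbf z}=B_h^{-\top}\mathbf z$ and $\mathbf z^{\mathrm{phys}}$ is its push-forward. Combined with the $H^1$ equivalence and the physical Poincaré inequality (zero bottom trace), this gives both sides.

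For the second-order equivalences \eqref{eq:lower-norm-spatial}, I differentiate twice. The extra terms are $\partial_x^3h\,\bv$, $\partial_x^2h\,\nabla\bv$ and $(\partial_x^2h)^2\bv$. The first is bounded by $\norm{h}_{H^3}(1+\cD)^{1/4}$. The second is bounded by $C\norm{\partial_x^2h}_{L^2}\norm{\nabla \bv}_{L^\infty_xL^2_z}$, which after Young's inequality is $\delta\norm{\bv}_{H^2}+C(1+\cD)^{1/2}$. The third uses $\partial_x^2h\in L^\infty$, which holds since $h\in H^3$, and is bounded by $\cE_1^{1/2}\cD^{1/2}$. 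All of these squared are at most $C(1+\cD)\cE_1$ after absorbing the $\delta$-terms.

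The time equivalence \eqref{eq:lower-norm-time} uses $\partial_t\bv=B_h^\top\partial_t\widehat\bu+(\partial_tB_h^\top)\widehat\bu$ and the relation $\partial_t\widehat\bu=(\partial_t\bu)\circ\chi_h+z\partial_th(\partial_y\bu)\circ\chi_h$. The entries of $\partial_tB_h$ are $\partial_th$ and $z\partial_x\partial_th$. So the error terms are $\partial_th\,\nabla\widehat\bu$ and $\partial_x\partial_th\,\widehat\bu$. The first is bounded by $\norm{\partial_th}_{L^\infty}\cD^{1/2}\leq C\cE_1^{1/2}\cD^{1/2}$. The second is bounded by $\norm{\partial_x\partial_th}_{L^2}\norm{\widehat\bu}_{L^\infty_xL^2_z}\leq C\cE_1^{1/2}(1+\cD)^{1/4}$.

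The main obstacle is the convective estimate \eqref{eq:lower-nonlinear}, since it is genuinely quadratic in the velocity. The term $h(\partial_tB_h^{-\top})\bv$ is handled exactly as in the time equivalence. For the transport term, the transport field is $B_h^{-\top}(\bv-z\partial_th\be_2)$, so I bound it in $L^4$. By Ladyzhenskaya's inequality on $\Omega_1$ it is at most $C\norm{\bv}_{L^2}^{1/2}\norm{\bv}_{H^1}^{1/2}+C\norm{\partial_th}_{H^1}\leq C(1+\cD)^{1/4}+C\cE_1^{1/2}$. The factor $\nabla\widehat\bu$ is bounded in $L^4$ by $C\norm{\nabla\widehat\bu}_{L^2}^{1/2}\norm{\widehat\bu}_{H^2}^{1/2}$, and I would bound $\norm{\widehat\bu}_{H^2}\leq C\norm{\bv}_{H^2}+C((1+\cD)\cE_1)^{1/2}$ by the preceding step. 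The product squared is then at most $C\big((1+\cD)^{1/2}+\cE_1\big)\cD^{1/2}\norm{\bv}_{H^2}+\dots$. Young's inequality with $\delta$ gives $\delta\norm{\bv}_{H^2}^2+C_\delta(1+\cD)\cE_1^2$.

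The delicate point is avoiding the $\cE_1^2$ power. To get a linear $\cE_1$, I would instead place the boundary-velocity part of the transport field $z\partial_th$ in $L^\infty$, using $\norm{\partial_th}_{L^\infty}^2\leq C\norm{\partial_th}_{L^2}\norm{\partial_th}_{H^1}\leq C\cE_1^{1/2}$, and place $\nabla\widehat\bu$ in $L^2$. This keeps all bounds of the form $C_\delta(1+\cD)\cE_1$. If that still fails, I would accept the superlinear form, since the later Gronwall argument can tolerate it through the integrable coefficient $(1+\cD)$. I would try the linear version first.
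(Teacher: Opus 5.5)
Your proposal is correct and follows essentially the paper's route. It uses the pointwise coefficient bounds from the energy and the gap, the column embedding \eqref{eq:lower-anisotropic} for products with $\partial_x^2h$ and $\partial_x^3h$, chain-rule expansions for the second-order and temporal conversions, and Ladyzhenskaya's inequality for the convection.

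For \eqref{eq:lower-nonlinear}, the linear version you settle on is the one that works and matches the paper. The mesh term $z\partial_th\,\partial_z\widehat\bu$ is treated with $\partial_th\in L_\sharp^\infty(0,L)$, and the velocity part closes with $\cD^2\leq C\cD\,\cE_1$, which you should state explicitly. Your superlinear fallback, by contrast, is not acceptable: a remainder $(1+\cD)\cE_1^2$ cannot be closed by the linear Gronwall argument in Section~\ref{sec:lower-closure}.
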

	\begin{proof}
		The mechanical energy bounds $h$ in $H_\sharp^2(0,L)$, $\partial_th$ in $L_\sharp^2(0,L)$, and $\bv$ in $L^2(\Omega_1)$. Together with $h\geq \min h$, it controls $h$, $h^{-1}$, and $\partial_xh$ in $L_\sharp^\infty(0,L)$. The matrices $A_h$ and $B_h^{-1}B_h^{-\top}$ are therefore uniformly elliptic, and their entries are uniformly bounded in $L^\infty(0,1;H_\sharp^1(0,L))$.
		
		We shall repeatedly use the one-dimensional inequalities with values in $L^2(0,1)$,
		\begin{align}
			\norm{f}_{L_\sharp^\infty(0,L;L^2(0,1))}^2
			&\leq C\norm{f}_{L^2(\Omega_1)}\norm{f}_{L^2(0,1;H_\sharp^1(0,L))},\nonumber\\
			\norm{f}_{L_\sharp^4(0,L;L^2(0,1))}^2
			&\leq C\norm{f}_{L^2(\Omega_1)}^{3/2}\norm{f}_{L^2(0,1;H_\sharp^1(0,L))}^{1/2}.
			\label{eq:lower-anisotropic}
		\end{align}
		Here $\norm{f}_{L^2(0,1;H_\sharp^s(0,L))}^2=\int_0^1\norm{f(\cdot,z)}_{H_\sharp^s(0,L)}^2\dz$. Multiplication by $B_h^\top$ and $B_h^{-\top}$ is bounded on $H^1(\Omega_1)$ with the stated constants. The only coefficient derivative requiring an estimate is a bounded factor times $\partial_x^2h$, and
		\begin{align*}
			\norm{\partial_x^2h\,\mathbf z}_{L^2(\Omega_1)}
			\leq\norm{\partial_x^2h}_{L_\sharp^2(0,L)}
			\norm{\mathbf z}_{L_\sharp^\infty(0,L;L^2(0,1))}
			\leq C\norm{\mathbf z}_{H^1(\Omega_1)}.
		\end{align*}
		Combining the coordinate change with the fixed bottom trace, we obtain \eqref{eq:lower-energy-products}.
		
		For a field with zero bottom trace, ellipticity, Poincare's inequality, and the same multiplication bounds imply \eqref{eq:lower-uniform-coercivity}. Thus the coercivity constant also depends only on the energy and separation.
		
		Differentiating the Piola transformation twice yields \eqref{eq:lower-norm-spatial}. Indeed, the scalar composition with $\chi_h$ and its inverse is bounded on $H^2$ under an $H_\sharp^2(0,L)$ bound for $h$ and a positive gap, cf.~\cite[Proposition~2.1]{SchwarzacherSu2025}. The additional highest derivative in the Piola multiplication satisfies
		\begin{align*}
			\norm{\partial_x^3h\,\widehat\bu}_{L^2(\Omega_1)}^2
			\leq\norm{\partial_x^3h}_{L_\sharp^2(0,L)}^2
			\norm{\widehat\bu}_{L_\sharp^\infty(0,L;L^2(0,1))}^2
			\leq C(1+\cD)\cE_1.
		\end{align*}
		Products with $\partial_x^2h$ multiplying a first fluid derivative are controlled by \eqref{eq:lower-anisotropic} and Young's inequality. Products with $(\partial_x^2h)^2$ multiplying an undifferentiated fluid factor use
		$\norm{\partial_x^2h}_{L_\sharp^4(0,L)}^4\leq C(1+\norm{\partial_x^3h}_{L_\sharp^2(0,L)})$. These observations prove both directions of \eqref{eq:lower-norm-spatial}.
		
		The exact temporal conversion is
		\begin{align}
			(\partial_t\bu)\circ\chi_h
			=B_h^{-\top}\partial_t\bv+(\partial_tB_h^{-\top})\bv
			-\frac{z\partial_th}{h}\partial_z(B_h^{-\top}\bv).
			\label{eq:lower-eulerian-}
		\end{align}
		The matrix $\partial_tB_h^{-\top}$ contains bounded factors multiplying $\partial_th$ and $\partial_t\partial_xh$. Estimating the latter coefficient in $L_\sharp^2(0,L)$ against the column supremum of $\bv$ and using the one-dimensional interpolation inequality for $\partial_th$ in $L_\sharp^\infty(0,L)$, we derive \eqref{eq:lower-norm-time}.
		
		The same estimates control the geometric transport terms. For convection, we invoke the two-dimensional interpolation inequality to obtain
		\begin{align*}
			\norm{\bu\cdot\nabla\bu}_{L^2(\Omega_h)}^2
			\leq C\norm\bu_{L^2(\Omega_h)}\norm\bu_{H^1(\Omega_h)}^2\norm\bu_{H^2(\Omega_h)}.
		\end{align*}
		Using \eqref{eq:lower-norm-spatial} and $\cD\leq C\cE_1$, we obtain \eqref{eq:lower-nonlinear} for every $\delta>0$.
		
		This completes the proof. 
	\end{proof}
	
	\subsection{The tangential estimate} 
	We begin with a tangential estimate, using the fact that horizontal differentiation preserves the coupling at the top boundary.
	\begin{lemma}
		Under the standing assumptions of this section, there is a constant $c>0$ such that, for every $\delta>0$ and $0\leq t\leq T$,
		\begin{align}
			&1+\rho_f\int_{\Omega_1}h(t)|B_{h(t)}^{-\top}\partial_x\bv(t)|^2\,\mathrm dx\,\mathrm dz
			+\rho_s\norm{\partial_t\partial_xh(t)}_{L_\sharp^2(0,L)}^2\nonumber\\
			&\quad+\beta\norm{\partial_x^2h(t)}_{L_\sharp^2(0,L)}^2
			+\alpha\norm{\partial_x^3h(t)}_{L_\sharp^2(0,L)}^2
			+c\int_0^t\norm{\partial_x\bv}_{H^1(\Omega_1)}^2\,\mathrm ds\nonumber\\
			&\leq1+\rho_f\int_{\Omega_1}h_0|B_{h_0}^{-\top}\partial_x\bv_0|^2\,\mathrm dx\,\mathrm dz
			+\rho_s\norm{\partial_xh_1}_{L_\sharp^2(0,L)}^2\nonumber\\
			&\quad+\beta\norm{\partial_x^2h_0}_{L_\sharp^2(0,L)}^2
			+\alpha\norm{\partial_x^3h_0}_{L_\sharp^2(0,L)}^2\nonumber\\
			&\quad+\delta\int_0^t\left(\norm{\partial_t\bv}_{L^2(\Omega_1)}^2
			+\norm{\bv}_{H^2(\Omega_1)}^2\right)\,\mathrm ds
			+C_\delta\int_0^t(1+\cD)\cE_1\,\mathrm ds.
			\label{eq:lower-horizontal-bound}
		\end{align}
		Moreover, for almost every $t\in(0,T)$,
		\begin{align}
			\norm{\partial_th}_{H_\sharp^{3/2}(0,L)}^2
			\leq C\norm{\partial_x\bv}_{H^1(\Omega_1)}^2.
			\label{eq:lower-velocity-trace}
		\end{align}
		All constants are independent of an assumed upper bound for $\cE_1$.
	\end{lemma}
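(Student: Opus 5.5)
The plan is to test the transformed coupled formulation \eqref{eq:lower-coupled} with the tangential pair $(\bphi,\zeta)=(-\partial_x^2\bv,-\partial_t\partial_x^2h)$. This pair is admissible: horizontal differentiation commutes with $\Div$ on $\Omega_1$, preserves the zero bottom trace, and sends the top trace $\partial_th\,\be_2$ to $\partial_x^2\partial_th\,\be_2$. For the exact higher strong solution this is justified directly, or after a horizontal mollification that is removed at the end. After one horizontal integration by parts, each term becomes a pairing of $\partial_x$ of the equation against $\partial_x\bv$ or $\partial_t\partial_xh$.

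The structural terms are then exact time derivatives. They give $\frac{\mathrm d}{\mathrm dt}\bigl(\frac{\rho_s}{2}\norm{\partial_t\partial_xh}^2+\frac\beta2\norm{\partial_x^2h}^2+\frac\alpha2\norm{\partial_x^3h}^2\bigr)$.

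In the fluid inertia term, the derivative falling on $\partial_t\bv$ produces $\frac12\frac{\mathrm d}{\mathrm dt}\int h|B_h^{-\top}\partial_x\bv|^2$, up to $\partial_t$ of the coefficient $hB_h^{-1}B_h^{-\top}$. That remainder involves $\partial_th$ and $\partial_t\partial_xh$ times $|\partial_x\bv|^2$, and is bounded by $C(1+\cD)\cE_1$ using the column estimates \eqref{eq:lower-anisotropic}. The derivative falling on the coefficient is a commutator $\partial_x(hB_h^{-1}B_h^{-\top})\partial_t\bv\cdot\partial_x\bv$. Since $\partial_x$ of the coefficient is $O(1+|\partial_x^2h|)$, it is bounded by $\delta\norm{\partial_t\bv}^2+C_\delta(1+\cD)\cE_1$ through the $L^\infty_xL^2_z$ bound on $\partial_x\bv$ and Young's inequality.

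For the viscous term I write $\partial_x\bigl[(A_h)_{ij}\partial_i\widehat\bu\bigr]$ with $\widehat\bu=B_h^{-\top}\bv$. The principal part is $\sum(A_h)_{ij}\partial_i(B_h^{-\top}\partial_x\bv)\cdot\partial_j(B_h^{-\top}\partial_x\bv)$, which by \eqref{eq:lower-uniform-coercivity} (valid since $\partial_x\bv$ has zero bottom trace) is at least $c\norm{\partial_x\bv}_{H^1}^2$. The remainders are $\partial_xA_h\,\nabla\widehat\bu$ and $A_h\nabla((\partial_xB_h^{-\top})\bv)$, paired against $\nabla(B_h^{-\top}\partial_x\bv)$ and $\nabla((\partial_xB_h^{-\top})\partial_x\bv)$. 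I estimate them with \eqref{eq:geometric-bound}, \eqref{eq:lower-matrix-product} and \eqref{eq:lower-gradient-column}. Each is of the form $(1+\norm{\partial_x^3h})(1+\cD)^{1/2}\norm{\partial_x\bv}_{H^1}$ or $(1+\cD)^{1/4}\norm{\partial_x\bv}_{H^1}^{3/2}$, and Young's inequality absorbs it into $\frac c4\norm{\partial_x\bv}_{H^1}^2+C(1+\cD)\cE_1$.

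The transport terms are paired directly against $\partial_x^2\bv$, either kept as $\norm{\partial_x^2\bv}_{L^2}\le\norm{\bv}_{H^2}$ or integrated by parts once. Lemma~\ref{lem:lower-coefficients}, \eqref{eq:lower-nonlinear}, gives the bound $\delta\norm{\bv}_{H^2}^2+C_\delta(1+\cD)\cE_1$. Integrating in time yields \eqref{eq:lower-horizontal-bound}. The lower-order $1$ and the $L^2$ parts of $\norm{\partial_x\bv}_{H^1}$ are harmless, because $\cD$ and the energy control them. None of the constants involve an upper bound on $\cE_1$, since $\cE_1$ enters only linearly through Young's inequality.

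I expect the main obstacle to be the viscous commutator carrying $\partial_x^3h$. It appears only linearly through $\nabla\partial_xB_h^{-\top}$ and through $\partial_x^2h\,\nabla\bv$, and it must be kept at the level $(1+\cD)\cE_1$ rather than quadratic in $\cE_1$. The plan there is to always put $\partial_x^3h$ in $L^2_x$ against a column supremum $\norm{\bv}_{L^\infty_xL^2_z}\le C(1+\cD)^{1/4}$, so that the product stays linear in $\cE_1$.

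For \eqref{eq:lower-velocity-trace}, the top trace of $\partial_x\bv$ is $\partial_x\partial_th\,\be_2$. By the trace theorem $H^1(\Omega_1)\to H^{1/2}_\sharp(0,L)$, $\norm{\partial_x\partial_th}_{H^{1/2}}\le C\norm{\partial_x\bv}_{H^1}$. Since $\partial_th$ has zero mean, this already controls $\norm{\partial_th}_{H^{3/2}}$.
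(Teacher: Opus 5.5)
Your proposal is correct and follows the paper's proof essentially step by step. It uses the same tangential test pair $(-\partial_x^2\bv,-\partial_t\partial_x^2h)$ and the coercivity of the principal viscous part from \eqref{eq:lower-uniform-coercivity}; it controls the viscous remainders by keeping $\partial_x^3h$ in $L_\sharp^2(0,L)$ against a column supremum, bounds the transport terms via \eqref{eq:lower-nonlinear}, and obtains \eqref{eq:lower-velocity-trace} from the top trace $\partial_x\partial_th\,\be_2$ and the zero mean of $\partial_th$.

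The only slip is bookkeeping in the two inertia remainders. The column bound on $\partial_x\bv$ costs a factor $\norm{\partial_x\bv}_{H^1(\Omega_1)}^{1/2}$ and is not controlled by $\cE_1$, so neither remainder is bounded by $C(1+\cD)\cE_1$ (resp.\ $\delta\norm{\partial_t\bv}^2+C_\delta(1+\cD)\cE_1$) alone. Each one also produces a term $\eta\norm{\partial_x\bv}_{H^1(\Omega_1)}^2$, which you must absorb into the viscous coercivity, exactly as the paper does for $I_1$ and $I_2$.
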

	\begin{proof}
		We take $(\bphi,\zeta)=(-\partial_x^2\bv,-\partial_t\partial_x^2h)$ in \eqref{eq:lower-coupled}, since $\partial_x$ is the tangential direction and does not change the coupling of kinetic condition. Integration by parts in $x$ yields
		\begin{align}
			&\frac12\frac{\mathrm d}{\mathrm dt}\left[
			\int_0^L\int_0^1\rho_fh|B_h^{-\top}\partial_x\bv|^2\dz\,\mathrm dx
			+\rho_s\norm{\partial_t\partial_xh}_{L_\sharp^2(0,L)}^2
			+\beta\norm{\partial_x^2h}_{L_\sharp^2(0,L)}^2+\alpha\norm{\partial_x^3h}_{L_\sharp^2(0,L)}^2\right]\nonumber\\
			&\quad+\mu\sum_{i,j}\int_0^L\int_0^1(A_h)_{ij}
			\partial_i(B_h^{-\top}\partial_x\bv)\cdot\partial_j(B_h^{-\top}\partial_x\bv)\dz\,\mathrm dx\nonumber\\
			&=\frac12\int_0^L\int_0^1(\rho_f\partial_t(hB_h^{-1}B_h^{-\top}))\partial_x\bv\cdot\partial_x\bv\dz\,\mathrm dx
			-\int_0^L\int_0^1(\rho_f\partial_x(hB_h^{-1}B_h^{-\top}))\partial_t\bv\cdot\partial_x\bv\dz\,\mathrm dx\nonumber\\
			&\quad-\mu\sum_{i,j}\int_0^L\int_0^1(\partial_xA_h)_{ij}
			\partial_i\widehat{\bu}\cdot\partial_j(B_h^{-\top}\partial_x\bv)\dz\,\mathrm dx\nonumber\\
			&\quad-\mu\sum_{i,j}\int_0^L\int_0^1(A_h)_{ij}
			\partial_i((\partial_xB_h^{-\top})\bv)\cdot\partial_j(B_h^{-\top}\partial_x\bv)\dz\,\mathrm dx\nonumber\\
			&\quad-\mu\sum_{i,j}\int_0^L\int_0^1(A_h)_{ij}
			\partial_i\widehat{\bu}\cdot\partial_j((\partial_xB_h^{-\top})\partial_x\bv)\dz\,\mathrm dx 
			+\rho_f\int_0^L\int_0^1h(\partial_tB_h^{-\top})\bv\cdot B_h^{-\top}\partial_x^2\bv\dz\,\mathrm dx\nonumber\\
			&\quad
			+\rho_f\int_0^L\int_0^1
			\big[(\widehat\bu-\partial_t\chi_h)\cdot(B_h\nabla)\big]\widehat\bu
			\cdot B_h^{-\top}\partial_x^2\bv\dz\,\mathrm dx
			\eqqcolon \sum_{i=1}^7 I_i.
			\label{eq:lower-horizontal-identity}
		\end{align}
		By \eqref{eq:lower-uniform-coercivity}, we have
		\begin{align*}
			\left|\mu\sum_{i,j}\int_0^L\int_0^1(A_h)_{ij}
			\partial_i(B_h^{-\top}\partial_x\bv)\cdot\partial_j(B_h^{-\top}\partial_x\bv)\dz\,\mathrm dx\right|
			\geq c\norm{\partial_x\bv}_{H^1(\Omega_1)}^2,
		\end{align*}
		where $c > 0$ depends only on the mechanical energy and the positive gap.
		
		For the right-hand side, we estimate it term by term. First, the time derivative of $hB_h^{-1}B_h^{-\top}$ is a sum of bounded coefficients multiplying $\partial_th$ and $\partial_t\partial_xh$. Since $\norm{\partial_x\bv}_{L^2(\Omega_1)}\leq C(1+\cD)^{1/2}$, for every $\eta>0$, it holds
		\begin{align*}
			|I_1|
			\leq C\norm{\partial_th}_{H_\sharp^1(0,L)}
			\norm{\partial_x\bv}_{L^2(\Omega_1)}^{3/2}
			\norm{\partial_x\bv}_{H^1(\Omega_1)}^{1/2}
			&\leq\eta\norm{\partial_x\bv}_{H^1(\Omega_1)}^2
			+C_\eta(1+\cD)\norm{\partial_th}_{H_\sharp^1(0,L)}^{4/3}
			\\
			&\leq\eta\norm{\partial_x\bv}_{H^1(\Omega_1)}^2+C_\eta(1+\cD)\cE_1.
		\end{align*}
		Similarly, we know the $x$-derivative of $hB_h^{-1}B_h^{-\top}$ is bounded in $L_\sharp^2(0,L;L^\infty(0,1))$. Consequently,
		\begin{align*}
			|I_2|\leq C\norm{\partial_t\bv}_{L^2(\Omega_1)}(1+\cD)^{1/4}
			\norm{\partial_x\bv}_{H^1(\Omega_1)}^{1/2}
			\leq\eta\norm{\partial_t\bv}_{L^2(\Omega_1)}^2
			+\eta\norm{\partial_x\bv}_{H^1(\Omega_1)}^2+C_\eta(1+\cD).
		\end{align*}
		
		We now estimate the three viscous remainders in
		\eqref{eq:lower-horizontal-identity}. For the first one, applying Cauchy--Schwarz inequality in $z$ and H\"older's inequality in $x$ yields
		\begin{align*}
			|I_3|
			&\leq
			C\norm{\partial_xA_h}_{L_\sharp^2(0,L;L^\infty(0,1))}
			\norm{\nabla\widehat\bu}_{L_\sharp^\infty(0,L;L^2(0,1))}
			\norm{\nabla(B_h^{-\top}\partial_x\mathbf v)}_{L^2(\Omega_1)}\\
			&\leq
			C(1+\cD)^{1/4}
			\norm{\partial_x\mathbf v}_{H^1(\Omega_1)}^{3/2}
			+C(1+\cD)^{1/2}
			\left(1+\norm{\partial_x^3h}_{L_\sharp^2(0,L)}\right)^{1/2}
			\norm{\partial_x\mathbf v}_{H^1(\Omega_1)}\\
			&\leq
			\eta\norm{\partial_x\mathbf v}_{H^1(\Omega_1)}^2
			+C_\eta(1+\cD)\cE_1.
		\end{align*}		
		By the boundedness of $A_h$ and
		\eqref{eq:lower-matrix-product}, we have
		\begin{align*}
			|I_4|&\leq
			C\norm{\nabla((\partial_xB_h^{-\top})\mathbf v)}_{L^2(\Omega_1)}
			\norm{\partial_x\mathbf v}_{H^1(\Omega_1)}\\
			&\leq
			C\left(1+\norm{\partial_x^3h}_{L_\sharp^2(0,L)}\right)
			(1+\cD)^{1/2}
			\norm{\partial_x\mathbf v}_{H^1(\Omega_1)}
			+C(1+\cD)^{1/4}
			\norm{\partial_x\mathbf v}_{H^1(\Omega_1)}^{3/2}\\
			&\leq
			\eta\norm{\partial_x\mathbf v}_{H^1(\Omega_1)}^2
			+C_\eta(1+\cD)\cE_1.
		\end{align*}
		Since
		\begin{align*}
			\partial_j((\partial_xB_h^{-\top})\partial_x\mathbf v)
			=(\partial_xB_h^{-\top})\partial_j\partial_x\mathbf v
			+(\partial_j\partial_xB_h^{-\top})\partial_x\mathbf v,
		\end{align*}
		one obtains from \eqref{eq:geometric-bound} that
		\begin{align*}
			|I_5|&\leq
			C\int_{\Omega_1}
			(1+|\partial_x^2h|)|\nabla\widehat\bu|
			\big(
			|\nabla\partial_x\mathbf v|+|\partial_x\mathbf v|
			\big)\dx\dz
			+C\int_{\Omega_1}
			|\partial_x^3h|\,|\nabla\widehat\bu|\,
			|\partial_x\mathbf v|\dx\dz.
		\end{align*}
		The first integral is estimated exactly as $I_3$, using \eqref{eq:lower-gradient-column}.
		For the integral containing $\partial_x^3h$, we instead derive
		\begin{align*}
			\int_{\Omega_1}
			|\partial_x^3h|\,|\nabla\widehat\bu|\,
			|\partial_x\mathbf v|\dx\dz
			&\leq
			\int_0^L|\partial_x^3h|
			\left(\int_0^1|\nabla\widehat\bu|^2\dz\right)^{1/2}
			\left(\int_0^1|\partial_x\mathbf v|^2\dz\right)^{1/2}
			\dx\\
			&\quad\leq
			\norm{\partial_x^3h}_{L_\sharp^2(0,L)}
			\norm{\nabla\widehat\bu}_{L^2(\Omega_1)}
			\norm{\partial_x\mathbf v}_{L_\sharp^\infty(0,L;L^2(0,1))}\\
			&\quad\leq
			C\norm{\partial_x^3h}_{L_\sharp^2(0,L)}
			(1+\cD)^{3/4}
			\norm{\partial_x\mathbf v}_{H^1(\Omega_1)}^{1/2},
		\end{align*}
		which then by Young's inequality yields
		\begin{align*}
			|I_5| \leq
			\eta\norm{\partial_x\mathbf v}_{H^1(\Omega_1)}^2
			+C_\eta(1+\cD)\cE_1.
		\end{align*}
		
		Finally, invoking \eqref{eq:lower-nonlinear}, we find
		\begin{align*}
			|I_6| + |I_7|
			& \leq C \Big(\norm{h(\partial_tB_h^{-\top})\bv}_{L^2(\Omega_1)}
			+\norm{[(\widehat\bu-\partial_t\chi_h)\cdot(B_h\nabla)]\widehat\bu}_{L^2(\Omega_1)}\Big) \norm{B_h^{-\top} \partial_x^2 \bv}_{L^2(\Omega_1)} \\
			& \leq \delta\norm{\bv}_{H^2(\Omega_1)}^2
			+ \eta\norm{\partial_x\bv}_{H^1(\Omega_1)}^2
			+C_{\delta,\eta}(1+\cD)\cE_1.
		\end{align*}
		
		Choosing $\eta > 0$ sufficiently small such that $12 \eta \leq c$ and integrating \eqref{eq:lower-horizontal-identity} proves \eqref{eq:lower-horizontal-bound}.
		The trace condition and the zero mean of $\partial_th$ show \eqref{eq:lower-velocity-trace}.
	\end{proof}
	
	\subsection{Fractional-order estimates for the height}
	We next derive a fractional-order tangential estimate for the height function, which would be essential in the acceleration test later on. Since the test function will not correspond to the beam velocity, one requires an suitable construction of the pair fluid test function to be admissible. To this end, we seek a solenoidal lifting from the boundary trace into the bulk domain.
	
	\begin{lemma}\label{lem:lower-lift}
		For a smooth mean-zero periodic function $\xi$, let $\mathscr{E}(\xi)$ be the stationary Stokes velocity on $\Omega_1$ determined by
		\begin{align*}
			&\Div\mathscr{E}(\xi)=0,\qquad
			\mathscr{E}(\xi)|_{z=0}=0,\qquad \mathscr{E}(\xi)|_{z=1}=\xi\be_2,\\
			&\int_0^L\int_0^1\nabla\mathscr{E}(\xi):\nabla\boldsymbol\psi\dz\,\mathrm dx=0
		\end{align*}
		for every $x$-periodic $\boldsymbol\psi\in H^1(\Omega_1)$ satisfying $\Div\boldsymbol\psi=0$ and $\boldsymbol\psi|_{z=0}=\boldsymbol\psi|_{z=1}=0$. The lifting is linear and independent of time, and the estimates for this boundary value problem are
		\begin{align*}
			\norm{\mathscr{E}(\xi)}_{L^2(\Omega_1)}\leq C\norm\xi_{H_\sharp^{-1/2}(0,L)},
			\quad
			\norm{\mathscr{E}(\xi)}_{H^1(\Omega_1)}\leq C\norm\xi_{H_\sharp^{1/2}(0,L)},\quad
			\norm{\mathscr{E}(\xi)}_{H^2(\Omega_1)}\leq C\norm\xi_{H_\sharp^{3/2}(0,L)}.
		\end{align*}
		Moreover, for $\mathbf f\in L^2(0,1;H_\sharp^1(0,L))$,
		\begin{align}
			\left|\int_0^L\int_0^1\mathbf f\cdot\mathscr{E}(\xi)\dz\,\mathrm dx\right|
			\leq C\norm{\mathbf f}_{L^2(0,1;H_\sharp^1(0,L))}\norm\xi_{H_\sharp^{-3/2}(0,L)}.
			\label{eq:lower-lift-adjoint}
		\end{align}
		The constants depend only on $L$.
	\end{lemma}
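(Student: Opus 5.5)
The plan is to solve the Stokes problem on the flat periodic strip explicitly by Fourier series in $x$. Since $\Omega_1=(0,L)\times(0,1)$ is periodic and the boundary data depend only on $x$, I will write $\xi=\sum_{k\neq0}\hat\xi_k e^{ikx}$ with $k\in(2\pi/L)\mathbb Z\setminus\{0\}$; the mean-zero hypothesis removes $k=0$. I then seek $\mathscr E(\xi)$ through a stream function $\Psi=\sum_k\hat\xi_k\phi_k(z)e^{ikx}/(ik)$, so that $\mathscr E(\xi)=(-\partial_z\Psi,\partial_x\Psi)$. The weak Stokes condition against solenoidal fields vanishing at $z=0,1$ is equivalent to $\Delta^2\Psi=0$, that is $(\partial_z^2-k^2)^2\phi_k=0$. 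The boundary conditions become $\phi_k(0)=\phi_k'(0)=0$, $\phi_k(1)=1$, $\phi_k'(1)=0$. The solution lies in the span of $\cosh kz,\sinh kz,z\cosh kz,z\sinh kz$ and is fixed by a $4\times4$ determinant, which I will check is nonzero for every $k\ne0$ (the standard Stokes-strip determinant $\sinh^2k-k^2>0$). Uniqueness follows from coercivity of the Dirichlet form on the constrained space, and linearity and time independence are then evident.

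The core step is a uniform bound on each mode. I will show
\[
\int_0^1\bigl(k^{2}|\phi_k|^2+|\phi_k'|^2\bigr)\dz\le C|k|^{-1},\qquad \int_0^1\bigl(|\phi_k''|^2+k^2|\phi_k'|^2+k^4|\phi_k|^2\bigr)\dz\le C|k|,
\]
and similarly at third order with $C|k|^{3}$. For large $|k|$ I will argue that $\phi_k$ is, up to exponentially small errors, the boundary layer $(1+|k|(1-z))e^{-|k|(1-z)}$ concentrated in a layer of width $|k|^{-1}$; for the finitely many small $|k|$ the solution is bounded by continuity. Since the velocity components are $\hat\xi_k\phi_k'/(ik)$ and $\hat\xi_k\phi_k$, Parseval gives the three norm bounds, with the $L^2$ norm carrying weight $|k|^{-1}$ (hence $H^{-1/2}$), the $H^1$ norm weight $|k|$ (hence $H^{1/2}$), and the $H^2$ norm weight $|k|^3$ (hence $H^{3/2}$). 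I expect the main obstacle to be this uniform-in-$k$ mode analysis: one must check that the determinant does not degenerate and that the coefficients decay exactly at the boundary-layer rate. Once the explicit formula is written down, this is routine. An alternative is scaling together with the standard Agmon–Douglis–Nirenberg estimates for the $H^1$ and $H^2$ bounds, plus a duality (Aubin–Nitsche-type) argument for the $L^2$ bound.

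For the adjoint bound \eqref{eq:lower-lift-adjoint}, I will expand $\mathbf f$ in $x$-Fourier modes as well, with $\hat{\mathbf f}_k(z)$. Then
\[
\int_{\Omega_1}\mathbf f\cdot\mathscr E(\xi)=L\sum_k\hat\xi_k\int_0^1\hat{\mathbf f}_{-k}\cdot\bigl(\phi_k'/(ik),\phi_k\bigr)\dz.
\]
By Cauchy–Schwarz in $z$ and the first mode bound, each term is bounded by $|\hat\xi_k|\,\|\hat{\mathbf f}_{-k}\|_{L^2(0,1)}C|k|^{-1/2}$. Writing $|k|^{-1/2}=|k|^{-3/2}\cdot|k|$ and applying Cauchy–Schwarz in $k$ then gives $C\|\xi\|_{H_\sharp^{-3/2}}\|\mathbf f\|_{L^2(0,1;H_\sharp^1)}$. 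All constants depend only on $L$ through the frequency lattice.
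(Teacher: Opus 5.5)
Your proposal follows the paper's proof: expansion in horizontal Fourier modes, the ODE $(\partial_z^2-k^2)^2w_k=0$ for the vertical component with the four exponential solutions, per-mode bounds summed by Parseval, uniqueness by testing the difference with itself, and Cauchy--Schwarz in $k$ after writing $|k|^{-1/2}=|k|^{-3/2}\cdot|k|$ for the adjoint estimate.

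Your displayed mode bounds need one correction. They omit the factor $k^{-2}$ coming from the $1/(ik)$ in the horizontal component, and as written they are false. The boundary layer gives $\int_0^1(k^2|\phi_k|^2+|\phi_k'|^2)\dz$ comparable to $|k|$. Moreover, a bound $C|k|^{-1}$ would contradict $\phi_k(1)=1$ through the inequality $|\phi(1)|^2\le\norm{\phi}_{L^2(0,1)}^2+2\norm{\phi}_{L^2(0,1)}\norm{\phi'}_{L^2(0,1)}$.

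The correct right-hand sides are $C|k|$, $C|k|^3$ and $C|k|^5$. These are exactly what your stated weights $|k|^{-1},|k|,|k|^3$ and the factor $|k|^{-1/2}$ in the adjoint step actually use, so the argument goes through once the display is fixed.
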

	\begin{proof}
		These bounds can be obtained directly from the horizontal Fourier modes. At a nonzero frequency $k\in(2\pi/L)\mathbb Z$, the vertical component $w_k=(\mathscr{E}(\xi)_k)_2$ solves
		\begin{align*}
			(\partial_z^2-k^2)^2w_k=0,\qquad
			w_k(0)=\partial_zw_k(0)=\partial_zw_k(1)=0,\qquad w_k(1)=\xi_k,
		\end{align*}
		and the horizontal component is $(\mathscr{E}(\xi)_k)_1=-(ik)^{-1}\partial_zw_k$. Solving this fourth-order equation with the basis $e^{|k|z}$, $e^{-|k|z}$, $ze^{|k|z}$, and $ze^{-|k|z}$, we integrate the resulting exponential terms and find
		\begin{align}
			\norm{\partial_z^j\mathscr{E}(\xi)_k}_{L^2(0,1)}
			\leq C_j(1+k^2)^{j/2-1/4}|\xi_k|,\qquad j=0,1,2.
			\label{eq:lower-lift-modes}
		\end{align}
		The zero mode vanishes. Multiplying \eqref{eq:lower-lift-modes} by the appropriate powers of $|k|$ and summing by Parseval's identity proves the three estimates. The constructed velocity satisfies the stated Stokes problem. Uniqueness follows by testing the difference of two solutions by itself. The first estimate extends the solution to negative-order boundary data by transposition, cf.~\cite[Appendix~A]{Raymond2007}.
		
		Finally, applying Cauchy--Schwarz to the Fourier series, we obtain
		\begin{align*}
			\left|\int_{\Omega_1}\mathbf f\cdot\mathscr{E}(\xi)\dx\dz\right|
			&\leq C\sum_{k\neq0}\norm{\mathbf f_k}_{L^2(0,1)}(1+k^2)^{-1/4}|\xi_k|\\
			&\leq C\left(\sum_k(1+k^2)\norm{\mathbf f_k}_{L^2(0,1)}^2\right)^{1/2}
			\left(\sum_{k\neq0}(1+k^2)^{-3/2}|\xi_k|^2\right)^{1/2},
		\end{align*}
		which is \eqref{eq:lower-lift-adjoint}. Thus the time derivative of the fluid test can be estimated by duality.
	\end{proof}
	
	\begin{lemma}\label{lem:lower-height}
		Under the standing assumptions of this section, for every $\varepsilon>0$ and $0\leq t\leq T$,
		\begin{align}
			\int_0^t\norm{h}_{H_\sharp^{7/2}(0,L)}^2\,\mathrm ds
			&\leq\varepsilon\left(\norm{h(t)}_{H_\sharp^3(0,L)}^2+\norm{\partial_th(t)}_{H_\sharp^1(0,L)}^2\right)
			+C_\varepsilon(1+\cE_1(0))\nonumber\\
			&\quad+C\int_0^t\norm{\partial_x\bv}_{H^1(\Omega_1)}^2\,\mathrm ds
			+C\int_0^t(1+\cD)\cE_1\,\mathrm ds.
			\label{eq:lower-height-bound}
		\end{align}
	\end{lemma}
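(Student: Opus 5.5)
We use the coupled test suggested in the introduction: a solenoidal lifting whose upper trace is $\zeta=(1-\partial_x^2)^{3/2}(h-H)$, that is, $(\bphi,\zeta)=(\mathscr E(\zeta),\zeta)$ in \eqref{eq:lower-coupled}. This pair is admissible because $\zeta$ has zero mean, and Lemma~\ref{lem:lower-lift} gives a divergence-free field with zero bottom trace and upper trace $\zeta\be_2$. In Fourier variables the bending term is
\begin{align*}
\alpha\int_0^L\partial_x^2h\,\partial_x^2\zeta\dx=\alpha\sum_k k^4(1+k^2)^{3/2}|h_k|^2\ \geq\ c\norm{h}_{H_\sharp^{7/2}(0,L)}^2-C\norm{h}_{H_\sharp^2(0,L)}^2 .
\end{align*}
The tension term is nonnegative. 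Integrating in time, the left-hand side therefore produces $\int_0^t\norm{h}_{H_\sharp^{7/2}}^2$. Everything else must be bounded by the right-hand side of \eqref{eq:lower-height-bound}.

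\textbf{Beam inertia.} We write
\begin{align*}
\rho_s\int_0^L\partial_t^2h\,\zeta\dx=\rho_s\frac{\mathrm d}{\mathrm dt}\int_0^L\partial_th\,\zeta\dx-\rho_s\int_0^L\partial_th\,(1-\partial_x^2)^{3/2}\partial_th\dx .
\end{align*}
The last integral equals $\rho_s\norm{\partial_th}_{H_\sharp^{3/2}}^2$ up to sign. By \eqref{eq:lower-velocity-trace} it is bounded by $C\norm{\partial_x\bv}_{H^1(\Omega_1)}^2$. The boundary terms satisfy $|\int\partial_th\,\zeta|\le\norm{\partial_th}_{H^1}\norm{h-H}_{H^2}$. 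By the energy bound on $\norm{h}_{H^2}$ this is at most $\varepsilon\norm{\partial_th(t)}_{H^1}^2+C_\varepsilon$ at time $t$, and at most $C\cE_1(0)$ at time $0$. This produces the $\varepsilon$ terms in \eqref{eq:lower-height-bound}. We keep $\norm{h(t)}_{H^3}^2$ on the right for symmetry; one may use it by interpolation if needed.

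\textbf{Fluid inertia.} We move the time derivative off $\partial_t\bv$. The field $\mathscr E(\zeta)$ is linear in $\zeta$ and independent of the coefficients, so $\partial_t\mathscr E(\zeta)=\mathscr E((1-\partial_x^2)^{3/2}\partial_th)$. Hence
\begin{align*}
\int h B_h^{-\top}\partial_t\bv\cdot B_h^{-\top}\mathscr E(\zeta)=\frac{\mathrm d}{\mathrm dt}\int hB_h^{-1}B_h^{-\top}\bv\cdot\mathscr E(\zeta)-\int\partial_t(hB_h^{-1}B_h^{-\top})\bv\cdot\mathscr E(\zeta)-\int hB_h^{-1}B_h^{-\top}\bv\cdot\mathscr E\big((1-\partial_x^2)^{3/2}\partial_th\big).
\end{align*}
We bound the last term with the adjoint estimate \eqref{eq:lower-lift-adjoint}, with $\mathbf f=hB_h^{-1}B_h^{-\top}\bv\in L^2(0,1;H^1_\sharp)$ and $\norm{(1-\partial_x^2)^{3/2}\partial_th}_{H^{-3/2}}=\norm{\partial_th}_{H^{3/2}}$. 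It is therefore at most
\begin{align*}
C\norm{\bv}_{H^1}\norm{\partial_x\bv}_{H^1}\le C\cD+C\norm{\partial_x\bv}_{H^1}^2 .
\end{align*}
For the boundary values we use $\norm{\mathscr E(\zeta)}_{L^2}\le C\norm{h}_{H^{5/2}}$, which is interpolated between $H^2$ and $H^3$ and absorbed as before. The term with $\partial_t(hB_h^{-1}B_h^{-\top})$ and the transport terms of \eqref{eq:lower-coupled} are paired against $\norm{\mathscr E(\zeta)}_{L^2}\le C\norm{h}_{H^{5/2}}\le C\cE_1^{1/2}$. By \eqref{eq:lower-nonlinear} and the product bounds of Section~\ref{sec:lower-coefficients}, they contribute at most $C(1+\cD)\cE_1$, plus small multiples of $\norm{\bv}_{H^2}^2$. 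If these small multiples appear, we keep them, since they are controlled later. If the inequality must be stated without them, we instead place the transport terms in $L^2(0,1;H^{-1}_\sharp)$ and use the $H^1$ bound on $\mathscr E$.

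\textbf{Viscous term: the main obstacle.} The pairing $\mu\int A_h\nabla\widehat\bu\cdot\nabla(B_h^{-\top}\mathscr E(\zeta))$ is the crux, because $\norm{\mathscr E(\zeta)}_{H^1}\sim\norm{h}_{H^{7/2}}$ only just matches the coercive term. First we replace $\nabla(B_h^{-\top}\mathscr E)$ by $B_h^{-\top}\nabla\mathscr E$; the remainder is a lower-order term with coefficient $\partial_x^2h$. Then we bound the main part by
\begin{align*}
C\norm{\nabla\widehat\bu}_{L^2}\norm{\mathscr E(\zeta)}_{H^1}\le\eta\norm{h}_{H^{7/2}}^2+C_\eta\cD .
\end{align*}
This is crude but sufficient, because the dissipation $\cD$ appears with the factor $(1+\cD)\cE_1$. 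If the coefficient regularity ($A_h\in L^\infty(0,1;H^1_\sharp)$) blocks the commutation, we integrate by parts in $x$ using half a derivative, which costs $\norm{\partial_x\bv}_{H^1}$. That quantity is permitted on the right-hand side.

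We then absorb the $\eta$ terms into the coercive term, integrate over $(0,t)$, and collect the contributions. This gives \eqref{eq:lower-height-bound}.
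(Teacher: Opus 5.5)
Your proposal follows the paper's proof step by step. You use the same pair $(\mathscr E(\Lambda^3(h-H)),\Lambda^3(h-H))$, integrate both inertial terms by parts in time, and control $\rho_s\norm{\partial_th}_{H_\sharp^{3/2}(0,L)}^2$ by \eqref{eq:lower-velocity-trace}. As in the paper, you then handle $\partial_t\bphi$ with the adjoint bound \eqref{eq:lower-lift-adjoint}, bound the viscous term crudely via $\norm{\bphi}_{H^1(\Omega_1)}\leq C\norm{h-H}_{H_\sharp^{7/2}(0,L)}$, and treat the endpoint pairings by interpolation.

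The one point to fix is the convective term, where you must commit to your fallback. The $L^2$ pairing through \eqref{eq:lower-nonlinear} leaves $\delta\int_0^t\norm{\bv}_{H^2(\Omega_1)}^2$, which does not appear in \eqref{eq:lower-height-bound}. The paper instead integrates by parts, using that $\bv-z\partial_th\be_2$ has zero normal component at $z=0,1$ and divergence $-\partial_th$. It then pairs against $\norm{\bphi}_{H^1(\Omega_1)}$ with bound $C(1+\cD)^{1/2}\cE_1^{1/2}\norm{\bphi}_{H^1(\Omega_1)}$. Your slice-wise $H_\sharp^{-1}$ version also works, via $L_\sharp^1(0,L)\hookrightarrow H_\sharp^{-1}(0,L)$ and the vertical column supremum of $\bv$.
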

	
	\begin{proof}
		Let $\Lambda=(1-\partial_x^2)^{1/2}$ be the tangential operator, defined by horizontal Fourier series with frequencies $2\pi k/L$, $k\in\mathbb Z$. We choose $\zeta=\Lambda^3(h-H)$ and use $\bphi = \mathscr{E} (\zeta)$ in \eqref{eq:lower-coupled}. Since $h-H$ has zero mean, the test pair is admissible by Lemma~\ref{lem:lower-lift}. Integrating the two acceleration terms by parts in time, we arrive at
		\begin{align}
			&\frac{\mathrm d}{\mathrm dt}\left[
			\rho_f\int_0^L\int_0^1h\,\widehat\bu\cdot B_h^{-\top}\bphi\dz\,\mathrm dx
			+\rho_s\int_0^L\partial_th\,\Lambda^3(h-H)\dx\right]\nonumber\\
			&\quad+\alpha\norm{\Lambda^{3/2}\partial_x^2h}_{L_\sharp^2(0,L)}^2
			+\beta\norm{\Lambda^{3/2}\partial_xh}_{L_\sharp^2(0,L)}^2\nonumber\\
			&=\rho_s\norm{\partial_th}_{H_\sharp^{3/2}(0,L)}^2
			+\rho_f\int_0^L\int_0^1h\,\widehat\bu\cdot B_h^{-\top}\partial_t\bphi\dz\,\mathrm dx
			+\rho_f\int_0^L\int_0^1\partial_t(hB_h^{-1}B_h^{-\top})\bv\cdot\bphi\dz\,\mathrm dx\nonumber\\
			&\quad-\mu\sum_{i,j=1}^2\int_0^L\int_0^1(A_h)_{ij}
			\partial_i\widehat\bu\cdot\partial_j(B_h^{-\top}\bphi)\dz\,\mathrm dx
			-\rho_f\int_0^L\int_0^1h(\partial_tB_h^{-\top})\bv\cdot B_h^{-\top}\bphi\dz\,\mathrm dx\nonumber\\
			&\quad-\rho_f\int_0^L\int_0^1
			\big[(\widehat\bu-\partial_t\chi_h)\cdot(B_h\nabla)\big]\widehat\bu
			\cdot B_h^{-\top}\bphi\dz\,\mathrm dx
			\eqqcolon\sum_{i=1}^6J_i.
			\label{eq:lower-height-identity}
		\end{align}
		Since $h-H$ has zero mean and $\alpha>0$, it holds
		\begin{align*}
			\alpha\norm{\Lambda^{3/2}\partial_x^2h}_{L_\sharp^2(0,L)}^2
			+\beta\norm{\Lambda^{3/2}\partial_xh}_{L_\sharp^2(0,L)}^2
			\geq c\norm{h-H}_{H_\sharp^{7/2}(0,L)}^2
		\end{align*}
		for a fixed $c>0$. From the lifting estimates, we also have
		\begin{align*}
			\norm{\bphi}_{L^2(\Omega_1)}\leq C\norm{h-H}_{H_\sharp^{5/2}(0,L)},\qquad
			\norm{\bphi}_{H^1(\Omega_1)}\leq C\norm{h-H}_{H_\sharp^{7/2}(0,L)}.
		\end{align*}
		
		We estimate the right-hand side term by term. By \eqref{eq:lower-velocity-trace},
		\begin{align*}
			|J_1|\leq C\norm{\partial_x\bv}_{H^1(\Omega_1)}^2.
		\end{align*}
		Since lifting is linear and independent of time, $\partial_t\bphi$ solves the same problem with boundary value $\Lambda^3\partial_th\,\be_2$. For the term containing $\partial_t\bphi$, we apply \eqref{eq:lower-lift-adjoint} and obtain
		\begin{align*}
			|J_2|
			&\leq C\norm{hB_h^{-1}B_h^{-\top}\bv}_{L^2(0,1;H_\sharp^1(0,L))}
			\norm{\partial_th}_{H_\sharp^{3/2}(0,L)}\\
			&\leq C(1+\cD)^{1/2}\norm{\partial_th}_{H_\sharp^{3/2}(0,L)}
			\leq C\norm{\partial_x\bv}_{H^1(\Omega_1)}^2+C(1+\cD).
		\end{align*}
		The multiplication estimate in the second step uses only $\partial_x^2h\in L_\sharp^2(0,L)$ and the column supremum for $\bv$.
		
		For the coefficient-time term, the entries of $\partial_t(hB_h^{-1}B_h^{-\top})$ contain bounded factors multiplying $\partial_th$ and $\partial_t\partial_xh$. Consequently,
		\begin{align*}
			\norm{\partial_t(hB_h^{-1}B_h^{-\top})\bv}_{L^2(\Omega_1)}
			&\leq C\norm{\partial_th}_{L_\sharp^\infty(0,L)}\norm{\bv}_{L^2(\Omega_1)}
			+C\norm{\partial_t\partial_xh}_{L_\sharp^2(0,L)}
			\norm{\bv}_{L_\sharp^\infty(0,L;L^2(0,1))}\\
			&\leq C(1+\cD)^{1/2}\cE_1^{1/2}.
		\end{align*}
		Using the $L^2(\Omega_1)$ bound for the lift and Young's inequality, for every $\eta>0$ we obtain
		\begin{align*}
			|J_3|\leq C(1+\cD)^{1/2}\cE_1^{1/2}\norm{\bphi}_{L^2(\Omega_1)}
			\leq\eta\norm{h-H}_{H_\sharp^{7/2}(0,L)}^2+C_\eta(1+\cD)\cE_1.
		\end{align*}
		For viscosity, boundedness of multiplication by $B_h^{-\top}$ on $H^1$ implies
		\begin{align*}
			|J_4|&\leq C\norm{\nabla\widehat\bu}_{L^2(\Omega_1)}\norm{\bphi}_{H^1(\Omega_1)}
			\leq C(1+\cD)^{1/2}\norm{\bphi}_{H^1(\Omega_1)}\\
			&\leq\eta\norm{h-H}_{H_\sharp^{7/2}(0,L)}^2+C_\eta(1+\cD).
		\end{align*}
		Arguing as for $J_3$, we find
		\begin{align*}
			|J_5|\leq\eta\norm{h-H}_{H_\sharp^{7/2}(0,L)}^2+C_\eta(1+\cD)\cE_1.
		\end{align*}
		
		The transport term requires only the $H^1(\Omega_1)$ norm of the test. Since $B_h^\top(\widehat\bu-\partial_t\chi_h)=\bv-z\partial_th\be_2$ has zero normal component at $z=0,1$ and divergence $-\partial_th$, integration by parts yields
		\begin{align*}
			&\int_{\Omega_1}\big[(\widehat\bu-\partial_t\chi_h)\cdot(B_h\nabla)\big]\widehat\bu
			\cdot B_h^{-\top}\bphi\,\mathrm dx\,\mathrm dz\\
			&\quad=-\int_{\Omega_1}\widehat\bu\cdot
			\big[(\widehat\bu-\partial_t\chi_h)\cdot(B_h\nabla)\big](B_h^{-\top}\bphi)\,\mathrm dx\,\mathrm dz
			+\int_{\Omega_1}\partial_th\,\widehat\bu\cdot B_h^{-\top}\bphi\,\mathrm dx\,\mathrm dz.
		\end{align*}
		The two-dimensional inequality $\norm{\bv}_{L^4(\Omega_1)}^2\leq C\norm{\bv}_{L^2(\Omega_1)}\norm{\bv}_{H^1(\Omega_1)}$ controls the quadratic fluid product by $C(1+\cD)^{1/2}$. Products with $\partial_th$ use its $L_\sharp^\infty(0,L)$ norm, bounded by $C\cE_1^{1/2}$. In particular,
		\begin{align*}
			|J_6|&\leq C\left(\norm{\bv}_{L^4(\Omega_1)}^2
			+\norm{\partial_th}_{L_\sharp^\infty(0,L)}\norm{\bv}_{L^2(\Omega_1)}\right)\norm{\bphi}_{H^1(\Omega_1)}\\
			&\leq C(1+\cD)^{1/2}\cE_1^{1/2}\norm{\bphi}_{H^1(\Omega_1)}
			\leq\eta\norm{h-H}_{H_\sharp^{7/2}(0,L)}^2+C_\eta(1+\cD)\cE_1.
		\end{align*}
		The estimates of $J_5$ and $J_6$ apply to every admissible test of the stated regularity. In particular,
		\begin{align}
			&\left|\rho_f\int_{\Omega_1}h(\partial_tB_h^{-\top})\bv\cdot B_h^{-\top}\bphi\,\mathrm dx\,\mathrm dz\right|\nonumber\\
			&\quad+\left|\rho_f\int_{\Omega_1}\big[(\widehat\bu-\partial_t\chi_h)\cdot(B_h\nabla)\big]\widehat\bu
			\cdot B_h^{-\top}\bphi\,\mathrm dx\,\mathrm dz\right|
			\leq C(1+\cD)^{1/2}\cE_1^{1/2}\norm{\bphi}_{H^1(\Omega_1)}.
			\label{eq:lower-weak-nonlinear}
		\end{align}
		
		Choose $\eta>0$ so that $4\eta\leq c/2$ and integrate \eqref{eq:lower-height-identity}. Since $\cE_1\geq1$, the preceding estimates yield
		\begin{align*}
			&\left[\rho_f\int_{\Omega_1}h\,\widehat\bu\cdot B_h^{-\top}\bphi\dx\dz
			+\rho_s\int_0^L\partial_th\,\Lambda^3(h-H)\dx\right]_0^t
			+\frac c2\int_0^t\norm{h-H}_{H_\sharp^{7/2}(0,L)}^2\,\mathrm ds\\
			&\quad\leq C\int_0^t\norm{\partial_x\bv}_{H^1(\Omega_1)}^2\,\mathrm ds
			+C\int_0^t(1+\cD)\cE_1\,\mathrm ds.
		\end{align*}
		It remains to estimate the fluid and beam pairings evaluated at times $0$ and $t$. By the mechanical energy bound and interpolation,
		\begin{align*}
			\left|\rho_s\int_0^L\partial_th\,\Lambda^3(h-H)\dx\right|
			&\leq C\norm{\partial_th}_{H_\sharp^1(0,L)}\norm{h-H}_{H_\sharp^2(0,L)}
			\leq C\norm{\partial_th}_{H_\sharp^1(0,L)},\\
			\left|\rho_f\int_{\Omega_1}h\,\widehat\bu\cdot B_h^{-\top}\bphi\,\mathrm dx\,\mathrm dz\right|
			&\leq C\norm{\bphi}_{L^2(\Omega_1)}
			\leq C\norm{h-H}_{H_\sharp^{5/2}(0,L)}
			\leq C\left(1+\norm{h}_{H_\sharp^3(0,L)}^{1/2}\right).
		\end{align*}
		Hence, for every $\varepsilon>0$, Young's inequality implies
		\begin{align*}
			\left|\rho_f\int_{\Omega_1}h\,\widehat\bu\cdot B_h^{-\top}\bphi\dx\dz
			+\rho_s\int_0^L\partial_th\,\Lambda^3(h-H)\dx\right|
			\leq 
			\varepsilon\left(\norm{h(t)}_{H_\sharp^3(0,L)}^2+\norm{\partial_th(t)}_{H_\sharp^1(0,L)}^2\right)+C_\varepsilon,
		\end{align*}
		and its value at $t=0$ is bounded by $C(1+\cE_1(0))$.
		This completes the proof of \eqref{eq:lower-height-bound}.
	\end{proof}
	
	\subsection{The beam acceleration estimate}
	In this section, we proceed with the acceleration estimate in negative tangential Sobolev spaces, which can be derived from the tangential estimates above and will be crucial in the fluid controls.
	
	To estimate the errors produced by tangential smoothing, we use the shifted Fourier multiplier $\Lambda_N=(N^2-\partial_x^2)^{1/2}$, where $N\geq1$ is a large parameter, by which one shall filter the higher-order perturbations due to commutator estimates. For an operator $T$ and a coefficient $g$, we denote the commutator by $[T,g]f= T(gf)-gTf$. We first record the following result for the commutator.
	\begin{lemma}\label{lem:lower-commutator}
		Let $N\geq1$ and $\Lambda_N=(N^2-\partial_x^2)^{1/2}$ on periodic functions. For a scalar periodic $g\in H_\sharp^1(0,L)$ and $f\in L_\sharp^2(0,L)$,
		\begin{align}
			\norm{[\Lambda_N^{-1},g]f}_{L_\sharp^2(0,L)}
			&\leq C N^{-3/2}\norm{\partial_xg}_{L_\sharp^2(0,L)}\norm{f}_{L_\sharp^2(0,L)},\nonumber\\
			\norm{\Lambda_N[\Lambda_N^{-1},g]f}_{L_\sharp^2(0,L)}
			&\leq C N^{-1/2}\norm{\partial_xg}_{L_\sharp^2(0,L)}\norm{f}_{L_\sharp^2(0,L)},\nonumber\\
			\norm{\Lambda_N^{1/2}[\Lambda_N^{-1},g]f}_{L_\sharp^2(0,L)}
			&\leq C N^{-1}\norm{\partial_xg}_{L_\sharp^2(0,L)}\norm{f}_{L_\sharp^2(0,L)}.
			\label{eq:lower-sharp-commutator}
		\end{align}
		For rougher coefficients $g\in H_\sharp^{1/2}(0,L)$,
		\begin{align}
			\norm{\Lambda_N^{1/2}[\Lambda_N^{-1},g]f}_{L_\sharp^2(0,L)}
			\leq C_N\norm{g}_{H_\sharp^{1/2}(0,L)}\norm{f}_{L_\sharp^2(0,L)}.
			\label{eq:lower-rough-commutator}
		\end{align}
		Moreover, if $g\in H_\sharp^1(0,L)$ is a symmetric matrix with $g\geq g_0 I$, where $g_0>0$, then, for every periodic vector field $f\in L_\sharp^2(0,L)$,
		\begin{align}
			\int_0^L(gf)\cdot\Lambda_N^{-1}f\,\mathrm dx
			\geq \left(g_0-CN^{-1/2}\norm{\partial_xg}_{L_\sharp^2(0,L)}\right)
			\norm{\Lambda_N^{-1/2}f}_{L_\sharp^2(0,L)}^2.
			\label{eq:lower-positive-mass}
		\end{align}
		Here the matrix norm in the error can be taken to be the sum of the component norms. The constants $C$ are independent of $N\geq1$. 
	\end{lemma}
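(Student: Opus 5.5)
We work entirely on the Fourier side, with frequencies $k\in(2\pi/L)\mathbb Z$, and write $m_N(k)=(N^2+k^2)^{-1/2}$ for the symbol of $\Lambda_N^{-1}$. The commutator has kernel representation
\begin{align*}
\widehat{[\Lambda_N^{-1},g]f}(k)=\sum_{j}\bigl(m_N(k)-m_N(j)\bigr)\widehat g(k-j)\widehat f(j).
\end{align*}

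\textbf{Step 1: the three sharp bounds \eqref{eq:lower-sharp-commutator}.} The basic symbol estimate is the mean value inequality
\begin{align*}
|m_N(k)-m_N(j)|\leq |k-j|\sup_{\theta\in[j,k]}|m_N'(\theta)|,\qquad |m_N'(\theta)|\leq (N^2+\theta^2)^{-1}.
\end{align*}
We also use the crude bound $|m_N(k)-m_N(j)|\leq m_N(k)+m_N(j)$. Split the sum by comparing $|k-j|$ with $\tfrac12\max(N,|k|,|j|)$.

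In the near-diagonal region, $|\theta|+N\sim |k|+N\sim|j|+N$. Then the multiplier $(N^2+k^2)^{s/2}|m_N(k)-m_N(j)|$ is at most $C|k-j|(N+|k|)^{s-2}\leq C|k-j|N^{s-2}$, for $s=0,1,\tfrac12$.

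In the far region, $|k-j|\gtrsim N+|k|$ or $|k-j|\gtrsim N+|j|$. Here the crude bound gives weight $(N+|k|)^{s-1}+(N+|j|)^{s-1}(N+|k|)^{s}$. In either case this is dominated by $C|k-j|\,(N+|k-j|)^{s-2}$, possibly up to a factor we absorb.

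Writing $\widehat{\partial_xg}(k-j)=i(k-j)\widehat g(k-j)$, the problem reduces to a convolution with kernel $a(k-j)=|\widehat{\partial_x g}(k-j)|\,\omega(k-j)$, where $\omega(n)\leq C(N+|n|)^{s-2}$. Young's convolution inequality on $\ell^2\ast\ell^1$ needs $\norm{a}_{\ell^1}$, and Cauchy--Schwarz gives
\begin{align*}
\norm{a}_{\ell^1}\leq \norm{\partial_xg}_{L^2}\Bigl(\sum_n (N+|n|)^{2s-4}\Bigr)^{1/2}\leq C N^{s-3/2}\norm{\partial_xg}_{L^2}.
\end{align*}
This yields the exponents $N^{-3/2}$, $N^{-1/2}$ and $N^{-1}$ for $s=0,1,\tfrac12$ respectively. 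The main care is making the far-region bound uniform. When $|j|\gg|k|$, the crude bound gives $m_N(k)(N+|k|)^{s}\leq (N+|k|)^{s-1}$. Since $|k-j|\sim|j|\gtrsim N+|k|$, we have $(N+|k|)^{s-1}\leq C|k-j|(N+|k-j|)^{s-2}$ only when $s\leq1$ (note $s-1\leq0$), which covers all three cases. I expect this region bookkeeping to be the main obstacle.

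\textbf{Step 2: the rough bound \eqref{eq:lower-rough-commutator}.} Since $N$ is fixed, we use $|m_N(k)-m_N(j)|(N+|k|)^{1/2}\leq C_N(1+|k-j|)^{1/2}(1+|j|)^{-1/2}$; alternatively, bound the two terms separately. The term $\Lambda_N^{-1/2}(gf)$ is estimated using $H^{1/2}\cdot L^2\subset H^{-1/2-\delta}$, which is not quite enough. Instead we use Sobolev multiplication $H^{1/2}\times L^2\to H^{-1/2}$, obtained by duality from $H^{1/2}\times H^{1/2}\to L^2$, which fails only at the endpoint. If that endpoint trouble appears, we take the Fourier route: the weight $(1+|k-j|)^{1/2}(1+|j|)^{-1/2}\cdot(1+|k|)^{-1/2}$-type bounds, and the Schur test with $\widehat g\in\ell^2((1+|n|)\,dn)$. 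The constant is allowed to depend on $N$, so a crude Schur argument suffices.

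\textbf{Step 3: the positivity \eqref{eq:lower-positive-mass}.} Set $f=\Lambda_N^{1/2}\phi$ and $\phi=\Lambda_N^{-1/2}f$. Then
\begin{align*}
\int (gf)\cdot\Lambda_N^{-1}f=\int g\,\Lambda_N^{-1/2}\phi\cdot\Lambda_N^{1/2}\phi+\int [\Lambda_N^{-1/2}\ldots]\ldots
\end{align*}
More directly, we write
\begin{align*}
\Lambda_N^{-1}f\cdot gf=\Lambda_N^{-1/2}f\cdot\Lambda_N^{1/2}\bigl(g\,\Lambda_N^{-1}f\bigr)+\Lambda_N^{-1/2}f\cdot\Lambda_N^{1/2}[\Lambda_N^{-1},g]f
\end{align*}
after moving $\Lambda_N^{-1}$ across using symmetry. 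The second part is bounded by $CN^{-1}\norm{\partial_xg}\norm{f}_{L^2}\norm{\Lambda_N^{-1/2}f}$, which is not of the right form. So we instead symmetrize: $\int gf\cdot\Lambda_N^{-1}f=\int g\psi\cdot\psi+\int\psi\cdot[\Lambda_N^{-1/2},g]\Lambda_N^{1/2}\psi$, with $\psi=\Lambda_N^{-1/2}f$. The first term is at least $g_0\norm{\psi}^2$. For the second we need the commutator bound $\norm{[\Lambda_N^{-1/2},g]\Lambda_N^{1/2}\psi}\leq CN^{-1/2}\norm{\partial_xg}\norm{\psi}$. This follows exactly as in Step 1, with symbol difference $|m_N^{1/2}(k)-m_N^{1/2}(j)|(N+|j|)^{1/2}\leq C|k-j|(N+|k-j|)^{-1}$ in the near region, and the $\ell^1$ sum $\sum_n (N+|n|)^{-2}\leq CN^{-1}$, giving $N^{-1/2}$. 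For the matrix case we apply this componentwise, summing the component norms.
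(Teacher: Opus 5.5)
\textbf{Gap in Step 1.} The argument fails at the reduction to a convolution with an $\ell^1$ kernel in $k-j$. Write $\langle k\rangle_N=(N^2+k^2)^{1/2}$. The pointwise bound $\langle k\rangle_N^{s}|m_N(k)-m_N(j)|\le C|k-j|(N+|k-j|)^{s-2}$ is false off the diagonal, for each of $s=0,\tfrac12,1$.

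\emph{Regime $|j|\gg N+|k|$.} Your inequality $(N+|k|)^{s-1}\le C|k-j|(N+|k-j|)^{s-2}$ runs the wrong way. The right side is comparable to $(N+|k-j|)^{s-1}$. Since $s-1<0$ and $N+|k-j|\gg N+|k|$, it is much smaller than the left side. For instance, at $k=0$ the symbol is $\approx N^{s-1}$ while your bound is $\approx|j|^{s-1}\to0$.

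\emph{Regime $|k|\gg N+|j|$.} You wave this away ("a factor we absorb"), but the term $m_N(j)\langle k\rangle_N^{s}$ is not controlled by any function of $k-j$. For $s=1$ and $j=0$ the symbol of $\Lambda_N[\Lambda_N^{-1},g]$ is $1-\langle k\rangle_N/N$, of size $\approx|k|/N$, against your $O(1)$.

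\emph{Step 3 inherits the defect.} The symbol of $[\Lambda_N^{-1/2},g]\Lambda_N^{1/2}$ is $(\langle j\rangle_N^{1/2}\langle k\rangle_N^{-1/2}-1)\widehat g(k-j)$. At $k=0$ it grows like $(|j|/N)^{1/2}$. Any dominating convolution kernel must then be at least $|\widehat{\partial_xg}(n)|\,|n|^{-1/2}N^{-1/2}$ for $|n|\gg N$, and that is not summable under an $L^2$ bound on $\partial_xg$. So as written neither \eqref{eq:lower-sharp-commutator} nor \eqref{eq:lower-positive-mass} is proved, although both are true. The paper obtains them in physical space, from moment bounds for the Bessel kernel, the $C^{1/2}$ modulus of $g$, and interpolation, with the positivity coming from a heat-kernel formula for $\Lambda_N^{1/2}$ plus $L^4$ bounds.

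\textbf{Repair.} The missing idea is to put the $\ell^1$ norm on the $f$ side rather than on the coefficient.
\begin{itemize}
\item[(a)] Since $\langle\cdot\rangle_N$ is $1$-Lipschitz, the Fourier coefficients of $\Lambda_N[\Lambda_N^{-1},g]f=gf-\Lambda_N(g\Lambda_N^{-1}f)$ are bounded by $\sum_j|\widehat{\partial_xg}(k-j)|\,\langle j\rangle_N^{-1}|\widehat f(j)|$.
\item[(b)] Young's inequality with $\|\langle\cdot\rangle_N^{-1}\widehat f\|_{\ell^1}\le(\sum_j\langle j\rangle_N^{-2})^{1/2}\|\widehat f\|_{\ell^2}\le CN^{-1/2}\|f\|_{L^2}$ then gives the $s=1$ bound, with no case analysis.
\item[(c)] The cases $s=0,\tfrac12$ follow from $\|\Lambda_N^{s}u\|_{L^2}\le N^{s-1}\|\Lambda_Nu\|_{L^2}$.
\item[(d)] In Step 3 the same Lipschitz bound controls the symbol by $|\widehat{\partial_xg}(k-j)|\,\langle k\rangle_N^{-1/2}\langle j\rangle_N^{-1/2}$. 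Pair with $\widehat\psi$, then use Young ($\ell^2\ast\ell^{4/3}\to\ell^4$) and $\|\langle\cdot\rangle_N^{-1/2}\|_{\ell^4}\le CN^{-1/4}$. This gives the error $CN^{-1/2}\|\partial_xg\|_{L^2}\|\psi\|_{L^2}^2$, the Fourier counterpart of the paper's $L^4$ step.
\end{itemize}
With these repairs your Fourier route is a genuinely shorter alternative to the paper's kernel argument.

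\textbf{Step 2.} Your first route is the paper's and needs no fallback. $H^{1/2}\cdot H^{1/2}\subset L^2$ holds in one dimension (already $H^{1/4}\subset L^4$), so $H^{1/2}\cdot L^2\subset H^{-1/2}$ has no endpoint problem. You still have to treat $\Lambda_N^{1/2}(g\Lambda_N^{-1}f)$, via $H^{1/2}\cdot H^1\subset H^{1/2}$. Your Fourier fallback with weight $(1+|k-j|)^{1/2}(1+|j|)^{-1/2}$ would not close. After taking absolute values it amounts to $L^2\cdot H^{1/2}\subset L^2$, which fails in one dimension.
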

	\begin{proof}
		Indeed, $|g(x)-g(y)|\leq \norm{\partial_xg}_{L_\sharp^2(0,L)}d(x,y)^{1/2}$, where $d$ denotes periodic distance. The periodized kernel of $\Lambda_N^{-1}$ has moment of order $1/2$ bounded by $CN^{-3/2}$ and its derivative has the corresponding moment bounded by $CN^{-1/2}$. These statements follow by scaling the Bessel kernel on the line and then periodizing. Moreover,
		\begin{align*}
			\partial_x[\Lambda_N^{-1},g]f
			=[\partial_x\Lambda_N^{-1},g]f-(\partial_xg)\Lambda_N^{-1}f,
			\qquad
			\norm{\Lambda_N^{-1}f}_{L_\sharp^\infty(0,L)}\leq C N^{-1/2}\norm{f}_{L_\sharp^2(0,L)}.
		\end{align*}
		The last inequality follows from the $L^2$ norm of the Bessel kernel. These facts prove the first two estimates in \eqref{eq:lower-sharp-commutator}, and Hilbert-space interpolation leads to the third one.
		
		To show \eqref{eq:lower-rough-commutator}, write the commutator as $\Lambda_N^{-1}(gf)-g\Lambda_N^{-1}f$. In one dimension $H^{1/2}\cdot L^2\subset H^{-1/2}$ and $H^{1/2}\cdot H^1\subset H^{1/2}$. The first product assertion also follows immediately by duality from $H^{1/2}\subset L^4$. Applying these two product estimates, we have \eqref{eq:lower-rough-commutator}. All coefficient families below are finite polynomials in $z$, so the same bounds on $\Omega_1$ follow from their slice estimates.
		
		We next establish the positivity needed for the acceleration terms and the principal part of the tangentially smoothed viscous form. With $\mathbf{w}=\Lambda_N^{-1/2}f$, the difference from $\int(g\mathbf{w})\cdot\mathbf{w}$ is
		\begin{align*}
			\int_0^L \mathbf{w}\cdot[\Lambda_N^{1/2},g]\Lambda_N^{-1/2}\mathbf{w}\,\mathrm dx.
		\end{align*}
		Interpolating between the $L^2$ and $L^\infty$ increment estimates, we obtain
		\begin{align*}
			\norm{g(\cdot-r)-g}_{L_\sharp^4(0,L)}
			\leq C d(r,0)^{3/4}\norm{\partial_xg}_{L_\sharp^2(0,L)}.
		\end{align*}
		The positive-order kernel for $\Lambda_N^{1/2}$ has moment of order $3/4$ bounded by $CN^{-1/4}$. One direct verification uses the periodic heat kernel and
		\begin{align*}
			\Lambda_N^{1/2}f
			=N^{1/2}f+
			\frac{1}{|\Gamma(-1/4)|}\int_0^\infty
			e^{-N^2r}r^{-5/4}(f-e^{r\partial_x^2}f)\,\mathrm dr.
		\end{align*}
		The periodic heat kernel has the required moment bound $Cr^{3/8}$, whose integral against $e^{-N^2r}r^{-5/4}$ is bounded by $CN^{-1/4}$. By Minkowski's and H\"older's inequalities,
		\begin{align*}
			\norm{[\Lambda_N^{1/2},g]w}_{L_\sharp^2(0,L)}
			\leq CN^{-1/4}\norm{\partial_xg}_{L_\sharp^2(0,L)}\norm{w}_{L_\sharp^4(0,L)}.
		\end{align*}
		Finally $H_\sharp^{1/4}(0,L)\subset L_\sharp^4(0,L)$ yields
		\begin{align*}
			\norm{\Lambda_N^{-1}f}_{L_\sharp^4(0,L)}
			\leq CN^{-1/4}\norm{\Lambda_N^{-1/2} f}_{L_\sharp^2(0,L)},
		\end{align*}
		which proves \eqref{eq:lower-positive-mass}.
	\end{proof}
	Now we state our tangential acceleration estimate.
	\begin{lemma}\label{lem:lower-acceleration}
		Under the standing assumptions, there exists $N_0\geq1$, depending only on the energy and geometric bounds and the physical parameters, such that for every $N\geq N_0$, every $\eta>0$, and every $t\in[0,T]$, one has, with $\Lambda_N=(N^2-\partial_x^2)^{1/2}$,
		\begin{align}
			&\int_0^t\left(\norm{\Lambda_N^{-1/2}\partial_t\bv}_{L^2(\Omega_1)}^2
			+\norm{\Lambda_N^{-1/2}\partial_t^2h}_{L_\sharp^2(0,L)}^2\right)\,\mathrm ds
			\nonumber\\
			&\quad\leq \eta\cE_1(t)+C_{\eta,N}(1+\cE_1(0))
			+C_N\int_0^t \norm{\partial_x\bv(s)}_{H^1(\Omega_1)}^2\,\mathrm ds\nonumber\\
			&\qquad+C N^{-2}\int_0^t\norm{\bv}_{H^2(\Omega_1)}^2\,\mathrm ds
			+C_N\int_0^t (1+\cD(s))\cE_1(s)\,\mathrm ds.
			\label{eq:lower-filtered-bound}
		\end{align}
	\end{lemma}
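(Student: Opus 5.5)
We test the coupled weak formulation \eqref{eq:lower-coupled} with the filtered acceleration pair suggested in the introduction, namely $\zeta=\Lambda_N^{-1}\partial_t^2h$ and a fluid field $\bphi$ built from $\Lambda_N^{-1}\partial_t\bv$. Since $\Lambda_N^{-1}$ is a horizontal Fourier multiplier, it commutes with $\partial_z$, preserves the zero bottom trace, and maps the top trace $\partial_t^2h\,\be_2$ of $\partial_t\bv$ to $\Lambda_N^{-1}\partial_t^2h\,\be_2$. It also preserves solenoidality, because $\Div\partial_t\bv=0$ and $\Lambda_N^{-1}$ commutes with $\partial_x,\partial_z$. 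Thus $(\Lambda_N^{-1}\partial_t\bv,\Lambda_N^{-1}\partial_t^2h)$ is admissible for the higher strong solution, after temporal regularization if necessary. Its mean is zero because $\partial_t^2h$ has zero mean.

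\textbf{Positive terms.} The acceleration terms give $\rho_s\norm{\Lambda_N^{-1/2}\partial_t^2h}^2$ exactly. The fluid term is $\rho_f\int hB_h^{-1}B_h^{-\top}\partial_t\bv\cdot\Lambda_N^{-1}\partial_t\bv$, which we treat slice by slice in $z$ using \eqref{eq:lower-positive-mass} with $g=hB_h^{-1}B_h^{-\top}$. This coefficient is uniformly elliptic and bounded in $L^\infty(0,1;H^1_\sharp)$ by Lemma~\ref{lem:lower-coefficients}. Choosing $N_0$ large therefore gives at least $\frac{g_0}{2}\norm{\Lambda_N^{-1/2}\partial_t\bv}_{L^2(\Omega_1)}^2$.

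\textbf{Elastic terms.} These become $\alpha\int\partial_x^2h\,\Lambda_N^{-1}\partial_x^2\partial_t^2h$ plus the analogous $\beta$ term. We integrate by parts in time:
\begin{align*}
\int_0^t\alpha\int_0^L\partial_x^2h\,\Lambda_N^{-1}\partial_x^2\partial_t^2h\dx\,\mathrm ds
=\Bigl[\alpha\int_0^L\partial_x^2h\,\Lambda_N^{-1}\partial_x^2\partial_th\dx\Bigr]_0^t-\alpha\int_0^t\norm{\Lambda_N^{-1/2}\partial_x^2\partial_th}^2\,\mathrm ds.
\end{align*}
The last term is bounded by $C_N\norm{\partial_th}_{H^{3/2}_\sharp}^2\le C_N\norm{\partial_x\bv}_{H^1}^2$ via \eqref{eq:lower-velocity-trace}. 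The endpoint terms are bounded by $C_N\norm{h}_{H^3_\sharp}\norm{\partial_th}_{H^1_\sharp}$, interpolated against the energy bound for $\norm{h}_{H^2}$. This gives $C_N\norm{h}_{H^{5/2}}\norm{\partial_th}_{H^{1}}$, which Young turns into $\eta\cE_1(t)+C_{\eta,N}$; at $t=0$ it is bounded by $C(1+\cE_1(0))$.

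\textbf{Transport terms.} The two transport terms are paired with $B_h^{-\top}\Lambda_N^{-1}\partial_t\bv$. Using \eqref{eq:lower-nonlinear} together with $\norm{\Lambda_N^{-1}\partial_t\bv}_{L^2}\le\norm{\Lambda_N^{-1/2}\partial_t\bv}_{L^2}$, they cost
\begin{align*}
\delta\norm{\Lambda_N^{-1/2}\partial_t\bv}^2+CN^{-2}\norm{\bv}_{H^2}^2+C_N(1+\cD)\cE_1.
\end{align*}
This needs some care, since \eqref{eq:lower-nonlinear} carries $\delta\norm{\bv}_{H^2}^2$. We set $\delta\sim N^{-2}$ there and absorb the resulting $C_\delta$ into $C_N$.

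\textbf{Viscous term (main obstacle).} It reads $\mu\sum\int(A_h)_{ij}\partial_i\widehat\bu\cdot\partial_j(B_h^{-\top}\Lambda_N^{-1}\partial_t\bv)$. We move $\Lambda_N^{-1}$ off the test. With $\Lambda_N^{-1}$ self-adjoint, this produces
\begin{align*}
\int \Lambda_N^{-1}\bigl(A_h\nabla\widehat\bu\bigr)\cdot\nabla(B_h^{-\top}\partial_t\bv)\quad\text{plus commutators }[\Lambda_N^{-1},A_h],\ [\Lambda_N^{-1},B_h^{-\top}].
\end{align*}
Integrating by parts in space returns $-\Div(A_h\nabla\widehat\bu)$ against $\Lambda_N^{-1}$ of the test. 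The principal part is then bounded by $\norm{\bv}_{H^2}\,\norm{\Lambda_N^{-1}\partial_t\bv}\le CN^{-1}\norm{\bv}_{H^2}\norm{\Lambda_N^{-1/2}\partial_t\bv}$, and Young gives $\delta\norm{\Lambda_N^{-1/2}\partial_t\bv}^2+CN^{-2}\norm{\bv}_{H^2}^2$. This is exactly why the estimate carries the coefficient $N^{-2}$ and why the shift $N$ is introduced.

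The boundary terms from this integration by parts pair the normal stress with $\Lambda_N^{-1}\partial_t^2h$. They should cancel against the beam equation, which is already built into the coupled form if we keep the weak formulation and do not integrate by parts at the top. Equivalently, we only integrate by parts in $x$, or in the $z$-direction on pieces with zero trace.

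The commutators are estimated by \eqref{eq:lower-sharp-commutator} and \eqref{eq:lower-rough-commutator}. For example, $\Lambda_N^{1/2}[\Lambda_N^{-1},A_h]\nabla\widehat\bu$ has size $CN^{-1}\norm{\partial_xA_h}\norm{\nabla\widehat\bu}\lesssim(1+\cD)^{1/2}$, and it is paired with $\Lambda_N^{-1/2}\nabla\partial_t\bv$. This pairing is where a $z$-derivative of $\partial_t\bv$ appears that is not controlled.

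I would handle it by writing $\partial_z$-terms via integration by parts in $z$ onto the coefficient side. The $x$-derivatives go through $\Lambda_N^{-1/2}\partial_x$, which costs $\Lambda_N^{1/2}$, and land on $\Lambda_N^{-1/2}\partial_t\bv$ after one more horizontal integration by parts. The remaining factors involve $\partial_x\bv$ in $H^1$ and $\partial_x^3h$, giving $C_N\norm{\partial_x\bv}_{H^1}^2+C_N(1+\cD)\cE_1$.

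I expect the justification of these vertical integrations by parts, where the top trace $\partial_t^2h$ appears, to be the delicate step. Those contributions are paired with $\Lambda_N^{-1}\partial_t^2h$ and must be absorbed into $\delta\norm{\Lambda_N^{-1/2}\partial_t^2h}^2$, using trace bounds for $\nabla\widehat\bu$ through $\norm{\partial_x\bv}_{H^1}$.

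\textbf{Conclusion.} Collecting all terms, absorbing the $\delta$-terms, and integrating in time gives \eqref{eq:lower-filtered-bound}.
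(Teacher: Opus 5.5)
\textbf{Verdict: there is a genuine gap, in the viscous term.} Your mass, elastic and transport terms are fine and agree with the paper. The viscous term, however, is exactly the one that dictates the coefficient $CN^{-2}$, and your argument does not reach it.

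\textbf{The claimed gain is off by $N^{1/2}$.} The inequality $\norm{\Lambda_N^{-1}\partial_t\bv}_{L^2(\Omega_1)}\leq CN^{-1}\norm{\Lambda_N^{-1/2}\partial_t\bv}_{L^2(\Omega_1)}$ is false. The symbol of $\Lambda_N^{-1/2}$ is $(N^2+k^2)^{-1/4}$, whose supremum $N^{-1/2}$ is attained at $k=0$. Bounding the principal part directly by $\norm{\bv}_{H^2(\Omega_1)}\norm{\Lambda_N^{-1}\partial_t\bv}_{L^2(\Omega_1)}$ therefore gives only
\[
\delta\norm{\Lambda_N^{-1/2}\partial_t\bv}_{L^2(\Omega_1)}^2+C_\delta N^{-1}\norm{\bv}_{H^2(\Omega_1)}^2 .
\]
For the transport terms this loss is harmless, because \eqref{eq:lower-nonlinear} has a free small parameter; the viscous operator applied to $\bv$ has none.

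\textbf{The top boundary term does not cancel.} Producing $\Div(A_h\nabla\widehat\bu)$ requires a vertical integration by parts against a test with nonzero top trace. Restricting to horizontal integrations by parts, as you suggest as an alternative, does not produce that divergence. The vertical integration by parts creates the viscous normal stress paired with $\Lambda_N^{-1}\partial_t^2h$. In \eqref{eq:lower-coupled} the bulk viscous integral is itself the fluid load on the beam, so this term must be estimated, and its $H^{1/2}$ trace bound again gains only $N^{-1/2}$.

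\textbf{Why $N^{-1}$ is fatal.} The conversion \eqref{eq:lower-norm} costs a factor $N$. With a coefficient $CN^{-1}$, the full $H^2$ norm would enter the fluid estimate with an $N$-independent constant. It could then not be absorbed when closing Proposition~\ref{prop:lower-estimates}.

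\textbf{The missing idea: symmetrize instead of estimating.} Do not estimate the principal viscous term at all. Exploit the symmetry of
\[
a(\bv,\mathbf z)=\mu\sum_{i,j}\int_{\Omega_1}(A_h)_{ij}\partial_i(B_h^{-\top}\bv)\cdot\partial_j(B_h^{-\top}\mathbf z)\dx\dz ,
\]
which gives
\[
a(\bv,\Lambda_N^{-1}\partial_t\bv)=\frac12\frac{\mathrm d}{\mathrm dt}a(\bv,\Lambda_N^{-1}\bv)-\frac12R_t+\frac12\bigl[a(\bv,\Lambda_N^{-1}\partial_t\bv)-a(\Lambda_N^{-1}\bv,\partial_t\bv)\bigr],
\]
where $R_t$ collects the terms containing $\partial_tA_h$ and $\partial_tB_h^{-\top}$.

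\textbf{How each piece is controlled.}
\begin{enumerate}
\item The antisymmetric bracket consists only of commutators $[\Lambda_N^{-1},g]$. After one spatial integration by parts, the $z=1$ contribution is again a commutator, now paired with $\partial_t^2h$.
\item \eqref{eq:lower-sharp-commutator} gives the full gain $\norm{\Lambda_N^{1/2}[\Lambda_N^{-1},g]\partial_i\partial_j\bv}_{L^2(\Omega_1)}\leq CN^{-1}\norm{\bv}_{H^2(\Omega_1)}$. Pairing with $\Lambda_N^{-1/2}\partial_t\bv$ and $\Lambda_N^{-1/2}\partial_t^2h$ produces exactly $CN^{-2}\norm{\bv}_{H^2(\Omega_1)}^2$.
\item The lower-order commutators, estimated by \eqref{eq:lower-rough-commutator}, bring in $\norm{h}_{H_\sharp^{7/2}(0,L)}^2$. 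This term is then removed with Lemma~\ref{lem:lower-height}.
\item The coefficient-time terms $R_t$ are bounded by $C_N\norm{\partial_x\bv}_{H^1(\Omega_1)}^2+C_N(1+\cD)\cE_1$.
\item The total derivative is handled at the endpoints. At time $t$ one needs $a(\bv,\Lambda_N^{-1}\bv)\geq-\eta\cE_1-C_{\eta,N}$. This follows from \eqref{eq:lower-positive-mass} applied to the principal block $(A_h)_{ij}B_h^{-1}B_h^{-\top}$, which is a second reason, beyond the mass term, why $N_0$ must be large.
\end{enumerate}
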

	
	\begin{proof}
		We test \eqref{eq:lower-coupled} with the admissible pair $(\bphi,\zeta)=(\Lambda_N^{-1}\partial_t\bv,\Lambda_N^{-1}\partial_t^2h)$. The operator $\Lambda_N^{-1}$ commutes with spatial derivatives and boundary traces, so the pair is solenoidal and preserves the coupling condition. Integration by parts in time in the elastic terms and symmetry of the viscous form yield
		\begin{align}
			&\frac{\mathrm d}{\mathrm dt}\Bigg[
			\frac\mu2\sum_{i,j=1}^2\int_{\Omega_1}(A_h)_{ij}
			\partial_i(B_h^{-\top}\bv)\cdot\partial_j(B_h^{-\top}\Lambda_N^{-1}\bv)\dx\dz\nonumber\\
			&\hspace{22mm}+\int_0^L\left(\alpha\partial_x^2h\,\Lambda_N^{-1}\partial_t\partial_x^2h
			+\beta\partial_xh\,\Lambda_N^{-1}\partial_t\partial_xh\right)\dx\Bigg]\nonumber\\
			&\quad+\rho_f\int_{\Omega_1}(hB_h^{-1}B_h^{-\top}\partial_t\bv)\cdot\Lambda_N^{-1}\partial_t\bv\dx\dz
			+\rho_s\norm{\Lambda_N^{-1/2}\partial_t^2h}_{L_\sharp^2(0,L)}^2\nonumber\\
			&=\alpha\norm{\Lambda_N^{-1/2}\partial_t\partial_x^2h}_{L_\sharp^2(0,L)}^2
			+\beta\norm{\Lambda_N^{-1/2}\partial_t\partial_xh}_{L_\sharp^2(0,L)}^2\nonumber\\
			&\quad+\frac\mu2\sum_{i,j=1}^2\int_{\Omega_1}\Big[
			(\partial_tA_h)_{ij}\partial_i(B_h^{-\top}\bv)\cdot\partial_j(B_h^{-\top}\Lambda_N^{-1}\bv)\nonumber\\
			&\hspace{22mm}+(A_h)_{ij}\partial_i((\partial_tB_h^{-\top})\bv)\cdot\partial_j(B_h^{-\top}\Lambda_N^{-1}\bv)
			+(A_h)_{ij}\partial_i(B_h^{-\top}\bv)\cdot\partial_j((\partial_tB_h^{-\top})\Lambda_N^{-1}\bv)
			\Big]\dx\dz\nonumber\\
			&\quad-\frac\mu2\sum_{i,j=1}^2\int_{\Omega_1}(A_h)_{ij}\Big[
			\partial_i(B_h^{-\top}\bv)\cdot\partial_j(B_h^{-\top}\Lambda_N^{-1}\partial_t\bv)
			-\partial_i(B_h^{-\top}\Lambda_N^{-1}\bv)\cdot\partial_j(B_h^{-\top}\partial_t\bv)
			\Big]\dx\dz\nonumber\\
			&\quad-\rho_f\int_{\Omega_1}h(\partial_tB_h^{-\top})\bv\cdot B_h^{-\top}\Lambda_N^{-1}\partial_t\bv\dx\dz\nonumber\\
			&\quad-\rho_f\int_{\Omega_1}\big[(\widehat\bu-\partial_t\chi_h)\cdot(B_h\nabla)\big]\widehat\bu
			\cdot B_h^{-\top}\Lambda_N^{-1}\partial_t\bv\dx\dz
			\eqqcolon\sum_{i=1}^5 K_i.
			\label{eq:lower-acceleration-identity}
		\end{align}
		Here $K_1$ is the sum of the two structural norms.
		The coefficients $\rho_fhB_h^{-1}B_h^{-\top}$ and the full principal block matrix $\big((A_h)_{ij}B_h^{-1}B_h^{-\top}\big)_{i,j=1}^2$ are uniformly positive and bounded in $L^\infty(0,1;H_\sharp^1(0,L))$ using only the energy and gap. By Lemma~\ref{lem:lower-commutator}, we may choose $N\geq N_0$ depending only on these quantities so that
		\begin{align*}
			&\rho_f\int_{\Omega_1}(hB_h^{-1}B_h^{-\top}\partial_t\bv)\cdot\Lambda_N^{-1}\partial_t\bv\dx\dz
			+\rho_s\norm{\Lambda_N^{-1/2}\partial_t^2h}_{L_\sharp^2(0,L)}^2\\
			&\qquad\geq c\left(\norm{\Lambda_N^{-1/2}\partial_t\bv}_{L^2(\Omega_1)}^2
			+\norm{\Lambda_N^{-1/2}\partial_t^2h}_{L_\sharp^2(0,L)}^2\right).
		\end{align*}
		The same choice ensures positivity of the principal part of the viscous quadratic form inside the time derivative in \eqref{eq:lower-acceleration-identity}.
		
		We now estimate the right-hand side term by term. From the Fourier symbol of $\Lambda_N^{-1/2}$ and \eqref{eq:lower-velocity-trace}, we obtain, uniformly for $N\geq1$,
		\begin{align*}
			0\leq K_1\leq C\norm{\partial_th}_{H_\sharp^{3/2}(0,L)}^2
			\leq C\norm{\partial_x\bv}_{H^1(\Omega_1)}^2.
		\end{align*}
		
		To estimate $K_2$ and $K_3$, we first expand the viscous form. For an arbitrary test $\bphi$, its expansion is
		\begin{align}
			&\mu\sum_{i,j=1}^2\int_{\Omega_1}(A_h)_{ij}
			\partial_i(B_h^{-\top}\bv)\cdot\partial_j(B_h^{-\top}\bphi)\,\mathrm dx\,\mathrm dz\nonumber\\
			&=\mu\sum_{i,j=1}^2\int_{\Omega_1}\Big[
			((A_h)_{ij}B_h^{-1}B_h^{-\top}\partial_i\bv)\cdot\partial_j\bphi
			+((A_h)_{ij}(\partial_jB_h^{-1})B_h^{-\top}\partial_i\bv)\cdot\bphi\nonumber\\
			&\hspace{27mm}+((A_h)_{ij}B_h^{-1}(\partial_iB_h^{-\top})\bv)\cdot\partial_j\bphi
			+((A_h)_{ij}(\partial_jB_h^{-1})(\partial_iB_h^{-\top})\bv)\cdot\bphi
			\Big]\,\mathrm dx\,\mathrm dz.
			\label{eq:lower-coefficient-expansion}
		\end{align}
		Here $\partial_1=\partial_x$ and $\partial_2=\partial_z$. We keep the four coefficient families separate, since only the principal coefficient needs a decaying power of $N$.
		
		The mechanical energy and the positive gap imply
		\begin{align}
			&\norm{\rho_fhB_h^{-1}B_h^{-\top}}_{L^\infty(0,1;H_\sharp^1(0,L))}
			+\sum_{i,j=1}^2\norm{(A_h)_{ij}B_h^{-1}B_h^{-\top}}_{L^\infty(0,1;H_\sharp^1(0,L))}\leq C,
			\\
			&\sum_{i,j=1}^2\Big(
			\norm{\partial_x((A_h)_{ij}B_h^{-1}B_h^{-\top})}_{L^\infty(0,1;H_\sharp^{1/2}(0,L))}
			+\norm{(A_h)_{ij}B_h^{-1}(\partial_iB_h^{-\top})}_{L^\infty(0,1;H_\sharp^{1/2}(0,L))}\Big)\nonumber\\
			&\qquad\leq C\left(1+\norm{\partial_x^3h}_{L_\sharp^2(0,L)}^{1/2}\right).
		\end{align}
		The transposed mixed family has the same bound. A horizontal derivative of a mixed coefficient and an undifferentiated zeroth-order coefficient satisfy
		\begin{align}
			&\sum_{i,j=1}^2\norm{\partial_x((A_h)_{ij}B_h^{-1}(\partial_iB_h^{-\top}))}_{L^\infty(0,1;H_\sharp^{1/2}(0,L))}
			\leq C\left(1+\norm{\partial_x^3h}_{L_\sharp^2(0,L)}+\norm{h}_{H_\sharp^{7/2}(0,L)}\right),\nonumber\\
			&\sum_{i,j=1}^2\norm{(A_h)_{ij}(\partial_jB_h^{-1})(\partial_iB_h^{-\top})}_{L^\infty(0,1;H_\sharp^{1/2}(0,L))}
			\leq C\left(1+\norm{\partial_x^3h}_{L_\sharp^2(0,L)}\right).
		\end{align}
		The same bounds hold after the $z$ derivatives used below. Indeed, these coefficients are finite polynomials in $z$. Their scalar coefficient functions depend smoothly on $h,h^{-1},\partial_xh$, with at most a linear factor of $\partial_x^2h$ in the mixed terms and a quadratic factor in the zeroth-order terms. Smooth functions of $h,h^{-1},\partial_xh$ have uniformly bounded $H_\sharp^1(0,L)$ norms. The only quadratic structural product needing explanation is controlled by
		\begin{align*}
			\norm{(\partial_x^2h)^2}_{H_\sharp^{1/2}(0,L)}
			\leq C\norm{\partial_x^2h}_{L_\sharp^2(0,L)}
			\norm{\partial_x^2h}_{H_\sharp^1(0,L)}.
		\end{align*}
		This follows from the one-dimensional fractional product estimate and interpolation, or from $\norm{f}_{L_\sharp^4(0,L)}\leq C\norm{f}_{L_\sharp^2(0,L)}^{3/4}\norm{f}_{H_\sharp^1(0,L)}^{1/4}$ and $\norm{(1-\partial_x^2)^{1/4}f}_{L_\sharp^4(0,L)}\leq C\norm{f}_{L_\sharp^2(0,L)}^{1/4}\norm{f}_{H_\sharp^1(0,L)}^{3/4}$.
		
		We now estimate $K_2$, the coefficient-time integral in \eqref{eq:lower-acceleration-identity}. In the principal coefficients a time derivative produces $\partial_th$ and $\partial_t\partial_xh$. The mixed coefficients additionally produce $\partial_t\partial_x^2h$, and the zeroth-order coefficients produce $\partial_x^2h\,\partial_t\partial_x^2h$. Integrate the latter two derivatives once in $x$. The resulting products are obtained directly from \eqref{eq:lower-coefficient-expansion}, with at most the following structural factors:
		\begin{align*}
			&\partial_th,\quad \partial_t\partial_xh,\quad
			\partial_th\,\partial_x^2h,\quad
			\partial_t\partial_xh\,\partial_x^2h,\quad
			\partial_th\,(\partial_x^2h)^2,\quad
			\partial_t\partial_xh\,(\partial_x^2h)^2,\quad
			\partial_t\partial_xh\,\partial_x^3h.
		\end{align*}
		They multiply two fluid factors, one of which is acted on by $\Lambda_N^{-1}$. The only second fluid derivatives generated in this integration by parts have a horizontal derivative and are bounded by $\norm{\partial_x\bv}_{H^1(\Omega_1)}$. The estimates use
		\begin{align*}
			&\norm{\partial_th}_{L_\sharp^\infty(0,L)}\leq C \cE_1^{1/4},\qquad
			\norm{\partial_x^2h}_{L_\sharp^\infty(0,L)}\leq C(1+\norm{\partial_x^3h}_{L_\sharp^2(0,L)}^{1/2}),\\
			&\norm{\Lambda_N^{-1}f}_{L_\sharp^\infty(0,L;L^2(0,1))}\leq C_N\norm{f}_{L^2(\Omega_1)},\\
			&\norm{\nabla\bv}_{L_\sharp^\infty(0,L;L^2(0,1))}
			\leq C (1+\cD)^{1/4}((1+\cD)^{1/4}+\norm{\partial_x\bv}_{H^1(\Omega_1)}^{1/2}).
		\end{align*}
		Here the first inequality also uses the energy bound for $\partial_th$ in $L^2$. To make the highest products explicit, their absolute integrals over $\Omega_1$ satisfy
		\begin{align*}
			&\int_{\Omega_1}|\partial_t\partial_xh|\,|\Lambda_N^{-1}\bv|\,
			|\nabla\partial_x\bv|
			\leq C_N\cE_1^{1/2}\norm{\partial_x\bv}_{H^1(\Omega_1)}\leq\delta \norm{\partial_x\bv}_{H^1(\Omega_1)}^2+C_{\delta,N}(1+\cD)\cE_1,\\
			&\int_{\Omega_1}|\partial_t\partial_xh|\,|\partial_x^3h|\,
			|\bv|\,|\Lambda_N^{-1}\bv|
			\leq C_N (1+\cD)^{1/4}\cE_1\leq C_N(1+\cD)\cE_1,\\
			&\int_{\Omega_1}|\partial_t\partial_xh|\,|\partial_x^2h|^2\,
			|\bv|\,|\Lambda_N^{-1}\bv|
			\leq C_N(1+\cD)^{1/4}\cE_1^{3/4}\leq C_N(1+\cD)\cE_1,\\
			&\int_{\Omega_1}|\partial_t\partial_xh|\,|\partial_x^2h|\,
			|\nabla\bv|\,|\Lambda_N^{-1}\bv|
			\leq C_N\cE_1^{1/2}(1+\cD)^{1/4}\left((1+\cD)^{1/4}+\norm{\partial_x\bv}_{H^1(\Omega_1)}^{1/2}\right)\\
			&\phantom{\int_{\Omega_1}|\partial_t\partial_xh|\,|\partial_x^2h|\,
				|\nabla\bv|\,|\Lambda_N^{-1}\bv|}\leq\delta \norm{\partial_x\bv}_{H^1(\Omega_1)}^2+C_{\delta,N}(1+\cD)\cE_1.
		\end{align*}
		Each estimate uses Cauchy--Schwarz in $z$ before applying H\"older's inequality in $x$. The other products have fewer derivatives and satisfy the same bound. Thus, expanding $2K_2$ in \eqref{eq:lower-acceleration-identity}, we obtain
		\begin{align}
			&\mu\Bigg|\sum_{i,j=1}^2\int_{\Omega_1}\Big[
			(\partial_t((A_h)_{ij}B_h^{-1}B_h^{-\top})\partial_i\bv)\cdot\partial_j\Lambda_N^{-1}\bv\nonumber\\
			&\hspace{16mm}+(\partial_t((A_h)_{ij}(\partial_jB_h^{-1})B_h^{-\top})\partial_i\bv)\cdot\Lambda_N^{-1}\bv
			+(\partial_t((A_h)_{ij}B_h^{-1}(\partial_iB_h^{-\top}))\bv)\cdot\partial_j\Lambda_N^{-1}\bv\nonumber\\
			&\hspace{16mm}+(\partial_t((A_h)_{ij}(\partial_jB_h^{-1})(\partial_iB_h^{-\top}))\bv)\cdot\Lambda_N^{-1}\bv
			\Big]\,\mathrm dx\,\mathrm dz\Bigg|\nonumber\\
			&\qquad\leq\delta\norm{\partial_x\bv}_{H^1(\Omega_1)}^2+C_{\delta,N}(1+\cD)\cE_1.
		\end{align}
		Since $K_2$ is one half of this integral, we fix $\delta$, for example $\delta=1$, and obtain
		\begin{align*}
			|K_2|\leq C_N\norm{\partial_x\bv}_{H^1(\Omega_1)}^2+C_N(1+\cD)\cE_1.
		\end{align*}
		
		For $K_3$, we estimate the commutator difference in \eqref{eq:lower-acceleration-identity}. Expanding $-2K_3$ by \eqref{eq:lower-coefficient-expansion}, commuting the self-adjoint operator $\Lambda_N^{-1}$, and integrating by parts once, we obtain
		\begin{align}
			&\mu\sum_{i,j=1}^2\int_{\Omega_1}(A_h)_{ij}\Big[
			\partial_i(B_h^{-\top}\bv)\cdot\partial_j(B_h^{-\top}\Lambda_N^{-1}\partial_t\bv)
			-\partial_i(B_h^{-\top}\Lambda_N^{-1}\bv)\cdot\partial_j(B_h^{-\top}\partial_t\bv)
			\Big]\,\mathrm dx\,\mathrm dz\nonumber\\
			&=\mu\sum_{i,j=1}^2\int_{\Omega_1}\Big\{
			-\partial_j\big([\Lambda_N^{-1},(A_h)_{ij}B_h^{-1}B_h^{-\top}]\partial_i\bv\big)
			-\partial_j\big([\Lambda_N^{-1},(A_h)_{ij}B_h^{-1}(\partial_iB_h^{-\top})]\bv\big)\nonumber\\
			&\hspace{24mm}+[\Lambda_N^{-1},(A_h)_{ij}(\partial_jB_h^{-1})B_h^{-\top}]\partial_i\bv
			+[\Lambda_N^{-1},(A_h)_{ij}(\partial_jB_h^{-1})(\partial_iB_h^{-\top})]\bv
			\Big\}\cdot\partial_t\bv\,\mathrm dx\,\mathrm dz\nonumber\\
			&\quad+\mu\sum_{i=1}^2\int_0^L\Big(
			[\Lambda_N^{-1},(A_h)_{i2}B_h^{-1}B_h^{-\top}]\partial_i\bv
			+[\Lambda_N^{-1},(A_h)_{i2}B_h^{-1}(\partial_iB_h^{-\top})]\bv
			\Big)_2\big|_{z=1}\partial_t^2h\,\mathrm dx.
			\label{eq:lower-commutator-boundary}
		\end{align}
		The bottom contribution vanishes because $\partial_t\bv=0$ there. To proceed, we first expand
		\begin{align*}
			\partial_j\big([\Lambda_N^{-1},(A_h)_{ij}B_h^{-1}B_h^{-\top}]
			\partial_i\bv\big)=[\Lambda_N^{-1},(A_h)_{ij}B_h^{-1}B_h^{-\top}]
			\partial_j\partial_i\bv
			+[\Lambda_N^{-1},\partial_j((A_h)_{ij}B_h^{-1}B_h^{-\top})]
			\partial_i\bv.
		\end{align*}
		The same product rule applies to the differentiated mixed commutator. For the terms with two derivatives on $\bv$, \eqref{eq:lower-sharp-commutator} yields
		\begin{align*}
			\sum_{i,j=1}^2
			\norm{\Lambda_N^{1/2}
				[\Lambda_N^{-1},(A_h)_{ij}B_h^{-1}B_h^{-\top}]
				\partial_j\partial_i\bv}_{L^2(\Omega_1)}
			\leq CN^{-1}\norm{\bv}_{H^2(\Omega_1)}.
		\end{align*}
		Among the other terms, the three containing a first derivative of $\bv$ have a differentiated principal coefficient or an undifferentiated mixed coefficient. Their $H_\sharp^{1/2}(0,L)$ norms are bounded by $C(1+\norm{\partial_x^3h}_{L_\sharp^2(0,L)}^{1/2})$, uniformly in $z$. The two terms containing $\bv$ have a differentiated mixed coefficient or a zeroth-order coefficient, with norms bounded by $C(1+\norm{\partial_x^3h}_{L_\sharp^2(0,L)}+\norm{h}_{H_\sharp^{7/2}(0,L)})$. We apply \eqref{eq:lower-rough-commutator} to these five terms.
		
		At $z=1$, both $\bv$ and $\Lambda_N^{-1}\bv$ have vertical traces, while $\partial_iB_h^{-\top}$ has zero second column. Hence
		\begin{align*}
			[\Lambda_N^{-1},(A_h)_{i2}B_h^{-1}(\partial_iB_h^{-\top})]
			\bv\big|_{z=1}=0.
		\end{align*}
		The principal boundary terms satisfy, by \eqref{eq:lower-sharp-commutator} and the fixed-strip trace theorem,
		\begin{align*}
			\sum_{i=1}^2
			\norm{\left.\Lambda_N^{1/2}
				[\Lambda_N^{-1},(A_h)_{i2}B_h^{-1}B_h^{-\top}]
				\partial_i\bv\right|_{z=1}}_{L_\sharp^2(0,L)}
				\leq CN^{-1}\sum_{i=1}^2
			\norm{\partial_i\bv|_{z=1}}_{L_\sharp^2(0,L)}
			\leq CN^{-1}\norm{\bv}_{H^2(\Omega_1)}.
		\end{align*}
		Pairing the volume and boundary terms with $\Lambda_N^{-1/2}\partial_t\bv$ and $\Lambda_N^{-1/2}\partial_t^2h$, respectively, we obtain
		\begin{align*}
			2|K_3|
			&\leq \bigg[
			CN^{-1}\norm{\bv}_{H^2(\Omega_1)}
			+C_N\left(1+\norm{\partial_x^3h}_{L_\sharp^2(0,L)}^{1/2}\right)
			\norm{\nabla\bv}_{L^2(\Omega_1)}\\
			&\hspace{13mm}
			+C_N\left(1+\norm{\partial_x^3h}_{L_\sharp^2(0,L)}
			+\norm{h}_{H_\sharp^{7/2}(0,L)}\right)
			\norm{\bv}_{L^2(\Omega_1)}
			\bigg]\norm{\Lambda_N^{-1/2}\partial_t\bv}_{L^2(\Omega_1)}\\
			&\quad+CN^{-1}\norm{\bv}_{H^2(\Omega_1)}
			\norm{\Lambda_N^{-1/2}\partial_t^2h}_{L_\sharp^2(0,L)}.
		\end{align*}
		Using $\norm{\bv}_{L^2(\Omega_1)}\leq C$, $\norm{\nabla\bv}_{L^2(\Omega_1)}^2\leq C\cD$, and $1+\norm{\partial_x^3h}_{L_\sharp^2(0,L)}^2\leq C\cE_1$, it follows from Young's inequality that
		\begin{align*}
			2|K_3|
			&\leq\delta\left(
			\norm{\Lambda_N^{-1/2}\partial_t\bv}_{L^2(\Omega_1)}^2
			+\norm{\Lambda_N^{-1/2}\partial_t^2h}_{L_\sharp^2(0,L)}^2\right)
			+C_\delta N^{-2}\norm{\bv}_{H^2(\Omega_1)}^2\\
			&\quad+C_{\delta,N}\left((1+\cD)\cE_1
			+\norm{h}_{H_\sharp^{7/2}(0,L)}^2\right).
		\end{align*}
		
		Finally, since $\norm{\Lambda_N^{-1}\partial_t\bv}_{L^2(\Omega_1)}\leq N^{-1/2}\norm{\Lambda_N^{-1/2}\partial_t\bv}_{L^2(\Omega_1)}$, we apply Young's inequality and \eqref{eq:lower-nonlinear} with its parameter proportional to $N^{-1}$. For $K_4$ and $K_5$, we obtain, for every fixed $\delta>0$,
		\begin{align*}
			|K_4|+|K_5|\leq\delta\norm{\Lambda_N^{-1/2}\partial_t\bv}_{L^2(\Omega_1)}^2
			+C_\delta N^{-2}\norm{\bv}_{H^2(\Omega_1)}^2+C_{\delta,N}(1+\cD)\cE_1.
		\end{align*}
		
		We next estimate the two expressions inside the time derivative in \eqref{eq:lower-acceleration-identity}. The structural squares generated by integration by parts in time constitute $K_1$ and have already been estimated. For the elastic pairing, we have
		\begin{align*}
			\left|\int_0^L\left(\alpha\partial_x^2h\,\Lambda_N^{-1}\partial_t\partial_x^2h
			+\beta\partial_xh\,\Lambda_N^{-1}\partial_t\partial_xh\right)\,\mathrm dx\right|
			\leq C\norm{h}_{H_\sharp^3(0,L)}\norm{\partial_th}_{L_\sharp^2(0,L)}
			\leq \eta\cE_1+C_\eta.
		\end{align*}
		
		For the fluid viscous quadratic form at time $t$, we first consider the terms in which both spatial derivatives fall on the fluid factors. The full principal block is uniformly positive: for every $\boldsymbol\xi_1,\boldsymbol\xi_2\in\mathbb R^2$,
		\begin{align*}
			\sum_{i,j=1}^2
			\big((A_h)_{ij}B_h^{-1}B_h^{-\top}\boldsymbol\xi_i\big)
			\cdot\boldsymbol\xi_j
			&=\sum_{i,j=1}^2(A_h)_{ij}
			(B_h^{-\top}\boldsymbol\xi_i)\cdot
			(B_h^{-\top}\boldsymbol\xi_j)\\
			&\geq c\sum_{i=1}^2
			|B_h^{-\top}\boldsymbol\xi_i|^2
			\geq c_0\sum_{i=1}^2|\boldsymbol\xi_i|^2.
		\end{align*}
		Moreover, its entries have uniformly bounded horizontal $H_\sharp^1(0,L)$ norms, with constants depending only on the energy and gap. Applying \eqref{eq:lower-positive-mass} to this block and the vector $(\partial_x\bv,\partial_z\bv)$ at each fixed $z$, and then integrating in $z$, we obtain
		\begin{align*}
			&\sum_{i,j=1}^2\int_{\Omega_1}
			\big((A_h)_{ij}B_h^{-1}B_h^{-\top}\partial_i\bv\big)
			\cdot\Lambda_N^{-1}\partial_j\bv\dx\dz\\
			&\qquad\geq
			(c_0-CN^{-1/2})
			\norm{\Lambda_N^{-1/2}\nabla\bv}_{L^2(\Omega_1)}^2
			\geq \frac{c_0}{2}
			\norm{\Lambda_N^{-1/2}\nabla\bv}_{L^2(\Omega_1)}^2,
		\end{align*}
		provided $N$ is sufficiently large. This is the positive principal contribution used to absorb the mixed terms below. For either mixed term, move $\Lambda_N^{-1/2}$ to the differentiated fluid factor and use the product estimates underlying \eqref{eq:lower-rough-commutator}. The mixed coefficient has $H_\sharp^{1/2}(0,L)$ norm bounded by $C(1+\norm{\partial_x^3h}_{L_\sharp^2(0,L)}^{1/2})$, while $\norm{\Lambda_N^{-1}\bv}_{L^2(0,1;H_\sharp^1(0,L))}\leq C_N\norm{\bv}_{L^2(\Omega_1)}$ is energy controlled. 
		For the zeroth-order family in \eqref{eq:lower-coefficient-expansion}, set
		\begin{align*}
			\mathbf Z_{ij}=(A_h)_{ij}(\partial_jB_h^{-1})(\partial_iB_h^{-\top}).
		\end{align*}
		The energy bound and one-dimensional interpolation imply
		\begin{align*}
			\norm{\mathbf Z_{ij}}_{L_\sharp^2(0,L;L^\infty(0,1))}
			&\leq C\left(1+\norm{\partial_x^2h}_{L_\sharp^4(0,L)}^2\right)
			\leq C\left(1+\norm{\partial_x^3h}_{L_\sharp^2(0,L)}^{1/2}\right),\\
			\norm{\Lambda_N^{-1}\bv}_{L^\infty(0,L;L^2(0,1))}
			&\leq C_N\norm{\bv}_{L^2(\Omega_1)}.
		\end{align*}
		Applying Cauchy--Schwarz in $z$ and H\"older's inequality in $x$ , we obtain
		\begin{align*}
			\left|\int_{\Omega_1}(\mathbf Z_{ij}\bv)\cdot\Lambda_N^{-1}\bv\dx\dz\right|
			&\leq\norm{\mathbf Z_{ij}}_{L_\sharp^2(0,L;L^\infty(0,1))}
			\norm{\bv}_{L^2(\Omega_1)}
			\norm{\Lambda_N^{-1}\bv}_{L^\infty(0,L;L^2(0,1))}\\
			&\leq C_N\left(1+\norm{\partial_x^3h}_{L_\sharp^2(0,L)}^{1/2}\right).
		\end{align*}
		The product of the two fluid norms in the preceding estimate is bounded by $C_N\norm{\bv}_{L^2(\Omega_1)}^2$, which is controlled by the mechanical energy. Combining the principal, mixed, and zeroth-order terms, we conclude that
		\begin{align}
			&\mu\sum_{i,j=1}^2\int_{\Omega_1}(A_h)_{ij}
			\partial_i(B_h^{-\top}\bv)\cdot\partial_j(B_h^{-\top}\Lambda_N^{-1}\bv)\,\mathrm dx\,\mathrm dz\nonumber\\
			&\geq c\norm{\Lambda_N^{-1/2}\nabla\bv}_{L^2(\Omega_1)}^2
			-C_N\left(1+\norm{\partial_x^3h}_{L_\sharp^2(0,L)}\right)
			\geq -\eta\cE_1-C_{\eta,N}.
		\end{align}
		The absolute value of the expression inside the time derivative in \eqref{eq:lower-acceleration-identity}, evaluated at $t=0$, is bounded by $C_N(1+\cE_1(0))$.
		
		Now we integrate \eqref{eq:lower-acceleration-identity} over $(0,t)$. Choose the small parameters in the estimates of $K_3,K_4,K_5$ so that their acceleration terms are absorbed by the mass coercivity. In view of the estimates of $K_1,K_2$ and the preceding bounds at times $0$ and $t$, we conclude that
		\begin{align*}
			&\int_0^t\left(\norm{\Lambda_N^{-1/2}\partial_t\bv}_{L^2(\Omega_1)}^2
			+\norm{\Lambda_N^{-1/2}\partial_t^2h}_{L_\sharp^2(0,L)}^2\right)\,\mathrm ds\\
			&\quad\leq\eta\cE_1(t)+C_{\eta,N}(1+\cE_1(0))
			+CN^{-2}\int_0^t\norm{\bv}_{H^2(\Omega_1)}^2\,\mathrm ds
			\\&\qquad+C_N\int_0^t\left(\norm{\partial_x\bv}_{H^1(\Omega_1)}^2+(1+\cD)\cE_1+\norm{h}_{H_\sharp^{7/2}(0,L)}^2\right)\,\mathrm ds.
		\end{align*}
		Applying the preceding estimate with $\eta/2$ in place of $\eta$, and then using \eqref{eq:lower-height-bound} with $\varepsilon$ chosen so that $C_N\varepsilon\leq\eta/2$. we record \eqref{eq:lower-filtered-bound}. For $N\geq1$, the Fourier symbols satisfy
		\begin{align*}
			(1+k^2)^{-1/2}\leq N(N^2+k^2)^{-1/2},
			\qquad k\in(2\pi/L)\mathbb Z.
		\end{align*}
		Consequently,
		\begin{align}
			\norm{f}_{H_\sharp^{-1/2}(0,L)}^2
			\leq CN\norm{\Lambda_N^{-1/2}f}_{L_\sharp^2(0,L)}^2.
			\label{eq:lower-norm}
		\end{align}
		which finishes the proof. 
	\end{proof}
	
	By \eqref{eq:lower-norm}, the coefficient of the full fluid $H^2$ norm becomes $CN^{-1}$ after conversion to the unshifted acceleration norm. It can be made small by a further fixed choice of $N$, still depending only on the energy and gap. We shall combine this estimate with the preceding horizontal estimate and the fluid estimate proved in the next.
	
	\subsection{The fluid estimate}\label{sec:lower-fluid}
	
	For each time, let $\widetilde{\bv} = \mathscr{E} (\partial_t h)$ be the stationary solenoidal lifting from Lemma \ref{lem:lower-lift} with upper trace $\partial_th\be_2$, and put $\overline{\bv}=\bv-\widetilde{\bv}$, which has zero boundary trace. Both $\overline{\bv}$ and $\partial_t \overline{\bv}$ are solenoidal and have zero trace.
	
	The lifting operator is fixed and linear, so
	\begin{align}
		\norm{\partial_t\widetilde{\bv}}_{L^2(\Omega_1)}\leq C\norm{\partial_t^2h}_{H_\sharp^{-1/2}(0,L)}.
		\label{eq:lower-lift-time}
	\end{align}
	The zero mean of $\partial_th$ is its flux compatibility condition. By the mechanical energy, the $H^1$ norm equivalence, Lemma \ref{lem:lower-lift}, and \eqref{eq:lower-velocity-trace}, one records
	\begin{align}
		\norm{\overline{\bv}}_{L^2(\Omega_1)}+\norm{\widetilde{\bv}}_{L^2(\Omega_1)}&\leq C,\nonumber\\
		\norm{\overline{\bv}}_{H^1(\Omega_1)}^2+\norm{\widetilde{\bv}}_{H^1(\Omega_1)}^2&\leq C(1+\cD),\nonumber\\
		\norm{\partial_x\overline{\bv}}_{H^1(\Omega_1)}^2+\norm{\widetilde{\bv}}_{H^2(\Omega_1)}^2&\leq C\norm{\partial_x\bv}_{H^1(\Omega_1)}^2.
	\end{align}
	Applying \eqref{eq:lower-anisotropic} to these lifting bounds, we obtain
	\begin{subequations}
		\label{eq:lower-fluid-column-bounds}
		\begin{align}
			&\norm{\overline{\bv}}_{L_\sharp^\infty(0,L;L^2(0,1))}+\norm{\widetilde{\bv}}_{L_\sharp^\infty(0,L;L^2(0,1))}\leq C(1+\cD)^{1/4},\\
			&\norm{\nabla\overline{\bv}}_{L_\sharp^4(0,L;L^2(0,1))}^2
			\leq C(1+\cD)^{3/4}\left(1+\cD+\norm{\partial_x\bv}_{H^1(\Omega_1)}^2\right)^{1/4},\\
			&\norm{\nabla\overline{\bv}}_{L_\sharp^\infty(0,L;L^2(0,1))}
			\leq C(1+\cD)^{1/4}\left(1+\cD+\norm{\partial_x\bv}_{H^1(\Omega_1)}^2\right)^{1/4}.
		\end{align}
	\end{subequations}
	The energy-level bounds for $\partial_th$ and $\partial_x^2h$ also by interpolation yield
	\begin{subequations}
		\label{eq:lower-fluid-structural-bounds}
		\begin{align}
			&\norm{\partial_th}_{L_\sharp^\infty(0,L)}+\norm{\partial_x^2h}_{L_\sharp^\infty(0,L)}
			+\norm{\partial_x^2h}_{L_\sharp^4(0,L)}^2\leq C\cE_1^{1/4},\\
			&\norm{\partial_t\partial_xh}_{L_\sharp^2(0,L)}+\norm{\partial_x^3h}_{L_\sharp^2(0,L)}\leq C\cE_1^{1/2}.
		\end{align}
	\end{subequations}
	Here the energy-level $L^2$ bounds are retained in interpolation. The resulting powers are needed to keep the final bound linear in $\cE_1$.
	\begin{lemma}\label{lem:lower-fluid}
		Under the assumptions fixed above, there exist constants $c,C>0$, depending only on the mechanical energy, the prescribed positive gap, the conserved mean height, the period, and the physical parameters, such that, for every $0\leq t\leq T$,
		\begin{align}
			&\mu\sum_{i,j=1}^2\int_{\Omega_1}(A_h(t))_{ij}
			\partial_i(B_{h(t)}^{-\top}\overline{\bv}(t))\cdot\partial_j(B_{h(t)}^{-\top}\overline{\bv}(t))\dx\dz\nonumber\\
			&\quad+c\int_0^t\left(\norm{\partial_t\bv}_{L^2(\Omega_1)}^2
			+\norm{\bv}_{H^2(\Omega_1)}^2+\norm{\widehat p}_{H^1(\Omega_1)}^2\right)\,\mathrm ds\nonumber\\
			&\leq\mu\sum_{i,j=1}^2\int_{\Omega_1}(A_{h_0})_{ij}
			\partial_i(B_{h_0}^{-\top}\overline{\bv}(0))\cdot\partial_j(B_{h_0}^{-\top}\overline{\bv}(0))\dx\dz\nonumber\\
			&\quad+C\int_0^t\norm{\partial_t^2h}_{H_\sharp^{-1/2}(0,L)}^2\,\mathrm ds
			+C\int_0^t\norm{\partial_x\bv}_{H^1(\Omega_1)}^2\,\mathrm ds
			+C\int_0^t(1+\cD)\cE_1\,\mathrm ds.
			\label{eq:lower-fluid-loop}
		\end{align}
		Moreover, for every $\delta>0$ and every $0\leq t\leq T$,
		\begin{align}
			\norm{\bv(t)}_{H^1(\Omega_1)}^2
			&\leq C\mu\sum_{i,j=1}^2\int_{\Omega_1}(A_h(t))_{ij}
			\partial_i(B_{h(t)}^{-\top}\overline{\bv}(t))\cdot\partial_j(B_{h(t)}^{-\top}\overline{\bv}(t))\dx\dz\nonumber\\
			&\quad+\delta\norm{\partial_t\partial_xh(t)}_{L_\sharp^2(0,L)}^2+C_\delta.
			\label{eq:lower-fluid-pairing}
		\end{align}
		The initial viscous integral on the right-hand side of \eqref{eq:lower-fluid-loop} is bounded by $C\cE_1(0)$.
	\end{lemma}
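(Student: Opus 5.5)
We test the transformed momentum equation \eqref{eq:lower-strong-fluid}, written for $\overline{\bv}=\bv-\widetilde{\bv}$, by $\partial_t\overline{\bv}$. Since $\partial_t\overline{\bv}$ is solenoidal with zero traces on $z=0$ and $z=1$, the pressure term drops out after the divergence-free integration by parts (the Piola structure gives $\int_{\Omega_1}\nabla\widehat p\cdot B_h^{-\top}\bphi$ proportional to $\int \widehat p\,\Div\bphi$). For the same reason the beam equation does not enter. The viscous term, with $\bv=\overline{\bv}+\widetilde{\bv}$ inserted, splits into a symmetric quadratic form in $\overline{\bv}$ and a cross term with $\widetilde{\bv}$. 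The symmetric part equals $\frac12\frac{\mathrm d}{\mathrm dt}$ of the viscous integral in \eqref{eq:lower-fluid-loop}, up to a coefficient-time remainder. That remainder contains $\partial_th$ and $\partial_t\partial_xh$ (through $\partial_tA_h$ and $\partial_tB_h^{-\top}$). We bound it, exactly as $I_1$ and $K_2$ were bounded, by $\eta\norm{\bv}_{H^2(\Omega_1)}^2+C_\eta(1+\cD)\cE_1$, using \eqref{eq:lower-fluid-column-bounds} and \eqref{eq:lower-fluid-structural-bounds}.

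For the cross term, the mass term gives $\rho_f\int h|B_h^{-\top}\partial_t\overline{\bv}|^2\geq c\norm{\partial_t\overline{\bv}}_{L^2}^2$. The pieces involving $\widetilde{\bv}$ are the mass term against $\partial_t\widetilde{\bv}$, controlled by \eqref{eq:lower-lift-time}, and the viscous term in $\widetilde{\bv}$, controlled by $\norm{\widetilde{\bv}}_{H^2}\leq C\norm{\partial_x\bv}_{H^1}$ once $\nabla$ is integrated back onto $\widetilde{\bv}$. For the latter we write the viscous operator against $\partial_t\overline{\bv}$ in strong form; this is allowed because $\partial_t\overline{\bv}$ vanishes on the boundary. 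The transport terms are handled by \eqref{eq:lower-nonlinear}. Together this yields
\begin{align*}
\frac{\mathrm d}{\mathrm dt}\mathcal V(\overline{\bv})+c\norm{\partial_t\bv}_{L^2}^2\leq \eta\norm{\bv}_{H^2}^2+C\norm{\partial_t^2h}_{H_\sharp^{-1/2}}^2+C\norm{\partial_x\bv}_{H^1}^2+C_\eta(1+\cD)\cE_1,
\end{align*}
where $\mathcal V$ denotes the viscous form and we used $\partial_t\bv=\partial_t\overline{\bv}+\partial_t\widetilde{\bv}$.

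The main obstacle is recovering $\norm{\bv}_{H^2}^2+\norm{\widehat p}_{H^1}^2$ in the time integral. For this we use stationary Stokes regularity on the fixed strip, freezing time in \eqref{eq:lower-strong-fluid}. We write it as a variable-coefficient Stokes system for $\overline{\bv}$ with zero Dirichlet data. The forcing consists of $\rho_fhB_h^{-1}B_h^{-\top}\partial_t\bv$, the transport terms, the $\widetilde{\bv}$ contributions, and the lower-order coefficient derivatives. The coefficient derivatives contain $\partial_x^2h$ and, linearly, $\partial_x^3h$; they are bounded as in \eqref{eq:lower-matrix-product}.

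For the regularity itself we avoid assuming smooth coefficients. The tangential part $\partial_x\overline{\bv}$ is already controlled in $H^1$ by $\norm{\partial_x\bv}_{H^1}$, and the normal derivative $\partial_z^2\overline{v}_1$ is read off the equation. Concretely, the $(2,2)$ entry of $A_h$ is bounded below, so the first component of the momentum equation gives $\partial_z^2\overline v_1$ in terms of $\partial_z\widehat p$ and terms already controlled. Incompressibility gives $\partial_z\overline v_2=-\partial_x\overline v_1$, hence $\partial_z^2\overline v_2=-\partial_x\partial_z\overline v_1$ is tangentially controlled. Finally, $\nabla\widehat p$ is bounded in $L^2$ by the remaining terms of the equation, with $\partial_x\widehat p$ and $\partial_z\widehat p$ handled through the two components. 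The result is
\begin{align*}
\norm{\bv}_{H^2}^2+\norm{\widehat p}_{H^1}^2\leq C\norm{\partial_t\bv}_{L^2}^2+C\norm{\partial_x\bv}_{H^1}^2+C(1+\cD)\cE_1.
\end{align*}
The pressure mean is fixed by the beam equation and bounded through $\norm{\partial_t^2h}_{H_\sharp^{-1/2}}$ by testing with the lift. We multiply this estimate by a small constant, add it to the energy inequality, absorb $\eta\norm{\bv}_{H^2}^2$, and integrate in time to obtain \eqref{eq:lower-fluid-loop}.

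For \eqref{eq:lower-fluid-pairing}, we use \eqref{eq:lower-uniform-coercivity} to bound $\norm{\overline{\bv}}_{H^1}^2$ by $C$ times the viscous form. For the lift we have $\norm{\widetilde{\bv}}_{H^1}\leq C\norm{\partial_th}_{H_\sharp^{1/2}}\leq C\norm{\partial_th}_{L^2}^{1/2}\norm{\partial_t\partial_xh}_{L^2}^{1/2}$, and the mechanical energy bounds the first factor. Young's inequality then gives $\delta\norm{\partial_t\partial_xh}_{L^2}^2+C_\delta$. The initial viscous integral is bounded by $C\norm{\bv_0}_{H^1}^2+C\norm{h_1}_{H^1}^2\leq C\cE_1(0)$.
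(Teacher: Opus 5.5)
Your energy identity (testing \eqref{eq:lower-strong-fluid} by $\partial_t\overline{\bv}$, controlling the lift through \eqref{eq:lower-lift-time} and $\norm{\widetilde{\bv}}_{H^2(\Omega_1)}\leq C\norm{\partial_x\bv}_{H^1(\Omega_1)}$, and the transport through \eqref{eq:lower-nonlinear}) agrees with the paper, and so does your proof of \eqref{eq:lower-fluid-pairing}. You depart from the paper in recovering $\norm{\bv}_{H^2(\Omega_1)}^2+\norm{\widehat p}_{H^1(\Omega_1)}^2$. The paper invokes the stationary Stokes estimate \cite[Theorem~4.5]{SchwarzacherSu2025} on $\Omega_h$, whose constant depends only on the $H^2_\sharp$ bound of $h$ and the gap, and then converts norms. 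You instead propose a hands-on argument on $\Omega_1$ that uses the tangential term already on the right-hand side. That route is more elementary, but as written the pressure step is circular.

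Since $\widehat u_1=v_1/h$ and $\widehat u_2=v_2+z\partial_xh\,v_1/h$, once $\partial_z^2\overline v_2=-\partial_x\partial_z\overline v_1$ is inserted, three quantities remain uncontrolled: $\partial_z^2\overline v_1$, $\partial_x\widehat p$ and $\partial_z\widehat p$. The first component of $-\mu B_h^{-1}\Div(A_h\nabla\widehat\bu)+\nabla\widehat p$ contains $\partial_x\widehat p$ (not $\partial_z\widehat p$, as you wrote) together with $-\mu(A_h)_{22}h^{-2}(1+z^2|\partial_xh|^2)\partial_z^2\overline v_1$. The second component contains $\partial_z\widehat p$ together with $-\mu(A_h)_{22}zh^{-1}\partial_xh\,\partial_z^2\overline v_1$. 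Pointwise these are two relations among three unknowns. Eliminating $\partial_z^2\overline v_1$ leaves only a first-order transport-type relation for $\widehat p$, which gives no $L^2$ bound on $\nabla\widehat p$. Even for a flat beam, the first component still couples $\partial_z^2\overline v_1$ with $\partial_x\widehat p$.

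The missing idea is an independent estimate of the tangential pressure derivative. Under the Piola transform the pressure enters as $\int_{\Omega_1}\nabla\widehat p\cdot\bphi=-\int_{\Omega_1}\widehat p\,\Div\bphi$ for $x$-periodic $\bphi\in H^1(\Omega_1)$ vanishing at $z=0,1$. Test with $\partial_x\bphi$ and shift the $x$-derivative onto $A_h\nabla\widehat\bu$. This yields $\bigl|\int_{\Omega_1}\partial_x\widehat p\,\Div\bphi\bigr|\leq C\bigl(\norm{\mathbf f}_{L^2(\Omega_1)}+\norm{\partial_x\nabla\widehat\bu}_{L^2(\Omega_1)}+\text{coefficient terms}\bigr)\norm{\bphi}_{H^1(\Omega_1)}$, with $\mathbf f$ the forcing. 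Because $\partial_x\widehat p$ has zero mean on the periodic strip, the Ne\v{c}as (Bogovski\u{\i}) inequality then gives $\partial_x\widehat p\in L^2(\Omega_1)$ with the required bound. Only after that does the first component yield $\partial_z^2\overline v_1$ and the second yield $\partial_z\widehat p$.

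Your treatment of the pressure constant also fails. Lifts $\mathscr E(\xi)$ exist only for mean-zero $\xi$, and mean-zero beam tests cannot detect a spatially constant pressure. The constant has to come, as in \eqref{eq:lower-pressure}, from integrating the beam equation over a period, using $\int_0^L\partial_t^2h\dx=0$. This bounds it by the traces of $\nabla\bu$ and $p_0$ on the beam, hence by $C(\norm{\bu}_{H^2(\Omega_h)}+\norm{p_0}_{H^1(\Omega_h)})$.

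With these two repairs your fixed-strip argument closes and avoids citing a Stokes theorem for domains with only $H^2$ boundaries. The cited theorem, by contrast, takes as input only $\partial_t\bu$, the convection, and the boundary datum $\partial_th\in H_\sharp^{3/2}(0,L)$.
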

	\begin{proof}
		With the reduction above, we now test the transformed fluid equation \eqref{eq:lower-strong-fluid} by $\partial_t\overline{\bv}$. Its pressure term vanishes because the test is solenoidal and has zero trace. Using $\partial_t\bv=\partial_t\overline{\bv}+\partial_t\widetilde{\bv}$, we obtain
		\begin{align}
			&\rho_f\int_{\Omega_1}h|B_h^{-\top}\partial_t\overline{\bv}|^2\dx\dz
			+\frac\mu2\frac{\mathrm d}{\mathrm dt}\sum_{i,j=1}^2\int_{\Omega_1}(A_h)_{ij}
			\partial_i(B_h^{-\top}\overline{\bv})\cdot\partial_j(B_h^{-\top}\overline{\bv})\dx\dz\nonumber\\
			&\quad=\frac\mu2\sum_{i,j=1}^2\int_{\Omega_1}(\partial_tA_h)_{ij}
			\partial_i(B_h^{-\top}\overline{\bv})\cdot\partial_j(B_h^{-\top}\overline{\bv})\dx\dz\nonumber\\
			&\qquad+\mu\sum_{i,j=1}^2\int_{\Omega_1}(A_h)_{ij}
			\partial_i((\partial_tB_h^{-\top})\overline{\bv})\cdot\partial_j(B_h^{-\top}\overline{\bv})\dx\dz\nonumber\\
			&\qquad-\rho_f\int_{\Omega_1}h(B_h^{-\top}\partial_t\widetilde{\bv})\cdot(B_h^{-\top}\partial_t\overline{\bv})\dx\dz
			+\mu\int_{\Omega_1}B_h^{-1}\Div\big(A_h\nabla(B_h^{-\top}\widetilde{\bv})\big)\cdot\partial_t\overline{\bv}\dx\dz\nonumber\\
			&\qquad-\rho_f\int_{\Omega_1}B_h^{-1}\left[h(\partial_tB_h^{-\top})\bv+
			\big((\widehat\bu-\partial_t\chi_h)\cdot(B_h\nabla)\big)\widehat\bu\right]\cdot\partial_t\overline{\bv}\dx\dz
			\eqqcolon \sum_{i=1}^5 L_i.
		\end{align}
		
		We first estimate the expression obtained by differentiating only the geometric coefficients of the viscous integral. The formula
		\begin{align*}
			\partial_tB_h^{-\top}
			=\begin{pmatrix}-\partial_th/h^2&0\\ z\partial_t\partial_xh/h-z\partial_xh\,\partial_th/h^2&0\end{pmatrix}
		\end{align*}
		shows that the only second spatial derivative of the beam velocity is a coefficient times $\partial_t\partial_x^2h\,\overline{\bv}\cdot\nabla(B_h^{-\top}\overline{\bv})$. Integrating this term once in $x$ leaves only $\partial_t\partial_xh$. After expanding the remaining factors, we obtain the following eight types of products, together with terms obtained by deleting $\partial_x^2h$ or one derivative of $\overline{\bv}$:
		\begin{align*}
			&|\partial_th|\,|\nabla\overline{\bv}|^2,\quad |\partial_t\partial_xh|\,|\nabla\overline{\bv}|^2,\quad
			|\partial_t\partial_xh|\,|\overline{\bv}|\,|\partial_x\nabla\overline{\bv}|,\\
			&|\partial_th|\,|\partial_x^2h|\,|\overline{\bv}|\,|\nabla\overline{\bv}|,\quad
			|\partial_t\partial_xh|\,|\partial_x^2h|\,|\overline{\bv}|\,|\nabla\overline{\bv}|,\\
			&|\partial_th|\,|\partial_x^2h|^2|\overline{\bv}|^2,\quad
			|\partial_t\partial_xh|\,|\partial_x^2h|^2|\overline{\bv}|^2,\quad
			|\partial_t\partial_xh|\,|\partial_x^3h|\,|\overline{\bv}|^2.
		\end{align*}
		All their coefficients are bounded in terms of the energy and the gap. At each $x$, the two fluid factors are paired in $L^2(0,1)$. Equations \eqref{eq:lower-fluid-column-bounds}--\eqref{eq:lower-fluid-structural-bounds} yield, for every $\delta>0$,
		\begin{align*}
			\int_{\Omega_1}|\partial_th|\,|\nabla\overline{\bv}|^2\dx\dz
			&\leq C\cE_1^{1/4}(1+\cD)\leq C(1+\cD)\cE_1,\\
			\int_{\Omega_1}|\partial_t\partial_xh|\,|\nabla\overline{\bv}|^2\dx\dz
			&\leq C\cE_1^{1/2}(1+\cD)^{3/4}
			\left(1+\cD+\norm{\partial_x\bv}_{H^1(\Omega_1)}^2\right)^{1/4}\\
			&\leq\delta\norm{\partial_x\bv}_{H^1(\Omega_1)}^2+C_\delta(1+\cD)\cE_1,\\
			\int_{\Omega_1}|\partial_t\partial_xh|\,|\overline{\bv}|\,|\partial_x\nabla\overline{\bv}|\dx\dz
			&\leq C\cE_1^{1/2}(1+\cD)^{1/4}\norm{\partial_x\bv}_{H^1(\Omega_1)}\\
			&\leq\delta\norm{\partial_x\bv}_{H^1(\Omega_1)}^2+C_\delta(1+\cD)\cE_1.
		\end{align*}
		For the products with additional derivatives of the geometric coefficients, we obtain
		\begin{align*}
			\int_{\Omega_1}|\partial_th|\,|\partial_x^2h|\,|\overline{\bv}|\,|\nabla\overline{\bv}|\dx\dz
			&\leq C\cE_1^{1/4}(1+\cD)^{3/4}\leq C(1+\cD)\cE_1,\\
			\int_{\Omega_1}|\partial_t\partial_xh|\,|\partial_x^2h|\,|\overline{\bv}|\,|\nabla\overline{\bv}|\dx\dz
			&\leq C\cE_1^{1/2}(1+\cD)^{1/2}\left(1+\cD+\norm{\partial_x\bv}_{H^1(\Omega_1)}^2\right)^{1/4}\\
			&\leq\delta\norm{\partial_x\bv}_{H^1(\Omega_1)}^2+C_\delta(1+\cD)\cE_1,\\
			\int_{\Omega_1}|\partial_th|\,|\partial_x^2h|^2|\overline{\bv}|^2\dx\dz
			&\leq C\cE_1^{1/4}(1+\cD)^{1/2}\leq C(1+\cD)\cE_1,\\
			\int_{\Omega_1}|\partial_t\partial_xh|\,|\partial_x^2h|^2|\overline{\bv}|^2\dx\dz
			&\leq C\cE_1^{3/4}(1+\cD)^{1/2}\leq C(1+\cD)\cE_1,\\
			\int_{\Omega_1}|\partial_t\partial_xh|\,|\partial_x^3h|\,|\overline{\bv}|^2\dx\dz
			&\leq C\cE_1(1+\cD)^{1/2}\leq C(1+\cD)\cE_1.
		\end{align*}
		The last estimate pairs the two structural factors in $L_\sharp^2(0,L)$ and uses the column supremum for each fluid factor. Thus
		\begin{align*}
			|L_1| + |L_2|
			\leq\delta\norm{\partial_x\bv}_{H^1(\Omega_1)}^2+C_\delta(1+\cD)\cE_1.
		\end{align*}
		In view of \eqref{eq:lower-lift-time}, one has
		\begin{align*}
			|L_3| \leq C \norm{\partial_t^2 h}_{H_\sharp^{-1/2}(0,L)} \norm{\partial_t \overline{\bv}}_{L^2(\Omega_1)}
			\leq \delta \norm{\partial_t \overline{\bv}}_{L^2(\Omega_1)}^2 + C \norm{\partial_t^2 h}_{H_\sharp^{-1/2}(0,L)}^2.
		\end{align*}
		By direct spatial differentiation, we have
		\begin{align*}
			\left|B_h^{-1}\Div\big(A_h\nabla(B_h^{-\top}\widetilde{\bv})\big)\right|
			\leq C\big[|\nabla^2\widetilde{\bv}|+(1+|\partial_x^2h|)|\nabla\widetilde{\bv}|
			+(1+|\partial_x^3h|+|\partial_x^2h|^2)|\widetilde{\bv}|\big].
		\end{align*}
		The terms involving the highest geometric derivatives satisfy
		\begin{align*}
			\norm{\partial_x^2h\,\nabla\widetilde{\bv}}_{L^2(\Omega_1)}^2
			&\leq C\norm{\nabla\widetilde{\bv}}_{L_\sharp^\infty(0,L;L^2(0,1))}^2
			\leq C\norm{\widetilde{\bv}}_{H^2(\Omega_1)}^2
			\leq C\norm{\partial_x\bv}_{H^1(\Omega_1)}^2,\\
			\norm{\partial_x^3h\,\widetilde{\bv}}_{L^2(\Omega_1)}^2
			&\leq C\cE_1(1+\cD)^{1/2}\leq C(1+\cD)\cE_1,\\
			\norm{|\partial_x^2h|^2\widetilde{\bv}}_{L^2(\Omega_1)}^2
			&\leq C\cE_1^{1/2}(1+\cD)^{1/2}\leq C(1+\cD)\cE_1.
		\end{align*}
		It follows that
		\begin{align*}
			\norm{B_h^{-1}\Div\big(A_h\nabla(B_h^{-\top}\widetilde{\bv})\big)}_{L^2(\Omega_1)}^2
			\leq C\norm{\partial_x\bv}_{H^1(\Omega_1)}^2+C(1+\cD)\cE_1,
		\end{align*}
		which implies
		\begin{align*}
			|L_4| 
			& \leq C \norm{B_h^{-1}\Div\big(A_h\nabla(B_h^{-\top}\widetilde{\bv})\big)}_{L^2(\Omega_1)} \norm{\partial_t \overline{\bv}}_{L^2(\Omega_1)} \\
			& \leq \delta\norm{\partial_t \overline{\bv}}_{L^2(\Omega_1)}^2
			+ C \norm{\partial_x\bv}_{H^1(\Omega_1)}^2+C(1+\cD)\cE_1.
		\end{align*}
		Moreover, it holds from \eqref{eq:lower-nonlinear} that
		\begin{align*}
			|L_5| \leq \delta(\norm{\bv}_{H^2(\Omega_1)}^2 + \norm{\partial_t \overline{\bv}}_{L^2(\Omega_1)}^2)+C(1+\cD)\cE_1.
		\end{align*}
		By the coercivity
		\begin{align*}
			\rho_f\int_{\Omega_1}h|B_h^{-\top}\partial_t\overline{\bv}|^2\dx\dz \geq C \norm{\partial_t \overline{\bv}}_{L^2(\Omega_1)}^2,
		\end{align*}
		which then absorbs the fluid acceleration terms on the right-hand side by taking small $\delta>0$.
		Then we conclude
		\begin{align}
			&\mu\frac{\mathrm d}{\mathrm dt}\sum_{i,j=1}^2\int_{\Omega_1}(A_h)_{ij}
			\partial_i(B_h^{-\top}\overline{\bv})\cdot\partial_j(B_h^{-\top}\overline{\bv})\dx\dz
			+c\norm{\partial_t\bv}_{L^2(\Omega_1)}^2\nonumber\\
			&\qquad\leq C\norm{\partial_t^2h}_{H_\sharp^{-1/2}(0,L)}^2
			+C\norm{\partial_x\bv}_{H^1(\Omega_1)}^2
			+\delta\norm{\bv}_{H^2(\Omega_1)}^2+C_\delta(1+\cD)\cE_1,
			\label{eq:lower-fluid-preliminary}
		\end{align}
		
		To recover the spatial fluid derivatives, we apply \cite[Theorem~4.5]{SchwarzacherSu2025} with $s=q=2$, boundary integrability index $2$, and boundary Besov order $2>3/2$. The height is bounded by the energy in $H_\sharp^2(0,L)$ and hence in $H^2(0,L)=B_{2,2}^2(0,L)$, where $B_{2,2}^2(0,L)$ denotes the usual Besov space. Its uniform $C^{1,1/2}$ modulus permits boundary charts with uniformly small Lipschitz constants after rotation, while the positive gap keeps the two boundaries separated. The stationary Stokes constant therefore depends only on the energy and the gap. Denoting by $p_0$ the pressure of zero spatial mean, we obtain
		\begin{align*}
			\norm{\bu}_{H^2(\Omega_h)}^2+\norm{p_0}_{H^1(\Omega_h)}^2
			\leq C\left(\norm{\partial_t\bu}_{L^2(\Omega_h)}^2
			+\norm{\bu\cdot\nabla\bu}_{L^2(\Omega_h)}^2
			+\norm{\partial_th}_{H_\sharp^{3/2}(0,L)}^2\right).
		\end{align*}
		The boundary Sobolev norm is measured through the same uniformly controlled graph charts. By the convection estimate in the proof of Lemma~\ref{lem:lower-coefficients} and Young's inequality, one has
		\begin{align*}
			\norm{\bu\cdot\nabla\bu}_{L^2(\Omega_h)}^2
			\leq\delta\norm\bu_{H^2(\Omega_h)}^2+C_\delta(1+\cD)\cE_1.
		\end{align*}
		We then use \eqref{eq:lower-norm-time}, \eqref{eq:lower-norm-spatial}, and \eqref{eq:lower-velocity-trace} to control the other two terms left with explicit remainder $(1+\cD)\cE_1$.
		The physical pressure constant is determined by the mean beam equation. Write $p=p_0+p_1(t)$ with $p_0$ of zero spatial mean. Since $\partial_th$ has zero mean,
		\begin{align}
			Lp_1(t)&=\int_0^L\be_2\cdot\bsigma(\bu,p_0)(-\partial_xh,1)^\top\big|_{y=h}\dx,\nonumber\\
			|p_1(t)|&\leq C\left(\norm\bu_{H^2(\Omega_h)}+\norm{p_0}_{H^1(\Omega_h)}\right).
			\label{eq:lower-pressure}
		\end{align}
		The trace estimate uses only the energy-controlled geometry and the positive gap. We then conclude that
		\begin{align}
			\norm{\bv}_{H^2(\Omega_1)}^2+\norm{\widehat p}_{H^1(\Omega_1)}^2
			\leq C\left(\norm{\partial_t\bv}_{L^2(\Omega_1)}^2
			+\norm{\partial_x\bv}_{H^1(\Omega_1)}^2\right)+C(1+\cD)\cE_1.
			\label{eq:lower-elliptic-fluid}
		\end{align}
		Now let us choose $\delta$ in \eqref{eq:lower-fluid-preliminary} sufficiently small to absorb its acceleration contribution through \eqref{eq:lower-elliptic-fluid} and apply \eqref{eq:lower-elliptic-fluid} once more, and integrate in time. This proves \eqref{eq:lower-fluid-loop}.
		
		Finally, \eqref{eq:lower-uniform-coercivity} shows that the viscous integral on the left-hand side of \eqref{eq:lower-fluid-loop}, evaluated at time $t$, controls $\norm{\overline{\bv}(t)}_{H^1(\Omega_1)}^2$. Since the mechanical energy bounds $\partial_th$ in $L_\sharp^2(0,L)$, interpolation and Lemma~\ref{lem:lower-lift} therefore imply \eqref{eq:lower-fluid-pairing} for every $\delta>0$.
	\end{proof}
	
	\subsection{Closure on an arbitrary finite interval}\label{sec:lower-closure}
	Now we are in the position to close all the estimates above.
	\begin{proof}[Proof of Proposition~\ref{prop:lower-estimates}]
		We retain $\widetilde{\bv}=\mathscr E(\partial_th)$ and $\overline{\bv}=\bv-\widetilde{\bv}$ from Section~\ref{sec:lower-fluid}. Combining \eqref{eq:lower-filtered-bound} with \eqref{eq:lower-norm}, the contribution of the full fluid $H^2$ norm to \eqref{eq:lower-fluid-loop} has coefficient $C_0N^{-1}$. Let $c_f>0$ be the coercivity constant multiplying the fluid integral in \eqref{eq:lower-fluid-loop}. We first fix $N\geq N_0$ so that $C_0N^{-1}\leq c_f/2$, also keeping the positivity ensured by \eqref{eq:lower-positive-mass}. This choice depends only on the energy, the prescribed gap, and the fixed parameters. After absorbing this contribution, we obtain, for every $\eta>0$,
		\begin{align}
			&\mu\sum_{i,j=1}^2\int_{\Omega_1}(A_h(t))_{ij}
			\partial_i(B_{h(t)}^{-\top}\overline{\bv}(t))\cdot\partial_j(B_{h(t)}^{-\top}\overline{\bv}(t))\dx\dz\nonumber\\
			&\quad+c\int_0^t\left(\norm{\partial_t\bv}_{L^2(\Omega_1)}^2
			+\norm{\bv}_{H^2(\Omega_1)}^2+\norm{\widehat p}_{H^1(\Omega_1)}^2\right)\,\mathrm ds\nonumber\\
			&\qquad\leq\eta\cE_1(t)+C_{\eta,N}\cE_1(0)
			+C_N\int_0^t\norm{\partial_x\bv}_{H^1(\Omega_1)}^2\,\mathrm ds
			+C_N\int_0^t(1+\cD)\cE_1\,\mathrm ds.
			\label{eq:lower-fluid-closed}
		\end{align}	
		By \eqref{eq:lower-fluid-pairing} and the mechanical energy bounds, one has
		\begin{align*}
			\cE_1
			\leq C \bigg(
			1& +\rho_f\int_{\Omega_1}h(t)|B_{h(t)}^{-\top}\partial_x\bv(t)|^2\,\mathrm dx\,\mathrm dz \\
			&  +\beta\norm{\partial_x^2h(t)}_{L_\sharp^2(0,L)}^2
			+\alpha\norm{\partial_x^3h(t)}_{L_\sharp^2(0,L)}^2
			+\rho_s\norm{\partial_t\partial_xh(t)}_{L_\sharp^2(0,L)}^2  \\
			& + \mu\sum_{i,j=1}^2\int_{\Omega_1}(A_h(t))_{ij}
			\partial_i(B_{h(t)}^{-\top}\overline{\bv}(t))\cdot\partial_j(B_{h(t)}^{-\top}\overline{\bv}(t))\dx\dz\bigg).
		\end{align*}
		Let $c_x>0$ and $c_v>0$ denote the coefficients of the fluid integrals on the left-hand sides of \eqref{eq:lower-horizontal-bound} and \eqref{eq:lower-fluid-closed}, respectively, and let $K_N$ be the coefficient of $\int_0^t\norm{\partial_x\bv}_{H^1(\Omega_1)}^2\,\mathrm ds$ on the right-hand side of \eqref{eq:lower-fluid-closed}. With $N$ now fixed, choose
		\begin{align*}
			0<\vartheta\leq\min\left\{1,\frac{c_x}{4(1+K_N)}\right\},
			\qquad 0<\delta\leq\frac{\vartheta c_v}{4}.
		\end{align*}
		We add \eqref{eq:lower-horizontal-bound}, taken with this $\delta$, to $\vartheta$ times \eqref{eq:lower-fluid-closed}. The first choice absorbs the tangential dissipation on the right-hand side, and the second absorbs the full fluid terms contributed by \eqref{eq:lower-horizontal-bound}. By the preceding norm comparison, the terms evaluated at time $t$ on the left, together with the constant $1$ already present in \eqref{eq:lower-horizontal-bound}, control $c_\vartheta\cE_1(t)$ for some $c_\vartheta>0$. We finally choose $\eta>0$ so that $\vartheta\eta\leq c_\vartheta/4$, which them leads to
		\begin{align}
			&\cE_1(t)+c\int_0^t\left(
			\norm{\partial_x\bv}_{H^1(\Omega_1)}^2
			+\norm{\partial_t\bv}_{L^2(\Omega_1)}^2
			+\norm{\bv}_{H^2(\Omega_1)}^2+\norm{\widehat p}_{H^1(\Omega_1)}^2\right)\,\mathrm ds\nonumber\\
			&\qquad\leq C\cE_1(0)+C\int_0^t(1+\cD(s))\cE_1(s)\,\mathrm ds,
		\end{align}
		after division by the fixed positive coercivity constants.
		 
		By the mechanical energy identity, $\int_0^T(1+\cD(s))\,\mathrm ds\leq T+\cE(0)/\mu$. Applying Gronwall's inequality, we conclude that
		\begin{align}
			&\sup_{0\leq t\leq T}\cE_1(t)
			+\int_0^T\left(
			\norm{\partial_x\bv}_{H^1(\Omega_1)}^2
			+\norm{\partial_t\bv}_{L^2(\Omega_1)}^2
			+\norm{\bv}_{H^2(\Omega_1)}^2+\norm{\widehat p}_{H^1(\Omega_1)}^2\right)\dt\nonumber\\
			&\qquad\leq C\cE_1(0)\exp\left(C\left[T+\frac{\cE(0)}\mu\right]\right).
		\end{align}
		Equations \eqref{eq:lower-velocity-trace}, \eqref{eq:lower-height-bound}, \eqref{eq:lower-filtered-bound}, and \eqref{eq:lower-norm} control the remaining structural norms. The spatial and temporal Piola transform then recover the corresponding physical fluid norms.
	\end{proof}
	
	\begin{remark}
		The estimates are uniform for any family of exact higher strong solutions with common bounds for the lower initial norms, the mechanical energy, and the inverse height on the chosen finite interval. 
	\end{remark}
	
	\section{Construction and Global Continuation}\label{sec:global-proof}
	
	In this section, we prove the global existence and uniqueness assertions of Theorem~\ref{thm:global} by approximating the initial data. Each exact higher-regularity solution is global by the distance estimate and the higher continuation theorem. Proposition~\ref{prop:lower-estimates} then supplies uniform lower-order bounds on every finite interval. These bounds suffice for the construction, the initial traces, the energy equality, and uniqueness.
	
	\subsection{Approximation of the problem}
	To justify the preceding arguments, we approximate the initial data by compatible data in $H^2(\Omega_{h_0})\times H_\sharp^4(0,L)\times H_\sharp^2(0,L)$, for which a unique higher strong solution exists up to contact by \cite[Theorems~1.1 and~4.7]{SchwarzacherSu2025}. Although the cited result assumes positive tension, the construction and estimates also apply when $\beta=0$, as follows directly from the bending coercivity. Indeed, for a zero-mean periodic function $f$,
	\begin{align}
		\norm{f}_{L_\sharp^2(0,L)}\leq (L/(2\pi))^2\norm{\partial_x^2f}_{L_\sharp^2(0,L)},
		\qquad
		\norm{\partial_xf}_{L_\sharp^2(0,L)}\leq L/(2\pi)\norm{\partial_x^2f}_{L_\sharp^2(0,L)}.
		\label{eq:construct-bending}
	\end{align}
	Thus $\alpha\norm{\partial_x^2f}_{L_\sharp^2(0,L)}^2+\beta\norm{\partial_xf}_{L_\sharp^2(0,L)}^2$ controls the full $H_\sharp^2(0,L)$ norm uniformly for $\beta$ in a bounded interval of $[0,\infty)$. This applies to $h-H$ and $\partial_th$. The coefficients in the fluid estimates and the mass matrices in the Galerkin construction do not involve $\beta$. Deleting the tension contribution therefore preserves their estimates and the coercivity of the structural stiffness. The initial-acceleration estimate uses $\alpha\partial_x^4h_0-\beta\partial_x^2h_0$, which has the required regularity also at $\beta=0$. More precisely, the differentiated structural energy in \cite[(3.7)]{SchwarzacherSu2025} contains
	\begin{align*}
		\frac{\rho_s}{2}\norm{\partial_t^2h}_{L_\sharp^2(0,L)}^2
		+\frac\beta2\norm{\partial_t\partial_xh}_{L_\sharp^2(0,L)}^2
		+\frac\alpha2\norm{\partial_t\partial_x^2h}_{L_\sharp^2(0,L)}^2.
	\end{align*}
	At zero tension, its last term still controls the full $H_\sharp^2(0,L)$ norm of $\partial_th$ by \eqref{eq:construct-bending}. The structural basis in \cite[Section~4.2]{SchwarzacherSu2025} uses the mean-zero bending space. The mass matrices in \cite[(4.11) and (4.14)]{SchwarzacherSu2025} remain positive, and \cite[Proposition~4.2]{SchwarzacherSu2025} bounds the initial acceleration without division by $\beta$. Consequently, the estimates and continuation argument in hold with constants uniform for $\beta$ in bounded subsets of $[0,\infty)$. The beam equation recovers $H_\sharp^4(0,L)$ using $\alpha>0$. These observations justify the same existence and continuation input throughout the parameter range of Theorem~\ref{thm:global}.
	
	Let the initial data satisfy \eqref{eq:data}. We choose smooth periodic $h_0^n\to h_0$ in $H_\sharp^3(0,L)$, preserving the mean height, and smooth periodic $h_1^n\to h_1$ in $H_\sharp^1(0,L)$, preserving zero mean. Omitting finitely many terms, we have
	\begin{align}
		h_0^n\geq\frac12h_0,\qquad
		\int_0^L\frac{\dx}{h_0^n}\leq2\int_0^L\frac{\dx}{h_0}.
		\label{eq:construct-initial-gap}
	\end{align}
	On $\Omega_1$, we have $\bv_0=B_{h_0}^\top\widehat\bu_0$. Let $\widetilde{\bv}_0$ and $\widetilde{\bv}_0^n$ be the stationary Stokes velocity fields with respective upper traces $h_1\be_2$ and $h_1^n\be_2$, zero lower trace, and periodicity in $x$, as in Lemma \ref{lem:lower-lift}. By the compatibility conditions,
	$\bv_0-\widetilde{\bv}_0$ is solenoidal and has zero trace at both horizontal boundaries. Then we approximate this difference in $H^1(\Omega_1)$ by smooth periodic solenoidal fields $\bv_{0,\mathrm{zero}}^n$ with the same zero boundary traces. Such approximations may be obtained by finite expansions in homogeneous Stokes eigenfunctions. Set
	\begin{align}
		\bv_0^n=\widetilde{\bv}_0^n+\bv_{0,\mathrm{zero}}^n,
		\qquad
		\widehat\bu_0^n=B_{h_0^n}^{-\top}\bv_0^n,
		\qquad
		\bu_0^n=\widehat\bu_0^n\circ\chi_{h_0^n}^{-1}.
	\end{align}
	The Piola identity ensures the exact divergence and trace conditions on each actual initial domain. In particular,
	\begin{align}
		&\bv_0^n\to\bv_0\quad\text{in }H^1(\Omega_1),
		\qquad \cE^n(0)\to \cE(0),\nonumber\\
		&\sup_n\left(\norm{\bv_0^n}_{H^1(\Omega_1)}
		+\norm{h_0^n}_{H_\sharp^3(0,L)}
		+\norm{h_1^n}_{H_\sharp^1(0,L)}\right)<\infty.
		\label{eq:construct-data-convergence}
	\end{align}
	
	\begin{remark}
		We use the higher strong theory of \cite{SchwarzacherSu2025} to construct exact solutions of the undamped system from compatible smooth approximations of the initial data. The separation and lower-order estimates therefore apply directly to these solutions. Alternatively, one could use the global damped solutions of \cite{GrandmontHillairet2016}, with the prescribed initial data at the lower regularity level, and pass to the limit as the damping coefficient $\gamma$ tends to zero. This would require extending Theorem~\ref{thm:separation} and Proposition~\ref{prop:lower-estimates} to the beam equation with the additional term $-\gamma\partial_x^2\partial_th$, with constants independent of $\gamma$. The additional contribution in the distance test can be controlled by the bending coercivity and the damped energy identity. In the lower-order estimates, the horizontal test introduces nonnegative dissipation, while the fractional height test and the acceleration test produce total time derivatives of nonnegative quadratic forms. After integration in time, these quadratic forms are nonnegative at time $t$, and their initial values are uniformly bounded. The same compactness argument then applies. 
	\end{remark}
	\subsection{Uniform separation and global lower-order bounds}
	
	For each $n$, take the exact maximal higher strong solution furnished by the preceding existence result. Theorem~\ref{thm:separation} applies on every compact subinterval of its existence interval. If the maximal time were finite, its quantitative bound would keep the height uniformly positive up to that time. The higher estimate and continuation theorem of \cite[Theorems~1.1 and~4.7]{SchwarzacherSu2025}, with the zero-tension modification justified above, would then extend that solution. Thus every approximate solution is global. This argument is applied separately for each $n$ and permits dependence on its finite higher initial norms.
	
	Fix any finite $T>0$. To make the distance constants uniform in $n$, we choose a common energy bound larger than $\cE(0)$, by which we have fix constants and the elliptic multiplier parameter in the distance argument. Together with the conserved mean and \eqref{eq:construct-initial-gap}, we find
	\begin{align}
		\inf_n\inf_{0\leq t\leq T}\min_{x\in[0,L]}h^n(t,x)\geq d_T>0.
		\label{eq:construct-uniform-gap}
	\end{align}
	The constant $d_T$ depends only on $T$, the common energy bound, the initial reciprocal height, the mean, the period, and the physical parameters.
	
	In view of Proposition~\ref{prop:lower-estimates} on the entire interval $[0,T]$, \eqref{eq:construct-data-convergence}, and \eqref{eq:construct-uniform-gap}, one obtains
	\begin{align}
		&\sup_n\sup_{0\leq t\leq T}\left(
		\norm{\bv^n(t)}_{H^1(\Omega_1)}^2
		+\norm{h^n(t)}_{H_\sharp^3(0,L)}^2
		+\norm{\partial_th^n(t)}_{H_\sharp^1(0,L)}^2\right)\nonumber\\
		&\quad+\sup_n\int_0^T\left(
		\norm{\partial_t\bv^n}_{L^2(\Omega_1)}^2
		+\norm{\bv^n}_{H^2(\Omega_1)}^2
		+\norm{p^n}_{H^1(\Omega_{h^n})}^2\right)\dt\nonumber\\
		&\quad+\sup_n\int_0^T\left(
		\norm{h^n}_{H_\sharp^{7/2}(0,L)}^2
		+\norm{\partial_th^n}_{H_\sharp^{3/2}(0,L)}^2
		+\norm{\partial_t^2h^n}_{H_\sharp^{-1/2}(0,L)}^2\right)\dt\leq C_T.
		\label{eq:construct-uniform-lower}
	\end{align}
	The corresponding physical bounds in \eqref{eq:lower-estimate}, including the Eulerian derivative bound, follow from \eqref{eq:lower-eulerian-} and the norm comparisons in the lower-order estimates. The constant $C_T$ depends only on the quantities permitted in Theorem~\ref{thm:global}. 
	
	\subsection{Compactness, pressure reconstruction, and initial traces}
	
	Fix $T<\infty$. By the uniform bounds and compact Sobolev embeddings, we obtain, after extraction,
	\begin{align}
		\bv^n&\to\bv
		&&\text{in }L^2(0,T;H^1(\Omega_1))\cap C([0,T];L^2(\Omega_1)),
		\label{eq:construct-fluid-compactness}\\
		h^n&\to h
		&&\text{in }C([0,T];H_\sharp^{3-\delta}(0,L)),\nonumber\\
		\partial_th^n&\to\partial_th
		&&\text{in }C([0,T];H_\sharp^{1-\delta}(0,L))
		\cap L^2(0,T;H_\sharp^1(0,L)),\qquad\delta>0.
		\label{eq:construct-beam-compactness}
	\end{align}
	For \eqref{eq:construct-fluid-compactness}, the sequence is bounded in $H^1(0,T;L^2(\Omega_1))\cap L^2(0,T;H^2(\Omega_1))\cap L^\infty(0,T;H^1(\Omega_1))$. For \eqref{eq:construct-beam-compactness}, use the bounds on $\partial_th^n$ in $L^\infty(0,T;H_\sharp^1(0,L))$ and on $\partial_t^2h^n$ in $L^2(0,T;H_\sharp^{-1/2}(0,L))$, together with the spatial compactness. Taking $0<\delta<1/2$ leads to uniform convergence of the heights and of their first two spatial derivatives. Thus \eqref{eq:construct-uniform-gap} passes to the limit.
	
	The coefficients of \eqref{eq:lower-coupled}, including those containing $\partial_x^2h^n$, converge strongly. The convection passes to the limit using \eqref{eq:construct-fluid-compactness}. For the coefficient-time products, the strong $L^2(0,T;H_\sharp^1(0,L))$ convergence of $\partial_th^n$ and the bounded $L^\infty(0,T;L_\sharp^\infty(0,L;L^2(0,1)))$ norm of $\bv^n$ control the factor containing $\partial_t\partial_xh^n$. The remaining product converges by the strong $L^2(0,T;H^1(\Omega_1))$ convergence of $\bv^n$. Hence the limiting pair solves the coupled equations with the prescribed boundary conditions and initial data.
	
	Recover the pressure of zero spatial mean, denoted by $p_0$, from the fluid equation. The full pressure is $p=p_0+p_1(t)$, with $p_1$ fixed by the mean beam equation as in \eqref{eq:lower-pressure}. The pressure-constant estimate established there and the trace estimate on the separated graph imply
	\begin{align*}
		p\in L^2(0,T;H^1(\Omega_h)),\qquad
		-\be_2\cdot\bsigma(\bu,p)\mathbf N_h\big|_{y=h}
		\in L^2(0,T;H_\sharp^{1/2}(0,L)).
	\end{align*}
	The beam equation consequently holds in $L^2(0,T;H_\sharp^{-1/2}(0,L))$.
	
	By the trace theorem, $\bv\in H^1(0,T;L^2(\Omega_1))\cap L^2(0,T;H^2(\Omega_1))$ implies $\bv\in C([0,T];H^1(\Omega_1))$ with initial value $\bv_0$. To recover the strong beam traces, first use \eqref{eq:construct-beam-compactness} and the uniform bounds to obtain weak continuity in $H_\sharp^3(0,L)\times H_\sharp^1(0,L)$. Write the vertical load as
	\begin{align*}
		\Phi=-\be_2\cdot\bsigma(\bu,p)\mathbf N_h\big|_{y=h}
		\in L^2(0,T;H_\sharp^{1/2}(0,L)).
	\end{align*}
	Applying spatial Fourier truncation to the beam equation differentiated once in $x$, we obtain
	\begin{align}
		&\frac12\left(\rho_s\norm{\partial_t\partial_xh(t)}_{L_\sharp^2(0,L)}^2
		+\beta\norm{\partial_x^2h(t)}_{L_\sharp^2(0,L)}^2
		+\alpha\norm{\partial_x^3h(t)}_{L_\sharp^2(0,L)}^2\right)
		\nonumber\\
		&\quad=\frac12\left(\rho_s\norm{\partial_xh_1}_{L_\sharp^2(0,L)}^2
		+\beta\norm{\partial_x^2h_0}_{L_\sharp^2(0,L)}^2
		+\alpha\norm{\partial_x^3h_0}_{L_\sharp^2(0,L)}^2\right)
		+\int_0^t\langle\partial_x\Phi,\partial_t\partial_xh\rangle\,\mathrm ds.
	\end{align}
	The limiting duality is justified by $\partial_x\Phi\in L^2(0,T;H_\sharp^{-1/2}(0,L))$ and $\partial_t\partial_xh\in L^2(0,T;H_\sharp^{1/2}(0,L))$. Hence the energy on the left is absolutely continuous. Weak continuity and continuity of this Hilbert norm imply
	\begin{align}
		h\in C([0,T];H_\sharp^3(0,L)),\qquad
		\partial_th\in C([0,T];H_\sharp^1(0,L)).
	\end{align}
	The conserved mean and the zero mean of $\partial_th$ turn the displayed seminorms into full norms. The Piola transformation then implies the asserted continuity of the physical fluid velocity after pullback.
	
	The limiting fields satisfy the coupled equations and boundary conditions, and the preceding bounds and strong initial traces place them in the class of Definition~\ref{def:strong}. Lemma~\ref{lem:mechanical-energy} therefore establishes the mechanical energy equality with the prescribed initial energy. This equality follows from the limiting solution itself; lower semicontinuity alone would provide only an inequality.
	
	The bounds obtained from Proposition~\ref{prop:lower-estimates} on the exact higher approximations pass to the limit by weak lower semicontinuity, using the convergence of the geometric coefficients. The continuous representatives established above turn the resulting essential suprema into suprema on $[0,T]$. Consequently, \eqref{eq:lower-bound} holds for the constructed solution in Definition~\ref{def:strong}, with the same dependence on the lower initial norms and the common positive gap. The spatial and temporal norm conversions in Lemma~\ref{lem:lower-coefficients} then yield \eqref{eq:lower-estimate}. A diagonal extraction over $T=1,2,\ldots$ produces one global solution. The reciprocal-height functional converges uniformly since
	\begin{align*}
		\sup_{0\leq t\leq T}\left|\int_0^L\frac{\dx}{h^n(t,x)}
		-\int_0^L\frac{\dx}{h(t,x)}\right|
		\leq Ld_T^{-2}\norm{h^n-h}_{C([0,T]\times[0,L])}\to0.
	\end{align*}
	Applying Theorem~\ref{thm:separation} to the limiting lower strong solution recovers its distance bound with constants determined by its exact initial energy.
	
	\subsection{Uniqueness and completion of the proof}
	
	The two-dimensional case of \cite[Theorem~1.2]{SchwarzacherSroczinski2022} requires
	\begin{align}
		\bu\in L^2(0,T;W^{1,2}(\Omega_h)),\qquad
		\partial_t\bu\in L^2(0,T;\widetilde W^{-1,2}(\Omega_h)).
	\end{align}
	Here $\widetilde W^{-1,2}$ is the dual of the full $W^{1,2}$ velocity-test space satisfying the fixed-wall condition, which is not restricted to solenoidal test fields. Both hypotheses follow from Definition~\ref{def:strong}, since the Eulerian derivative belongs to $L^2(0,T;L^2(\Omega_h))$. The height is separated on every finite interval. The proof in \cite{SchwarzacherSroczinski2022} applies to the periodic geometry, with cancellation of the lateral boundary terms, as used in \cite{SchwarzacherSu2025}. Its pure-bending case also covers $\beta=0$. We obtain uniqueness in the lower strong class. In particular, any solution in Definition~\ref{def:strong} with the same initial data agrees on its existence interval with the solution constructed above. 
	
	The unweighted regularity on $[0,T]$ therefore suffices for uniqueness, including at the initial time. Together with the preceding construction, strong initial traces, energy equality, and quantitative distance bound, this completes the proof of Theorem~\ref{thm:global}. 
	
	\section{Partial Parabolic Smoothing}\label{sec:time-weighted-estimates}
	We first establish a time-weighted estimate for the exact higher strong solutions, with constants controlled by the lower-order bounds. We then pass to the global solution constructed in Section~\ref{sec:global-proof} and derive its spatial Lipschitz regularity. The argument is adapted from the time-weighted method for inhomogeneous Navier--Stokes equations with vacuum in \cite{DanchinMucha2019}. In the present coupled problem, interpolation of the beam acceleration leads to the square of the time weight.
	
	\subsection{Time-weighted estimates}
	\begin{proposition}[Time-weighted estimate]\label{prop:weighted-estimates}
		Let $0<T<\infty$ and let $(h,\bu,p)$ be an exact higher strong solution on $[0,T]$ with bounded mechanical energy and $h\geq \min h>0$. Then
		\begin{align}
			&\mathop{\mathrm{ess\,sup}}_{0<t\leq T}t^2\left(
			\norm{\partial_t\bu(t)}_{L^2(\Omega_h(t))}^2
			+\norm{\partial_t^2h(t)}_{L_\sharp^2(0,L)}^2
			+\norm{\partial_th(t)}_{H_\sharp^2(0,L)}^2\right.\nonumber\\
			&\hspace{41mm}\left.
			+\norm{\bu(t)}_{H^2(\Omega_h(t))}^2
			+\norm{p(t)}_{H^1(\Omega_h(t))}^2
			+\norm{h(t)}_{H_\sharp^4(0,L)}^2\right)\nonumber\\
			&\qquad
			+\int_0^Tt^2
			\norm{\nabla\partial_t\bu(t)}_{L^2(\Omega_h(t))}^2
			\dt
			\leq C_T.
			\label{eq:weighted-bound}
		\end{align}
		The constant $C_T>0$ depends only on $T$, $(\min h)^{-1}$, the initial mechanical energy, the mean height, the period, the physical parameters, and the three initial norms in \eqref{eq:data-regularity}. It is independent of the higher initial norms and the initial acceleration.
	\end{proposition}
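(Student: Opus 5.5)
The plan is to run the coupled acceleration estimate of Schwarzacher--Su with a time weight, in the spirit of Danchin--Mucha. The point of the weight is to avoid any use of the initial acceleration. Fix the higher strong solution on $[0,T]$. Proposition~\ref{prop:lower-estimates} gives the unweighted bound \eqref{eq:lower-bound}, with constants depending only on the lower data and the gap. Define the acceleration energy
\begin{align*}
	\cE_2(t)=\rho_f\int_{\Omega_1}h|B_h^{-\top}\partial_t\bv|^2\dx\dz+\rho_s\norm{\partial_t^2h}_{L_\sharp^2(0,L)}^2+\alpha\norm{\partial_t\partial_x^2h}_{L_\sharp^2(0,L)}^2+\beta\norm{\partial_t\partial_xh}_{L_\sharp^2(0,L)}^2 .
\end{align*}
Differentiate \eqref{eq:lower-coupled} in time and test with the admissible pair $(\partial_t\bv,\partial_t^2h)$. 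This is legitimate for the higher solution: $\partial_t\bv$ is solenoidal with top trace $\partial_t^2h\,\be_2$. The result is a differential inequality
\begin{align*}
	\frac{\mathrm d}{\mathrm dt}\cE_2+c\norm{\nabla\partial_t\bv}_{L^2(\Omega_1)}^2\leq C(1+\cD+\norm{\bv}_{H^2}^2+\norm{h}_{H^{7/2}}^2+\norm{\partial_th}_{H^{3/2}}^2)\,\cE_2+\mathcal R,
\end{align*}
where $\mathcal R$ collects the lower-order forcing. Each remainder comes from a time derivative of a geometric coefficient. These are estimated with the column inequalities \eqref{eq:lower-anisotropic}, the product bounds of Section~\ref{sec:lower-coefficients}, and the trace bound $\norm{\partial_t^2h}_{H^{1/2}}\lesssim\norm{\nabla\partial_t\bv}_{L^2}$ coming from the Piola trace. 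The first-order coefficient factors $\partial_t\partial_xh,\partial_t\partial_x^2h$ are absorbed into $\cE_2$ or the dissipation.

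Next I multiply by $t^2$. This gives $\frac{\mathrm d}{\mathrm dt}(t^2\cE_2)+ct^2\norm{\nabla\partial_t\bv}^2\le C g(t)\,t^2\cE_2+2t\cE_2+t^2\mathcal R$, with $g\in L^1(0,T)$ by \eqref{eq:lower-bound}. The crucial term is $2t\cE_2$, and I expect it to be the main obstacle. The fluid part $t\norm{\partial_t\bv}^2$ is integrable by \eqref{eq:lower-bound}. However, $t\norm{\partial_t^2h}_{L^2}^2$ and $t\norm{\partial_t\partial_x^2h}_{L^2}^2$ are not controlled a priori, since only $\partial_t^2h\in L^2H^{-1/2}$ and $\partial_th\in L^2H^{3/2}$ are known. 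This is where the square of the weight is needed. I would interpolate
\begin{align*}
	t\norm{\partial_t^2h}_{L^2}^2\le t\norm{\partial_t^2h}_{H^{-1/2}}\norm{\partial_t^2h}_{H^{1/2}}\le \eta t^2\norm{\nabla\partial_t\bv}^2+C_\eta\norm{\partial_t^2h}_{H^{-1/2}}^2,
\end{align*}
and similarly $t\norm{\partial_th}_{H^2}^2\le t\norm{\partial_th}_{H^{3/2}}\norm{\partial_th}_{H^{5/2}}$. Here $\norm{\partial_th}_{H^{5/2}}$ is bounded via the trace by $\norm{\partial_t\bv}_{H^1}$ only up to lower terms, so I would instead bound it through the Piola trace of the $H^2$ fluid norm after time differentiation. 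If that fails, I would use the beam energy at level $\alpha\norm{\partial_t\partial_x^2h}^2$ obtained by averaging the higher propagation estimate over starting times $s\in(0,t)$, as the introduction indicates. Concretely: since the unweighted quantities are integrable, $\cE_2(s)$ is finite for a.e.\ $s$, and one chooses $s\in(t/2,t)$ with $\cE_2(s)\lesssim t^{-2}\int_{t/2}^t s\,\cE_2$. The interpolation above bounds the latter by $t^{-2}\cdot$const. Then Gronwall on $[s,t]$ with $g\in L^1$ gives $t^2\cE_2(t)\le C_T$ together with $\int t^2\norm{\nabla\partial_t\bv}^2\le C_T$.

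Finally, I recover the remaining norms at each time by elliptic comparison. The stationary Stokes estimate from the proof of Lemma~\ref{lem:lower-fluid} gives $\norm{\bu}_{H^2}^2+\norm{p}_{H^1}^2\lesssim\norm{\partial_t\bu}^2+\norm{\partial_th}_{H^{3/2}}^2+(1+\cD)\cE_1$. The pressure constant is handled by \eqref{eq:lower-pressure}, and both right-hand terms are controlled by $\cE_2$ and $\cE_1$. The beam equation gives $\alpha\partial_x^4h=\Phi-\rho_s\partial_t^2h+\beta\partial_x^2h$, with $\Phi\in L^2$ bounded by the traction trace of $\bu\in H^2$, $p\in H^1$. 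Hence $t^2\norm{h}_{H^4}^2\le C_T$. The Eulerian quantities follow from the Piola conversions \eqref{eq:lower-norm-time}--\eqref{eq:lower-norm-spatial}. All constants involve only \eqref{eq:lower-bound} and the gap, never the higher initial norms.
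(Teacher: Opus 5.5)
There is a genuine gap at exactly the step you flag as the main obstacle. After multiplying by $t^2$, the term $2t\cE_2$ contains $2\alpha t\norm{\partial_t\partial_x^2h}_{L_\sharp^2(0,L)}^2$, so you need $\int_0^T t\norm{\partial_th}_{H_\sharp^2(0,L)}^2\,\mathrm dt\leq C_T$. Gronwall cannot absorb this term, since $2t\cE_2=\frac2t\,(t^2\cE_2)$ and $1/t\notin L^1(0,T)$. Neither of your two routes supplies the bound.

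The interpolation $\norm{\partial_th}_{H^2}^2\leq\norm{\partial_th}_{H^{3/2}}\norm{\partial_th}_{H^{5/2}}$ needs $\partial_th\in H^{5/2}$, which is not available anywhere in the argument. The top trace of $\partial_t\bv$ is $\partial_t^2h\,\be_2$, not $\partial_th\,\be_2$, so $\norm{\partial_t\bv}_{H^1}$ only yields $\partial_t^2h\in H^{1/2}$. The trace of $\bv\in H^2$ only yields $\partial_th\in H^{3/2}$. Reaching $H^{5/2}$ would require $\bv\in H^3$ or $\partial_t\bv\in H^2$, and neither is part of $\cE_2$ or its dissipation.

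The fallback of averaging over starting times is only a reformulation of the $t^2$-weight. Choosing $s\in(t/2,t)$ with $\cE_2(s)\lesssim t^{-2}\int_{t/2}^t s\,\cE_2\,\mathrm ds$ presupposes that $\int s\norm{\partial_t\partial_x^2h}^2\,\mathrm ds$ is finite. Your interpolation controls only the kinetic part $t\norm{\partial_t^2h}_{L^2}^2$ of $t\cE_2$, not its elastic part.

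The missing idea is an equipartition (virial) identity for the time-differentiated hyperbolic beam, which converts kinetic control into potential control. The paper tests the time-differentiated coupled system with the admissible pair $(\mathscr E(\partial_th),\partial_th)$, where $\mathscr E$ is the stationary Stokes lift of Lemma~\ref{lem:lower-lift}. This gives $\frac{\mathrm d}{\mathrm dt}\mathcal H+\alpha\norm{\partial_t\partial_x^2h}^2+\beta\norm{\partial_t\partial_xh}^2=\rho_s\norm{\partial_t^2h}^2+(\text{fluid terms})$.

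The fluid terms are controlled through $\norm{\partial_t\mathscr E(\partial_th)}_{L^2}\lesssim\norm{\partial_t^2h}_{H^{-1/2}}$ and the lower-order bound \eqref{eq:lower-bound}. The cross functional $\mathcal H$ contains $\rho_s\int\partial_t^2h\,\partial_th$, but it is still bounded at the $H^1$ level. One rewrites it, via the undifferentiated coupled equation, as a viscous plus elastic pairing.

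Multiply by $t$ and integrate, using your $H^{-1/2}/H^{1/2}$ interpolation for $\rho_s t\norm{\partial_t^2h}^2$. This yields $\int_0^Tt\norm{\partial_th}_{H^2}^2\,\mathrm dt\leq\delta\int_0^Tt^2\norm{\partial_t\bv}_{H^1}^2\,\mathrm dt+C_{\delta,T}$, which is absorbable and closes your weighted Gronwall step.

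The rest of your plan matches the paper: the $t^2$-weighted acceleration estimate, the interpolation for $\partial_t^2h$, and the elliptic recovery of $\bu$, $p$, and $h$.
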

	
	\begin{proof}
		We first propagate the acceleration from a regular time $s$. By the acceleration estimate in \cite[Section~3.2 \& Theorem~4.4]{SchwarzacherSu2025}, we have, initially for almost every pair of regular times $0\leq s\leq t\leq T$ and then for the corresponding energy representatives,
		\begin{align}
			&1+\norm{\partial_t\widehat\bu(t)}_{L^2(\Omega_1)}^2
			+\norm{\partial_t^2h(t)}_{L_\sharp^2(0,L)}^2
			+\norm{\partial_th(t)}_{H_\sharp^2(0,L)}^2
			+\int_s^t\norm{\nabla\partial_t\widehat\bu(r)}_{L^2(\Omega_1)}^2\,\mathrm dr
			\nonumber\\
			&\qquad\leq C_T\left(1+\norm{\partial_t\widehat\bu(s)}_{L^2(\Omega_1)}^2
			+\norm{\partial_t^2h(s)}_{L_\sharp^2(0,L)}^2
			+\norm{\partial_th(s)}_{H_\sharp^2(0,L)}^2\right).
			\label{eq:weighted-propagation}
		\end{align}
		Here we have used the upper and lower bounds for $h$, together with the zero mean of $\partial_th$ and $\alpha>0$, to write the differentiated physical energy in terms of its equivalent Sobolev norms. The constant $1$ absorbs the lower-order terms in the source estimate. The corrected velocity derivative in the source calculation is precisely
		\begin{align}
			B_h^{-\top}\partial_t\bv=\partial_t\widehat\bu+
			\begin{pmatrix}
				h^{-1}\partial_th\,\widehat u_1\\
				z h^{-1}\partial_th\,\partial_xh\,\widehat u_1
				-z\partial_t\partial_xh\,\widehat u_1
			\end{pmatrix},
			\label{eq:weighted-correction}
		\end{align}
		where all terms can be controlled independently of higher norms at the initial time $s$.
		
		Note that
		\begin{align*}
			\int_0^T2s\int_s^T\norm{\nabla\partial_t\widehat\bu(t)}_{L^2(\Omega_1)}^2\,\mathrm dt\,\mathrm ds
			=\int_0^Tt^2\norm{\nabla\partial_t\widehat\bu(t)}_{L^2(\Omega_1)}^2\,\mathrm dt.
		\end{align*}
		Consequently, on the fixed interval $t \in [0,T]$, integrating \eqref{eq:weighted-propagation} against $s\,\mathrm ds$ on $(0,t)$ leads to
		\begin{align}
			&\mathop{\mathrm{ess\,sup}}_{0<t\leq T}t^2\left(\norm{\partial_t\widehat\bu(t)}_{L^2(\Omega_1)}^2
			+\norm{\partial_t^2h(t)}_{L_\sharp^2(0,L)}^2
			+\norm{\partial_th(t)}_{H_\sharp^2(0,L)}^2\right)
			+\int_0^Tt^2\norm{\nabla\partial_t\widehat\bu}_{L^2(\Omega_1)}^2\,\mathrm dt\nonumber\\
			&\quad\leq C_T+C_T\int_0^Tt\norm{\partial_t\widehat\bu}_{L^2(\Omega_1)}^2\,\mathrm dt
			+C_T\int_0^Tt\norm{\partial_t^2h}_{L_\sharp^2(0,L)}^2\,\mathrm dt
			+C_T\int_0^Tt\norm{\partial_th}_{H_\sharp^2(0,L)}^2\,\mathrm dt.
			\label{eq:weighted-averaging}
		\end{align}
		The fluid acceleration term on the right is controlled by the unweighted lower estimate. It remains to estimate the two beam terms.
		
		To estimate the beam velocity, retain the stationary lifting $\widetilde{\bv}=\mathscr{E}(\partial_th)$ from Section~\ref{sec:lower-fluid}. Its boundary conditions and time derivative are given by Lemma~\ref{lem:lower-lift} and \eqref{eq:lower-lift-time}. Differentiating \eqref{eq:lower-strong-fluid} and beam equation \eqref{eq:beam} in time and testing with $(\bphi,\zeta)=(\widetilde{\bv},\partial_th)$, we obtain
		\begin{align}
			&\frac{\mathrm d}{\mathrm dt} \mathcal H
			+\alpha\int_0^L|\partial_t\partial_x^2h|^2\dx
			+\beta\int_0^L|\partial_t\partial_xh|^2\dx\nonumber\\
			&=\rho_s\int_0^L|\partial_t^2h|^2\dx
			+\rho_f\int_0^L\int_0^1h(B_h^{-\top}\partial_t\bv)\cdot B_h^{-\top}\partial_t\widetilde{\bv}\dz\,\mathrm dx\nonumber\\
			&\quad+\rho_f\int_0^L\int_0^1h((\partial_tB_h^{-\top})\bv)\cdot B_h^{-\top}\partial_t\widetilde{\bv}\dz\,\mathrm dx \nonumber\\
			&\quad+\rho_f\int_0^L\int_0^1
			\big((\widehat\bu-\partial_t\chi_h)\cdot(B_h\nabla)\big)\widehat\bu
			\cdot B_h^{-\top}\partial_t\widetilde{\bv}\dz\,\mathrm dx\nonumber\\
			&\quad-\mu\sum_{i,j=1}^2\int_0^L\int_0^1(A_h)_{ij}
			\partial_i(B_h^{-\top}\partial_t\bv)\cdot\partial_j(B_h^{-\top}\widetilde{\bv})\dz\,\mathrm dx\nonumber\\
			&\quad-\mu\sum_{i,j=1}^2\int_0^L\int_0^1(\partial_tA_h)_{ij}
			\partial_i(B_h^{-\top}\bv)\cdot\partial_j(B_h^{-\top}\widetilde{\bv})\dz\,\mathrm dx\nonumber\\
			&\quad-\mu\sum_{i,j=1}^2\int_0^L\int_0^1(A_h)_{ij}
			\partial_i((\partial_tB_h^{-\top})\bv)\cdot\partial_j(B_h^{-\top}\widetilde{\bv})\dz\,\mathrm dx\nonumber\\
			&\quad-\mu\sum_{i,j=1}^2\int_0^L\int_0^1(A_h)_{ij}
			\partial_i(B_h^{-\top}\bv)\cdot\partial_j((\partial_tB_h^{-\top})\widetilde{\bv})\dz\,\mathrm dx
			\eqqcolon\sum_{i=1}^8 O_i.
			\label{eq:weighted-virial}
		\end{align}
		Here we have
		\begin{align*}
			\mathcal H & \coloneqq
			\rho_f\int_0^L\int_0^1 h(B_h^{-\top}\partial_t\bv)\cdot B_h^{-\top}\widetilde{\bv}\dz\,\mathrm dx +\rho_f\int_0^L\int_0^1h((\partial_tB_h^{-\top})\bv)\cdot B_h^{-\top}\widetilde{\bv}\dz\,\mathrm dx \\
			& \qquad
			+\rho_f\int_0^L\int_0^1
			\big((\widehat\bu-\partial_t\chi_h)\cdot(B_h\nabla)\big)\widehat\bu
			\cdot B_h^{-\top}\widetilde{\bv}\dz\,\mathrm dx 
			+\rho_s\int_0^L\partial_t^2h\,\partial_th\dx.
		\end{align*}
		Multiplying \eqref{eq:weighted-virial} by $t$ and integrating by parts from $0$ to $T$, we obtain
		\begin{align}
			& \alpha \int_0^Tt\int_0^L|\partial_t\partial_x^2h|^2\dx\dt
			+\beta\int_0^Tt\int_0^L|\partial_t\partial_xh|^2\dx\dt \nonumber\\
			& \quad = - T\mathcal H(T) + \int_0^T \mathcal H(t) \dt + \sum_{i=1}^8 \int_0^TtO_i\dt 
			\label{eq:weighted-t}
		\end{align}
		We estimate the full cross functional using the original coupled equation. Applying \eqref{eq:lower-coupled} with $(\bphi,\zeta)=(\widetilde{\bv},\partial_th)$ and integrating by parts in the periodic variable, we obtain
		\begin{align*}
			\mathcal H
			&=-\mu\sum_{i,j=1}^2\int_{\Omega_1}(A_h)_{ij}
			\partial_i(B_h^{-\top}\bv)\cdot\partial_j(B_h^{-\top}\widetilde{\bv})\dx\dz\\
			&\quad+\alpha\int_0^L\partial_x^3h\,\partial_t\partial_xh\dx
			-\beta\int_0^L\partial_xh\,\partial_t\partial_xh\dx.
		\end{align*}
		By \eqref{eq:lower-bound}, the upper bound in \eqref{eq:lower-uniform-coercivity}, and Lemma~\ref{lem:lower-lift}, for regular times we have
		\begin{align}
			|\mathcal H(t)|
			\leq C_T\left(
			\norm{\bv(t)}_{H^1(\Omega_1)}
			\norm{\widetilde{\bv}(t)}_{H^1(\Omega_1)}
			+\norm{h(t)}_{H_\sharp^3(0,L)}
			\norm{\partial_th(t)}_{H_\sharp^1(0,L)}
			\right)\leq C_T.
			\label{eq:weighted-cross-bound}
		\end{align}
		Consequently,
		\begin{align*}
			|T\mathcal H(T)|+\int_0^T|\mathcal H(t)|\dt\leq C_T.
		\end{align*}
		Note that $t\mathcal H(t)\to0$ as $t\downarrow0$, uniformly along the higher approximations with common lower-order bounds. 
		
		We now estimate $O_i$, $i=1,\dots,8$ in order. For $O_1$, we apply the trace theorem to the boundary identity $\partial_t\bv|_{z=1}=\partial_t^2h\,\be_2$ and find
		\begin{align*}
			\norm{\partial_t^2h}_{H_\sharp^{1/2}(0,L)}
			\leq C\norm{\partial_t\bv}_{H^1(\Omega_1)}.
		\end{align*}
		Interpolation between $H_\sharp^{-1/2}(0,L)$ and $H_\sharp^{1/2}(0,L)$, followed by Young's inequality, yields, for every $\delta>0$,
		\begin{align}
			\int_0^TtO_1 \dt
			=\rho_s \int_0^T t\norm{\partial_t^2h}_{L_\sharp^2(0,L)}^2\dt
			\leq\delta \int_0^T t^2\norm{\partial_t\bv}_{H^1(\Omega_1)}^2\dt
			+C_\delta\int_0^T\norm{\partial_t^2h}_{H_\sharp^{-1/2}(0,L)}^2\dt.
			\label{eq:weighted-acceleration}
		\end{align}
		The last term is controlled in \eqref{eq:lower-bound}.
		
		For $O_2$, the bounded geometric coefficients and \eqref{eq:lower-lift-time} imply
		\begin{align*}
			|O_2|
			\leq C_T\norm{\partial_t\bv}_{L^2(\Omega_1)}
			\norm{\partial_t\widetilde{\bv}}_{L^2(\Omega_1)}\leq C_T\norm{\partial_t\bv}_{L^2(\Omega_1)}
			\norm{\partial_t^2h}_{H_\sharp^{-1/2}(0,L)}.
		\end{align*}
		Both factors belong to $L^2(0,T)$ by \eqref{eq:lower-bound}. Therefore by Cauchy--Schwarz in time and $t\leq T$, we have
		\begin{align*}
			\int_0^Tt|O_2|\dt\leq C_T.
		\end{align*}
		
		For $O_3$, the entries of $\partial_tB_h^{-\top}$ contain only bounded factors multiplying $\partial_th$ or $\partial_t\partial_xh$. By the lower-order bounds and the column estimate in Lemma~\ref{lem:lower-coefficients}, we have $\norm{h(\partial_tB_h^{-\top})\bv}_{L^2(\Omega_1)}\leq C_T$. Consequently, by \eqref{eq:lower-lift-time} and \eqref{eq:lower-bound} we infer
		\begin{align*}
			\int_0^Tt|O_3|\dt
			&\leq C_T\int_0^Tt\norm{h(\partial_tB_h^{-\top})\bv}_{L^2(\Omega_1)}
			\norm{\partial_t^2h}_{H_\sharp^{-1/2}(0,L)}\dt
			\leq C_T.
		\end{align*}
		
		For $O_4$, the same lifting estimate with \eqref{eq:lower-nonlinear} and \eqref{eq:lower-bound} yields
		\begin{align*}
			\int_0^Tt|O_4|\dt
			&\leq C_T
			\int_0^Tt\norm{\big[(\widehat\bu-\partial_t\chi_h)\cdot(B_h\nabla)\big]\widehat\bu}_{L^2(\Omega_1)}
			\norm{\partial_t^2h}_{H_\sharp^{-1/2}(0,L)}\dt\\
			&\leq C_T \norm{\big[(\widehat\bu-\partial_t\chi_h)\cdot(B_h\nabla)\big]\widehat\bu}_{L^2(0,T;L^2(\Omega_1))} \norm{\partial_t^2h}_{L^2(0,T;H_\sharp^{-1/2}(0,L))}\leq C_T,
		\end{align*}
		
		For $O_5$, the $H^1$ multiplication bounds for $B_h^{-\top}$ and the stationary lifting estimate in Lemma \ref{lem:lower-lift} imply
		\begin{align}
			\int_0^Tt|O_5|\dt
			&\leq C_T\int_0^Tt\norm{\partial_t\bv}_{H^1(\Omega_1)}
			\norm{\widetilde{\bv}}_{H^1(\Omega_1)}\dt\nonumber\\
			&\leq\delta \int_0^Tt^2\norm{\partial_t\bv}_{H^1(\Omega_1)}^2\dt
			+C_{\delta,T}\int_0^T\norm{\partial_th}_{H_\sharp^{1/2}(0,L)}^2\dt.
		\end{align}
		The last term is finite by \eqref{eq:lower-bound}.
		
		By Lemma \ref{lem:lower-lift} and one-dimensional Sobolev embedding with values in $L^2(0,1)$, one records
		\begin{subequations}
			
			\begin{align}
				\norm{\widetilde{\bv}}_{L_\sharp^\infty(0,L;L^2(0,1))}
				&\leq C\norm{\partial_th}_{H_\sharp^{1/2}(0,L)},\nonumber\\
				\norm{\nabla\widetilde{\bv}}_{L_\sharp^\infty(0,L;L^2(0,1))}
				+\norm{\nabla\partial_x\widetilde{\bv}}_{L^2(\Omega_1)}
				&\leq C\norm{\partial_th}_{H_\sharp^{3/2}(0,L)}.
			\end{align}
		\end{subequations}
		The strong solution bounds $h$ in $L^\infty(0,T;H_\sharp^3(0,L))$ and both $\bv$ and $\widetilde{\bv}$ in $L^\infty(0,T;H^1(\Omega_1))$. In particular, their column suprema and the spatial coefficient bounds used below are uniform on $[0,T]$. Since the entries of $\partial_tA_h$ contain only $\partial_th$ and $\partial_t\partial_xh$, for $O_6$, similar to the proof of Lemma \ref{lem:lower-fluid}, we have
		\begin{align*}
			\int_0^Tt|O_6|\dt
			&\leq C_T\int_0^Tt\left(
			\norm{\nabla\widehat\bu}_{L^2(\Omega_1)}
			+\norm{\nabla\widehat\bu}_{L_\sharp^\infty(0,L;L^2(0,1))}\right)
			\norm{B_h^{-\top}\widetilde{\bv}}_{H^1(\Omega_1)}\dt\\
			&\leq C_T\int_0^Tt\left(1+\norm{\bv}_{H^2(\Omega_1)}^2\right)\dt \leq C_T,
		\end{align*}
		where the second inequality uses the same column estimates and norm conversions as in Lemma~\ref{lem:lower-coefficients}.
		
		For $O_7$, set $a_j=\frac{z(A_h)_{1j}}h,a_1=z,a_2=-\frac{z^2\partial_xh}{h}$.
		The separation bound and \eqref{eq:lower-bound} imply
		\begin{align*}
			\sum_{j=1}^2\left(
			\norm{a_j}_{L^\infty(\Omega_1)}
			+\norm{\partial_xa_j}_{L^\infty(\Omega_1)}
			\right)\leq C_T.
		\end{align*}
		We shall use, for $\mathbf f\in H^2(\Omega_1)$,
		\begin{align*}
			\norm{\mathbf f}_{L_\sharp^\infty(0,L;L^2(0,1))}
			+\norm{B_h^{-\top}\mathbf f}_{H^1(\Omega_1)}
			&\leq C_T\norm{\mathbf f}_{H^1(\Omega_1)},\\
			\norm{\nabla(B_h^{-\top}\mathbf f)}
			_{L_\sharp^\infty(0,L;L^2(0,1))}
			+\norm{\partial_x\nabla(B_h^{-\top}\mathbf f)}
			_{L^2(\Omega_1)}
			&\leq C_T\norm{\mathbf f}_{H^2(\Omega_1)}.
		\end{align*}
		These bounds follow from the coefficient-product argument in Section~\ref{sec:lower-coefficients}, in particular the expansion underlying \eqref{eq:lower-matrix-product}, and the column estimate \eqref{eq:lower-anisotropic}, using the uniform $H_\sharp^3(0,L)$ bound for $h$ from \eqref{eq:lower-bound}.
		
		Explicit differentiation of $B_h^{-\top}$ leads to the pointwise bound
		\begin{align*}
			&|\partial_tB_h^{-\top}|
			+|\partial_z\partial_tB_h^{-\top}|
			+\left|
			\partial_x\partial_tB_h^{-\top}
			-\frac zh\partial_t\partial_x^2h
			(\be_2\otimes\be_1)
			\right|
			\leq C_T
			\left(|\partial_th|+|\partial_t\partial_xh|\right).
		\end{align*}
		Expanding $\partial_i((\partial_tB_h^{-\top})\bv)$ and applying Cauchy--Schwarz in $z$ and H\"older's inequality in $x$, we obtain
		\begin{align*}
			\int_0^Tt|O_7|\dt
			\leq
			\mu\sum_{j=1}^2
			\int_0^Tt\left|\int_0^L\int_0^1
			\partial_t\partial_x^2h\,a_jv_1
			\partial_j(B_h^{-\top}\widetilde{\bv})_2
			\dz\,\mathrm dx\right|\dt+C_T.
		\end{align*}
		For the first term, integration by parts in $x$ yields, for $j=1,2$,
		\begin{align*}
			&\left|\int_0^L\int_0^1
			\partial_t\partial_x^2h\,a_jv_1
			\partial_j(B_h^{-\top}\widetilde{\bv})_2
			\dz\,\mathrm dx\right|\\
			&\quad=
			\left|\int_0^L\int_0^1
			\partial_t\partial_xh\,
			\partial_x\left(
			a_jv_1\partial_j(B_h^{-\top}\widetilde{\bv})_2
			\right)\dz\,\mathrm dx\right|\\
			&\quad\leq
			\norm{\partial_t\partial_xh}_{L_\sharp^2(0,L)}
			\bigg(
			\norm{\partial_xa_j}_{L^\infty(\Omega_1)}
			\norm{v_1}_{L_\sharp^\infty(0,L;L^2(0,1))}
			\norm{\partial_j(B_h^{-\top}\widetilde{\bv})_2}_{L^2(\Omega_1)}\\
			&\qquad\quad+
			\norm{a_j}_{L^\infty(\Omega_1)}
			\norm{\partial_xv_1}_{L^2(\Omega_1)}
			\norm{\partial_j(B_h^{-\top}\widetilde{\bv})_2}
			_{L_\sharp^\infty(0,L;L^2(0,1))}\\
			&\qquad\quad+
			\norm{a_j}_{L^\infty(\Omega_1)}
			\norm{v_1}_{L_\sharp^\infty(0,L;L^2(0,1))}
			\norm{\partial_x\partial_j(B_h^{-\top}\widetilde{\bv})_2}
			_{L^2(\Omega_1)}
			\bigg)\\
			&\quad\leq
			C_T\norm{\partial_th}_{H_\sharp^1(0,L)}
			\norm{\bv}_{H^1(\Omega_1)}
			\norm{\widetilde{\bv}}_{H^2(\Omega_1)}.
		\end{align*}
		The remaining terms in the estimate of $O_7$ have the same bound. Consequently, by \eqref{eq:lower-bound} and Lemma \ref{lem:lower-lift}, one infers
		\begin{align*}
			\int_0^Tt|O_7|\dt
			\leq C_T.
		\end{align*}
		
		For $O_8$, in view of the preceding product estimates with $\bv$ and $\widetilde{\bv}$, we find
		\begin{align*}
			\int_0^Tt|O_8|\dt
			&\leq C_T
			\int_0^Tt\norm{\partial_th}_{H_\sharp^1(0,L)}
			\norm{\widetilde{\bv}}_{H^1(\Omega_1)}
			\norm{\bv}_{H^2(\Omega_1)} \dt
			\leq C_T\int_0^Tt\norm{\bv}_{H^2(\Omega_1)}\dt
			\leq C_T.
		\end{align*}
		
		Using \eqref{eq:weighted-t}, \eqref{eq:weighted-cross-bound}, the preceding estimates of $O_1,\ldots,O_8$, and the zero mean of $\partial_th$, we obtain, for every $\delta>0$,
		\begin{align*}
			\int_0^Tt\norm{\partial_th}_{H_\sharp^2(0,L)}^2\dt
			\leq\delta\int_0^Tt^2\norm{\partial_t\bv}_{H^1(\Omega_1)}^2\dt
			+C_{\delta,T}.
		\end{align*}
		Combining this estimate with \eqref{eq:weighted-acceleration} and the unweighted lower bound for $\partial_t\widehat\bu$, and rescaling $\delta$, yields
		\begin{align}
			\int_0^Tt\left(
			\norm{\partial_t\widehat\bu}_{L^2(\Omega_1)}^2
			+\norm{\partial_t^2h}_{L_\sharp^2(0,L)}^2
			+\norm{\partial_th}_{H_\sharp^2(0,L)}^2
			\right)\dt
			\leq\delta\int_0^Tt^2\norm{\partial_t\bv}_{H^1(\Omega_1)}^2\dt
			+C_{\delta,T}.
			\label{eq:weighted-integral-preliminary}
		\end{align}
		To absorb the last integral, differentiating $\bv=B_h^\top\widehat\bu$ and using the global bound \eqref{eq:lower-bound} lead to
		\begin{align}
			\norm{\partial_t\bv}_{H^1(\Omega_1)}^2
			\leq C_T\bigl(1+\norm{\nabla\partial_t\widehat\bu}_{L^2(\Omega_1)}^2
			+\norm{\partial_t\widehat\bu}_{L^2(\Omega_1)}^2
			+\norm{\partial_th}_{H_\sharp^2(0,L)}^2
			+\norm{\bv}_{H^2(\Omega_1)}^2\bigr).
			\label{eq:weighted-piola-derivative}
		\end{align}
		In particular, $\partial_t\partial_x^2h\,\widehat u_1$ is controlled by $\norm{\partial_th}_{H_\sharp^2(0,L)}$ and $\norm{\widehat u_1}_{L_\sharp^\infty(0,L;L^2(0,1))}$. Products of $\partial_t\partial_xh$ with $\nabla\widehat u_1$ are controlled by $\norm{\bv}_{H^2(\Omega_1)}$. These account for all derivatives of the Piola coefficients.
		
		Collecting \eqref{eq:weighted-averaging}, \eqref{eq:weighted-piola-derivative}, and \eqref{eq:weighted-integral-preliminary} together, we conclude that
		\begin{align}
			&\mathop{\mathrm{ess\,sup}}_{0<t\leq T}t^2\left(\norm{\partial_t\widehat\bu(t)}_{L^2(\Omega_1)}^2
			+\norm{\partial_t^2h(t)}_{L_\sharp^2(0,L)}^2
			+\norm{\partial_th(t)}_{H_\sharp^2(0,L)}^2\right)
			\nonumber\\
			&\quad+\int_0^{T}t\left(\norm{\partial_t\widehat\bu}_{L^2(\Omega_1)}^2
			+\norm{\partial_t^2h}_{L_\sharp^2(0,L)}^2
			+\norm{\partial_th}_{H_\sharp^2(0,L)}^2\right)\,\mathrm dt+\int_0^{T}t^2\norm{\nabla\partial_t\widehat\bu}_{L^2(\Omega_1)}^2\mathrm dt\leq C_T,
			\label{eq:weighted-closed}
		\end{align}
		where we have chosen $\delta > 0$ sufficiently small such that all the norms on the right-hand side can be absorbed suitably via the weighted higher energy or dissipation.
		
		We next recover the Eulerian and spatial estimates using the temporal conversion \eqref{eq:lower-eulerian-} and $\widehat\bu=B_h^{-\top}\bv$.
		After taking a spatial derivative, the additional products contain $\partial_th\,\nabla^2\widehat\bu$ and $\partial_t\partial_xh\,\nabla\widehat\bu$. The former uses $H_\sharp^1(0,L)\hookrightarrow L_\sharp^\infty(0,L)$, and the latter satisfies
		\begin{align*}
			\norm{\partial_t\partial_xh\,\nabla\widehat\bu}_{L^2(\Omega_1)}
			\leq\norm{\partial_t\partial_xh}_{L_\sharp^2(0,L)}
			\norm{\nabla\widehat\bu}_{L_\sharp^\infty(0,L;L^2(0,1))}
			\leq C_T\norm{\bv}_{H^2(\Omega_1)}.
		\end{align*}
		Thus \eqref{eq:weighted-closed} also controls $t\partial_t\bu$ in $L^\infty(0,T;L^2(\Omega_h))\cap L^2(0,T;H^1(\Omega_h))$.
		
		For the spatial estimates, use the stationary Stokes estimate and convection absorption from the proof of Lemma~\ref{lem:lower-fluid}, together with the pressure normalization \eqref{eq:lower-pressure}. Combining these with the beam equation \eqref{eq:beam}, the fluid-traction trace estimate, and \eqref{eq:lower-bound}, we obtain
		\begin{align}
			&\norm{\bu}_{H^2(\Omega_h)}^2+\norm{p}_{H^1(\Omega_h)}^2
			+\norm{h}_{H_\sharp^4(0,L)}^2 \nonumber\\
			& \qquad\leq C_T\left(1+\norm{\partial_t\widehat\bu}_{L^2(\Omega_1)}^2
			+\norm{\partial_t^2h}_{L_\sharp^2(0,L)}^2
			+\norm{\partial_th}_{H_\sharp^2(0,L)}^2\right).
			\label{eq:weighted-spatial}
		\end{align}
		Multiplying \eqref{eq:weighted-spatial} by $t^2$ and using \eqref{eq:weighted-closed} shows \eqref{eq:weighted-bound}, which completes the proof.
	\end{proof}
	
	\subsection{Proof of Theorem~\ref{thm:smoothing}}
	Fix $T>0$ and let us consider the exact global approximation sequence $(h^n,\bu^n,p^n)$ constructed in Section~\ref{sec:global-proof}. The common separation bound \eqref{eq:construct-uniform-gap} and the uniform lower-order estimate \eqref{eq:construct-uniform-lower} allow us to apply Proposition~\ref{prop:weighted-estimates} with a constant independent of $n$. In particular, \eqref{eq:weighted-estimate} holds uniformly along this sequence, independently of its higher initial norms and initial acceleration.
	
	Fix $0<\tau<T$. Since the time weights are bounded away from zero on $[\tau,T]$, weak and weak-star compactness after pullback to $\Omega_1$, together with \eqref{eq:construct-fluid-compactness}--\eqref{eq:construct-beam-compactness}, identifies the limits with the corresponding derivatives of the solution furnished by Theorem~\ref{thm:global}. For the Eulerian time derivative, use \eqref{eq:lower-eulerian-}, while the pressure is identified by the fluid equation and \eqref{eq:lower-pressure}. By the strong convergence of the geometric coefficients and weak lower semicontinuity, we obtain \eqref{eq:weighted-estimate} restricted to $[\tau,T]$, with a constant independent of $\tau$. Letting $\tau\downarrow0$, using monotone convergence for the integral and monotonicity of the essential suprema, we conclude that \eqref{eq:weighted-estimate} holds on $(0,T]$.
	
	We finally prove \eqref{eq:velocity-lipschitz}. Fix $1\leq q<2$ and choose
	\begin{align*}
		0<\theta<\frac1q-\frac12.
	\end{align*}
	The separation estimate and the uniform $H_\sharp^3(0,L)$ bound for $h$ ensure that the spatial interpolation, multiplication, and Stokes estimates below have constants uniform on $[0,T]$. Recall by \eqref{eq:lower-estimate} and \eqref{eq:weighted-estimate} that
	\begin{align*}
		&\norm{\partial_t\bu}_{L^2(0,T;L^2(\Omega_h))}
		+\norm{t\partial_t\bu}_{L^2(0,T;H^1(\Omega_h))}
		+\norm{\partial_th}_{L^\infty(0,T;H_\sharp^1(0,L))}
		+\norm{t\partial_th}_{L^\infty(0,T;H_\sharp^2(0,L))}
		\leq C_T.
	\end{align*}
	In view of spatial interpolation, it holds, for almost every $t>0$, that
	\begin{align*}
		\norm{\partial_t\bu(t)}_{H^\theta(\Omega_h(t))}
		&\leq C_T
		\norm{\partial_t\bu(t)}_{L^2(\Omega_h(t))}^{1-\theta}
		\norm{\partial_t\bu(t)}_{H^1(\Omega_h(t))}^{\theta}\\
		&=C_Tt^{-\theta}
		\norm{\partial_t\bu(t)}_{L^2(\Omega_h(t))}^{1-\theta}
		\norm{t\partial_t\bu(t)}_{H^1(\Omega_h(t))}^{\theta}.
	\end{align*}
	Since $2q\theta/(2-q)<1$, the function $t^{-\theta}$ belongs to $L^{2q/(2-q)}(0,T)$. H\"older's inequality therefore implies
	\begin{align*}
		\norm{\partial_t\bu}_{L^q(0,T;H^\theta(\Omega_h))}
		\leq C_{T,q,\theta}
		\norm{\partial_t\bu}_{L^2(0,T;L^2(\Omega_h))}^{1-\theta}
		\norm{t\partial_t\bu}_{L^2(0,T;H^1(\Omega_h))}^{\theta}
		\leq C_{T,q,\theta}.
	\end{align*}
	For the boundary velocity, we interpolate between $H_\sharp^1(0,L)$ and $H_\sharp^2(0,L)$ to obtain
	\begin{align*}
		\norm{\partial_th(t)}_{H_\sharp^{3/2+\theta}(0,L)}
		\leq C
		\norm{\partial_th(t)}_{H_\sharp^1(0,L)}^{1/2-\theta}
		\norm{\partial_th(t)}_{H_\sharp^2(0,L)}^{1/2+\theta}
		\leq C_Tt^{-1/2-\theta}.
	\end{align*}
	Our choice of $\theta$ ensures that $q(1/2+\theta)<1$, and hence
	\begin{align*}
		\norm{\partial_th}_{L^q(0,T;H_\sharp^{3/2+\theta}(0,L))}
		\leq C_{T,q,\theta}.
	\end{align*}
	
	By the two-dimensional multiplication estimate, valid for $0<\theta<1$, we have
	\begin{align*}
		\norm{\bu\cdot\nabla\bu}_{H^\theta(\Omega_h)}
		\leq C_{T,\theta}
		\norm{\bu}_{H^1(\Omega_h)}
		\norm{\bu}_{H^2(\Omega_h)}.
	\end{align*}
	Thus \eqref{eq:lower-estimate} implies
	\begin{align*}
		\norm{\bu\cdot\nabla\bu}_{L^q(0,T;H^\theta(\Omega_h))}
		\leq C_{T,q,\theta},
	\end{align*}
	because $q<2$ and $T<\infty$.
	
	We now apply the stationary Stokes estimate in \cite[Theorem~4.5]{SchwarzacherSu2025}, with spatial integrability exponent $2$, spatial order $2+\theta$, boundary integrability index $2$, and boundary Besov order $3>3/2+\theta$. The uniform bound $h\in L^\infty(0,T;H_\sharp^3(0,L))$ supplies this boundary regularity, while the boundary charts and positive gap are controlled as in the proof of Lemma~\ref{lem:lower-fluid}. The boundary compatibility condition follows from $\int_0^L\partial_th\dx=0$. Hence
	\begin{align*}
		\norm{\bu}_{H^{2+\theta}(\Omega_h)}
		\leq C_{T,\theta}\left(
		\norm{\partial_t\bu}_{H^\theta(\Omega_h)}
		+\norm{\bu\cdot\nabla\bu}_{H^\theta(\Omega_h)}
		+\norm{\partial_th}_{H_\sharp^{3/2+\theta}(0,L)}
		\right).
	\end{align*}
	Combining the preceding bounds and using the two-dimensional embedding $H^{2+\theta}(\Omega_h(t))\hookrightarrow W^{1,\infty}(\Omega_h(t))$, we obtain
	\begin{align*}
		\norm{\bu}_{L^q(0,T;W^{1,\infty}(\Omega_h))}
		\leq C_{T,\theta}
		\norm{\bu}_{L^q(0,T;H^{2+\theta}(\Omega_h))}
		\leq C_{T,q}.
	\end{align*}
	Here $\theta$ is fixed depending only on $q$. This proves \eqref{eq:velocity-lipschitz}. In particular, the spatial Lipschitz norm of the fluid velocity is integrable on the whole interval $(0,T)$. 
	
	This completes the proof. \qed
	
	\section*{Declarations}
	\subsection*{Data availability}
	Data sharing is not applicable to this article.
	\subsection*{Conflict of interest}
	The author declares no conflict of interest.
	\subsection*{AI disclosure} 
	AI-assisted tool (GPT-6 Astra Ultra) was used to improve the clarity and presentation of proofs written by the authors.
	All mathematical statements and proofs were rigorously verified by the author, who takes full responsibility for the contents of the paper.
	
	\subsection*{Funding}
	YL was partially supported by the Natural Science Foundation of Jiangsu Province (Grant No.~BK20240572) and the China Postdoctoral Science Foundation (Grant~2025M773078).

\end{document}